\numberwithin{equation}{section}
\theoremstyle{plain}
\newtheorem{thm}[equation]{Theorem}
\newtheorem{lem}[equation]{Lemma}
\newtheorem{pro}[equation]{Proposition}
\newtheorem{cor}[equation]{Corollary}
\theoremstyle{definition}
\newtheorem{df}[equation]{Definition}
\newtheorem{exa}[equation]{Example}
\newtheorem{rem}[equation]{Remark}
\newtheorem*{open}{Open problem}
\let\OldSubsection\subsection
\renewcommand{\subsection}{\setcounter{subsection}{\value{equation}}\stepcounter{equation}\OldSubsection}
\newcommand*{\Nbb}{\mathbb{N}}
\newcommand*{\Zbb}{\mathbb{Z}}
\newcommand*{\Rbb}{\mathbb{R}}
\newcommand*{\Bcal}{{\mathcal B}}
\newcommand*{\Ccal}{{\mathcal C}}
\newcommand*{\Hcal}{\mathcal{H}}
\newcommand*{\Wcal}{\mathcal{W}}
\providecommand*{\coloneq}{:=}
\newcommand*{\eps}{\varepsilon}
\newcommand*{\Om}{\Omega}
\newcommand*{\vphi}{\varphi}
\newcommand*{\dOm}{{\partial\Omega}}
\newcommand*{\dd}{{\mathrm d}}
\newcommand*{\loc}{{\mathrm{loc}}}
\newcommand*{\PI}{{\mathrm{PI}}}
\newcommand*{\wk}{{\textup{weak-}}}
\newcommand*{\dbl}{{\textup{dbl}}}
\newcommand*{\emb}{\hookrightarrow}
\DeclareMathOperator*{\esssup}{ess\,sup}
\DeclareMathOperator*{\essinf}{ess\,inf}
\DeclareMathOperator{\dist}{dist}
\DeclareMathOperator{\rad}{rad}
\DeclareMathOperator{\Id}{Id}
\DeclareMathOperator{\Lip}{Lip}
\DeclareMathOperator{\LIP}{LIP}
\DeclareMathOperator{\diam}{diam}
\DeclareMathOperator{\Ext}{Ext}
\newcommand*{\symdiff}{{\mathbin{\triangle}}}
\begin{document}
\begin{abstract}
Trace classes of Sobolev-type functions in metric spaces are subject of this paper. In particular, functions on domains whose boundary has an upper codimension-$\theta$ bound are considered. Based on a Poincar\'e inequality, existence of a Borel measurable trace is proven whenever the power of integrability of the ``gradient'' exceeds $\theta$. The trace $T$ is shown to be a compact operator mapping a Sobolev-type space on a domain into a Besov space on the boundary. Sufficient conditions for $T$ to be surjective are found and counterexamples showing that surjectivity may fail are also provided. The case when the exponent of integrability of the ``gradient'' is equal to $\theta$, i.e., the codimension of the boundary, is also discussed. Under some additional assumptions, the trace lies in $L^\theta$ on the boundary then. Essential sharpness of these extra assumptions is illustrated by an example.
\end{abstract}
\subjclass[2010]{Primary 46E35; Secondary 26B30, 26B35, 30L99, 31E05, 46E30.}
\keywords{trace, extension, Besov space, Haj\l asz space, Newtonian space, metric measure space, Hausdorff codimension-$\theta$ measure, Poincar\'e inequality, upper gradient, Sobolev function.}
\title[Trace and extension theorems in metric spaces]{Trace and extension theorems\\for Sobolev-type functions in metric spaces}
\author{Luk\'{a}\v{s} Mal\'{y}}
\thanks{The research of the author was supported by the Knut and Alice Wallenberg Foundation (Sweden).}
\address{Department of Mathematical Sciences\\%
         University of Cincinnati\\%
         Cincinnati, OH 45221-0025\\%
         U.S.A.}
\email{lukas.maly@matfyz.cz}
\date{April 20, 2017}
\maketitle
\section{Introduction and Overview}
Over the past two decades, analysis in metric measure spaces (and non-linear potential theory, in particular) has attracted a lot of attention, e.g., \cite{BjoBjo,BjoBjoSha,Haj,HajKos,HKST,KinMar}. See also \cite[Chapter 22]{HeKiMa} and references therein. 
The metric space setting provides a wide framework to study partial differential equations and, specifically, boundary value problems. These seek to find solutions to an equation in a domain, subject to a prescribed boundary condition. Most thoroughly studied problems deal with the Dirichlet boundary condition, where a trace of the solution is prescribed, and with the Neumann condition, where the normal derivative of the solution at the boundary is given. 

Both the Dirichlet and the Neumann problems in a domain in a metric space can be formulated as minimization of a certain energy functional. While the Dirichlet problem works with an extension of the prescribed data, see, e.g.,~\cite{BjoBjo}, the energy functional for the Neumann problem contains an integral of the trace of the solution, see~\cite{MalSha}. Given a domain $\Om$, it is therefore natural to ask what kind of boundary data can be extended to a Sobolev-type function in $\Om$, and conversely, what boundedness properties the trace operator exhibits when mapping a class of Sobolev-type functions in $\Om$ to some function class on $\dOm$.

Such questions were first studied in the Euclidean setting by Gagliardo~\cite{Gag}, who proved that there is a bounded surjective linear trace operator $T: W^{1,1}(\Rbb^{n+1}_+) \to L^1(\Rbb^n)$ with a non-linear right inverse (while the non-linearity is not an artifact of the proof, as can be seen from~\cite{Pee}). Moreover, he proved that $T: W^{1,p}(\Rbb^{n+1}_+) \to B^{1-1/p}_{p,p}(\Rbb^n)$, where $B^{1-1/p}_{p,p}(\Rbb^n)$ stands for a Besov space (cf.\@ Section~\ref{sec:besov-emb} below), is bounded for every $p>1$ and there exists a bounded linear extension operator that acts as a right inverse of $T$. The results can be easily adapted to domains with Lipschitz boundary in $\Rbb^n$. Later, Besov~\cite{Bes} followed up with investigating how Besov functions in a (half)space can be restricted to a hyperplane.

In the Euclidean setting, the trace theorems have been substantially generalized by Jonsson--Wallin~\cite{JonWal}, see also \cite{JonWalBk}, who showed that $T: W^{1,p}(\Rbb^n) \to B^{1-\theta/p}_{p,p}(F)$ is a bounded linear surjection provided that $F$ is a compact set that is Ahlfors codimension-$\theta$ regular (cf.\@ \eqref{eq:H-lower-massbound} and \eqref{eq:H-upper-massbound} below) and $p>\theta$. However, they did not address the case $p=\theta > 1$.

In the metric setting, \cite{LahSha} proved that the trace of a function of bounded variation on $\Om$ (which arise as relaxation of Newton--Sobolev $N^{1,1}(\Om)$ functions) lies in $L^1(\dOm)$ provided that $\Om$ is a domain of finite perimeter that admits a $1$-Poincar\'e inequality and its boundary is codimension-$1$ regular. An extension operator for $L^1$ boundary data was constructed in \cite{Vit} for Lipschitz domains in 
Carnot--Carath\'eodory spaces and in \cite{MalShaSni} for domains with codimension-$1$ regular boundary in general metric spaces.

Recent paper~\cite{SakSot} discusses traces of Besov, Triebel--Lizorkin, and Haj\l asz--Sobolev functions to porous Ahlfors regular closed subsets of the ambient metric space in case that the Haj\l asz gradient is summable in sufficiently high power. The method there is based on hyperbolic fillings of the metric space, cf.\@ \cite{BonSak,Sot}. The paper~\cite{SakSot} also shows possible relaxation of Ahlfors regularity by replacing it with the Ahlfors codimension-$\theta$ regularity. 

The goal of the present paper is to study trace and extension theorems for Sobolev-type functions in domains in a metric measure space with a lower and/or upper codimension bound (possibly unequal) for the boundary, including the case when the upper codimension bound equals the integrability exponent of the gradient. Unlike the trace theorems, the extension theorems make no use of any kind of Poincar\'e inequality. This gives new results also for weighted classical Sobolev functions in the Euclidean setting.

Let us now state the main theorems of this paper.
\begin{thm}
\label{thm:main-John-pPI}
Let $(X, \dd, \mu)$ be a metric measure space. Let $\Om \subset X$ be a John domain whose closure is compact. Assume that $\mu\lfloor_{\Om}$ is doubling and admits a $p$-Poincar\'e inequality for some $p> 1$. (If $\Om$ is uniform, then $p=1$ is also allowed.) Let $\dOm$ be endowed with Ahlfors codimension-$\theta$ regular measure $\Hcal$ for some $\theta < p$. Then, there is a bounded linear surjective trace operator $T: N^{1,p}(\Om, d\mu) \to B^{1-\theta/p}_{p,p}(\dOm, d\Hcal)$, which satisfies 
\begin{equation}
  \label{eq:defoftrace}
  \lim_{r\to0^+}\fint_{B(z,r)\cap\Om} |u(x)-Tu(z)|\, d\mu(x)=0, \quad \text{for $\Hcal$-a.e.\@ }z\in\dOm.
\end{equation}
Moreover, there exists a bounded linear extension operator $E: B^{1-\theta/p}_{p,p}(\dOm, d\Hcal) \to N^{1,p}(\Om, d\mu)$ that acts as a right inverse of $T$, i.e., $T\circ E = \Id$ on $B^{1-\theta/p}_{p,p}(\dOm, d\Hcal)$.
\end{thm}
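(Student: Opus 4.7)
The plan is to prove the theorem in three stages: first, construct the trace $T$ and show $T: N^{1,p}(\Om) \to B^{1-\theta/p}_{p,p}(\dOm)$ is bounded; second, construct an extension $E$ and show it is bounded in the opposite direction; finally, verify $T \circ E = \Id$, which gives surjectivity of $T$. Linearity of both operators will be immediate from their explicit construction.

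For the trace, I would define $Tu(z)$ through the limit in \eqref{eq:defoftrace}. Its $\Hcal$-a.e.\ existence follows from a telescoping argument on dyadic balls $B_k = B(z, 2^{-k})$: chaining the $p$-Poincar\'e inequality up to the boundary, using the John (or uniform) structure of $\Om$ to connect $B_k \cap \Om$ to an interior reference ball via a chain of Poincar\'e balls, yields
\[
|u_{B_k \cap \Om} - u_{B_{k+1} \cap \Om}| \le C\, 2^{-k} \Bigl( \fint_{\lambda B_k \cap \Om} g_u^p \,d\mu \Bigr)^{1/p}.
\]
The codimension identity $\Hcal(B(z,r) \cap \dOm) \approx \mu(B(z,r))/r^\theta$ combined with a Vitali covering argument shows that the resulting series converges for $\Hcal$-a.e.\ $z \in \dOm$ whenever $g_u \in L^p(\Om)$; here the hypothesis $\theta < p$ is decisive. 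Borel measurability of $Tu$ follows by taking a countable limit through rational radii. For the Besov bound, I would pair two sequences of shrinking balls at $z$ and $w$ with a John chain in $\Om$ connecting them at scale $r = d(z,w)$, producing a pointwise estimate of $|Tu(z) - Tu(w)|^p$ by an integral of $g_u^p$ over a tube joining $z$ and $w$. Fubini in the Besov double integral, combined with the codimension identity to replace the $\Hcal \times \Hcal$-integration by a single $\mu$-integration, then gives $\|Tu\|_{B^{1-\theta/p}_{p,p}} \le C \|u\|_{N^{1,p}}$.

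For the extension, I would take a Whitney ball cover $\{B_i = B(x_i, r_i)\}$ of $\Om$ with $r_i \sim \dist(x_i, \dOm)$ and bounded overlap, a subordinate Lipschitz partition of unity $\{\vphi_i\}$ with $\Lip \vphi_i \le C/r_i$, and for each $i$ pick a boundary point $z_i \in \dOm$ with $d(x_i, z_i) \le C r_i$. Setting
\[
Ef(x) \coloneq \sum_i \vphi_i(x)\, f_{B(z_i, r_i) \cap \dOm},
\]
a standard calculation produces an upper gradient of $Ef$ comparable to
\[
g(x) = \sum_{i:\, 2B_i \ni x}\, r_i^{-1} \sum_{j:\, 2B_j \cap 2B_i \neq \emptyset} \bigl| f_{B(z_i,r_i) \cap \dOm} - f_{B(z_j,r_j) \cap \dOm} \bigr|.
\]
Each difference of averages rewrites as an $\Hcal$-weighted integral of the Besov difference quotient; the codimension bound converts these boundary integrals into interior ones, and the finite-overlap property of the Whitney family yields $\|g\|_{L^p(\Om)} \le C \|f\|_{B^{1-\theta/p}_{p,p}}$. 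The $L^p$-bound on $Ef$ itself follows from a similar single-scale calculation. Finally $T \circ E = \Id$ $\Hcal$-a.e., because by construction the averages of $Ef$ over $B(z, r) \cap \Om$ reduce, modulo a remainder vanishing as $r \to 0^+$, to averages of $f$ over concentric boundary balls, so the Lebesgue differentiation theorem on $(\dOm, \Hcal)$ finishes the argument.

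The hardest step will be the Besov bound for the trace: executing the chaining estimate for $|Tu(z) - Tu(w)|$ along John chains that must exit a neighborhood of $z$, traverse the interior of $\Om$, and re-approach $w$, and then carefully running the Fubini argument that turns the boundary double integral into an interior $\mu$-integral controlled by $\|g_u\|_{L^p(\Om)}$. It is precisely at this step that all four hypotheses---the John geometry, doubling of $\mu$, the $p$-Poincar\'e inequality, and Ahlfors codimension-$\theta$ regularity of $\Hcal$ with $\theta < p$---have to be tightly coordinated.
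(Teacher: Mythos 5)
Your plan is essentially the paper's own proof: existence of the trace via dyadic telescoping of the Poincar\'e inequality (Theorem~\ref{thm:TraceOpB}), the $B^{1-\theta/p}_{p,p}$ bound from chaining along John/uniform curves followed by a Fubini conversion of the boundary double integral to an interior $\mu$-integral (Theorems~\ref{thm:TraceJohn} and~\ref{thm:traceUniform}, via Lemmas~\ref{lem:trace-int-est1}--\ref{lem:trace-int-est2}), the Whitney-average extension and its $L^p$ and gradient bounds (Lemma~\ref{lem:Lp-est_Whitney} and Proposition~\ref{pro:extnBounds}), and $T\circ E=\Id$ from Lebesgue differentiation on $(\dOm,\Hcal)$ (Lemma~\ref{lem:trace-Lebesgue}). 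The one technical point your sketch leaves implicit is the reason for the $p>1$ (John) versus $p=1$ (uniform) dichotomy in the hypotheses: in a non-uniform John domain the two subchains from $z$ and $w$ meet at a ``junction'' ball of radius comparable to $\dd(z,w)$, and passing the gradient average across that ball needs a Hardy--Littlewood-type maximal function, which is $L^p$-bounded only when the PI exponent $q$ is strictly less than $p$---supplied for $p>1$ by Keith--Zhong self-improvement, whereas for uniform domains the direct curve from $z$ to $w$ avoids the junction altogether.
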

Existence and boundedness of the trace operator is established by Theorems~\ref{thm:TraceJohn} (for John domains) and~\ref{thm:traceUniform} (for uniform domains). The linear extension operator is constructed in Section~\ref{sec:Besov-extension}, see Lemmata~\ref{lem:Lp-est_Whitney} and \ref{lem:trace-Lebesgue}, and Proposition~\ref{pro:extnBounds}. 
\begin{thm}
\label{thm:main-general}
Let $(X, \dd, \mu)$ be a metric measure space. Let $\Om\subset X$ be a domain such that $\mu \lfloor_{\Om}$ is doubling. Let $F \subset \overline{\Om}$ be a bounded set, endowed with a measure $\nu$ that satisfies codimension-$\theta$ upper bound, i.e., $\nu(B(z,r) \cap F) \lesssim \mu(B(z,r)\cap \Om)/r^\theta$ for every $z \in F$, $r \le 2 \diam F$, and some $\theta>0$. Then, for every $p>\theta$ with $p\ge 1$, there is a bounded linear trace operator $T: M^{1,p}(\Om, d\mu) \to B^{1-\theta/p}_{p, \infty}(F, d\nu)$. However, the trace operator need not be surjective and there can be a function $u \in M^{1,p}(\Om, d\mu)$ such that $Tu \in B^{1-\theta/p}_{p, \infty}(F, d\nu) \setminus \bigcup_{\alpha>1-\theta/p} B^{\alpha}_{p, \infty}(F, d\nu)$.

If, in addition, $\Om$ supports a $p$-Poincar\'e inequality, then $T: N^{1,p}(\Om, d\mu) \to B^{1-\theta/p}_{p,\infty}(F, d\nu)$.
\end{thm}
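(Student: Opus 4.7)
The plan is to define the trace by ball averages, prove $\nu$-a.e.\@ existence via a Haj\l asz telescope, and then convert $L^p$ control of the Haj\l asz gradient into an $L^p$ bound on $F$ through the codimension upper bound. Given $u \in M^{1,p}(\Om, d\mu)$ with Haj\l asz gradient $g \in L^p(\Om, d\mu)$, set
\[ Tu(z) := \lim_{r\to 0^+} \fint_{B(z,r)\cap\Om} u \, d\mu \]
when the limit exists and zero otherwise. Integrating the pointwise Haj\l asz inequality over $x \in B(z,2^{-k-1}r)\cap\Om$ and $y \in B(z,2^{-k}r)\cap\Om$, and using doubling of $\mu\lfloor_{\Om}$ to compare the two ball averages of $u$, yields
\[ \bigl|u_{B(z,2^{-k-1}r)\cap\Om} - u_{B(z,2^{-k}r)\cap\Om}\bigr| \lesssim 2^{-k} r \fint_{B(z,2^{-k}r)\cap\Om} g \, d\mu, \]
which sums geometrically to the telescoping bound $|Tu(z) - u_{B(z,r)\cap\Om}| \lesssim r\, M^r_\Om g(z)$, where $M^t_\Om g(z) := \sup_{0<s\le t} \fint_{B(z,s)\cap\Om} g \, d\mu$ is a truncated ``tangential'' maximal function.

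The next step is to bound $M^t_\Om g$ in $L^p(F, d\nu)$. The codimension upper bound $\nu(B(z,r)\cap F) \lesssim \mu(B(z,r)\cap\Om)/r^\theta$ combined with a Vitali-type covering argument applied to the level sets of $M^t_\Om g$, in the flavor of $L^p \to L^{p^*}$ bounds for fractional maximal operators, should yield
\[ \int_F (M^t_\Om g)^p \, d\nu \lesssim t^{-\theta}\, \|g\|_{L^p(\Om, d\mu)}^p, \qquad t > 0, \]
precisely because $p > \theta$. The $t = \infty$ case guarantees $M_\Om g < \infty$ $\nu$-a.e., so that $Tu$ is well-defined $\nu$-a.e., and combined with a crude estimate of $|u_{B(z,\diam F)\cap\Om}|$ it also controls $\|Tu\|_{L^p(F, d\nu)}$.

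For the Besov seminorm, fix $z, w \in F$ with $d(z, w) \le t$. Applying the Haj\l asz inequality to a.e.\@ pair $(x, y) \in (B(z,t)\cap\Om) \times (B(w,t)\cap\Om)$ and averaging gives $|u_{B(z,t)\cap\Om} - u_{B(w,t)\cap\Om}| \lesssim t\bigl(M^t_\Om g(z) + M^t_\Om g(w)\bigr)$, which combined with the telescope at both centers yields $|Tu(z) - Tu(w)| \lesssim t\bigl(M^t_\Om g(z) + M^t_\Om g(w)\bigr)$. Substituting,
\[ \int_F \fint_{B(z,t)\cap F} |Tu(z) - Tu(w)|^p \, d\nu(w) \, d\nu(z) \lesssim t^p \int_F (M^t_\Om g)^p \, d\nu \lesssim t^{p-\theta} \|g\|_{L^p(\Om, d\mu)}^p, \]
which after division by $t^{p-\theta}$ and taking $\sup_{t>0}$ is exactly the $B^{1-\theta/p}_{p,\infty}$ seminorm bound. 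Infimizing over Haj\l asz gradients of $u$ yields boundedness of $T$. For the $N^{1,p}$ version, I would invoke the Haj\l asz--Koskela pointwise estimate that $C(Mg^p)^{1/p}$ is a Haj\l asz gradient of $u$ on balls in $\Om$ whenever $g$ is a $p$-weak upper gradient and $\Om$ supports a $p$-Poincar\'e inequality; by the Hardy--Littlewood maximal theorem on $(\Om, \mu\lfloor_{\Om})$ this gradient lies in $L^p(\Om, d\mu)$, so the Newtonian case reduces to the Haj\l asz case just handled.

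The main technical obstacle is the fractional-maximal step of the second paragraph, because balls $B(z,r)\cap\Om$ are centered at boundary points $z \in F \subset \overline{\Om}$ and only the codimension \emph{upper} bound for $\nu$ is available, with no compatible lower bound for $\nu$ and no global doubling of $\mu$ on $X$; the covering argument must respect this asymmetric geometry. The surjectivity failure and the sharpness of the Besov target space, claimed in the theorem but not proven by the reasoning above, would be exhibited by explicit (weighted) Euclidean counterexamples that require no Poincar\'e inequality, consistent with the introduction's remark that the extension theorems in this paper are Poincar\'e-free.
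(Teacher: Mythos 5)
The telescoping construction in your first paragraph is sound, as is the derivation of the pointwise two-point estimate $|Tu(z) - Tu(w)| \lesssim t\bigl(M^t_\Om g(z) + M^t_\Om g(w)\bigr)$. The proof breaks at the claimed inequality
\[
  \int_F \bigl(M^t_\Om g\bigr)^p\,d\nu \lesssim t^{-\theta}\,\|g\|_{L^p(\Om,\,d\mu)}^p,
\]
which is false. First, note that as stated it degenerates: letting $t\to\infty$ (or $t = 2\diam F$ in the truncated setting) would force $M_\Om g = 0$ $\nu$-a.e.\@ for every $g$. Second, the Vitali covering mechanism you invoke cannot extract the factor $t^{-\theta}$, because the covering balls $B(z_k,s_k)$ for the level set $\{M^t_\Om g > \lambda\}$ have radii $s_k$ that may be much smaller than $t$, and the codimension bound contributes a factor $s_k^{-\theta}$ per ball which is not uniformly controlled by $t^{-\theta}$. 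Concretely, in $\Rbb^n_+$ with $F = \Rbb^{n-1}\times\{0\}$, $\theta = 1$, and $g = \delta^{-n/p}\chi_{B(0,\delta)\cap\Om}$, the left-hand side blows up like $\delta^{-1}$ as $\delta \to 0$ for fixed $t$, while the right-hand side stays bounded. The operator that \emph{does} behave well is the fractional maximal operator $M_{\theta,p}f(z) = \sup_r \bigl(r^\theta \fint_{B(z,r)\cap\Om}|f|^p\,d\mu\bigr)^{1/p}$, whose correct pointwise role in the telescope (after an application of H\"older in each ring) is $|Tu(z) - u_{B(z,r)\cap\Om}| \lesssim r^{1-\theta/p}\,M_{\theta,p}g(z)$; but even that operator is only bounded $L^p(\Om)\to\wk L^p(F,d\nu)$, not $L^p\to L^p$ strong, and the Besov seminorm is a strong $L^p$ quantity, so substituting it does not close the argument either.

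The paper avoids this entirely. For the $B^{1-\theta/p}_{p,\infty}$ seminorm, it estimates the Cauchy differences $\|T_R u - T_r u\|_{L^p(F,d\nu)}$ directly by applying Fubini to the double integral $\int_F R^p \fint_{B(z,\lambda R)\cap\Om}g^p\,d\mu\,d\nu(z)$, swapping to an $x$-integral over the shell $\Om_{\lambda R}$, then using the codimension upper bound to trade $R^p/\mu(B(z,\lambda R)\cap\Om)$ for $R^{p-\theta}/\nu(B(z,\lambda R)\cap F)$, and finally using doubling of $\nu$ (which follows from the upper codimension bound and doubling of $\mu\lfloor_\Om$) to integrate out the $z$-variable. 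This yields $\|T_R u - T_r u\|_{L^p(F,d\nu)} \lesssim R^{1-\theta/p}\|g\|_{L^p(\Om_{\lambda R})}$ with no maximal function at all, and the Besov bound follows by the same triangle-inequality decomposition you use. The fractional maximal operator appears in the paper only to establish the pointwise Lebesgue-type convergence~\eqref{eq:defoftrace} and the H\"older-type estimate of Proposition~\ref{pro:TraceOp1}, where weak $L^p$ control suffices. Your reduction of the $N^{1,p}$ case to the Haj\l asz case is fine in spirit (the paper works with the $P^{1,p}_q$ class, which is cleaner), and your remark that the counterexamples to surjectivity are explicit weighted Euclidean domains is consistent with the paper's Examples~\ref{exa:T-not-surjective} and~\ref{exa:T-not-surjective-uniform}.
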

The assertion follows from Theorem~\ref{thm:TraceOpB}, which is however formulated for $F = \dOm$; nevertheless, the proof goes through even in the setting of Theorem~\ref{thm:main-general}. The fact that $T$ need not be surjective is shown by Examples~\ref{exa:T-not-surjective} and~\ref{exa:T-not-surjective-uniform}. However, it is unclear whether the target space $B^{1-\theta/p}_{p,\infty}(F)$ is optimal under these mild assumptions.
\begin{open}
Under the assumptions of Theorem~\ref{thm:main-general}, is $T: M^{1,p}(\Om, d\mu) \to B^{1-\theta/p}_{p,p}(F, d\nu)$ bounded? Note however that even if $T$ were a bounded mapping into $B^{1-\theta/p}_{p,p}(F, d\nu)$, then it still could lack surjectivity as is seen in Examples~\ref{exa:T-not-surjective} and \ref{exa:T-not-surjective-uniform}.
\end{open}
\begin{thm}
Let $(X, \dd, \mu)$ be a metric measure space. Let $\Om \subset X$ be a bounded domain that admits a $\theta$-Poincar\'e inequality for some $\theta \ge 1$. Assume that $\mu\lfloor_{\Om}$ is doubling and that $\dOm$ is endowed with an Ahlfors codimension-$\theta$ regular measure $\Hcal$. Let $w_\eps(x) = \log(2 \diam \Om/\dist(x,\dOm))^{\theta + \eps}$ for some $\eps>0$. Then, there is a bounded linear trace operator $T: N^{1,\theta}(\Om, w_\eps\, d\mu) \to L^\theta(\dOm, d\Hcal)$, which satisfies \eqref{eq:defoftrace}.

Moreover, there exists a bounded non-linear extension operator $E: L^p(\dOm, d\Hcal) \to N^{1,p}(\Om, d\mu)$ that acts as a right inverse of $T$, i.e., $T\circ E = \Id$ on $L^p(\dOm, d\Hcal)$.

The choice of weight $w_\eps$ with an arbitrary $\eps>0$ is essentially sharp when $\theta > 1$ since for every $-\eps<0$ there exists $u \in N^{1,p}(\Om, w_{-\eps}\,d\mu)$ such that $Tu \equiv \infty$ in $\dOm$, i.e., \eqref{eq:defoftrace} fails.
\end{thm}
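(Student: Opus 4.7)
The plan is to establish the three assertions separately.

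\textbf{Trace boundedness.} For $u \in N^{1,\theta}(\Om, w_\eps\,\dd\mu)$ with $\theta$-weak upper gradient $g$, fix $z \in \dOm$ and $r_0 > 0$, and set $r_k = 2^{-k} r_0$, $B_k = B(z, r_k)$, $L_k = \log(2\diam\Om/r_k) \sim k$. The $\theta$-Poincar\'e inequality and doubling give
\begin{equation*}
|u_{B_k \cap \Om} - u_{B_{k+1} \cap \Om}| \lesssim r_k \left(\fint_{\lambda B_k \cap \Om} g^\theta\,\dd\mu\right)^{1/\theta},
\end{equation*}
and since $\dist(x,\dOm) \le \lambda r_k$ throughout $\lambda B_k \cap \Om$, the pointwise bound $w_\eps \ge L_k^{\theta+\eps}$ converts this into $|u_{B_k} - u_{B_{k+1}}| \lesssim c_k b_k(z)^{1/\theta}$ with $c_k = r_k L_k^{-(1+\eps/\theta)}$ and $b_k(z) = \fint_{\lambda B_k \cap \Om} g^\theta w_\eps\,\dd\mu$. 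Telescoping yields $|Tu(z) - u_{B_0\cap\Om}| \lesssim \sum_k c_k b_k(z)^{1/\theta}$. To take the $\theta$-th power and integrate against $\dd\Hcal$, I apply H\"older's inequality with an auxiliary sequence $d_k = k^{-(1+\delta)} r_k^\theta$ for any $0 < \delta < \eps$; a direct computation shows the residual series $\sum_k c_k^{\theta'} d_k^{-\theta'/\theta}$ converges precisely because $\delta < \eps$, giving $|Tu(z) - u_{B_0\cap\Om}|^\theta \lesssim \sum_k d_k b_k(z)$. A Fubini argument based on codimension-$\theta$ regularity produces $\int_\dOm b_k(z)\,\dd\Hcal(z) \lesssim r_k^{-\theta}\int_{\{\dist(\cdot,\dOm)<\lambda r_k\}} g^\theta w_\eps\,\dd\mu$, and because $d_k r_k^{-\theta} = k^{-(1+\delta)}$ is absolutely summable, Abel summation over the tubular shells collapses the double sum into $\int_\Om g^\theta w_\eps\,\dd\mu$. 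Combining with the easy bound $\int_\dOm |u_{B_0\cap\Om}|^\theta\,\dd\Hcal \lesssim \|u\|_{L^\theta(\Om, w_\eps\dd\mu)}^\theta$ finishes boundedness of $T$, and $\Hcal$-a.e.\ existence of the limit (hence~\eqref{eq:defoftrace}) follows from absolute convergence of the series along a countable family of radii combined with a Lebesgue-differentiation-type argument on $\dOm$.

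\textbf{Non-linear extension.} For $f \in L^\theta(\dOm, \dd\Hcal)$ I would follow a Whitney-type recipe: fix a Whitney cover $\{B_i\}$ of $\Om$ with subordinate partition of unity $\{\vphi_i\}$ and shadow balls $\hat B_i \subset \dOm$, and set $Ef(x) = \sum_i \vphi_i(x)(f)_{\hat B_i}$. The upper gradient of $Ef$ is dominated by differences of boundary averages $|(f)_{\hat B_i} - (f)_{\hat B_j}|$ between neighboring Whitney balls; these are not controlled by a Besov norm in the bare $L^\theta$ setting, which forces a non-linear modification. I would truncate the averages at a level tied to the boundary Hardy--Littlewood maximal function $M_\Hcal f$, producing a genuinely non-linear $E$; the gradient estimate then closes via a tent-type maximal inequality on $\dOm$, and the Lebesgue differentiation theorem on $\dOm$ gives $T\circ E = \Id$ on $L^\theta(\dOm,\dd\Hcal)$.

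\textbf{Sharpness.} Fix $\eps > 0$. Codimension-$\theta$ regularity implies $\mu(\{x \in \Om : 2^{-k-1} \le \dist(x,\dOm) < 2^{-k}\}) \sim 2^{-k\theta}$. Take a $2^{k+1}$-Lipschitz cutoff $\vphi_k$ equal to $1$ when $\dist(\cdot,\dOm) < 2^{-k-1}$ and $0$ when $\dist(\cdot,\dOm) \ge 2^{-k}$, and form $u = \sum_k a_k \vphi_k$ with $a_k \ge 0$. Then $\int_\Om g^\theta w_{-\eps}\,\dd\mu \sim \sum_k a_k^\theta k^{\theta-\eps}$ while $u$ blows up radially at the boundary at the rate $\sum_k a_k$. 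For $\eps \ge 1$ the choices $a_k = 1/k$ (for $\eps > 1$) and $a_k = 1/(k \log k)$ (for $\eps = 1$) immediately deliver $u \in N^{1,\theta}(\Om, w_{-\eps}\,\dd\mu)$ with $Tu \equiv \infty$. The main obstacle is the range $\eps \in (0,1)$: the H\"older pairing
\begin{equation*}
\sum_k a_k \le \left(\sum_k a_k^\theta k^{\theta-\eps}\right)^{1/\theta}\left(\sum_k k^{-(\theta-\eps)/(\theta-1)}\right)^{(\theta-1)/\theta}
\end{equation*}
shows that no purely radial superposition can blow up in this regime. Here I would instead localize the bumps tangentially along a scale-dependent net $\{z_{k,i}\} \subset \dOm$ so as to redistribute the gradient across the boundary, arranging the amplitudes and overlaps so that the blow-up holds at $\Hcal$-a.e.\ boundary point while the weighted gradient norm stays finite. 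Verifying that this layered construction remains in $N^{1,\theta}(\Om, w_{-\eps}\,\dd\mu)$ is the technical crux, and I expect it to be the hardest part of the proof.
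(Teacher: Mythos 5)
Your dyadic-chain argument for the trace boundedness is essentially sound and is in the same spirit as the paper's Theorem~\ref{thm:TraceOp-theta=p}: both split into annular scales $r_k\sim 2^{-k}$ and exploit the excess $\eps$ in the weight exponent to make the chain sum. The paper organizes the telescoping at the level of $\|T_R u-T_r u\|_{L^\theta(\dOm)}$ with the abstract hypothesis $\int_0^1 \frac{dt}{tw(t)}<\infty$, whereas you work pointwise first and insert auxiliary weights $d_k=k^{-(1+\delta)}r_k^\theta$ before integrating over $\dOm$; both close, though the paper's version also covers $\theta=1$ without modification, whereas your H\"older step with dual exponent $\theta'$ requires $\theta>1$.

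For the extension operator your sketch does not hold up. If you run the Whitney extension $Ef=\sum_{j,i}a_{j,i}\vphi_{j,i}$ with raw $L^\theta$ data and use the codimension-$\theta$ identity $\mu(B_{\ell,m})\approx r_{\ell,m}^\theta\Hcal(U_{\ell,m})$, the gradient estimate collapses to a sum $\sum_{\ell}\sum_m\Hcal(U_{\ell,m})\bigl(\fint_{U^*_{\ell,m}}\fint_{U^*_{\ell,m}}|f(z)-f(w)|\bigr)^\theta$ whose inner sum over $m$ is comparable to $\|f\|_{L^\theta(\dOm)}^\theta$ for \emph{each} Whitney level $\ell$, so the sum over $\ell$ diverges. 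Truncating the averages at a level tied to $M_\Hcal f$ does nothing about this infinite sum over levels; the divergence is not caused by large values of $f$ but by the lack of decay of the Besov modulus at small scales. The paper's mechanism (Section~\ref{sec:lp-extension}, following Gagliardo) is genuinely different: approximate $f$ in $L^\theta(\dOm)$ by Lipschitz $f_k$ with $\|f_{k+1}-f_k\|_{L^\theta}\le 2^{2-k}\|f\|_{L^\theta}$, pick layer widths $\rho_k$ shrinking so fast that $\sum_k\rho_k\LIP(f_{k+1},\dOm)\lesssim\|f\|_{L^\theta}$, and glue $Ef_k$ across the layers via $\psi_{k-1}-\psi_k$. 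The nonlinearity lives in the choice of $\rho_k$, which depends on the data $f$ through $\LIP(f_{k+1})$. This is the part your proposal is missing.

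On sharpness, your H\"older computation correctly shows that in a \emph{fixed} domain with $\theta$ fixed, a purely radial bump profile cannot blow up while keeping $\int_\Om g^\theta w_{-\eps}\,d\mu$ finite when $\eps\in(0,1)$; but then you leave the whole case open and propose a tangentially redistributed construction that you do not carry out. The paper does not do this. Proposition~\ref{pro:T-intoLp-sharpness} keeps the radial profile $u=\log(e/\dist(\cdot,\dOm))^{\eps/4}$ on $\Om=B(0,1)\subset\Rbb^2$ but \emph{constructs} the metric measure setting as part of the claim: the weight on $\Om$ is $d\mu=\dist(x,\dOm)^{n-1}\,dx$ with $n=\theta\ge 2/\eps$, so the codimension $\theta$ is chosen large relative to $\eps$ and the shell volumes $\mu(\Om_R)\sim R^\theta$ decay fast enough to make the weighted gradient integral converge. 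In other words, in the theorem's sharpness clause the quantifiers range over the domain, the measure, and $\theta$ as well, not just over $u$; once you read it that way, the radial construction suffices and your proposed tangential net is not needed. It is worth noting that your computation and the paper's are consistent: the paper escapes your obstruction precisely because it lets $\theta$ grow with $1/\eps$, which is not an option in your fixed-domain reading.
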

Existence and boundedness of the trace operator from the weighted Newtonian space follows from Theorem~\ref{thm:TraceOp-theta=p}. The non-linear extension operator is constructed in Section~\ref{sec:lp-extension}. Essential optimality of the weight is proven in Proposition~\ref{pro:T-intoLp-sharpness}, see also Example~\ref{exa:T-to-Lp-fail}. Observe that the behavior of the trace operator is substantially different when $\Hcal\lfloor_{\dOm}$ is codimension~$1$ regular since no weight is needed then and $L^1(\dOm, d\Hcal)$ is the trace class of $N^{1,1}(\Om, d\mu)$, see~\cite{LahSha,MalShaSni}.

The trace results stated for $N^{1,1}(\Om, d\mu)$ functions under the assumption of $1$-Poincar\'e inequality can be further generalized to functions of bounded variations. Details of such a generalization lie outside of the scope of the main interest and have therefore been omitted.

The paper is divided into 8 sections. We start off by preliminaries for the analysis in metric spaces and Sobolev-type functions considered in this paper. Section~\ref{sec:besov-emb} is then devoted to establishing elementary properties (continuous and compact embeddings, in particular) of Besov spaces in the metric setting. Trace theorems for general domains, whose boundary has an upper codimension-$\theta$ bound, are the subject of Section~\ref{sec:tracesGen}. Sharp trace theorems for John and uniform domains that are compactly embedded in $X$ are proven in Section~\ref{sec:tracesJohn}. A linear extension operator, i.e., a right inverse of the trace operator, for Besov data is constructed in Section~\ref{sec:Besov-extension}. A non-linear extension operator for $L^\theta$ boundary data is then constructed in Section~\ref{sec:lp-extension}. Finally, several examples that show sharpness of the hypothesis of the presented trace theorems are provided in Section~\ref{sec:examples}.
\section{Preliminaries \& Sobolev-type functions in the metric setting}
Throughout the paper, $(X, \dd, \mu)$ will be a metric space endowed with a $\sigma$-finite Borel regular measure that is non-trivial in the sense that $\mu(B) \in (0, \infty)$ for every ball $B \subset X$. We do not a priori assume that $X$ is (locally) complete, nor that $\mu$ is doubling in $X$. However, given a domain $\Om$, we will require that $\mu\lfloor_{\Om}$ is doubling and non-trivial, i.e., there is $c_\dbl \ge 1$ such that
\[
  0 < \mu(B(x, 2r) \cap \Om) \le c_\dbl \mu(B(x, r) \cap \Om) < \infty
\]
for every $x\in \overline{\Om}$ and every $r>0$, where $B(x,r)$ denotes an open ball of radius $r$ centered at $x$.
Then, \cite[Lemma~3.3]{BjoBjo} yields that there are $C_s>0$ and $s>0$ such that
\[
  \frac{\mu(B(x,r) \cap \Om)}{\mu(B(y,R) \cap \Om)} \ge C_s \biggl(\frac{r}{R} \biggr)^s
\]
for all $0<r\le R$, $y\in \overline{\Om}$, and $x\in B(y,R) \cap \overline{\Om}$. In particular, if $\Om$ is bounded, then $\mu\lfloor_\Om$ has a lower mass bound $\mu(B(x,r) \cap \Om) \ge c_s r^s$ since
\begin{equation}
  \label{eq:ball-mu-vs-rads}
  \frac{\mu(B(x,r) \cap \Om)}{r^s} \ge C_s \frac{\mu(B(x, 2 \diam \Om) \cap \Om)}{(2 \diam \Om)^s} = \frac{\mu(\Om)}{(2 \diam \Om)^s}\eqcolon c_s\,,\quad x\in \overline{\Om} .
\end{equation}

In the rest of the paper, $f_E$ denotes the \emph{integral mean} of a function $f\in L^0(E)$ over a measurable set $E \subset X$ of finite positive measure, defined as
\[
  f_E = \fint_E f\,d\mu = \frac{1}{\mu(E)} \int_E f\,d\mu
\]
whenever the integral on the right-hand side exists, not necessarily finite though. Given an open ball $B=B(x,r) \subset X$ and $\lambda>0$, the symbol $\lambda B$ denotes the inflated ball $B(x, \lambda r)$.

Throughout the paper, $C$ represents various constants and its precise value is not of interest. Moreover, its value may differ in each occurrence. Given expressions $a$ and $b$, we write $a \lesssim b$ if there is a constant $C>0$ such that $a\le C b$. We say that $a$ and $b$ are \emph{comparable}, denoted by $a\approx b$, if $a \lesssim b$ and $b\lesssim a$ at the same time.

Given a set $F \subset \overline{\Om}$ endowed with a $\sigma$-finite Borel regular measure $\Hcal$, we say that $\Hcal$ satisfies a \emph{lower codimension-$\vartheta$ bound} with some $\vartheta > 0$ if
\begin{equation}
 \label{eq:H-lower-massbound}
 \Hcal(B(x,r)\cap F) \ge c_\vartheta \frac{\mu(B(x,r)\cap \Om)}{r^\vartheta}, \quad\text{for all } x\in F, 0<r<2\diam F.
\end{equation}
Analogously, we say that $\Hcal$ satisfies a \emph{upper codimension-$\theta$ bound} with some $\theta > 0$ if
\begin{equation}
 \label{eq:H-upper-massbound}
 \Hcal(B(x,r)\cap F)\le c_\theta \frac{\mu(B(x,r)\cap \Om)}{r^\theta}, \quad\text{for all } x\in F, r>0.
\end{equation}
Note that the trace theorems established in Sections~\ref{sec:tracesGen} and \ref{sec:tracesJohn} make use only of the upper bound~\eqref{eq:H-upper-massbound} with $0<\theta < s$, whereas the extension theorems in Sections~\ref{sec:Besov-extension} and~\ref{sec:lp-extension} assume both \eqref{eq:H-lower-massbound} and~\eqref{eq:H-upper-massbound} with $0 < \vartheta \le \theta < s$. In the rest of the paper, the conditions \eqref{eq:H-lower-massbound} and \eqref{eq:H-upper-massbound} will be referred to with $F = \dOm$, while the notation $\Hcal$ is suggestively used to promote the measure's relation to the codimension-$\theta$ (and codimension-$\vartheta$) Hausdorff measure, see~\eqref{eq:df-Hcodim} and Lemma~\ref{lem:codim-bounds-vs-Hcodim} below.

If both \eqref{eq:H-lower-massbound} and \eqref{eq:H-upper-massbound} hold true with $0< \vartheta = \theta < s$, then $F$ (as well as $\Hcal$) will be called \emph{Ahlfors codimension-$\theta$ regular}.

Observe that \eqref{eq:H-lower-massbound} with \eqref{eq:ball-mu-vs-rads} imply that $\Hcal$ has a lower mass bound $\Hcal(B(x,r) \cap F) \gtrsim r^{s-\vartheta}$. On the other hand, \eqref{eq:H-upper-massbound} guarantees that $\Hcal \lfloor_{F}$ is doubling provided that $\mu\lfloor_{\Om}$ is doubling. If $\Om$ (and hence $F$) is bounded, then $\Hcal \lfloor_{F}$ has a lower mass bound $\Hcal(B(x,r) \cap F) \gtrsim r^{\alpha}$ with some $\alpha \ge s-\theta$ by \eqref{eq:H-upper-massbound}. However, the lower bound $s-\theta$ need not be optimal (as the optimal $\alpha$ might be smaller).

The following lemma shows how \eqref{eq:H-lower-massbound} and \eqref{eq:H-upper-massbound} relate $\Hcal(F)$ to the measure of an exterior shell of $F$ in $\Om$.
\begin{lem}
\label{lem:shell-measure}
Let $\Om \subset X$ be a domain with $\mu\lfloor_\Om$ doubling. Let $F \subset \overline{\Om}$ be endowed with a measure $\Hcal$ and set $\Om_R = \{ x\in \Om : \dist(x, F) < R\}$ for $R>0$.
\begin{enumerate}
	\item \label{it:shellmeas1} If $F$ has a lower codimension-$\vartheta$ bound, then $\mu(\Om_R) \lesssim \Hcal(F) R^\vartheta$.
	\item \label{it:shellmeas2} If $F$ has an upper codimension-$\theta$ bound, then $\mu(\Om_R) \gtrsim \Hcal(F) R^\theta$.
\end{enumerate}
\end{lem}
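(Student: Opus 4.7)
The plan is to prove both parts by standard covering arguments that transfer information between $F$ and the neighborhood $\Omega_R$ via the codimension estimate applied ball by ball. The role of doubling is to allow swapping inflated balls for deflated ones up to a constant factor, and the role of the codimension bound is to convert $\mu$-mass on the $\Omega$-side into $\mathcal{H}$-mass on the $F$-side (or vice versa).

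For part~\ref{it:shellmeas1}, I would start by choosing a maximal $R$-separated collection $\{x_i\}_{i \in I} \subset F$. Then the balls $B(x_i, R/2)$ are pairwise disjoint and, by maximality, the balls $B(x_i, R)$ cover $F$. Next, I would observe that $\Omega_R$ is contained in $\bigcup_i (B(x_i, 2R) \cap \Omega)$: indeed, any $y \in \Omega_R$ satisfies $d(y, z) < R$ for some $z \in F$, and that $z$ in turn lies within distance $R$ of some $x_i$. Using countable subadditivity together with the doubling property of $\mu\lfloor_{\Omega}$ (applied at the centers $x_i \in F \subset \overline{\Omega}$) I obtain
\[
  \mu(\Omega_R) \le \sum_i \mu(B(x_i, 2R) \cap \Omega) \lesssim \sum_i \mu(B(x_i, R/2) \cap \Omega).
\]
Applying \eqref{eq:H-lower-massbound} at scale $R/2$ and then summing via the disjointness of the $B(x_i, R/2)$ yields
\[
  \sum_i \mu(B(x_i, R/2) \cap \Omega) \lesssim R^\vartheta \sum_i \Hcal(B(x_i, R/2) \cap F) \le R^\vartheta \Hcal(F),
\]
which gives the claim.

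For part~\ref{it:shellmeas2}, I would instead apply the $5B$-covering lemma to the family $\{B(z, R) : z \in F\}$, producing a countable disjoint subfamily $\{B(x_i, R)\}_{i \in I}$ with $x_i \in F$ and $F \subset \bigcup_i B(x_i, 5R)$. Using \eqref{eq:H-upper-massbound} and doubling (to drop $5R$ to $R$) gives
\[
  \Hcal(F) \le \sum_i \Hcal(B(x_i, 5R) \cap F) \lesssim R^{-\theta} \sum_i \mu(B(x_i, R) \cap \Omega).
\]
Since each open ball $B(x_i, R) \cap \Omega$ lies inside $\Omega_R$ (as $x_i \in F$) and the balls $B(x_i, R)$ are pairwise disjoint, the sum on the right equals $\mu\bigl(\bigcup_i B(x_i, R) \cap \Omega\bigr) \le \mu(\Omega_R)$, concluding the proof.

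There is no serious obstacle; the only technical point to be mindful of is the range of admissible radii. The upper codimension bound holds for all $r > 0$, so part~\ref{it:shellmeas2} is unconstrained in $R$. The lower codimension bound is only assumed for $0 < r < 2 \diam F$, so in part~\ref{it:shellmeas1} one should tacitly restrict to $R \lesssim \diam F$; for larger $R$ the inequality is vacuous (or follows by monotonicity together with boundedness of $F$, since $F \subset \overline{\Omega}$ and $\mu\lfloor_\Omega$ is doubling). One should also note that the doubling inequalities used above are applied at centers $x_i \in F \subset \overline{\Omega}$, which is exactly where the doubling hypothesis on $\mu\lfloor_\Omega$ is available.
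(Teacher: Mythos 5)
Your proof is correct and follows essentially the same covering argument as the paper: in both parts you pass between $\mu$-mass on $\Omega_R$ and $\Hcal$-mass on $F$ by covering with balls centered on $F$, using disjointness to sum and doubling to trade inflated balls for deflated ones. The only cosmetic differences are that you use a maximal $R$-separated set in part~\ref{it:shellmeas1} where the paper uses the $5B$-covering lemma (interchangeable), and in part~\ref{it:shellmeas2} you apply doubling to $\mu\lfloor_\Om$ where the paper applies it to $\Hcal\lfloor_F$; your explicit comment on the admissible range of $R$ for the lower codimension bound is a reasonable extra caution that the paper leaves tacit.
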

It will become apparent from the proof that it would suffice to assume the relation between $\mu$ and $\Hcal$ to hold only for all $z \in E$, where $E$ is a dense subset of $F$, and all $r_k = 2^{-k} \diam(\Om)$, $k \in \Nbb_0$.
\begin{proof}
  Since $\Om_R = \bigcup_{z\in F} (B(z,R) \cap \Om)$ and $\mu\lfloor_\Om$ is doubling, we can apply the simple 5-covering lemma to obtain a collection $\{z_k\}_k \subset F$ such that $\Om_R \subset \bigcup_{k} (B(z_k, 5R) \cap \Om)$ while the balls $\{B(z_k, R)\}_k$ are pairwise disjoint. In case of \ref{it:shellmeas1}, we have that
  \[
    \mu(\Om_R) \le \sum_k \mu(B(z_k, 5R) \cap \Om) \lesssim \sum_k \mu(B(z_k, R) \cap \Om) \lesssim \sum_k R^\vartheta \Hcal(B(z_k, R) \cap F) \le R^\vartheta \Hcal(F).
  \]
On the other hand, in case of \ref{it:shellmeas2}, we obtain that $\Hcal\lfloor_{F}$ is doubling and hence
  \begin{align*}
    \mu(\Om_R) & \ge \sum_k \mu(B(z_k, R) \cap \Om) \gtrsim \sum_k R^\theta \Hcal(B(z_k, R) \cap F) \\
    & \gtrsim \sum_k R^\theta \Hcal(B(z_k, 5R) \cap F) \ge R^\theta \Hcal\biggl(\bigcup_k B(z_k, 5R) \cap F\biggr) = R^\theta \Hcal(F).
    \qedhere
  \end{align*}
\end{proof}
Given $E\subset \overline{\Om}$ and $\theta>0$, we define its \emph{co-dimension $\theta$ Hausdorff measure} $\Hcal^{\textup{co-}\theta}(E)$ by
\begin{equation}
  \label{eq:df-Hcodim}
  \Hcal^{\textup{co-}\theta}(E) = \lim_{\delta\to 0^+}\ \inf\biggl\{\sum_i\frac{\mu(B_i \cap \overline{\Om})}{\rad(B_i)^{\theta}}: B_i\text{ balls in }\overline{\Om}, \rad(B_i)<\delta, E\subset \bigcup_i B_i\biggr\}\,.
\end{equation}
More generally, it is possible (and natural) to define $\Hcal^{\textup{co-}\theta}(E)$ for a set $E \subset X$ by considering the limit of infima over $\sum_i \mu(B_i)/\rad(B_i)^\theta$ instead. If $\mu$ is globally doubling and $\Om$ satisfies the measure density condition, i.e., $\mu(B\cap \Om) \gtrsim \mu(B)$ for every ball $B$ centered in $\Om$, then such a measure would be comparable with \eqref{eq:df-Hcodim} whenever $E \subset \overline{\Om}$.
\begin{lem}
\label{lem:codim-bounds-vs-Hcodim}
Let $\Om \subset X$ be a domain with $\mu\lfloor_\Om$ doubling. Let $F \subset \overline{\Om}$ be endowed with a measure $\Hcal$.
\begin{enumerate}
	\item \label{it:Hcodim1} If $F$ has a lower codimension-$\vartheta$ bound, then $\Hcal^{\textup{co-}\vartheta}(E) \lesssim \Hcal(E)$ for every Borel set $E \subset F$.
	\item \label{it:Hcodim2} If $F$ has an upper codimension-$\theta$ bound, then $\Hcal^{\textup{co-}\theta}(E) \gtrsim \Hcal(E)$ for every Borel set $E \subset F$.
\end{enumerate}
\end{lem}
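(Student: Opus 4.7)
\medskip

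\noindent\textbf{Plan.} Both parts rely on the Vitali $5$-covering idea used for Lemma~\ref{lem:shell-measure}, paired with doubling of $\mu\lfloor_\Om$ and the relevant codimension bound. Part~\ref{it:Hcodim2} bounds the cost of an arbitrary admissible cover from below; part~\ref{it:Hcodim1} builds an efficient admissible cover from the top down.

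For \ref{it:Hcodim2}, I would fix $\delta>0$ and take an arbitrary cover $\{B_i\} = \{B(x_i, r_i)\}$ of $E$ with $r_i<\delta$ and $x_i\in\overline\Om$. For each $B_i$ meeting $E$ (the rest can be discarded), pick $y_i \in B_i \cap E \subset F$. Then $E \cap B_i \subset B(y_i, 2r_i) \cap F$, so \eqref{eq:H-upper-massbound} applied at $y_i$ with radius $2r_i$, combined with $B(y_i, 2r_i) \subset B(x_i, 3r_i)$ and doubling of $\mu\lfloor_\Om$, yields
\[
  \Hcal(E \cap B_i) \le c_\theta \frac{\mu(B(y_i, 2r_i)\cap\Om)}{(2r_i)^\theta} \lesssim \frac{\mu(B_i\cap\overline\Om)}{\rad(B_i)^\theta}.
\]
Subadditivity of $\Hcal$ gives $\Hcal(E) \lesssim \sum_i \mu(B_i\cap\overline\Om)/\rad(B_i)^\theta$; taking the infimum over admissible covers and letting $\delta\to 0^+$ finishes this direction.

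For \ref{it:Hcodim1}, the plan is to produce an admissible cover tightly hugging $E$. Given $\eps>0$, Borel regularity of $\Hcal$ supplies an open set $V \subset X$ with $V \cap F \supset E$ and $\Hcal(V\cap F) \le \Hcal(E)+\eps$. For small $\delta>0$, pick $r_x \in (0,\delta/5)$ for each $x\in E$ so that $B(x, 5r_x) \subset V$, apply the standard $5$-covering lemma to $\{B(x, r_x):x\in E\}$ to extract a disjoint subfamily $\{B_i = B(x_i, r_i)\}$ with $E \subset \bigcup_i 5B_i$, and treat $\{5B_i\}$ as the admissible cover at scale $\delta$. Doubling of $\mu\lfloor_\Om$ trades $5B_i$ for $B_i$ up to a constant; then \eqref{eq:H-lower-massbound} at $x_i \in E \subset F$ (valid for $r_i < 2\diam F$, automatic once $\delta$ is small) plus disjointness of $\{B_i\}$ yield
\[
  \sum_i \frac{\mu(5B_i\cap\overline\Om)}{\rad(5B_i)^\vartheta} \lesssim \sum_i \Hcal(B_i \cap F) = \Hcal\biggl(\bigcup_i B_i \cap F\biggr) \le \Hcal(V\cap F) \le \Hcal(E)+\eps.
\]
Letting $\eps\to 0^+$ closes the argument.

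The main obstacle is in \ref{it:Hcodim1}: without the constraint $5B_i \subset V$, the Vitali argument only bounds $\Hcal^{\textup{co-}\vartheta}(E)$ by $\Hcal(F)$, which is much weaker than $\Hcal(E)$ as soon as $E$ is a proper subset of $F$. Restricting the radii via the neighborhood $V$ supplied by Borel regularity is exactly what funnels the estimate down to the actual measure of $E$.
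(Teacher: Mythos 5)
Your proposal is correct; both halves go through, but the route differs from the paper's, most visibly in part \ref{it:Hcodim2}. There the paper repeats the Vitali $5$-covering extraction of Lemma~\ref{lem:shell-measure}: it keeps a pairwise disjoint subfamily $\{B_{i_j}\}$, applies \eqref{eq:H-upper-massbound} to those balls, and then uses the doubling of $\Hcal\lfloor_F$ (itself a consequence of \eqref{eq:H-upper-massbound}) to pass to the inflated balls $5B_{i_j}$ covering $E$. You dispense with the covering lemma and with doubling of $\Hcal$ entirely: re-centering each ball of an arbitrary admissible cover at a point $y_i\in B_i\cap E\subset F$ and using countable subadditivity of $\Hcal$ gives the lower bound directly, and it also handles explicitly the fact that the balls in \eqref{eq:df-Hcodim} need only be centered in $\overline\Om$ (the paper applies \eqref{eq:H-upper-massbound} at those centers without comment; your $2r_i$/$3r_i$ adjustment is exactly the missing re-centering). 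In part \ref{it:Hcodim1} the two arguments use the same ingredients ($5$-covering, doubling of $\mu\lfloor_\Om$, \eqref{eq:H-lower-massbound} on the disjoint balls, outer regularity of $\Hcal$) but arrange them differently: the paper starts from an arbitrary admissible cover, bounds the cost of its inflated disjoint subfamily by $\Hcal\bigl(\bigcup_i B_i\cap F\bigr)$, and only at the very end identifies $\lim_{\delta\to0^+}\inf\Hcal\bigl(\bigcup_i B_i\cap F\bigr)$ with $\Hcal(E)$ via outer regularity, whereas you invoke outer regularity first to get the open set $V$ and then exhibit one admissible cover, centered in $E$ and contained in $V$, of cost $\lesssim\Hcal(E)+\eps$. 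Your arrangement has the mild advantage that \eqref{eq:H-lower-massbound} is only ever applied at centers lying in $F$, which is where the hypothesis is actually stated; otherwise the two proofs of \ref{it:Hcodim1} are essentially equivalent.
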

\begin{proof}
\ref{it:Hcodim1} Let $\{B_i\}_i$ be a cover of $E$ by balls of radius at most $\delta$. As $\mu\lfloor_{\Om}$ is doubling, we can use the simple Vitali $5$-covering lemma to extract a collection of pairwise disjoint balls that will cover $E$ after being inflated by the factor of 5. Then,
\begin{equation}
 \label{eq:H-vs-codimbound}
 \sum_i \frac{\mu(B_i \cap \Om)}{\rad(B_i)^\vartheta} \approx \sum_j \frac{\mu(5B_{i_j} \cap \Om)}{\rad(B_{i_j})^\vartheta} \approx \sum_j \frac{\mu(B_{i_j} \cap \Om)}{\rad(B_{i_j})^\vartheta} \lesssim \sum_j \Hcal(B_{i_j} \cap F) = \Hcal\biggl(\bigcup_j B_{i_j} \cap F\biggr)\,.
\end{equation}
In particular,
\[
  \Hcal^{\textup{co-}\vartheta}(E) \le \lim_{\delta \to 0^+} \inf\biggl\{\Hcal \biggl(\bigcup_i B_{i} \cap F\biggr): B_i\text{ balls in }\overline{\Om}, \rad(B_i)<\delta, E\subset \bigcup_i B_i\biggr\} = \Hcal(E)
\]
since $\Hcal$ is outer regular by~\cite[Theorem~1.10]{Mat}.

\ref{it:Hcodim2} Note that $\Hcal$ is doubling. Analogously as in \eqref{eq:H-vs-codimbound}, we have
\[
  \sum_i \frac{\mu(B_i \cap \Om)}{\rad(B_i)^\theta} \gtrsim \sum_j \Hcal\bigl( B_{i_j} \cap F\bigr) \approx \sum_j \Hcal\bigl( 5 B_{i_j} \cap F\bigr)
\ge \Hcal\biggl(\bigcup_j 5 B_{i_j} \cap F\biggr) \ge \Hcal(E)
\]
whenever $E \subset \bigcup_i B_i$. Thus, $\Hcal^{\textup{co-}\theta}(E) \gtrsim \Hcal(E)$.
\end{proof}
\subsection{Newtonian functions}
\label{subsec:newtonian}
In the metric setting, there need not be any natural distributional derivative structure that would allow us to define Sobolev functions similarly as in the Euclidean setting. Shanmugalingam~\cite{Sha} pioneered an approach to Sobolev-type functions in metric spaces via (weak) upper gradients. The interested reader can be referred to~\cite{BjoBjo,HKST}.

The notion of (weak) upper gradients (first defined in~\cite{HeiKos}, named ``\emph{very weak gradients}''), corresponds to $|\nabla u|$.
A Borel function $g: X \to [0,\infty]$ is an \emph{upper gradient} of $u: X\to \Rbb\cup\{\pm\infty\}$ if
\begin{equation}
  \label{eq:def-ug}
  \bigl|u(\gamma(b))-u(\gamma(a))\bigr| \le \int_{\gamma}g \, ds
\end{equation}
holds for all rectifiable curves $\gamma: [a,b] \to X$
whenever $u(\gamma(a))$ and $u(\gamma(b))$ are both finite and $\int_{\gamma} g \,ds=\infty$ otherwise.

If a function $u: X \to \Rbb$ is locally Lipschitz, then
\[
  \Lip f(x) = \limsup_{y\to x}\frac{|f(y)-f(x)|}{\dd(y,x)}, \quad x \in X,
\]
is an upper gradient of $u$, see for example \cite{Hei01}.

For $p\ge1$, one defines the \emph{Newtonian space} $N^{1,p}(X) = \{ u \in L^p(X): \|u\|_{N^{1,p}(X)}<\infty\}$, where
\[
  \|u\|_{N^{1,p}(X)}^p = \|u\|_{L^p(X)}^p + \inf\bigl\{ \|g\|_{L^p(X)}^p: \text{$g$ is an upper gradient of $u$}\bigr\}\,.
\]
For every $u \in N^{1,p}(X)$, the infimum in the definition of the $N^{1,p}(X)$ norm is attained by a unique \emph{minimal $p$-weak upper gradient} (which is minimal both pointwise and normwise). The phrase ``$p$-weak'' refers to the fact that \eqref{eq:def-ug} may fail for a certain negligible number of rectifiable curves.

Given a domain $\Om \subset X$, the class $N^{1,p}(\Om)$ is defined as above with $\Om$ being the underlying metric space (whose metric measure structure is inherited from $X$).
\subsection{Haj\l asz functions}
\label{subsec:hajlasz}
Another approach to Sobolev-type functions was introduced in~\cite{Haj96}. Given $u: X \to \Rbb \cup \{\pm \infty\}$, we say that $g: X \to [0, \infty]$ is a \emph{Haj\l asz ($\alpha$-fractional) gradient} if there is $E \subset X$ with $\mu(E) = 0$ such that
\[
  |u(x) - u(y)| \le \dd(x,y)^\alpha (g(x) + g(y))\quad \text{for every }x,y \in X \setminus E,
\]
where $\alpha \in (0,1]$. The phrase ``$\alpha$-fractional'' is typically dropped in case $\alpha = 1$.
Compared to the classical Sobolev spaces in $\Rbb^n$, Haj\l asz gradients for $\alpha=1$ correspond to $c_n M|\nabla u|$, where $M$ is the Hardy--Littlewood maximal operator.

For $p>0$ and $\alpha \in (0,1]$, one defines the \emph{Haj\l asz space} $M^{\alpha,p}(X) = \{ u \in L^p(X): \|u\|_{M^{\alpha,p}(X)}<\infty\}$, where
\[
  \|u\|_{M^{\alpha,p}(X)}^p = \|u\|_{L^p(X)}^p + \inf\bigl\{ \|g\|_{L^p(X)}^p: \text{$g$ is an $\alpha$-fractional Haj\l asz gradient of $u$}\bigr\}\,.
\]
The infimum is attained for $p>1$, but the minimal Haj\l asz gradient is not pointwise minimal. For a domain $\Om \subset X$, the space $M^{\alpha,p}(\Om)$ is defined as above with $\Om$ being the underlying metric measure space. Haj\l asz gradients are $p$-weak upper gradients, cf.\@ \cite{JSYY}, whence $M^{1,p}(\Om) \emb N^{1,p}(\Om)$ for all $p\ge1$. The inclusion may be proper unless $\Om$ admits a $p$-Poincar\'e inequality and $p>1$. 

More information about Haj\l asz functions and the motivating ideas can be found in \cite{Haj}.
\subsection{Functions with a Poincar\'e inequality}
\label{subsec:PI}
Let $p \in [1 , \infty)$ and $q\in (0 ,p]$. Then, the space $P^{1,p}_q(X)$ consists of those functions $u \in L^p(X)$ for which there is a constant $\lambda \ge 1$ and a non-negative $g\in L^p(X)$ such that
\begin{equation}
  \label{eq:def-P1pq}
  \fint_B |u-u_B|\,d\mu \le \rad(B) \biggl( \fint_{\lambda B} g^q\,d\mu \biggr)^{1/q}
\end{equation}
holds true for all balls $B \subset X$.

A non-negative function $g \in L^0(X)$ will be called a \emph{$q$-PI gradient} of $u \in L^1_\loc(X)$, if the couple $(u,g)$ satisfies \eqref{eq:def-P1pq} for some fixed $\lambda \ge 1$.

It was observed already in \cite{Haj96} that $M^{1,p}(X) \subset P^{1,p}_1(X)$ and there is an absolute constant $c$ such that $cg$ is a $1$-PI gradient of $u\in M^{1,p}(X)$ with $\lambda = 1$, whenever $g$ is a Haj\l asz gradient of $u$. In fact, it follows from~\cite[Theorems~8.7 and~9.3]{Haj} that if $\mu$ is doubling, then there is $q \in (0,1)$ such that $M^{1,p}(X) = P^{1,p}_q(X)$ with a universal dilation factor $\lambda \ge 1$ for every $p\ge 1$. Corollary~\ref{cor:Hajlasz-infPI} below shows that Haj\l asz functions can be also characterized by an infimal Poincar\'e inequality \eqref{eq:inf-PI}.

The interested reader can refer to \cite{Haj,HajKos}, where functions with a Poincar\'e inequality have been thoroughly studied.
\begin{rem}
If $u \in P^{1,p}_q(\Om)$ for some domain $\Om \subset X$ such that $\mu\lfloor_\Om$ is doubling, then \eqref{eq:def-P1pq} holds true not only for balls with center in $\Om$, but also for balls whose center lies at $\dOm$ possibly at a cost of multiplying the $q$-PI gradient $g$ by a constant factor. This can be shown by the dominated convergence theorem as follows. If $B = B(z, r)$ with $z \in \dOm$, then there is a sequence of points $\{x_n\}_{n=1}^\infty \subset \Om$ such that $\dd(x_n, z) < 2^{-n} r$. Then, $\chi_{B_n} \to \chi_B$ a.e.\@ in $\Om$, where $B_n = B(x_n, (1-2^{-n})r)$. The doubling condition ensures that $\mu(B_n \cap \Om) \approx \mu(B \cap \Om)$ with constants independent of $n$. Thus,
\begin{align*}
  \fint_{B\cap \Om} |u& -u_{B\cap \Om}|\,d\mu \approx \fint_{B\cap \Om}\fint_{B\cap \Om} |u(x)-u(y)|\,d\mu(x)d\mu(y) \\
  & \approx \lim_{n\to\infty} \frac{\mu(B_n \cap \Om)^2}{\mu(B \cap \Om)^2} \fint_{B_n\cap \Om}\fint_{B_n\cap \Om} |u(x)-u(y)|\,d\mu(x)d\mu(y) \\
  & \lesssim \lim_{n\to\infty} (1-2^{-n}) r \biggl(\fint_{\lambda B_n \cap \Om} g^q\,d\mu\biggr)^{1/q} \approx r \biggl(\fint_{\lambda B \cap \Om} g^q\,d\mu\biggr)^{1/q}\,.
\end{align*}
\end{rem}
\begin{df}
\label{df:PI}
We say that a metric measure space $(X, \dd, \mu)$ admits a \emph{$q$-Poincar\'e inequality} if there is a constant $C_\PI > 0$ and a universal dilation factor $\lambda \ge 1$ such that \eqref{eq:def-P1pq} holds true for the couple of functions $(u, C_\PI g)$ whenever $g\in L^0(X)$ is an upper gradient of $u\in L^1_\loc(X)$.
\end{df}
In particular, if $X$ admits a $p$-Poincar\'e inequality, then $N^{1,p}(X) \subset P^{1,p}_p(X)$. It was shown in Keith--Zhong~\cite{KeiZho} that spaces that admit $p$-Poincar\'e inequality, where $p>1$, undergo a self-improvement so that they, in fact, admit a $(p-\eps)$-Poincar\'e inequality for some $\eps>0$ provided that $X$ is complete and $\mu$ doubling. Moreover, one still obtains that $N^{1,p}(X) \subset P^{1,p}_{p-\eps}(X)$ also in case $X$ is merely locally complete. In view of Corollary~\ref{cor:P1pq-equality} below, we therefore see that $N^{1,p}(\Om) \subset P^{1,p}_1(\Om)$ for every $p\ge 1$ whenever $\Om \subset X$ is a domain with a $p$-Poincar\'e inequality, $\mu\lfloor_{\Om}$ is doubling, and $\Om$ as a metric space is complete.
\begin{lem}
\label{lem:inf-PI}
Assume that $\mu$ doubling. Let $0< q < p<\infty$. Then, $u \in P^{1,p}_{q}(X)$ if and only if there is a non-negative function $h \in L^p(X)$ such that
\begin{equation}
  \label{eq:inf-PI}
  \fint_{B} |u-u_B|\,d\mu \le \rad(B) \essinf_{x\in B} h(x)
\end{equation}
for every ball $B \subset X$.
\end{lem}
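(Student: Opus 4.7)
My plan is to treat the two implications separately. The reverse implication $\eqref{eq:inf-PI}\Rightarrow u\in P^{1,p}_q(X)$ is the easy one: the trivial pointwise bound $\essinf_{B} h \le h(x)$, valid for $\mu$-a.e.\@ $x\in B$, after being raised to the $q$-th power and averaged over $B$, yields $\essinf_B h \le \bigl(\fint_B h^q\,d\mu\bigr)^{1/q}$. Plugging this into \eqref{eq:inf-PI} shows that $h$ itself is a $q$-PI gradient of $u$ with dilation $\lambda=1$, and the hypothesis $h\in L^p(X)$ then gives $u\in P^{1,p}_q(X)$. Doubling is not needed for this direction.

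For the forward direction, suppose $u\in P^{1,p}_q(X)$ with $q$-PI gradient $g\in L^p(X)$ and dilation $\lambda\ge 1$. My candidate is $h \coloneq C\,(Mg^q)^{1/q}$, where $M$ denotes the centered Hardy--Littlewood maximal operator on $(X,\dd,\mu)$ and $C$ is a constant depending only on $c_\dbl$ and $\lambda$, to be chosen below.

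To verify \eqref{eq:inf-PI} for this $h$, fix a ball $B=B(y,r)$ and any $x\in B$. Since $B(x,2\lambda r)\supset \lambda B$, the doubling property yields $\fint_{B(x,2\lambda r)}g^q\,d\mu\gtrsim\fint_{\lambda B}g^q\,d\mu$ with an implicit constant depending only on $c_\dbl$ and $\lambda$. Taking the supremum in the definition of $Mg^q(x)$ then gives $Mg^q(x)\gtrsim\fint_{\lambda B}g^q\,d\mu$ \emph{pointwise} on $B$; choosing $C$ to absorb this constant yields $h(x)^q\ge\fint_{\lambda B}g^q\,d\mu$ for every $x\in B$, hence $\essinf_B h \ge \bigl(\fint_{\lambda B}g^q\,d\mu\bigr)^{1/q}$. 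Combining with the $q$-PI inequality for $(u,g)$ delivers \eqref{eq:inf-PI}. Membership $h\in L^p(X)$ will follow from the Hardy--Littlewood maximal theorem on the doubling space: because $p/q>1$, one has $\|Mg^q\|_{L^{p/q}(X)}\lesssim\|g^q\|_{L^{p/q}(X)}=\|g\|_{L^p(X)}^q$, so $\|h\|_{L^p(X)}\lesssim\|g\|_{L^p(X)}$.

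The hard part is the bookkeeping in the forward direction: one must secure the lower bound on $Mg^q$ on \emph{all} of $B$ (not merely a.e.), so that passing to $\essinf_B$ is legitimate, and one must invoke the hypothesis $q<p$ to access the maximal theorem on $L^{p/q}$. Beyond these points, the argument is routine once the candidate $h=C(Mg^q)^{1/q}$ has been identified.
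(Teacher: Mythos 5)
Your proof is correct and follows essentially the same route as the paper: the reverse direction is the same one-line $\essinf_B h\le\bigl(\fint_B h^q\,d\mu\bigr)^{1/q}$ observation, and the forward direction uses the same candidate $h=C(Mg^q)^{1/q}$ together with $L^{p/q}$-boundedness of $M$ on the doubling space. The only cosmetic difference is that you use the \emph{centered} maximal operator, which forces you to pass through the ball $B(x,2\lambda r)\supset\lambda B$ and pay a doubling constant $C$, whereas the paper uses the \emph{non-centered} operator, so that $\lambda B$ itself is an admissible ball containing $x$ and the pointwise bound $Mg^q(x)\ge\fint_{\lambda B}g^q\,d\mu$ holds with no constant at all.
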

\begin{proof}
Obviously, \eqref{eq:inf-PI} implies \eqref{eq:def-P1pq} since $\essinf_B h = \bigl(\essinf_B h^q\bigr)^{1/q} \le \bigl(\fint_B h^q\,d\nu\bigr)^{1/q}$.

Let now $u \in P^{1,p}_q(X)$ with a $q$-PI gradient $g\in L^p(X)$ for some $\lambda \ge 1$. For every ball $B \subset X$ and every point $x \in B$, we have $h(x) \coloneq  (M g^q)^{1/q}(x) \ge (\fint_{\lambda B} g^q\,d\mu)^{1/q}$, where $M$ is the non-centered Hardy--Littlewood maximal operator. Thus,
\[
  \fint_B |u-u_B|\,d\mu \le \rad(B) \inf_{x\in B} h(x).
\]
As $\mu$ is doubling, $M: L^1(X) \to \text{weak-}L^1(X)$ is bounded due to Coiffman--Weiss~\cite[Theorem~III.2.1]{CoiWei}. Obviously, $M: L^\infty(X) \to L^\infty(X)$ is bounded. The Marcinkiewicz interpolation theorem then yields that $M: L^{p/q}(X) \to L^{p/q}(X)$. Therefore, $\| h \|_{L^p(X)}=\|(Mg^q)^{1/q}\|_{L^p(X)} \le c_{p/q} \|g\|_{L^p(X)}$. 
\end{proof}
One can deduce from the proof above that for every $u \in P^{1,p}_p(\Om)$, there is $h \in \wk L^p(\Om)$ such that the couple $(u,h)$ satisfies the infimal Poincar\'e inequality \eqref{eq:inf-PI}.
\begin{cor}
\label{cor:P1pq-equality}
Assume that $\mu$ is doubling. Let $0<r \le q < p$. Then, $P^{1,p}_{r}(X) = P^{1,p}_{q}(X) \subset P^{1,p}_{p}(X)$. Moreover, if $u\in P^{1,p}_q(X)$, then there is $h \in L^p(X)$ such that the couple $(u, h)$ satisfies \eqref{eq:def-P1pq} with $\lambda = 1$.
\end{cor}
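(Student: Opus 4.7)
The plan is to reduce everything to the infimal Poincar\'e inequality characterization of Lemma~\ref{lem:inf-PI}, which has the remarkable feature of carrying \emph{no} exponent $q$ on the right-hand side. This makes the exponent $q$ essentially irrelevant once $q<p$, and so the self-improvement from $q$ down to any smaller $r$ is automatic.

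First I would dispatch the easy inclusions $P^{1,p}_{r}(X) \subset P^{1,p}_{q}(X) \subset P^{1,p}_{p}(X)$ by Jensen's inequality: if $0<r\le q$ and $(u,g)$ satisfies \eqref{eq:def-P1pq} with exponent $r$ and dilation $\lambda$, then
\[
  \Bigl(\fint_{\lambda B} g^r\,d\mu\Bigr)^{1/r} \le \Bigl(\fint_{\lambda B} g^q\,d\mu\Bigr)^{1/q},
\]
so the same pair $(u,g)$ (with the same $\lambda$) witnesses $u \in P^{1,p}_{q}(X)$.

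For the nontrivial direction $P^{1,p}_{q}(X) \subset P^{1,p}_{r}(X)$, fix $u\in P^{1,p}_{q}(X)$. Because $q<p$, Lemma~\ref{lem:inf-PI} applies and yields a non-negative $h\in L^p(X)$ satisfying the infimal Poincar\'e inequality \eqref{eq:inf-PI}. Now for any $0<r\le q$, the trivial pointwise bound
\[
  \essinf_{x\in B} h(x) = \Bigl(\essinf_{x\in B} h(x)^r\Bigr)^{1/r} \le \Bigl(\fint_{B} h^r\,d\mu\Bigr)^{1/r}
\]
shows that $(u,h)$ satisfies \eqref{eq:def-P1pq} with exponent $r$ and dilation $\lambda=1$. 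This simultaneously establishes $u\in P^{1,p}_{r}(X)$ and delivers the ``moreover'' assertion, since taking $r=q$ produces an $h\in L^p(X)$ that is a $q$-PI gradient of $u$ with $\lambda=1$.

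There is no real obstacle here: the whole point is that Lemma~\ref{lem:inf-PI} has already absorbed the hard analytic step (the Hardy--Littlewood maximal bound and Marcinkiewicz interpolation, which used $q<p$). Once the infimal inequality is available, both the self-improvement $q \mapsto r$ and the reduction to dilation factor $1$ are purely algebraic consequences of the inequality $\essinf_B h \le (\fint_B h^r\,d\mu)^{1/r}$. The only hypothesis used in an essential way is $q<p$, which is built into the statement.
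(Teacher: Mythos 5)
Your proposal is correct and follows essentially the same route as the paper: the easy inclusions come from Jensen/H\"older on the right-hand side of \eqref{eq:def-P1pq}, and the nontrivial inclusion $P^{1,p}_q(X) \subset P^{1,p}_r(X)$ together with the ``moreover'' clause (the $\lambda=1$ gradient $h$) both drop out of the infimal Poincar\'e characterization in Lemma~\ref{lem:inf-PI}. You have merely spelled out the pointwise inequality $\essinf_B h \le (\fint_B h^r\,d\mu)^{1/r}$ that the paper leaves implicit; there is no substantive divergence.
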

\begin{proof}
The inclusion $P^{1,p}_{q}(X) \subset P^{1,p}_{p}(X)$ follows by the H\"older inequality applied to the right-hand side of \eqref{eq:def-P1pq}. Equality of $P^{1,p}_q(X)$ and $P^{1,p}_r(X)$ and existence of $h \in L^p(X)$ as desired are an immediate consequence of Lemma~\ref{lem:inf-PI}.
\end{proof}
\begin{cor}
\label{cor:Hajlasz-infPI}
Assume that $\mu$ is doubling. Then, $M^{1,p}(X) = P^{1,p}_q(X)$ for every $0<q<p$, where $1\le p < \infty$. Moreover, each $u \in M^{1,p}(X)$ has a Haj\l asz gradient $h\in L^p(X)$ such that $(u, h)$ satisfies the infimal Poincar\'e inequality \eqref{eq:inf-PI}.
\end{cor}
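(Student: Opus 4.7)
The first equality $M^{1,p}(X) = P^{1,p}_q(X)$ for all $q \in (0, p)$ is obtained by stitching together two facts already recorded in the exposition above. The cited result \cite[Theorems~8.7 and~9.3]{Haj} provides at least one $q_0 \in (0,1)$ for which $M^{1,p}(X) = P^{1,p}_{q_0}(X)$ under the doubling assumption on $\mu$, while Corollary~\ref{cor:P1pq-equality} guarantees that the spaces $P^{1,p}_r(X)$ with $r \in (0, p)$ all coincide. Chaining the two statements yields the desired equality for every $q \in (0, p)$.

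For the ``moreover'' claim, given $u \in M^{1,p}(X)$, I would pick any $q \in (0, \min\{1, p\})$ and a corresponding $q$-PI gradient $g \in L^p(X)$ of $u$, which exists by the first part (and may be assumed to use dilation $\lambda = 1$ via Corollary~\ref{cor:P1pq-equality}). The proof of Lemma~\ref{lem:inf-PI} then hands us the candidate $h := (Mg^q)^{1/q} \in L^p(X)$ such that $(u, h)$ satisfies the infimal Poincar\'e inequality \eqref{eq:inf-PI}. What remains is to check that a constant multiple of $h$ is itself a Haj\l asz gradient of $u$; since enlarging $h$ by a constant preserves \eqref{eq:inf-PI}, the same function (up to rescaling) will then serve both purposes simultaneously.

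This final verification is the heart of the matter and proceeds by a standard telescoping argument. Take Lebesgue points $x, y$ of $u$, set $r = \dd(x,y)$, $B = B(x, 2r)$, and $B' = B(y, 4r) \supset B$. Using doubling and the infimal PI on the shrinking balls $B_k := B(x, 2^{1-k} r)$, iteration of the bound $|u_{B_k} - u_{B_{k+1}}| \lesssim 2^{-k} r \, h(x)$ (which follows from $\essinf_{B_k} h \le h(x)$ together with doubling of $\mu$) yields $|u(x) - u_B| \lesssim r \, h(x)$. An analogous chain around $y$ combined with $|u_B - u_{B'}| \lesssim r \, h(y)$ (valid because $\mu(B) \approx \mu(B')$ by doubling) produces
\[
|u(x) - u(y)| \lesssim \dd(x,y)\,(h(x) + h(y))
\]
for a.e.\@ $x, y \in X$, so $Ch$ is a Haj\l asz gradient of $u$ for an appropriate constant $C$. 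The only genuine technicality is this pointwise telescoping step, but it is classical within the Haj\l asz--Koskela framework, and all of its ingredients---doubling, Lebesgue differentiation, and the infimal PI on every ball---are already at our disposal.
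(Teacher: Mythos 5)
Your proposal is correct and unpacks exactly the ingredients that the paper's one-line proof leaves implicit (the citation of \cite[Theorem~9.3]{Haj}, Lemma~\ref{lem:inf-PI}, and Corollary~\ref{cor:P1pq-equality}). One small but helpful observation: your choice $h = (Mg^q)^{1/q}$ makes the telescoping step cleaner than you give it credit for, because the proof of Lemma~\ref{lem:inf-PI} actually establishes the inequality $\fint_B |u-u_B|\,d\mu \le \rad(B)\inf_{x\in B} h(x)$ with the genuine (not essential) infimum. Since the maximal function satisfies $h(x) \ge (\fint_{\lambda B} g^q\,d\mu)^{1/q}$ for \emph{every} $x \in B$, the bound $\essinf_{B_k} h \le h(x)$ holds for all $x$ along the chain $B_k = B(x,2^{1-k}r)$ without having to manage a countable family of null sets, so the only exceptional set you need comes from the Lebesgue differentiation theorem for $u$. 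With that simplification noted, the argument is complete and matches the paper's intended route.
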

\begin{proof}
Follows from \cite[Theorem~9.3]{Haj}, Lemma~\ref{lem:inf-PI}, and Corollary~\ref{cor:P1pq-equality}.
\end{proof}
\section{Besov spaces in the metric setting and their embeddings}
\label{sec:besov-emb}
In this section, we will turn our attention to spaces of fractional smoothness, in particular, Besov spaces in the metric setting. Besov spaces have been thoroughly studied in the Euclidean setting (see \cite{Bes,JonWalBk,Tri2,Tri14} and references therein).
They made their first appearance in the metric setting in~\cite{BouPaj} and were explored further in~\cite{GogKosSha,GoKoZh,Han,HeiIhnTuo,KoYaZh,MulYan,Sot}.

The main goal of this section is to investigate continuous and compact embeddings of Besov spaces with norm based on generalization of modulus of continuity as in \cite{GogKosSha} under the very mild assumption that the underlying Borel regular measure is non-trivial and doubling. The results obtained in this section have been proven in some of the papers listed above under more restrictive hypothesis that the measure is Ahlfors regular. An extremely technical approach via frames was employed in \cite{Han,MulYan} to show the embeddings if the measure satisfies a reverse doubling condition.

The great advantage of using the Besov-type norm introduced in \cite{GogKosSha}, see \eqref{eq:Besov} below, is that it allows for very elementary proofs, where the main tools are the ordinary H\"older inequality and Sobolev-type embeddings for Haj\l asz functions. For our arguments, we will utilize also the zero-smoothness Besov spaces that have been studied only scarcely so far, cf.~\cite{MalShaSni}.

Eventually, we will make use of Besov spaces over $\dOm$ for a given domain $\Om \subset X$. In order to avoid confusion with the ambient metric space $X$ that is used in the rest of this paper, we will state the definitions and results in this section for a general metric space $Z = (Z, \dd, \nu)$, where $\nu$ is a doubling measure. By \cite[Lemma~3.3]{BjoBjo}, there are $Q>0$ and $c_Q > 0$ such that
\begin{equation}
  \label{eq:nu-lower-dimension}
   \frac{\nu(B(w,r))}{\nu(B(z,R))} \ge c_Q \biggl(\frac{r}{R} \biggr)^Q
\end{equation}
for all $0<r\le R <\infty$, $z\in Z$, and $w\in B(z,R)$. In particular, if $Z$ is bounded, then $\nu$ has a lower mass bound, i.e., there is $\tilde{c}_Q > 0$ such that
\begin{equation}
  \label{eq:nu-reg-lower-dimension}
   \nu(B(z,r)) \ge \tilde{c}_Q r^Q
\end{equation}
for every $z\in Z$ and $0<r < 2\diam Z$. In fact, one can choose $\tilde{c}_Q = c_Q \nu(Z)/(\diam Z)^Q$.

If $Z$ is connected, then it also satisfies the \emph{reverse doubling condition} by \cite[Corollary~3.8]{BjoBjo}, i.e., there are $\sigma > 0$ and $c_\sigma > 0$ such that
\begin{equation}
  \label{eq:nu-upper-dimension}
   \frac{\nu(B(y,r))}{\nu(B(z,R))} \le c_\sigma \biggl(\frac{r}{R} \biggr)^\sigma
\end{equation}
for all $0<r\le R \le 2 \diam Z$, $z\in Z$, and $y\in B(z,R)$. Unless explicitly stated otherwise, we will assume neither \eqref{eq:nu-reg-lower-dimension}, nor \eqref{eq:nu-upper-dimension} in this section.
\begin{df}
Fix $R>0$ and let $\alpha \in [0,1]$, $p\in [1, \infty)$, and $q \in (0, \infty]$. Then, the \emph{Besov space} $B_{p,q}^\alpha(Z)$ of smoothness $\alpha$ consists of $L^p$-functions of finite Besov (quasi)norm that is given by
\begin{equation}
  \label{eq:Besov}
  \|u\|_{B_{p,q}^\alpha(Z)} = \|u\|_{L^p(Z)} + \Biggl(\int_0^R \biggl(\int_Z \fint_{B(y, t)} \frac{|u(y) - u(z)|^p}{t^{\alpha p}}\,d\nu(z) \,d\nu(y)\biggr)^{q/p} \frac{dt}{t}\Biggr)^{1/q},
\end{equation}
with a standard modification in case $q=\infty$.
\end{df}
The function class $B^\alpha_{p,q}(Z)$ is in fact independent of the exact value of $R \in (0, \infty)$. Moreover, if $\alpha>0$, then we may also choose $R=\infty$ without changing the function class $B^\alpha_{p,q}(Z)$. However, $B^0_{p,q}(Z)$ consists only of (equivalence classes of) constant functions in case $R=\infty$, $q<\infty$ and $Z$ is bounded. Note also that $B^0_{p,\infty}(Z) = L^p(Z)$ regardless of the value of $R \in (0, \infty]$.

Considering the other extreme smoothness value, one can see that $B^1_{p,q}(Z)$ consists of constant functions if $q<\infty$. On the other hand, $M^{1,p}(Z) \subset B^1_{p,\infty}(Z) \subset KS^{1,p}(Z)$, where $M^{1,p}(Z)$ is the Haj\l{}asz space (see Paragraph~\ref{subsec:hajlasz} above) and $KS^{1,p}(Z)$ is the Korevaar--Schoen space introduced in~\cite{KorSch}. See Lemma~\ref{lem:HajlaszIsBesov} below for the former embedding, while the latter follows directly from the definition of $KS^{1,p}$, cf.\@ \cite{GogKosSha}.

If $Z$ supports a $p$-Poincar\'e inequality (see Definition~\ref{df:PI} above) for some $p>1$, then $M^{1,p}(Z) = B^1_{p,\infty}(Z) = KS^{1,p}(Z) = N^{1,p}(Z)$ by~\cite{KeiZho,KosMac}, where $N^{1,p}(Z)$ is the Newtonian space (see Paragraph~\ref{subsec:newtonian} above). For more information on the Haj\l{}asz, Korevaar--Schoen, and Newtonian spaces, see~\cite{BjoBjo,GogKosSha,Haj,HKST}.

The corresponding homogeneous seminorm will be denoted by $\dot{B}^\alpha_{p,q}(Z)$. The value of $q$ can be understood as a fine-tuning parameter of the smoothness for Besov functions of equal value of $\alpha \in [0,1)$.
By the Fubini theorem, the Besov norm defined by \eqref{eq:Besov} with $p=q$ and $R>0$ is equivalent to the Besov-type norm considered by Bourdon--Pajot~\cite{BouPaj}, given by
\begin{equation}
  \label{eq:BesovB}
  \|u \|_{\Bcal_p^\alpha(Z)} = \|u\|_{L^p(Z)} + \biggl(\int_Z \int_{B(w, R)} \frac{|u(w) - u(z)|^p}{\dd(w,z)^{\alpha p} \nu(B(w, \dd(w,z)))}\,d\nu(z) \,d\nu(w) \biggr)^{1/p}
\end{equation}
whenever $\alpha \in [0, 1)$ and $p \in [1, \infty)$, cf.\@ \cite[Theorem~5.2]{GogKosSha}. The setting of \cite{GogKosSha} has several additional standing assumptions, which however turn out to be superfluous for the particular result of equivalence of norms.
The corresponding homogeneous seminorm will be denoted by $\dot{\Bcal}_p^\alpha(Z)$.

The Bourdon--Pajot form of the Besov norm \eqref{eq:BesovB} corresponds very naturally to the Sobolev--Slobodeckij norm used in the classical trace theorems of Gagliardo~\cite{Gag} in the Euclidean setting. In fact, it allows for a cleaner exposition of proofs of trace theorems in John domains and hence will be used in Section~\ref{sec:tracesJohn}. Otherwise, we will be using the Gogatishvili--Koskela--Shan\-mu\-ga\-lin\-gam form of the Besov norm \eqref{eq:Besov}.

For the sake of brevity, we define 
\begin{equation}
  \label{eq:Eput-def}
  E_p(u, t) = \bigl(\int_Z \fint_{B(y,t)} |u(y) - u(z)|^p\,d\nu(z)\,d\nu(y)\bigr)^{1/p}, \qquad u\in L^p(Z),\ t\in (0, \infty).
\end{equation}
First, let us establish continuous embedding when varying the second exponent of the Besov norm.
\begin{lem}
\label{lem:increasing-q}
Let $\alpha \in [0,1)$, $p \in [1, \infty)$, and $q \in (0, \infty)$ be arbitrary. Then, $
  \|u\|_{{B}^\alpha_{p, \tilde{q}}(Z)} \lesssim \|u\|_{{B}^\alpha_{p, q}(Z)}$ for every $\tilde{q} \in [q, \infty]$.
\end{lem}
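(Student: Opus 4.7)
The plan is to show that $\Phi(t) := E_p(u,t)/t^\alpha$ is quasi-monotone in a doubling sense (increasing in $t$ up to a constant when $t$ is doubled), and then to deduce the embedding first for $\tilde q = \infty$, and finally to interpolate between $q$ and $\infty$.

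First I would establish the key dyadic comparison: if $0 < t \le s \le 2t$, then $B(y,t) \subset B(y,s)$ and the doubling property of $\nu$ gives $\nu(B(y,s))/\nu(B(y,t)) \le c_\dbl$. Since the integrand $|u(y)-u(z)|^p$ is non-negative, this yields
\[
  \fint_{B(y,t)} |u(y)-u(z)|^p\,d\nu(z) \le c_\dbl \fint_{B(y,s)} |u(y)-u(z)|^p\,d\nu(z),
\]
so $E_p(u,t) \le c_\dbl^{1/p} E_p(u,s)$, and hence $\Phi(t) \le c_\dbl^{1/p} 2^\alpha \Phi(s) =: C_0 \Phi(s)$ for all $s \in [t, 2t]$.

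Next I need an auxiliary \emph{a priori} bound on $E_p(u,t)$. By Fubini and by comparing $\nu(B(y,t))$ with $\nu(B(z,t))$ via doubling (using $y \in B(z,t) \subset B(y,2t)$), one routinely gets $E_p(u,t) \le C_1 \|u\|_{L^p(Z)}$ with $C_1$ depending only on $c_\dbl$. This controls $\Phi$ at \emph{large} scales, where the quasi-monotonicity has no room to be iterated.

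For the case $\tilde q = \infty$, I split $t \in (0, R]$ into two ranges. For $t \in (0, R/2]$, integrating the pointwise bound $\Phi(t)^q \le C_0^q \Phi(s)^q$ against $ds/s$ over $s \in [t, 2t] \subset (0, R]$ yields
\[
  \Phi(t)^q \log 2 \le C_0^q \int_t^{2t} \Phi(s)^q \frac{ds}{s} \le C_0^q \int_0^R \Phi(s)^q \frac{ds}{s} = C_0^q \|u\|_{\dot B^\alpha_{p,q}(Z)}^q.
\]
For $t \in (R/2, R]$, I simply use the \emph{a priori} bound to write $\Phi(t) \le C_1 \|u\|_{L^p(Z)}/t^\alpha \le 2^\alpha C_1 \|u\|_{L^p(Z)}/R^\alpha$. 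Combining the two ranges,
\[
  \|u\|_{\dot B^\alpha_{p,\infty}(Z)} = \sup_{0 < t \le R} \Phi(t) \lesssim \|u\|_{\dot B^\alpha_{p,q}(Z)} + \|u\|_{L^p(Z)}/R^\alpha,
\]
so $\|u\|_{B^\alpha_{p,\infty}(Z)} \lesssim \|u\|_{B^\alpha_{p,q}(Z)}$, with a constant depending on $R$, $\alpha$, $p$, $q$, and $c_\dbl$.

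For $\tilde q \in [q, \infty)$, I would use the telescoping estimate $\Phi(t)^{\tilde q} = \Phi(t)^q \cdot \Phi(t)^{\tilde q - q}$, bounding the second factor by $\|u\|_{\dot B^\alpha_{p,\infty}(Z)}^{\tilde q - q}$ (which was just shown to be controlled by $\|u\|_{B^\alpha_{p,q}(Z)}$). This yields
\[
  \|u\|_{\dot B^\alpha_{p,\tilde q}(Z)}^{\tilde q} \le \|u\|_{\dot B^\alpha_{p,\infty}(Z)}^{\tilde q - q}\, \|u\|_{\dot B^\alpha_{p,q}(Z)}^{q} \lesssim \|u\|_{B^\alpha_{p,q}(Z)}^{\tilde q},
\]
and adding the $L^p$-term to both sides finishes the proof. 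The main (mild) obstacle is the behavior of $\Phi$ near $t = R$, where the quasi-monotonicity cannot be iterated; this is handled by the $L^p$ \emph{a priori} bound, which is ultimately why the constants depend on $R$.
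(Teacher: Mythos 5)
Your proof is correct and follows essentially the same route as the paper: quasi-monotonicity of $\Phi(t)=E_p(u,t)/t^\alpha$ via doubling, the a priori bound $E_p(u,t)\lesssim\|u\|_{L^p(Z)}$, then the $\tilde q=\infty$ case first and $\tilde q\in[q,\infty)$ by H\"older interpolation against the $\dot B^\alpha_{p,\infty}$ factor. The only cosmetic difference is in the $\tilde q=\infty$ step near $t\approx R$: the paper changes variables to get an integral over $(0,2R]$ and bounds the tail $\int_R^{2R}$ by the $L^p$ term, while you split the supremum into $t\le R/2$ and $t\in(R/2,R]$; both devices rely on the same $L^p$ a priori bound and give the same $R$-dependent constant.
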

\begin{proof}
Let us first prove the inequality for $\tilde{q}=\infty$. Then,
\begin{align*}
 &\|u\|_{\dot{B}^\alpha_{p, \infty}(Z)}^q 
= \sup_{0<t<R} \frac{E_p(u, t)^q}{t^{\alpha q}}
\lesssim \sup_{0<t<R} \int_{t/2}^t \frac{ds}{s^{\alpha q+1}} E_p(u,t)^q \lesssim \sup_{0<t<R} \int_{t/2}^t \frac{E_p(u,2s)^q}{s^{\alpha q+1}}\,ds \\
& \qquad \lesssim \int_{0}^{2R} \frac{E_p(u,s)^q}{s^{\alpha q+1}}\,ds = \int_{0}^{R} \ldots\,ds + \int_{R}^{2R} \ldots\,ds \le \|u\|_{\dot{B}^\alpha_{p, q}(Z)}^q + \frac{C}{R^{\alpha q}} \|u\|_{L^p(Z)}^q \lesssim \|u\|_{{B}^\alpha_{p, q}(Z)}^q\,.
\end{align*}
Hence, $\|u\|_{{B}^\alpha_{p, \infty}(Z)} \lesssim \|u\|_{{B}^\alpha_{p, q}(Z)}$.

Given $\tilde{q} \in (q, \infty)$, we can estimate
\begin{align*}
  & \|u\|_{\dot{B}^\alpha_{p, \tilde{q}}(Z)}^{\tilde q} = \int_0^R \frac{E_p(u,t)^{\tilde q}}{t^{\alpha \tilde{q}+1}}\,dt = \int_0^R \frac{E_p(u,t)^{q}}{t^{\alpha q+1}} \cdot \biggl(\frac{E_p(u,t)}{t^{\alpha}}\biggr)^{\tilde{q}-q}\,dt \\
  & \qquad \le \biggl(\sup_{0<s\le R} \frac{E_p(u,s)}{s^{\alpha}}\biggr)^{\tilde{q}-q} \int_0^R \frac{E_p(u,t)^{q}}{t^{\alpha q+1}} \,dt \le 
 \|u\|_{\dot{B}^\alpha_{p, \infty}(Z)}^{\tilde{q}-q}\cdot \|u\|_{\dot{B}^\alpha_{p, q}(Z)}^{q} \lesssim \|u\|_{{B}^\alpha_{p, q}(Z)}^{\tilde{q}-q} \cdot \|u\|_{\dot{B}^\alpha_{p, q}(Z)}^{q}.
\end{align*}
In particular, $\|u\|_{\dot{B}^\alpha_{p, \tilde{q}}(Z)} \lesssim \|u\|_{{B}^\alpha_{p, q}(Z)}$ and hence $\|u\|_{{B}^\alpha_{p, \tilde{q}}(Z)} \lesssim \|u\|_{{B}^\alpha_{p, q}(Z)}$.
\end{proof}
A careful inspection of the proof above immediately yields that $\|u\|_{\dot{B}^\alpha_{p, \tilde{q}}(Z)} \le C \|u\|_{\dot{B}^\alpha_{p, q}(Z)}$ with $C>0$ independent of $\tilde{q} \in [q, \infty]$ holds true whenever $R > \diam Z$ or $R=\diam Z = \infty$. 

Next, we will look into simple interpolating properties of the Besov spaces.
\begin{lem}
\label{lem:BesovInterpolate}
Let $\alpha_j \in [0,1]$, $p_j \in [1, \infty)$, and $q_j \in (0, \infty]$, $j=0,1$. Let $\lambda\in [0,1]$ and define $\alpha$, $p$, and $q$ by the convex combinations
\[
  \alpha = (1-\lambda)\alpha_0 + \lambda \alpha_1, \qquad
  \frac{1}{p} = \frac{1-\lambda}{p_0} + \frac{\lambda}{p_1}, \qquad \text{and}\qquad
  \frac{1}{q} = \frac{1-\lambda}{q_0} + \frac{\lambda}{q_1},
\]
with standard modification if $q_0 = \infty$ or $q_1 = \infty$. Then, $\|u\|_{\dot{B}^\alpha_{p, q}(Z)} \le \|u\|_{\dot{B}^{\alpha_0}_{p_0, q_0}(Z)}^{1-\lambda} \cdot \|u\|_{\dot{B}^{\alpha_1}_{p_1, q_1}(Z)}^\lambda$.
\end{lem}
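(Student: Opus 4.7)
The plan is to execute the interpolation as two successive applications of log-convexity of $L^p$-norms (generalized H\"older), first on the inner double integral defining $E_p(u,t)$ at each fixed scale $t$, and then on the outer integral in the scale parameter $t$.

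First, I would rewrite
\[
E_p(u,t)^p = \int_Z \int_Z |u(y) - u(z)|^p \, d\mu_t(y,z),\qquad d\mu_t(y,z) \coloneq \frac{\chi_{B(y,t)}(z)}{\nu(B(y,t))}\,d\nu(z)\,d\nu(y),
\]
so that $E_p(u,t)$ is the $L^p(\mu_t)$-norm of the single function $f(y,z) = |u(y)-u(z)|$ with respect to the $\sigma$-finite (scale-dependent) measure $\mu_t$. Since $\frac{1}{p} = \frac{1-\lambda}{p_0} + \frac{\lambda}{p_1}$, the standard log-convexity of Lebesgue norms (a direct consequence of H\"older's inequality applied to $|f|^p = |f|^{(1-\lambda)p}\cdot|f|^{\lambda p}$ with exponents $p_0/((1-\lambda)p)$ and $p_1/(\lambda p)$, whose reciprocals sum to $1$) yields
\[
E_p(u,t) \le E_{p_0}(u,t)^{1-\lambda} \cdot E_{p_1}(u,t)^{\lambda}, \qquad t \in (0,R).
\]
Dividing by $t^\alpha = t^{(1-\lambda)\alpha_0}\cdot t^{\lambda\alpha_1}$ gives the pointwise (in $t$) bound
\[
\frac{E_p(u,t)}{t^\alpha} \le \biggl(\frac{E_{p_0}(u,t)}{t^{\alpha_0}}\biggr)^{1-\lambda} \biggl(\frac{E_{p_1}(u,t)}{t^{\alpha_1}}\biggr)^{\lambda}.
\]

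Next, I would raise this to the power $q$ and integrate against $dt/t$ on $(0,R)$. Writing $G(t) = E_{p_0}(u,t)/t^{\alpha_0}$ and $H(t) = E_{p_1}(u,t)/t^{\alpha_1}$, I need to bound $\|G^{1-\lambda}H^{\lambda}\|_{L^q((0,R),dt/t)}$. A second application of H\"older's inequality, this time in the measure $dt/t$ with conjugate exponents $q_0/((1-\lambda)q)$ and $q_1/(\lambda q)$, works precisely because $\frac{1}{q} = \frac{1-\lambda}{q_0} + \frac{\lambda}{q_1}$ makes those reciprocals sum to $1$; it delivers
\[
\|G^{1-\lambda}H^\lambda\|_{L^q(dt/t)} \le \|G\|_{L^{q_0}(dt/t)}^{1-\lambda}\cdot\|H\|_{L^{q_1}(dt/t)}^{\lambda} = \|u\|_{\dot{B}^{\alpha_0}_{p_0,q_0}(Z)}^{1-\lambda}\cdot\|u\|_{\dot{B}^{\alpha_1}_{p_1,q_1}(Z)}^{\lambda},
\]
which is the claimed inequality.

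For the degenerate cases, if $q_0 = \infty$ (and $\lambda < 1$), the second H\"older step becomes the elementary estimate $G(t)^{1-\lambda} \le \|G\|_{L^\infty}^{1-\lambda}$ pulled outside the integral, after which one integrates $H(t)^{\lambda q}$ against $dt/t$ with the single exponent $q_1/(\lambda q)=1$; analogously for $q_1=\infty$, and trivially if both are infinite. The endpoint cases $\lambda \in \{0,1\}$ reduce to the identity. The only mildly delicate point is bookkeeping of these $q_j=\infty$ conventions so that the inequality reads correctly in every case; the two H\"older estimates themselves are routine and require no hypothesis on $\nu$ beyond what is already standing.
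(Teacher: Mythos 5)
Your proof is correct and follows essentially the same two-step route as the paper: a first H\"older application in the $(y,z)$-variables at each fixed scale $t$, giving $E_p(u,t)\le E_{p_0}(u,t)^{1-\lambda}E_{p_1}(u,t)^{\lambda}$, followed by a second H\"older application in $t$ against $dt/t$, with the same handling of the $q_j=\infty$ endpoints. The only difference is cosmetic: you package the inner step as log-convexity of $L^p(\mu_t)$-norms for the scale-dependent measure $\mu_t$, where the paper spells out the H\"older inequality on the double integral directly.
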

\begin{proof}
The conclusion holds trivially true when $\lambda = 0$ or $\lambda = 1$. Let us now assume that $\lambda \in (0,1)$. Suppose also that both $q_0$ and $q_1$ are finite. Then, the desired estimate follows from the H\"older inequality applied twice.
\begin{align}
\notag
 \|u\|_{\dot{B}^\alpha_{p, q}(Z)}^q & = \int_0^R \biggl( \int_Z \fint_{B(y,t)} |u(y) - u(z)|^{(1-\lambda)p + \lambda p}\,d\nu(z)\,d\nu(y) \biggr)^{q/p}\,\frac{dt}{t^{\alpha q + 1}} \\
\notag
& \le \int_0^R \biggl( \int_Z \fint_{B(y,t)} |u(y) - u(z)|^{p_0}\,d\nu(z)\,d\nu(y) \biggr)^{(1-\lambda)q/p_0} \cdot \\
\notag
& \phantom{\le \int_0^R} \cdot \biggl( \int_Z \fint_{B(y,t)} |u(y) - u(z)|^{p_1}\,d\nu(z)\,d\nu(y) \biggr)^{\lambda q/p_1}\,\frac{dt}{t^{\alpha q + 1}} \\
\label{eq:BesovInterpolation}
& = I \coloneq \int_0^R \biggl(\frac{E_{p_0}(u,t)}{t^{\alpha_0}}\biggr)^{(1-\lambda) q} \biggl(\frac{E_{p_1}(u,t)}{t^{\alpha_1}}\biggr)^{\lambda q} \frac{dt}{t} \\
\notag
& \le \biggl(\int_0^R \biggl(\frac{E_{p_0}(u,t)}{t^{\alpha_0}}\biggr)^{q_0} \frac{dt}{t}\biggr)^{(1-\lambda)q/q_0} \biggl(\int_0^R \biggl(\frac{E_{p_1}(u,t)}{t^{\alpha_1}}\biggr)^{q_1} \frac{dt}{t}\biggr)^{(1-\lambda)q/q_1} \\
\notag
& = \|u\|_{\dot{B}^{\alpha_0}_{p_0, q_0}(Z)}^{(1-\lambda)q} \cdot \|u\|_{\dot{B}^{\alpha_1}_{p_1, q_1}(Z)}^{\lambda q}\,.
\end{align}
If $q_1 < q_0 = \infty$, then $\lambda q = q_1$ and we would proceed from \eqref{eq:BesovInterpolation} as follows:
\[
I \le  \sup_{0<t<R} \biggl(\frac{E_{p_0}(u,t)}{t^{\alpha_0}} \biggr)^{(1-\lambda) q} \int_0^R \biggl(\frac{E_{p_1}(u,t)}{t^{\alpha_1}}\biggr)^{q_1} \frac{dt}{t} = \|u\|_{\dot{B}^{\alpha_0}_{p_0, q_0}(Z)}^{(1-\lambda)q} \cdot \|u\|_{\dot{B}^{\alpha_1}_{p_1, q_1}(Z)}^{\lambda q}.
\]
A similar argument can be used for $q_0 < q_1 = \infty$.

Finally, if $q=q_0=q_1=\infty$, then the H\"older inequality yields that
\begin{align*}
  \|u\|_{\dot{B}^\alpha_{p, \infty}(Z)} & = \sup_{0<t<R} \frac{E_{(1-\lambda)p + \lambda p}(u,t)}{t^{\alpha}} \le \sup_{0<t<R} \biggl(\frac{E_{p_0}(u,t)^{1-\lambda} }{t^{(1-\lambda)\alpha_0}}\cdot \frac{E_{p_1}(u,t)^{\lambda}}{t^{\lambda \alpha_1}}\biggr) \\
  & \le \biggl( \sup_{0<t<R} \frac{E_{p_0}(u,t)}{t^{\alpha_0}}\biggr)^{1-\lambda} \cdot \biggl( \sup_{0<t<R} \frac{E_{p_1}(u,t)}{t^{\alpha_1}}\biggr)^{\lambda}
  \le \|u\|_{\dot{B}^{\alpha_0}_{p_0, \infty}(Z)}^{1-\lambda} \cdot \|u\|_{\dot{B}^{\alpha_1}_{p_1, \infty}(Z)}^{\lambda}\,.
  \qedhere
\end{align*}
\end{proof}
\begin{cor}
\label{cor:BesovInterpolate}
Given all the parameters as in Lemma~\ref{lem:BesovInterpolate}, $\|u\|_{{B}^\alpha_{p, q}(Z)} \le \|u\|_{{B}^{\alpha_0}_{p_0, q_0}(Z)}^{1-\lambda}  \|u\|_{{B}^{\alpha_1}_{p_1, q_1}(Z)}^\lambda$\,.
\end{cor}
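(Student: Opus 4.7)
The Besov norm decomposes as $\|u\|_{B^\alpha_{p,q}(Z)} = \|u\|_{L^p(Z)} + \|u\|_{\dot{B}^\alpha_{p,q}(Z)}$, so the plan is to handle each summand with a suitable interpolation inequality and then combine them. The homogeneous seminorm piece is already provided: Lemma~\ref{lem:BesovInterpolate} gives exactly
\[
  \|u\|_{\dot{B}^\alpha_{p, q}(Z)} \le \|u\|_{\dot{B}^{\alpha_0}_{p_0, q_0}(Z)}^{1-\lambda} \cdot \|u\|_{\dot{B}^{\alpha_1}_{p_1, q_1}(Z)}^{\lambda}.
\]

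For the Lebesgue piece, I would invoke the classical log-convexity of the $L^p$ scale: since $1/p = (1-\lambda)/p_0 + \lambda/p_1$, Hölder's inequality applied to the factorization $|u|^p = |u|^{(1-\lambda)p} \cdot |u|^{\lambda p}$ with exponents $p_0/((1-\lambda)p)$ and $p_1/(\lambda p)$ yields
\[
  \|u\|_{L^p(Z)} \le \|u\|_{L^{p_0}(Z)}^{1-\lambda} \cdot \|u\|_{L^{p_1}(Z)}^{\lambda}.
\]

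To assemble these into the corollary, I need the elementary inequality
\[
  a^{1-\lambda} b^{\lambda} + c^{1-\lambda} d^{\lambda} \le (a+c)^{1-\lambda} (b+d)^{\lambda}
\]
for non-negative reals $a,b,c,d$ and $\lambda \in [0,1]$. This is just Hölder's inequality for two-term sums with exponents $1/(1-\lambda)$ and $1/\lambda$ applied to the pairs $(a^{1-\lambda}, c^{1-\lambda})$ and $(b^{\lambda}, d^{\lambda})$. Setting $a = \|u\|_{L^{p_0}(Z)}$, $b = \|u\|_{L^{p_1}(Z)}$, $c = \|u\|_{\dot{B}^{\alpha_0}_{p_0, q_0}(Z)}$, and $d = \|u\|_{\dot{B}^{\alpha_1}_{p_1, q_1}(Z)}$ and combining with the two bounds above gives
\[
  \|u\|_{B^\alpha_{p,q}(Z)} \le (a+c)^{1-\lambda}(b+d)^{\lambda} = \|u\|_{B^{\alpha_0}_{p_0, q_0}(Z)}^{1-\lambda} \cdot \|u\|_{B^{\alpha_1}_{p_1, q_1}(Z)}^{\lambda},
\]
which is the claim. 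The boundary cases $\lambda \in \{0,1\}$ are trivial, and if any of $q_0, q_1$ equals $\infty$ the same argument goes through since Lemma~\ref{lem:BesovInterpolate} already covers those cases. There is no real obstacle here; the only minor point to be careful about is the standard modification in the Hölder step when $\lambda = 0$ or $\lambda = 1$, which reduces everything to a tautology.
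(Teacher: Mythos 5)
Your proof is correct and follows essentially the same route as the paper: decompose the Besov norm into the $L^p$ part and the homogeneous seminorm, interpolate each separately (H\"older for $L^p$, Lemma~\ref{lem:BesovInterpolate} for the seminorm), and recombine using the elementary inequality $a^{1-\lambda}b^\lambda + c^{1-\lambda}d^\lambda \le (a+c)^{1-\lambda}(b+d)^\lambda$. The only cosmetic difference is that you justify this last inequality by a two-term H\"older argument, whereas the paper derives it from concavity of $t\mapsto t^\lambda$; the two justifications are interchangeable.
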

\begin{proof}
By the H\"older inequality, $\|u\|_{L^p(Z)} \le \|u\|_{L^{p_0}(Z)}^{1-\lambda} \|u\|_{L^{p_1}(Z)}^{\lambda}$.
Thus,
\begin{align*}
 \|u\|_{{B}^\alpha_{p, q}(Z)} & \le \|u\|_{L^{p_0}(Z)}^{1-\lambda} \|u\|_{L^{p_1}(Z)}^{\lambda} + \|u\|_{\dot{B}^{\alpha_0}_{p_0, q_0}(Z)}^{1-\lambda} \|u\|_{\dot{B}^{\alpha_1}_{p_1, q_1}(Z)}^\lambda \\
& \le \bigl(\|u\|_{L^{p_0}(Z)} + \|u\|_{\dot{B}^{\alpha_0}_{p_0, q_0}(Z)}\bigr)^{1-\lambda} \bigl(\|u\|_{L^{p_1}(Z)} + \|u\|_{\dot{B}^{\alpha_1}_{p_1, q_1}(Z)}\bigr)^{\lambda} = \|u\|_{{B}^{\alpha_0}_{p_0, q_0}(Z)}^{1-\lambda}  \|u\|_{{B}^{\alpha_1}_{p_1, q_1}(Z)}^\lambda
\end{align*}
by the elementary inequality $x^{1-\lambda} y^\lambda + z^{1-\lambda} w^\lambda \le (x+z)^{1-\lambda} (y+w)^\lambda$, which holds true for every quadruplet of numbers $x,y,z,w\ge 0$ and $\lambda \in (0,1)$. Indeed, by concavity of $t \mapsto t^\lambda$, we have that
\begin{align*}
  x^{1-\lambda} y^\lambda + z^{1-\lambda} w^\lambda
  = x \Bigl(\frac yx \Bigr)^\lambda +
    z \Bigl(\frac wz \Bigr)^\lambda
  & = (x+z) \biggl( \frac{x}{x+z} \Bigl(\frac yx \Bigr)^\lambda + \frac{z}{x+z} \Bigl(\frac wz \Bigr)^\lambda \biggr) \\
  & \le (x+z) \biggl( \frac{xy}{(x+z)x} + \frac{zw}{(x+z)z} \biggr)^\lambda = (x+z)^{1-\lambda} (y+w)^\lambda
\end{align*}
whenever $xz > 0$. If $xz=0$, then the elementary inequality is trivial.
\end{proof}
\begin{lem}
\label{lem:BesovLinftyInterpolate}
Assume that $\alpha \in [0,1]$, $p \in [1, \infty)$, and $q \in (0, \infty]$. Let $\lambda \in (0,1)$. Then,
\[
  \|u\|_{\dot{B}^{\lambda\alpha}_{p/\lambda, q/\lambda}(Z)} \le 2 \|u\|_{L^\infty(Z)}^{1-\lambda} \cdot \|u\|_{\dot{B}^{\alpha}_{p, q}(Z)}^\lambda\,.
\]
\end{lem}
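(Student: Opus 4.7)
\medskip

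\textbf{Proof proposal.} The plan is to reduce the estimate to a pointwise bound on the integrand $|u(y)-u(z)|^{p/\lambda}$, then pull out the $L^\infty$ factor before integrating. Since $u\in L^\infty(Z)$, we have the pointwise bound
\[
  |u(y) - u(z)|^{p/\lambda} = |u(y) - u(z)|^{p} \cdot |u(y) - u(z)|^{p(1-\lambda)/\lambda} \le \bigl(2\|u\|_{L^\infty(Z)}\bigr)^{p(1-\lambda)/\lambda} |u(y) - u(z)|^p
\]
for $\nu$-a.e.\@ $y,z\in Z$. Integrating in $z$ over $B(y,t)$ and in $y$ over $Z$, and recalling the notation \eqref{eq:Eput-def}, this yields
\[
  E_{p/\lambda}(u,t) \le \bigl(2\|u\|_{L^\infty(Z)}\bigr)^{1-\lambda} E_p(u,t)^\lambda, \qquad t\in (0,\infty).
\]

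Next, I would insert this bound into the definition of the homogeneous Besov seminorm with parameters $(\lambda\alpha, p/\lambda, q/\lambda)$. Assuming first $q<\infty$, we compute
\begin{align*}
  \|u\|_{\dot B^{\lambda\alpha}_{p/\lambda, q/\lambda}(Z)}^{q/\lambda}
  &= \int_0^R \biggl(\frac{E_{p/\lambda}(u,t)}{t^{\lambda\alpha}}\biggr)^{q/\lambda}\,\frac{dt}{t} \\
  &\le \bigl(2\|u\|_{L^\infty(Z)}\bigr)^{(1-\lambda)q/\lambda} \int_0^R \biggl(\frac{E_p(u,t)}{t^{\alpha}}\biggr)^{q}\,\frac{dt}{t}
  = \bigl(2\|u\|_{L^\infty(Z)}\bigr)^{(1-\lambda)q/\lambda}\|u\|_{\dot B^\alpha_{p,q}(Z)}^q.
\end{align*}
Raising both sides to the power $\lambda/q$ and using $2^{1-\lambda}\le 2$ for $\lambda\in(0,1)$ produces the claimed inequality. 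In the remaining case $q=\infty$ (so $q/\lambda=\infty$), one takes the supremum over $t\in(0,R)$ instead of the integral, and the same bound follows by the same pointwise argument.

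I do not anticipate any real obstacle: the only subtlety is the correct bookkeeping of exponents (checking that $(q/\lambda)\cdot(1-\lambda)/\lambda\cdot(\lambda/q) = 1-\lambda$ and $(q/\lambda)\cdot\lambda = q$), which is what guarantees that the $L^\infty$ factor appears to the power $1-\lambda$ and the $\dot B^\alpha_{p,q}$ factor appears to the power $\lambda$. No Poincar\'e inequality, doubling of $\nu$, or lower mass bound is needed for this estimate; it is a direct interpolation between $L^\infty$ and $\dot B^\alpha_{p,q}$ at the level of the integrand.
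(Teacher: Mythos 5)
Your proof is correct and takes essentially the same approach as the paper: both factor out $|u(y)-u(z)|^{p(1-\lambda)/\lambda}$ from the innermost integral and bound it by $\bigl(2\|u\|_{L^\infty(Z)}\bigr)^{p(1-\lambda)/\lambda}$ before integrating, yielding the claimed bound after raising to the power $\lambda/q$.
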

\begin{proof}
Factoring out $\esssup_{y,z} |u(y)-u(z)|^{p(1/\lambda - 1)}$ from the innermost integral yields that
\begin{align*}
 \|u\|_{\dot{B}^{\lambda\alpha}_{p/\lambda, q/\lambda}(Z)}^{q/\lambda} & = \int_0^R \biggl( \int_Z \fint_{B(y,t)} |u(y) - u(z)|^{p/\lambda}\,d\nu(z)\,d\nu(y) \biggr)^{q/p}\,\frac{dt}{t^{\alpha q + 1}} \\
& \le 2^{q/\lambda} \|u\|_{L^\infty(Z)}^{(1-\lambda) q/\lambda} \int_0^R \biggl( \int_Z \fint_{B(y,t)} |u(y) - u(z)|^{p}\,d\nu(z)\,d\nu(y) \biggr)^{q/p}\,\frac{dt}{t^{\alpha q + 1}} 
\\
& = 2^{q/\lambda} \|u\|_{L^\infty(Z)}^{(1-\lambda) q/\lambda} \|u\|_{\dot{B}^{\alpha}_{p, q}(Z)}^q\,.
\qedhere
\end{align*}
\end{proof}
Note that similarly as in Corollary~\ref{cor:BesovInterpolate}, one obtains $\|u\|_{{B}^{\lambda\alpha}_{p/\lambda, q/\lambda}(Z)} \le 2 \|u\|_{L^\infty(Z)}^{1-\lambda} \cdot \|u\|_{{B}^{\alpha}_{p, q}(Z)}^\lambda$.
\begin{lem}
\label{lem:decreasingsmoothness}
Assume that $R<\infty$. Let $0\le \beta<\alpha \le 1$ and suppose that $p\in [1,\infty)$, and $q \in (0, \infty]$. Then, $\|u\|_{\dot{B}^\beta_{p,q}(Z)} \lesssim \|u\|_{\dot{B}^\alpha_{p,\infty}(Z)}$.
\end{lem}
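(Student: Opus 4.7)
The plan is to use the pointwise bound $E_p(u,t) \le \|u\|_{\dot{B}^\alpha_{p,\infty}(Z)} \cdot t^\alpha$ that the hypothesis directly gives, then integrate against $dt/t^{\beta q + 1}$ and exploit the strict inequality $\alpha > \beta$ to obtain a convergent integral on $(0, R)$.

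More precisely, set $M = \|u\|_{\dot{B}^\alpha_{p,\infty}(Z)}$. By the $q=\infty$ modification in the definition of the Besov seminorm, $E_p(u,t) \le M t^\alpha$ for every $t \in (0, R)$. First I would treat the case $q < \infty$: substituting this bound into \eqref{eq:Besov} and using that $R<\infty$,
\begin{equation*}
\|u\|_{\dot{B}^\beta_{p,q}(Z)}^q
= \int_0^R \frac{E_p(u,t)^q}{t^{\beta q + 1}}\,dt
\le M^q \int_0^R t^{(\alpha - \beta)q - 1}\,dt
= \frac{R^{(\alpha - \beta)q}}{(\alpha - \beta)q}\, M^q,
\end{equation*}
which gives $\|u\|_{\dot{B}^\beta_{p,q}(Z)} \le \bigl((\alpha-\beta)q\bigr)^{-1/q} R^{\alpha-\beta} \|u\|_{\dot{B}^\alpha_{p,\infty}(Z)}$.

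The case $q = \infty$ is even more immediate: for each $t \in (0,R)$,
\begin{equation*}
\frac{E_p(u,t)}{t^\beta} = t^{\alpha-\beta} \cdot \frac{E_p(u,t)}{t^\alpha} \le R^{\alpha-\beta}\, M,
\end{equation*}
so taking the supremum over $t$ yields $\|u\|_{\dot{B}^\beta_{p,\infty}(Z)} \le R^{\alpha - \beta} \|u\|_{\dot{B}^\alpha_{p,\infty}(Z)}$.

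There is no real obstacle here; the only thing to notice is that both the assumptions $R < \infty$ and $\beta < \alpha$ are needed exactly to make $\int_0^R t^{(\alpha-\beta)q - 1}\,dt$ finite (the integrand is integrable near $0$ thanks to $\alpha > \beta$, and the upper limit is finite by hypothesis). The implicit constant in the stated inequality depends on $\alpha - \beta$, $q$, and $R$, as expected.
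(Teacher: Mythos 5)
Your proof is correct and follows essentially the same route as the paper's: for $q<\infty$ you bound $E_p(u,t)$ by $t^\alpha\|u\|_{\dot B^\alpha_{p,\infty}}$ inside the integral and use $R<\infty$ together with $\alpha>\beta$ to evaluate $\int_0^R t^{(\alpha-\beta)q-1}\,dt$, and for $q=\infty$ you use the pointwise estimate $E_p(u,t)/t^\beta \le R^{\alpha-\beta}E_p(u,t)/t^\alpha$. No discrepancies to report.
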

\begin{proof}
If $q< \infty$, then
\[
  \|u\|_{\dot{B}^\beta_{p,q}(Z)}^q = \int_0^R E_p(u,t)^q \frac{dt}{t^{\beta q + 1}} = \int_0^R \biggl( \frac{E_p(u,t)}{t^\alpha} \biggr)^q \frac{dt}{t^{1-(\alpha-\beta)q}} \le \|u\|_{\dot{B}^\alpha_{p,\infty}(Z)}^q \frac{R^{(\alpha - \beta)q}}{(\alpha-\beta)q}\,.
\]
In case $q=\infty$, it suffices to notice that $E_p(u,t)/t^\beta \le R^{\alpha-\beta} E_p(u,t)/t^\alpha$ for every $0<t\le R$.
\end{proof}
\begin{rem}
\label{rem:decreasingsmoothness}
If $R=\infty$, then one can further use the estimate $E_p(u, t) \le C \|u\|_{L^p(Z)}$, where $C$ does not depend on $t>0$, to obtain the inequality $\|u\|_{{B}^\beta_{p,q}(Z)} \lesssim \|u\|_{{B}^\alpha_{p,\infty}(Z)}$ for every $0 \le \beta<\alpha \le 1$, $p\in [1,\infty)$, and $q \in (0, \infty]$.
\end{rem}
\begin{lem}
\label{lem:zerosmoothness}
Suppose that $u \in L^r(Z) \cap \dot{B}^\alpha_{s,\infty}(Z)$ for some $1\le s < r$ and $\alpha > 0$. Then, $u \in \dot{B}^0_{p,q}(Z)$ for every $p\in [s, r)$ and $q \in (0, \infty)$ provided that $R<\infty$ in \eqref{eq:Besov}.
\end{lem}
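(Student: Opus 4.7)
The plan is to reduce the claim to the two-step chain: first show that $u \in \dot{B}^{\beta}_{p,\infty}(Z)$ for some \emph{positive} smoothness $\beta>0$, and then invoke Lemma~\ref{lem:decreasingsmoothness} to drop the smoothness to $0$ while picking up the desired fine exponent $q \in (0,\infty)$. Since $R<\infty$, this last step delivers a finite bound for the $\dot B^0_{p,q}$-seminorm despite the divergence of $\int_0^R dt/t$ at the origin.

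The key input is a pointwise-in-$t$ interpolation estimate for the quantity $E_p(u,t)$ defined in \eqref{eq:Eput-def}. For $p \in [s,r)$, choose $\lambda \in (0,1]$ so that $1/p = (1-\lambda)/r + \lambda/s$. A double application of H\"older's inequality (first on the inner average over $B(y,t)$, then on the outer integral over $y$) with the conjugate exponents $r/((1-\lambda)p)$ and $s/(\lambda p)$ yields
\[
  E_p(u,t) \le E_r(u,t)^{1-\lambda} \, E_s(u,t)^{\lambda}.
\]
The trivial bound $E_r(u,t) \le 2\|u\|_{L^r(Z)}$ (valid uniformly in $t$) combined with the hypothesis $E_s(u,t) \le t^\alpha \|u\|_{\dot{B}^\alpha_{s,\infty}(Z)}$ then gives
\[
  E_p(u,t) \lesssim \|u\|_{L^r(Z)}^{1-\lambda} \, \|u\|_{\dot{B}^\alpha_{s,\infty}(Z)}^{\lambda} \, t^{\lambda\alpha}.
\]
In particular, $u \in \dot{B}^{\lambda\alpha}_{p,\infty}(Z)$ with $\beta \coloneq \lambda\alpha > 0$ (note that $\lambda>0$ because $p<r$). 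Alternatively, one can reach the very same conclusion by observing that $u \in L^r(Z)$ implies $\|u\|_{\dot{B}^0_{r,\infty}(Z)} \le 2\|u\|_{L^r(Z)}$ and then applying Lemma~\ref{lem:BesovInterpolate} with parameters $(\alpha_0,p_0,q_0)=(0,r,\infty)$ and $(\alpha_1,p_1,q_1)=(\alpha,s,\infty)$.

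With $u \in \dot{B}^{\beta}_{p,\infty}(Z)$ in hand, Lemma~\ref{lem:decreasingsmoothness} (applied with $\beta>0$ in the role of the higher smoothness and $0$ in the role of the lower) gives $\|u\|_{\dot{B}^0_{p,q}(Z)} \lesssim \|u\|_{\dot{B}^\beta_{p,\infty}(Z)} < \infty$ for every $q \in (0,\infty)$, which is exactly the claim. There is no serious obstacle; the only point requiring care is making sure that $\lambda$ is strictly positive, which is precisely why the hypothesis $p < r$ (rather than $p \le r$) is needed, and the assumption $R<\infty$ is what makes Lemma~\ref{lem:decreasingsmoothness} applicable.
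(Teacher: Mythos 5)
Your proof is correct, and it takes a genuinely different route from the paper's own argument. The paper handles $p \in (s,r)$ by a direct, ``by-hand'' computation: it introduces $\eta = s(r-p)/(r-s)$, factors $|u(y)-u(z)|^p \lesssim (|u(y)|^{p-\eta}+|u(z)|^{p-\eta})|u(y)-u(z)|^{\eta}$, symmetrizes, and applies H\"older to the double integral $\iint_{Z^2_t}$ with explicitly chosen conjugate exponents, arriving at the bound $\|u\|_{\dot{B}^0_{p,q}(Z)} \lesssim \|u\|_{L^r(Z)}^{r(p-s)/p(r-s)} \|u\|_{\dot{B}^\alpha_{s,\infty}(Z)}^{s(r-p)/p(r-s)}$ and an integrable power of $t$. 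You instead route through the paper's own interpolation machinery: observing $\|u\|_{\dot{B}^0_{r,\infty}(Z)} \lesssim \|u\|_{L^r(Z)}$, you invoke Lemma~\ref{lem:BesovInterpolate} with $(\alpha_0,p_0,q_0)=(0,r,\infty)$, $(\alpha_1,p_1,q_1)=(\alpha,s,\infty)$, obtain $u \in \dot{B}^{\lambda\alpha}_{p,\infty}(Z)$ with $\lambda\alpha>0$ since $p<r$, and then apply Lemma~\ref{lem:decreasingsmoothness}. Unwinding the algebra, $\lambda = s(r-p)/p(r-s)$ and $1-\lambda = r(p-s)/p(r-s)$, so the two arguments land on identical exponents; your version makes the interpolation structure transparent and is more modular, while the paper's version is self-contained and does not rely on Lemma~\ref{lem:BesovInterpolate}. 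Two small quibbles with your sketch: the uniform bound you call ``trivial,'' $E_r(u,t) \le 2\|u\|_{L^r(Z)}$, is not quite a constant-$2$ bound — controlling the $\fint_{B(y,t)}|u(z)|^r\,d\nu(z)$ contribution requires Fubini together with the doubling property of $\nu$, so the correct constant depends on $c_Q$ (the estimate $E_r(u,t)\lesssim \|u\|_{L^r(Z)}$ is what you actually need, and it does hold, as Remark~\ref{rem:decreasingsmoothness} also uses); and you should note that $\lambda\alpha\le 1$ (so Lemma~\ref{lem:decreasingsmoothness} applies) because the Besov scale in this paper restricts $\alpha\in[0,1]$, making the bound $\beta\le 1$ automatic.
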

\begin{proof}
Case $p=s$ follows immediately from Lemma~\ref{lem:decreasingsmoothness}.

Fix $p \in (s,r)$ and $q\in(0, \infty)$. Define $\eta = \frac{s(r-p)}{r-s} \in (0,s)$. Observe also that
\[
  \frac{\nu(B(y,t))}{c_Q} \le \nu(B(z,t)) \le c_Q \,\nu(B(y,t))
\]
whenever $y,z \in Z$ with $\dd(y,z)< t$, where $t>0$ is arbitrary. For the sake of brevity, let us define $Z^2_t = \{(y,z) \in Z^2: \dd(y,z) < t\}$. Then,
\begin{align*}
  \|u\|_{\dot{B}^0_{p,q}(Z)}^q & 
  \le \int_0^R \biggl(\int_Z \fint_{B(y,t)} \bigl(|u(y)| + |u(z)|\bigr)^{p-\eta} |u(y) - u(z)|^\eta\,d\nu(z)\,d\nu(y)\biggr)^{q/p}\,\frac{dt}{t} \\
  & \le 2^{q(p-\eta)/p} \int_0^R \biggl(\int_Z \fint_{B(y,t)} \bigl(|u(y)|^{p-\eta} + |u(z)|^{p-\eta}\bigr) |u(y) - u(z)|^\eta\,d\nu(z)\,d\nu(y)\biggr)^{q/p}\,\frac{dt}{t} \\
 & \approx \int_0^R \biggl( \iint_{Z^2_t}  \frac{|u(y)|^{p-\eta}|u(y) - u(z)|^\eta}{\nu(B(y,t))}+\frac{|u(z)|^{p-\eta}|u(y) - u(z)|^\eta}{\nu(B(z,t))}\,d\nu^2(y,z) \biggr)^{q/p} \frac{dt}{t}\\
& \approx \int_0^R \biggl( \iint_{Z^2_t} \frac{|u(y)|^{p-\eta} |u(y) - u(z)|^\eta}{\nu(B(y,t))} \,d\nu^2(y,z) \biggr)^{q/p} \frac{dt}{t} \eqcolon I\,.
\end{align*}
Next, we apply the H\"older inequality for the inner double integral, which yields
\begin{align*}
  I & \le \int_0^R \biggl( \iint_{Z^2_t} \frac{|u(y)|^r}{\nu(B(y,t))}\,d\nu^2(y,z)\biggr)^{q(p-\eta)/pr} \\
  & \qquad \times \biggl( \iint_{Z^2_t} \frac{|u(y)-u(z)|^{\eta r/(r-p+\eta)}}{\nu(B(y,t))}\,d\nu^2(y,z)\biggr)^{q(r-p+\eta)/pr} \frac{dt}{t} \\
  & = \int_0^R \biggl(\int_Z |u(y)|^r \,d\nu(y)\biggr)^{q(p-s)/p(r-s)} \biggl(\int_Z \fint_{B(y,t)} |u(y)-u(z)|^s\,d\nu(y)\,d\nu(z) \biggr)^{q(r-p)/p(r-s)}\frac{dt}{t} \\
  & \le \|u\|_{L^r(Z)}^{q r(p-s)/p(r-s)} \|u\|_{\dot{B}^\alpha_{1,\infty}(Z)}^{qs(r-p)/p(r-s)} \int_0^R \frac{dt}{t^{1-\beta}} < \infty\,,
\end{align*}
where $\beta = \alpha qs(r-p)/p(r-s) > 0$. Hence, $\|u\|_{\dot{B}^0_{p,q}(Z)} \le C \|u\|_{L^r(Z)}^{r(p-s)/p(r-s)} \|u\|_{\dot{B}^\alpha_{s,\infty}(Z)}^{s(r-p)/p(r-s)}$.
\end{proof}
The next lemma shows that Besov functions of smoothness $\alpha$ are closely related to $\alpha$-fractional Haj\l asz functions. Using also Lemma~\ref{lem:HajlaszIsBesov} below, we deduce that $B^\alpha_{p,p}(Z) \subset M^{\alpha,p}(Z) \subset B^\alpha_{p,\infty}(Z)$ and the embeddings are continuous. This fact will be used to prove Sobolev-type embeddings for Besov functions.
\begin{lem}
\label{lem:BesovHaj}
Given $u \in \dot{B}^{\alpha}_{p,q}(Z)$ for some $\alpha\in(0,1)$ and $0<q\le p < \infty$, there is $g \in L^p(Z)$ and $E \subset Z$ with $\nu(E)=0$ such that
\begin{equation}
  \label{eq:BesovHaj}
  |u(y) - u(z)| \le \dd(y,z)^\alpha (g(y) + g(z)) \qquad\text{for all }y,z \in Z \setminus E.
\end{equation}
Moreover, $\|g\|_{L^p(Z)} \lesssim \|u\|_{\dot{B}^\alpha_{p,q}(Z)}$ provided that $R > \diam Z$ in \eqref{eq:Besov} (or, perhaps, $R=\diam Z=\infty$).
\end{lem}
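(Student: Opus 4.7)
The plan is to take $g$ to be a natural ``Besov maximal'' quantity built directly from the integrand defining $\|u\|_{\dot{B}^\alpha_{p,q}(Z)}$ and then verify the $L^p$ bound and the pointwise estimate \eqref{eq:BesovHaj} separately. Concretely, I would set
\[
  g(y) = \biggl(\int_0^R \biggl(\fint_{B(y,t)}|u(y) - u(w)|^p\,d\nu(w)\biggr)^{q/p} \frac{dt}{t^{\alpha q + 1}}\biggr)^{1/q}.
\]
Since $p/q \ge 1$, Minkowski's integral inequality, applied with exponent $p/q$ in the $y$-variable, together with Fubini, immediately yields
\[
  \biggl(\int_Z g(y)^p\,d\nu(y)\biggr)^{q/p} \le \int_0^R \biggl(\int_Z \fint_{B(y,t)}|u(y) - u(w)|^p\,d\nu(w)\,d\nu(y)\biggr)^{q/p} \frac{dt}{t^{\alpha q+1}} = \|u\|_{\dot{B}^\alpha_{p,q}(Z)}^q,
\]
giving $\|g\|_{L^p(Z)} \lesssim \|u\|_{\dot{B}^\alpha_{p,q}(Z)}$.

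To establish the pointwise inequality \eqref{eq:BesovHaj} at Lebesgue points $y,z$ with $d \coloneq \dd(y,z)$, I would pick $k_0 \in \Zbb$ with $2^{-k_0-1} \le d < 2^{-k_0}$ and introduce a comparison ball $B^* \supset B(y,2^{-k_0}) \cup B(z,2^{-k_0})$ of radius comparable with $d$. Writing $A_k(y) \coloneq (\fint_{B(y,2^{-k})}|u(y) - u(w)|^p\,d\nu(w))^{1/p}$, Jensen's inequality combined with the doubling of $\nu$ yields
\[
  |u_{B(y,2^{-k})} - u_{B(y,2^{-k-1})}| \lesssim \fint_{B(y,2^{-k})} |u(y) - u(w)|\,d\nu(w) \le A_k(y),
\]
and analogously for $z$ and for the two steps linking $u_{B(y,2^{-k_0})}$ and $u_{B(z,2^{-k_0})}$ through $B^*$. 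Telescoping, using $u(y) = \lim_k u_{B(y,2^{-k})}$ from Lebesgue differentiation, then produces
\[
  |u(y) - u(z)| \lesssim \sum_{k \ge k_0 - C} \bigl(A_k(y) + A_k(z)\bigr)
\]
for some absolute constant $C$ (depending only on the doubling of $\nu$).

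It remains to bound this sum by $d^\alpha\bigl(g(y)+g(z)\bigr)$. A standard discrete-continuous comparison (using doubling to replace the integrand of $g^q$ on $t \in [2^{-k-1},2^{-k}]$ by $2^{k\alpha q} A_k^q$) shows $g(y)^q \gtrsim \sum_k 2^{k\alpha q} A_k(y)^q$. For $q \ge 1$, H\"older's inequality applied to $A_k(y) = 2^{-k\alpha}\cdot 2^{k\alpha} A_k(y)$ against the geometrically summable weight $2^{-k\alpha q'}$ gives
\[
  \sum_{k \ge k_0 - C} A_k(y) \lesssim 2^{-k_0 \alpha} g(y) \approx d^\alpha g(y).
\]
For $0 < q < 1$, I would instead invoke the elementary subadditivity $(\sum_k a_k)^q \le \sum_k a_k^q$, valid for $q \le 1$, combined with the bound $2^{-k\alpha q} \le 2^{-(k_0-C)\alpha q}$ for $k \ge k_0-C$, to reach the same conclusion.

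The main technical hurdle I expect is treating the two ranges $q \ge 1$ (via H\"older) and $0<q<1$ (via concavity/subadditivity) uniformly, and ensuring that the comparison ball $B^*$ contributes only terms already absorbed into the dyadic sum. A secondary issue is that, without the assumption $R>\diam Z$, pairs with $\dd(y,z)$ large relative to $R$ would not be covered by the dyadic chain; this is why the clean norm bound $\|g\|_{L^p} \lesssim \|u\|_{\dot{B}^\alpha_{p,q}}$ is stated only under $R>\diam Z$, whereas the mere existence of a Haj\l asz gradient in $L^p(Z)$ can always be arranged by allowing constants to depend on $R$ and absorbing large-distance pairs into the $L^p$-part of the full Besov norm.
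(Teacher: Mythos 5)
Your proof is correct, but it takes a genuinely different route from the paper. The paper sets $g$ equal to the fractional sharp maximal function $g(z) = C\sup_{r>0} r^{-\alpha}\fint_{B(z,r)}|u(z)-u(w)|\,d\nu(w)$, for which the pointwise Haj\l asz inequality~\eqref{eq:BesovHaj} is immediate (one comparison ball of radius $\approx \dd(y,z)$ suffices, no chaining), and then outsources the $L^p$ bound $\|g\|_{L^p}\lesssim\|u\|_{\dot B^\alpha_{p,p}}$ to \cite[Lemma~6.1]{GogKosSha}, finally passing from $\dot B^\alpha_{p,p}$ to $\dot B^\alpha_{p,q}$ via Lemma~\ref{lem:increasing-q}. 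You instead choose $g$ as an $\ell^q$-type aggregate of the oscillations across scales; this reverses where the work lies: the $L^p$ bound becomes a one-line application of Minkowski's integral inequality (with exponent $p/q\ge1$), directly in terms of $\|u\|_{\dot B^\alpha_{p,q}}$ with no detour through $\dot B^\alpha_{p,p}$, but the pointwise estimate now requires a dyadic telescoping chain between $u(y)$ and $u(z)$ together with a discrete H\"older/subadditivity argument split across $q\ge1$ and $q<1$. Both routes are sound; yours is self-contained where the paper cites external work, and it tracks the $q$-dependence more explicitly, at the cost of a longer chaining argument. Minor bookkeeping points worth flagging: the lower bound $g(y)^q\gtrsim\sum_k 2^{k\alpha q}A_k(y)^q$ is cleanest if you integrate over $t\in[2^{-k},2^{-k+1}]$ (so that $B(y,2^{-k})\subset B(y,t)$ and doubling controls the measure), rather than over $[2^{-k-1},2^{-k}]$ as written; and the chain needs all scales $2^{-k}$ with $k\ge k_0-C$ to lie below $R$, which holds after inflating $R$ by a bounded factor (harmless since changing $R$ only changes the seminorm by a constant) — this is precisely why the clean norm bound is asserted only when $R>\diam Z$.
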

\begin{proof}
Similarly as in \cite[Lemma~6.1]{GogKosSha}, setting $g(z) = 2^{Q+\alpha} c_Q^{-1} \sup_{r>0} \fint_{B(z,r)} |u(z) - u(y)|r^{-\alpha} \,d\nu(y)$, $z\in Z$, gives the desired function that satisfies \eqref{eq:BesovHaj}. Then, \cite[Lemma~6.1]{GogKosSha} and Lemma~\ref{lem:increasing-q} yield that $\|g\|_{L^p(Z)} \lesssim \|u\|_{\dot{B}^\alpha_{p,p}(Z)} \lesssim \|u\|_{\dot{B}^\alpha_{p,q}(Z)}$.
\end{proof}
\begin{cor}
\label{cor:Besov-standardembeddings}
Let $\alpha \in (0,1)$, $p \in [1, \infty)$, and $q \in (0,p]$. Let $G\subset Z$ be an arbitrary open ball. Then, there is $C>0$ such that:
\begin{enumerate}
	\item \label{it:BesovEmbIt1} If $\alpha p < Q$, then $\|u-u_G\|_{L^{p^*}(G)} \le C \|u\|_{\dot{B}^\alpha_{p,q}(Z)}$, where $p^* = pQ / (Q-\alpha p)$.
  \item \label{it:BesovEmbIt2} If $\alpha p = Q$, then $\|u-u_G\|_{\exp L(G)} \le C \|u\|_{\dot{B}^\alpha_{p,q}(Z)}$.
  \item \label{it:BesovEmbIt3} If $\alpha p > Q$, then $\|u-u_G\|_{L^{\infty}(G)} \le C \|u\|_{\dot{B}^\alpha_{p,q}(Z)}$ and $|u(y) - u(z)| \le C \dd(y,z)^{\kappa} \|u\|_{B^\alpha_{p,q}(Z)}$ for all $y,z \in G$, where $\kappa = \alpha - Q/p$.
\end{enumerate}
If \eqref{eq:nu-reg-lower-dimension} is satisfied, then we may choose $G = Z$ in all of the above even if $Z$ is unbounded, in which case $u_G = 0$.
\end{cor}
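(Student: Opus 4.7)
The plan is to reduce all three statements to classical Haj\l asz--Sobolev--Morrey theory. Since for $\alpha > 0$ the Besov norm \eqref{eq:Besov} is independent of the cut-off $R \in (0, \infty]$ up to equivalent constants, we may take $R = \infty$; Lemma~\ref{lem:BesovHaj} then produces $g \in L^p(Z)$ with $\|g\|_{L^p(Z)} \lesssim \|u\|_{\dot{B}^\alpha_{p,q}(Z)}$ which is an $\alpha$-fractional Haj\l asz gradient of $u$, i.e., $|u(y)-u(z)| \le \dd(y,z)^\alpha (g(y)+g(z))$ off a $\nu$-null set. Restricted to the ball $G$, the measure $\nu$ is doubling and enjoys the relative lower mass bound \eqref{eq:nu-lower-dimension} with exponent $Q$, so $(u,g)$ forms a Haj\l asz pair on $G$ in precisely the setting covered by the standard fractional Haj\l asz--Sobolev embeddings (cf.\@ \cite{Haj}).

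From here the three cases become familiar trichotomies. In case \ref{it:BesovEmbIt1}, the fractional Sobolev--Poincar\'e inequality
\[
  \biggl(\fint_G |u-u_G|^{p^*}\,d\nu\biggr)^{1/p^*} \lesssim r_G^\alpha \biggl(\fint_G g^p\,d\nu\biggr)^{1/p},
\]
obtained via a Hedberg-type pointwise bound of $g$ by a Riesz potential combined with the relative lower mass bound, yields the claim after absorbing $\nu(G)$ and using $\|g\|_{L^p(Z)} \lesssim \|u\|_{\dot B^\alpha_{p,q}(Z)}$. In case \ref{it:BesovEmbIt2} (the critical exponent $\alpha p = Q$), iterating the sub-critical estimate along a geometrically increasing sequence of exponents produces the Trudinger--Moser-type exponential integrability of $u - u_G$ in $\exp L(G)$. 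In case \ref{it:BesovEmbIt3}, a telescoping chain of balls joining $y$ and $z$ together with \eqref{eq:BesovHaj} and the relative lower mass bound yields the pointwise H\"older estimate $|u(y)-u(z)| \lesssim \dd(y,z)^{\alpha - Q/p} \|g\|_{L^p(G)}$; the $L^\infty(G)$ bound on $u-u_G$ then follows by fixing a Lebesgue point of $u$ in $G$ and subtracting.

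For the final assertion, assume \eqref{eq:nu-reg-lower-dimension}. Then the lower mass bound is uniform on all of $Z$, so the chaining/Hedberg arguments from the three cases transplant globally; the role of $u_G$ is taken by $0$ since $u \in L^p(Z)$ together with the uniform lower mass bound forces $u$ itself, not merely $u - u_G$, into the relevant target. The main obstacle is purely bookkeeping rather than genuine analysis: one needs to verify that the classical Haj\l asz--Sobolev proofs, ordinarily stated under global Ahlfors regularity or a Poincar\'e inequality on $Z$, go through using nothing beyond doubling of $\nu$, the pointwise inequality \eqref{eq:BesovHaj}, and the relative (respectively uniform) lower mass bound on balls. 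Since those proofs rely only on these three ingredients, they carry over to the present setting essentially verbatim.
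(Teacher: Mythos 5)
Your proposal is correct in substance but takes a genuinely different route from the paper's proof. Both arguments begin identically: via Lemma~\ref{lem:BesovHaj} a Besov function $u$ admits an $\alpha$-fractional Haj\l asz gradient $g \in L^p(Z)$ controlled by $\|u\|_{\dot B^\alpha_{p,q}(Z)}$. From there, however, you propose to \emph{re-derive} the fractional Haj\l asz--Sobolev trichotomy by hand --- a Hedberg-type Riesz-potential estimate for the sub-critical case, iteration to the exponential class at the endpoint, and a telescoping chain for the super-critical H\"older bound. The paper instead applies a single metric transformation: it passes to the snow-flaked metric $\tilde{d}(\cdot,\cdot) = \dd(\cdot,\cdot)^\alpha$, under which the $\alpha$-fractional inequality \eqref{eq:BesovHaj} becomes a genuine (non-fractional) Haj\l asz inequality for the pair $(u,g)$, observes that $(G, \tilde{d}, \nu\lfloor_G)$ then has a lower mass bound with exponent $Q/\alpha$, and invokes the existing non-fractional Haj\l asz--Sobolev theorem \cite[Theorem~8.7]{Haj} in one shot. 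The gain of the paper's route is brevity and safety: there is no need to re-check that the Hedberg/iteration/chaining machinery transfers to the fractional-gradient setting, because the change of metric absorbs $\alpha$ entirely into the geometry. Your route is more self-contained and does not depend on a pre-packaged reference (which is a modest advantage, since \cite{Haj} only treats the non-fractional Haj\l asz class explicitly), but the three sketches you give are exactly what \cite[Theorem~8.7]{Haj} already bundles together; making them rigorous amounts to reproving that theorem. Two minor points worth tightening in your write-up: (a) the equivalence of seminorms under changing $R$ is up to a term involving $\|u\|_{L^p(Z)}$, so ``independent of $R$ up to equivalent constants'' holds for the full norms but not the homogeneous seminorms --- this matters in parts \ref{it:BesovEmbIt1}--\ref{it:BesovEmbIt3} which have $\|u\|_{\dot B^\alpha_{p,q}(Z)}$ on the right, and the paper quietly sidesteps this by the phrasing in Lemma~\ref{lem:BesovHaj}; and (b) the citation ``standard fractional Haj\l asz--Sobolev embeddings (cf.\@ \cite{Haj})'' is a little misleading since \cite{Haj} states the theorem only for non-fractional Haj\l asz pairs, which is precisely why the paper snow-flakes.
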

\begin{proof}
Given $u \in \dot{B}^\alpha_{p,q}(Z)$, there is $g\in L^p(Z)$ such that \eqref{eq:BesovHaj} is satisfied.

Fix a ball $G\subset Z$. Considering the snow-flaked metric $\tilde{d}(\cdot, \cdot) = \dd(\cdot, \cdot)^\alpha$, we obtain from \eqref{eq:nu-lower-dimension} that the metric space $(G, \tilde{d}, \nu\lfloor_{G})$ has lower mass bound with exponent $Q/\alpha$, i.e., $\nu(B_{\tilde{d}}(z,r)) \ge \tilde{c} r^{Q/\alpha}$ for every $z\in G$ and $r\le \diam_{\tilde d} G$, where $B_{\tilde{d}}(z,r) = \{y \in G: \tilde{d}(y,z)<r\}$ and $\tilde{c} = c_Q \nu(G)/\rad_{\dd}(G)^Q$.

Then, all three parts follow from \cite[Theorem~8.7]{Haj}, which would give $\|g\|_{L^p(Z)} \lesssim \|u\|_{\dot{B}^\alpha_{p,q}(Z)}$ on the right hand side of the inequality in \ref{it:BesovEmbIt1} and \ref{it:BesovEmbIt2}. As for \ref{it:BesovEmbIt3}, we obtain that
\[
  |u(y)-u(z)| \le C \tilde{d}(y,z)^{1-Q/\alpha p} \|g\|_{L^p(Z)} \le C \dd(y,z)^{\alpha - Q/p} \|u\|_{\dot{B}^\alpha_{p,q}(Z)}\quad \text{for every }y,z\in G\,.
\]
By inspecting the proof of \cite[Theorem~8.7]{Haj}, one sees that the constant $C$ really depends only on $p$, $Q/\alpha$, and $\tilde{c}$ in the lower mass bound condition of $G$. Hence, if \eqref{eq:nu-reg-lower-dimension} holds, then $\nu(B) \ge c_Q \rad_{\tilde{d}}(B)^{Q/\alpha}$ for every ball $B \subset Z$ and hence we may apply \cite[Theorem~8.7]{Haj} directly to the snow-flaked space $(Z, \tilde{d}, \nu)$.
\end{proof}
\begin{rem}
\label{rem:sharp-besov-emb}
The embedding as stated in Corollary~\ref{cor:Besov-standardembeddings}\,\ref{it:BesovEmbIt1} is not sharp in case both \eqref{eq:nu-reg-lower-dimension} and \eqref{eq:nu-upper-dimension} are satisfied. In fact, one can deduce from \cite[Theorem~4.4]{HeiIhnTuo} that
$\|u\|_{L^{p^*,q}(Z)} \lesssim \|u\|_{B^{\alpha}_{p,q}(Z)}$ for all $p\in[1, Q/\alpha)$ and $q \in [1, \infty]$, where $L^{p^*,q}(Z)$ stands for a Lorentz space (see~\cite[Section~IV.4]{BenSha}).
Observe however that \cite{HeiIhnTuo} assumes that $\nu$ is Ahlfors $Q$-regular, which it fails to be in case $\sigma < Q$. Nevertheless, 
the problem will be circumvented as soon as \cite[Lemma~3.4]{HeiIhnTuo} in the proof of \cite[Theorem~4.4]{HeiIhnTuo} is replaced by \cite[Theorem~8.7\,(1)]{Haj}.
\end{rem}
\begin{lem}[{cf.\@ \cite[Lemma~6.2]{GogKosSha}}]
\label{lem:HajlaszIsBesov}
Let $p\in[1, \infty)$ and $\alpha\in (0,1]$. Suppose that the couple of measurable functions $u, g$ with $g \in L^p(Z)$ satisfy \eqref{eq:BesovHaj}. Then, $\|u\|_{\dot{B}^\alpha_{p,\infty}(Z)} \lesssim \|g\|_{L^p(Z)}$. Consequently, $M^{\alpha,p}(Z) \emb B^{\alpha}_{p,\infty}(Z)$.
\end{lem}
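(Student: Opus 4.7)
The plan is direct: estimate $E_p(u,t)$ pointwise in $t$ by inserting the Hajłasz inequality, then integrate using the doubling property of $\nu$.

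First, fix $t \in (0, R)$. For any $y \in Z \setminus E$ and $z \in B(y,t) \setminus E$ we have $\dd(y,z) < t$, so the hypothesis and the elementary inequality $(a+b)^p \le 2^{p-1}(a^p + b^p)$ give
\[
  |u(y) - u(z)|^p \le 2^{p-1} t^{\alpha p} \bigl(g(y)^p + g(z)^p\bigr).
\]
Integrating, we obtain
\[
  E_p(u,t)^p \le 2^{p-1} t^{\alpha p} \left( \int_Z g(y)^p\,d\nu(y) + \int_Z \fint_{B(y,t)} g(z)^p\,d\nu(z)\,d\nu(y) \right).
\]
The first summand is $\|g\|_{L^p(Z)}^p$ outright. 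The work is in controlling the second summand.

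For the second summand, apply Fubini after writing the inner mean as $\int_Z \chi_{B(y,t)}(z)/\nu(B(y,t))\,d\nu(z)$ and using symmetry $\chi_{B(y,t)}(z) = \chi_{B(z,t)}(y)$:
\[
  \int_Z \fint_{B(y,t)} g(z)^p\,d\nu(z)\,d\nu(y) = \int_Z g(z)^p \int_{B(z,t)} \frac{d\nu(y)}{\nu(B(y,t))}\,d\nu(z).
\]
For $y \in B(z,t)$ we have $B(z,t) \subset B(y,2t)$, so doubling of $\nu$ gives $\nu(B(z,t)) \le c_\dbl\, \nu(B(y,t))$, i.e., $1/\nu(B(y,t)) \le c_\dbl/\nu(B(z,t))$. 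Hence the inner integral is at most $c_\dbl$, and the second summand is bounded by $c_\dbl \|g\|_{L^p(Z)}^p$.

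Combining, $E_p(u,t)^p \le 2^{p-1}(1 + c_\dbl)\, t^{\alpha p} \|g\|_{L^p(Z)}^p$, so $E_p(u,t)/t^\alpha \lesssim \|g\|_{L^p(Z)}$ uniformly in $t$; taking the supremum over $t \in (0, R)$ yields $\|u\|_{\dot{B}^\alpha_{p,\infty}(Z)} \lesssim \|g\|_{L^p(Z)}$. Since every $u \in M^{\alpha,p}(Z)$ admits such a pair $(u,g)$ with $\|g\|_{L^p(Z)} \le \|u\|_{M^{\alpha,p}(Z)}$, the continuous embedding $M^{\alpha,p}(Z) \hookrightarrow B^\alpha_{p,\infty}(Z)$ follows. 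The only subtlety—and it is a very mild one—is the use of doubling to swap $\nu(B(y,t))$ with $\nu(B(z,t))$ after Fubini; everything else is bookkeeping.
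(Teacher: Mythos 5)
Your proof is correct and follows essentially the same route as the paper's: plug the Haj\l asz inequality into $E_p(u,t)$ and reduce to bounding $\int_Z \fint_{B(y,t)} g(z)^p\,d\nu(z)\,d\nu(y)$. The paper compresses this last bound into a single ``$\approx \|g\|_{L^p(Z)}^p$''; you have simply spelled out the Fubini-plus-doubling argument that justifies it.
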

\begin{proof}
Plugging in \eqref{eq:BesovHaj} into the definition of $\|u\|_{\dot{B}^\alpha_{p,\infty}}$ yields that
\begin{align*}
\|u\|_{\dot{B}^\alpha_{p,\infty}}^p & = \sup_{0<t<R}  \int_Z \fint_{B(y,t)} \frac{|u(y) - u(z)|^p}{t^{\alpha p}}\,d\nu(z)\,d\nu(y) 
\\
& \le \sup_{0<t<R}  \int_Z \fint_{B(y,t)} \frac{\dd(y,z)^{\alpha p}}{t^{\alpha p}} \bigl(g(y) + g(z)\bigr)^p\,d\nu(z)\,d\nu(y) \\
& \le 2^p \sup_{0<t<R} \int_Z \fint_{B(y,t)} \bigl(g(y)^p + g(z)^p\bigr)\,d\nu(z)\,d\nu(y) \approx \|g\|_{L^p(Z)}^p\,.
\qedhere
\end{align*}
\end{proof}
We are now ready to prove one of the main theorems of this section, dealing with continuous embeddings between Besov spaces over a measure space with lower mass bound. Recall also that the doubling condition \eqref{eq:nu-lower-dimension} implies the lower mass bound \eqref{eq:nu-reg-lower-dimension} whenever $Z$ is bounded.
\begin{thm}
\label{thm:Besov-embedding-openended}
Assume that \eqref{eq:nu-reg-lower-dimension} is satisfied. Suppose that $\alpha \in (0,1]$ and $p \in [1, Q/\alpha)$. Let $p^*$ be the \emph{Sobolev conjugate exponent}, i.e., $p^* = pQ / (Q-p\alpha)$. Given $\lambda \in (0,1)$, let $p_\lambda$ be given by the convex combination of reciprocals of $p$ and $p^*$, viz., $p_\lambda^{-1} = (1-\lambda) p^*{}^{-1} + \lambda p^{-1}$. 
\begin{enumerate}
  \item If $q \in (0, p]$, then $\|u\|_{B^{\lambda \alpha}_{p_\lambda, q'}}(Z) \lesssim \|u\|_{B^{\alpha}_{p, q}}(Z)$ for every $q' \in [q/\lambda, \infty]$.%
	\label{it:BesEmbedIt1}
  \item If $q \in (0, \infty]$, then $\|u\|_{B^{\lambda \alpha}_{p', q'}}(Z) \lesssim \|u\|_{B^{\alpha}_{p, q}}(Z)$ for every $p' \in [p, p_\lambda)$ and $q' \in (0, \infty]$.%
    \label{it:BesEmbedIt2}
\end{enumerate}
\end{thm}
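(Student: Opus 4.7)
The plan is to prove (i) by direct real interpolation of Besov norms between $L^{p^*}$ and $\dot B^\alpha_{p,q}$, and then to reduce (ii) to (i) by spending a little smoothness in order to gain flexibility in $p'$ and $q'$. In both cases the essential input beyond the lemmas already established in this section is the Sobolev-type bound $\|u\|_{L^{p^*}(Z)} \lesssim \|u\|_{B^\alpha_{p,q}(Z)}$ supplied by Corollary~\ref{cor:Besov-standardembeddings}\,\ref{it:BesovEmbIt1} with $G=Z$ (admissible thanks to the lower mass bound \eqref{eq:nu-reg-lower-dimension}), which is however only available under the restriction $q \in (0,p]$.

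For~(i), I would apply Lemma~\ref{lem:BesovInterpolate} with endpoints $(\alpha_0,p_0,q_0)=(0,p^*,\infty)$ and $(\alpha_1,p_1,q_1)=(\alpha,p,q)$ at the weight $\lambda$; the three convex combinations defining the output triple land precisely on $(\lambda\alpha,\,p_\lambda,\,q/\lambda)$, giving
\[
  \|u\|_{\dot B^{\lambda\alpha}_{p_\lambda,\,q/\lambda}(Z)} \le \|u\|_{\dot B^0_{p^*,\infty}(Z)}^{1-\lambda}\,\|u\|_{\dot B^\alpha_{p,q}(Z)}^{\lambda}.
\]
The first factor is controlled by $\|u\|_{L^{p^*}(Z)}$ (via $|u(y)-u(z)|\le|u(y)|+|u(z)|$, Fubini, and the doubling of $\nu$, using $B^0_{p^*,\infty}(Z)=L^{p^*}(Z)$), and the latter is $\lesssim\|u\|_{B^\alpha_{p,q}(Z)}$ by the Sobolev-type embedding above (with the constant $|u_Z|$ appearing in the bounded case absorbed via H\"older's inequality into $\|u\|_{L^p(Z)}$). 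Log-convexity $\|u\|_{L^{p_\lambda}}\le\|u\|_{L^p}^{\lambda}\|u\|_{L^{p^*}}^{1-\lambda}$ takes care of the $L^{p_\lambda}$-component of the target norm, and Lemma~\ref{lem:increasing-q} extends the resulting estimate at $q'=q/\lambda$ to every $q'\in[q/\lambda,\infty]$.

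For~(ii) with $p'\in(p,p_\lambda)$, the strict inequality provides just enough room to lose smoothness. I would first invoke Lemma~\ref{lem:increasing-q} (trivially when $q=\infty$) to embed $B^\alpha_{p,q}(Z)\emb B^\alpha_{p,\infty}(Z)$, and then Remark~\ref{rem:decreasingsmoothness} to embed the latter into $B^{\tilde\alpha}_{p,q_*}(Z)$ for any chosen $\tilde\alpha\in(\lambda\alpha,\alpha)$ and $q_*\in(0,p]$. Setting $\tilde\alpha\coloneq\lambda\alpha+Q(1/p-1/p')$ (which lies in $(\lambda\alpha,\alpha)$ precisely because $p<p'<p_\lambda$) and $\mu\coloneq\lambda\alpha/\tilde\alpha\in(\lambda,1)$, a direct check of the convex combinations in~(i) applied to $B^{\tilde\alpha}_{p,q_*}(Z)$ at weight $\mu$ gives the output triple $(\lambda\alpha,p',q_*/\mu)$; one verifies $(1-\mu)\tilde\alpha=Q(1/p-1/p')$, so the $p$-coordinate does land on $p'$. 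Taking $q_*\coloneq\min(\mu q',p)\in(0,p]$ ensures $q_*/\mu\le q'$, and a final application of Lemma~\ref{lem:increasing-q} closes the chain. The boundary case $p'=p$ bypasses (i) entirely: Lemma~\ref{lem:increasing-q} followed by Remark~\ref{rem:decreasingsmoothness} delivers $B^\alpha_{p,q}\emb B^\alpha_{p,\infty}\emb B^{\lambda\alpha}_{p,q'}$ directly.

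The main obstacle is precisely the restriction $q\le p$ in Corollary~\ref{cor:Besov-standardembeddings}; were it absent, part~(i) would already cover the endpoint $p'=p_\lambda$ for every $q\in(0,\infty]$ and the smoothness-spending detour in part~(ii) would not be needed, which is exactly why the upper bound on $p'$ in (ii) must be open rather than closed. The remaining work is routine bookkeeping: verifying that each intermediate parameter triple satisfies the hypotheses of the lemma being invoked and handling the additive constant $|u_Z|$ that arises in the bounded case when passing between $\dot B^0_{p^*,\infty}(Z)$ and $L^{p^*}(Z)$.
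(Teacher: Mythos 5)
Your proposal is correct and follows essentially the same path as the paper: part~(i) by interpolating between $B^0_{p^*,\infty}(Z)\approx L^{p^*}(Z)$ (supplied by Corollary~\ref{cor:Besov-standardembeddings} under the restriction $q\le p$) and $B^\alpha_{p,q}(Z)$ via Lemma~\ref{lem:BesovInterpolate}/Corollary~\ref{cor:BesovInterpolate}, then stretching $q'$ upward with Lemma~\ref{lem:increasing-q}; part~(ii) by first downgrading to $B^{\tilde\alpha}_{p,q_*}$ with $\tilde\alpha<\alpha$ and then invoking~(i). The only cosmetic difference is in~(ii): you pin down $\tilde\alpha$ and the interpolation weight explicitly so that the output lands exactly at smoothness $\lambda\alpha$ and handle the $q'$-coordinate via the choice $q_*=\min(\mu q',p)$, whereas the paper overshoots the smoothness ($\tilde\lambda\tilde\alpha>\lambda\alpha$) and uses Lemma~\ref{lem:decreasingsmoothness} afterward to reach $\lambda\alpha$ with arbitrary $q'$; both bookkeeping schemes are valid.
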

Note that by the definition of $p^*$, we have $p_\lambda^{-1} = p^{-1} - (1-\lambda) \alpha Q^{-1} = p^*{}^{-1} + \lambda \alpha Q^{-1}$. Part \ref{it:BesEmbedIt1} holds true even for $\lambda = 1$ by Lemma~\ref{lem:increasing-q}. The embedding in \ref{it:BesEmbedIt2} holds true also for $\lambda = 0$ by Corollary~\ref{cor:BesovInterpolate} and Lemma~\ref{lem:zerosmoothness} (details on how to deal with $q>p$ will become clear from the proof below). If $\nu(Z)<\infty$, then the estimate in \ref{it:BesEmbedIt2} is satisfied also for $p'<p$ since $E_{p'}(u,t) \le \nu(Z)^{1-p'/p} E_{p(u,t)}$ by the H\"older inequality.
\begin{proof}
Assume that $q \le p$. Then, $\|u\|_{L^{p^*}(Z)} \lesssim \|u\|_{B^\alpha_{p,q}(Z)}$ by Corollary~\ref{cor:Besov-standardembeddings}\,\ref{it:BesovEmbIt1}. In particular,
$
 \|u\|_{B^0_{p^*, \infty}(Z)} \approx \|u\|_{L^{p^*}(Z)} \lesssim \|u\|_{B^{\alpha}_{p,q}(Z)}.
$
Hence, Corollary~\ref{cor:BesovInterpolate} yields that
\[
  \|u\|_{B^{\lambda \alpha}_{p_\lambda, q/\lambda}(Z)} \le \|u\|_{B^0_{p^*, \infty}(Z)}^{1-\lambda} \|u\|_{B^\alpha_{p, q}(Z)}^\lambda \lesssim \|u\|_{B^\alpha_{p, q}(Z)}\,.
\]
Proof of \ref{it:BesEmbedIt1} will then be completed by applying Lemma~\ref{lem:increasing-q}.

Let us now focus on \ref{it:BesEmbedIt2}. If $p'=p$, then the desired embedding follows immediately from Lemma~\ref{lem:decreasingsmoothness}. Assume that $p' \in (p, p_\lambda)$. Then, there are $\tilde{\alpha} \in (\lambda\alpha, \alpha)$ and $\tilde{\lambda} \in (\lambda \alpha/\tilde{\alpha}, 1)$ such that
\[
  \frac{1}{p'} = \frac{1-\tilde\lambda}{\tilde{p}^*} + \frac{\tilde\lambda}{p} \coloneq (1-\tilde\lambda) \biggl( \frac{1}{p} - \frac{\tilde{\alpha}}{Q}\biggr) + \frac{\tilde\lambda}{p} = \frac{1}{p} - \frac{(1-\tilde\lambda) \tilde{\alpha}}{Q}
  = \frac{1}{p_\lambda} + \frac{\tilde{\lambda}\tilde{\alpha} - \lambda\alpha}{Q}\,.
\]
Then, $u \in B^{\tilde{\alpha}}_{p,p}(Z)$ with $\|u\|_{B^{\tilde{\alpha}}_{p,p}(Z)} \lesssim \|u\|_{B^\alpha_{p,q}}$ by Lemmata~\ref{lem:increasing-q} and \ref{lem:decreasingsmoothness}. Next, we obtain from \ref{it:BesEmbedIt1} that
$\|u\|_{B^{\tilde\lambda \tilde\alpha}_{p',\infty}(Z)} \lesssim \|u\|_{B^{\tilde{\alpha}}_{p,p}(Z)}$. Finally, Lemma~\ref{lem:decreasingsmoothness} concludes the proof of \ref{it:BesEmbedIt2}.
\end{proof}
\begin{rem}
\label{rem:Besov-embedding-openended}
If $\alpha p = Q$, then the conclusion of Theorem~\ref{thm:Besov-embedding-openended}\,\ref{it:BesEmbedIt2} still holds true with $p^* = \infty$, i.e., $p_\lambda = p/\lambda$ and the proof goes through verbatim. If $\alpha p > Q$, then both parts of Theorem~\ref{thm:Besov-embedding-openended} hold true with $p^* = \infty$, i.e., $p_\lambda = p/\lambda$. One needs to use Corollary~\ref{cor:Besov-standardembeddings}\,\ref{it:BesovEmbIt3} and Lemma~\ref{lem:BesovLinftyInterpolate} in the proof of \ref{it:BesEmbedIt1}, and make sure that $\tilde{\alpha} p \neq Q$ in the proof of \ref{it:BesEmbedIt2}.
\end{rem}
If we assume that $\nu$ is not only doubling, but also satisfies the reverse doubling condition~\eqref{eq:nu-upper-dimension}, then one has a somewhat sharper embedding between Besov spaces in the critical case $p' = p_\lambda$ for all $q\in(0,\infty]$. This has been proven by a very different (and very technical) approach to Besov spaces via frames. Roughly speaking, \eqref{eq:Besov} generalizes the modulus of continuity definition of Besov spaces in $\Rbb^n$, whereas the frames correspond to the Fourier approach to Besov spaces in $\Rbb^n$.
\begin{thm}
\label{thm:Besov-embedding-closedend}
Assume that both \eqref{eq:nu-reg-lower-dimension} and \eqref{eq:nu-upper-dimension} are satisfied. Suppose that $\alpha p < Q$ and define the ``Sobolev conjugate'' exponent $p^* = pQ / (Q-p\alpha)$. If $\lambda \in (0,1)$, then $B^{\alpha}_{p, q}(Z) \subset B^{\lambda \alpha}_{p_\lambda, q}(Z)$, where $p_\lambda$ is given by the convex combination of reciprocals of $p^*$ and $p$, viz., $p_\lambda^{-1} = (1-\lambda) p^*{}^{-1} + \lambda p^{-1}$.
\end{thm}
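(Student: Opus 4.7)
The plan is to extract this endpoint embedding from Corollary~\ref{cor:BesovInterpolate} together with the Lorentz-space refinement of the Sobolev embedding recorded in Remark~\ref{rem:sharp-besov-emb}. Applying Corollary~\ref{cor:BesovInterpolate} with endpoint data $(\alpha_0, p_0, q_0) = (0, p^*, q)$ and $(\alpha_1, p_1, q_1) = (\alpha, p, q)$ and weight $\lambda \in (0, 1)$, the convex-combination identities read $\lambda\alpha = (1-\lambda) \cdot 0 + \lambda \cdot \alpha$, $p_\lambda^{-1} = (1-\lambda)/p^* + \lambda/p$, and $q^{-1} = (1-\lambda)/q + \lambda/q$, so the corollary yields
\[
  \|u\|_{B^{\lambda\alpha}_{p_\lambda, q}(Z)} \le \|u\|_{B^0_{p^*, q}(Z)}^{1-\lambda} \cdot \|u\|_{B^\alpha_{p, q}(Z)}^{\lambda}.
\]
The task thus reduces to the auxiliary bound $\|u\|_{B^0_{p^*, q}(Z)} \lesssim \|u\|_{B^\alpha_{p, q}(Z)}$, where the crucial point is that the second Besov exponent is preserved as $q$ on both sides.

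For the Lebesgue part of this auxiliary bound, Corollary~\ref{cor:Besov-standardembeddings}\,\ref{it:BesovEmbIt1} gives $\|u\|_{L^{p^*}(Z)} \lesssim \|u\|_{B^\alpha_{p, q}(Z)}$ directly. For the homogeneous seminorm part $\|u\|_{\dot B^0_{p^*, q}(Z)}$, I would adapt the skeleton of Lemma~\ref{lem:zerosmoothness}, but now feeding in the $L^{p^*, q}$-Lorentz bound from Remark~\ref{rem:sharp-besov-emb} in place of the plain $L^{p^*}$-bound available from Corollary~\ref{cor:Besov-standardembeddings}\,\ref{it:BesovEmbIt1}. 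Concretely, decompose $|u(y) - u(z)|^{p^*} \le 2^{p^*-\eta}(|u(y)|^{p^*-\eta} + |u(z)|^{p^*-\eta}) \cdot |u(y) - u(z)|^{\eta}$ for a sufficiently small $\eta > 0$ and apply the H\"older inequality inside $E_{p^*}(u,t)^{p^*}$ to separate a Lorentz-norm piece, controlled by $\|u\|_{L^{p^*, q}(Z)}$ with a weight in $t$, from a Besov-seminorm piece controlled by $\|u\|_{\dot B^\alpha_{p, q}(Z)}$.

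The principal obstacle will be the final $t$-integration: to obtain the sharp second Besov exponent $q$ rather than $q/\lambda$ (which would only reproduce Theorem~\ref{thm:Besov-embedding-openended}\,\ref{it:BesEmbedIt1}), one has to exploit the Lorentz structure at every scale $t$ instead of merely the weaker Lebesgue bound. This is precisely where the reverse doubling hypothesis~\eqref{eq:nu-upper-dimension} enters decisively: it underlies the Lorentz-space embedding of Remark~\ref{rem:sharp-besov-emb}, and only with that refined input does the Hardy-type bookkeeping in the $t$-integral close at the sharp critical endpoint. Once the auxiliary embedding is in hand, the interpolation inequality above delivers the theorem.
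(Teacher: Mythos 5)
Your first step is fine: given the auxiliary embedding $\|u\|_{B^0_{p^*,q}(Z)}\lesssim\|u\|_{B^\alpha_{p,q}(Z)}$, applying Corollary~\ref{cor:BesovInterpolate} with $(\alpha_0,p_0,q_0)=(0,p^*,q)$ and $(\alpha_1,p_1,q_1)=(\alpha,p,q)$ would indeed yield the theorem for all $\lambda\in(0,1)$. The trouble is that all of the difficulty of the theorem is concentrated in precisely that auxiliary bound, and the sketch you give for it has a genuine gap.

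Your plan for the auxiliary bound is to rerun the argument of Lemma~\ref{lem:zerosmoothness} with the Lorentz refinement $\|u\|_{L^{p^*,q}(Z)}\lesssim\|u\|_{B^\alpha_{p,q}(Z)}$ from Remark~\ref{rem:sharp-besov-emb} fed in where the Lebesgue bound was used. But if you look carefully at Lemma~\ref{lem:zerosmoothness}, the final $t$-integral converges only because $\beta=\alpha q s(r-p)/p(r-s)>0$, i.e.\ strictly because $p<r$. Setting $r=p^*$ and the target exponent to $p^*$ forces $\beta=0$, so $\int_0^R t^{-(1-\beta)}\,dt$ becomes $\int_0^R\frac{dt}{t}$ and diverges logarithmically. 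This divergence is independent of whether the $L^{p^*}$ piece is replaced by the sharper $L^{p^*,q}$ Lorentz quantity: the Lorentz norm is a refinement in the $Z$-variable, and it enters the H\"older step as a $t$-independent constant. There is no visible mechanism by which the Lorentz structure injects the missing power of $t$ that the $t$-integration needs. Your last paragraph asserts that ``the Hardy-type bookkeeping in the $t$-integral close[s] at the sharp critical endpoint'' once the Lorentz bound is used, but this is a hope rather than an argument, and it is exactly the point at which the elementary modulus-of-continuity machinery breaks down. In the Euclidean case the same endpoint embedding is proved by a Bernstein-type inequality applied to Littlewood--Paley pieces, an ingredient that has no direct analogue at the level of $E_p(u,t)$.

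For contrast, the paper does not attempt a self-contained argument here: it simply cites the frame-based embedding theorems of Han and M\"uller--Yang (together with Lemma~\ref{lem:increasing-q}), which supply precisely the metric-space substitute for Littlewood--Paley/Bernstein. The paper even flags this explicitly just before the theorem, remarking that the sharp endpoint ``has been proven by a very different (and very technical) approach to Besov spaces via frames.'' So your route is genuinely different from the paper's, and if it could be completed it would be a welcome elementary alternative, but at present the crucial step is missing. One additional small warning: the Lebesgue part of your auxiliary bound, $\|u\|_{L^{p^*}(Z)}\lesssim\|u\|_{B^\alpha_{p,q}(Z)}$ via Corollary~\ref{cor:Besov-standardembeddings}\,\ref{it:BesovEmbIt1}, is stated there only for $q\le p$; for $q>p$ you would need to appeal to the Lorentz bound of Remark~\ref{rem:sharp-besov-emb} (which covers $q\in[1,\infty]$) instead.
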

\begin{proof}
The claim follows from \cite[Theorem~1.5\,(i)]{Han}, \cite[Theorem~4.1]{MulYan}, and Lemma~\ref{lem:increasing-q}.
\end{proof}
\begin{rem}
In the theory of Besov spaces in $\Rbb^n$, it is more common to state the embeddings between spaces of different smoothness in terms of their \emph{differential dimension}, which is defined as $\alpha - \frac{Q}{p}$ for $B^\alpha_{p,q}(Z)$, where $Q>0$ is from \eqref{eq:nu-lower-dimension}, cf.\@ \cite{Tri2,Tri14} and references therein. Embeddings of the form $B^{\alpha_0}_{p_0, q_0}(Z) \emb B^{\alpha_1}_{p_1, q_1}(Z)$, where $\alpha_0 - \frac{Q}{p_0} = \alpha_1 - \frac{Q}{p_1}$ and $\alpha_0 > \alpha_1$ are the objective of Theorems~\ref{thm:Besov-embedding-openended}\,\ref{it:BesEmbedIt1} and~\ref{thm:Besov-embedding-closedend}, whereas Theorem~\ref{thm:Besov-embedding-openended}\,\ref{it:BesEmbedIt2} deals with the case when $\alpha_0 - \frac{Q}{p_0} > \alpha_1 - \frac{Q}{p_1}$ and $\alpha_0 > \alpha_1$.
\end{rem}
Let us now aim our attention to proving compactness of the embeddings. First, we prove a technical lemma that will allow us to run a Rellich--Kondrachev-type argument in the proof of Theorem~\ref{thm:Besov0-cpt-embedding} below.
\begin{lem}
\label{lem:supsum-estimate}
Let $\{a_{j,k}\}_{j,k=1}^\infty \subset \Rbb^+$. Suppose that there is $K>0$ such that $\sup_{k} \sum_{j} a_{j,k} \le K$.
Then, for every $\eps>0$, there are $j_0 \in \Nbb$ and $I \subset \Nbb$ such that $\# I = \infty$ and $\sup_{k\in I} a_{j_0, k} \le \eps$.
\end{lem}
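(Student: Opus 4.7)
The statement is purely combinatorial: each column of the matrix has sum bounded by $K$, and we must find a row containing infinitely many entries below any prescribed threshold $\eps$. The natural approach is a pigeonhole/contradiction argument.

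The plan is to argue by contradiction. Suppose the conclusion fails for some $\eps > 0$. Then for every $j \in \Nbb$, the set $\{k : a_{j,k} \le \eps\}$ is finite (for if it were infinite, it could play the role of $I$ with this $j_0 = j$). Equivalently, for every $j$ there exists an index $k_j \in \Nbb$ such that $a_{j,k} > \eps$ for all $k \ge k_j$.

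Next, pick any integer $N > K/\eps$ and apply the previous observation to $j = 1, 2, \ldots, N$. Setting $k^* = \max\{k_1, k_2, \ldots, k_N\}$, we obtain that $a_{j,k} > \eps$ simultaneously for $j = 1, \ldots, N$ whenever $k \ge k^*$. Fixing any such $k$, we conclude
\[
  \sum_{j=1}^\infty a_{j,k} \ge \sum_{j=1}^N a_{j,k} > N\eps > K,
\]
which contradicts the standing hypothesis $\sup_k \sum_j a_{j,k} \le K$. Hence the supposition fails and the desired $j_0$ and infinite $I$ exist.

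There is no real obstacle here; the only point requiring a moment of care is the equivalence between "the conclusion fails" and "for every $j$, only finitely many $k$ satisfy $a_{j,k} \le \eps$", which is exactly the negation because an infinite subset of such $k$ would serve as $I$.
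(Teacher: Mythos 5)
Your proof is correct and follows essentially the same contradiction argument as the paper: assume each row $j$ has only finitely many entries $\le \eps$, restrict to the first $N$ rows with $N > K/\eps$ (the paper takes $N = \lceil K/\eps \rceil$), find a column $k$ where all $N$ entries exceed $\eps$, and derive a column sum exceeding $K$. The only cosmetic difference is that you locate such a $k$ via $k^* = \max\{k_1,\ldots,k_N\}$ while the paper phrases it as a nonempty finite intersection of cofinite sets.
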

\begin{proof}
For the sake of contradiction, suppose that there is $\eps>0$ such that $I_j \coloneq \{k \in \Nbb: a_{j, k} \le \eps\}$ is finite for every $j\in\Nbb$. Let $N =  \lceil K / \eps \rceil$. Then,
$  E = \bigcap_{j=1}^N (\Nbb \setminus I_j) \neq \emptyset$.
Hence, if $k\in E$ and $1\le j \le N$, then $a_{j,k} > \eps$. Thus,
\[
  \sum_{j=1}^\infty a_{j,k} \ge \sum_{j=1}^N a_{j,k} >  \sum_{j=1}^N \eps = \biggl\lceil \frac{K}{\eps}\biggr\rceil \cdot \eps \ge K\,,
\]
which contradicts the hypothesis that $\sum_j a_{j,k} \le K$ for every $k\in\Nbb$.
\end{proof}
Next, will show the compact embedding of Besov spaces of zero smoothness, which will be later applied to prove compact embeddings for spaces of higher smoothness via continuous embeddings and elementary interpolating properties.
\begin{thm}
\label{thm:Besov0-cpt-embedding}
Assume that $Z$ is bounded. Let $p\in (1, \infty)$ and $q\in (0, \infty)$. Let $\{u_k\}_{k=1}^\infty$ be a sequence of functions with uniformly bounded $B^0_{p,q}(Z)$ norm. Then, there is a subsequence $\{u_{k_j}\}_{j=1}^\infty$ and a function $u \in L^p(Z)$ such that $u_{k_j} \to u$ in $L^p(Z)$.
\end{thm}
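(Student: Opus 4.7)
I will run a Rellich--Kondrachev type argument: discretize $Z$ at every dyadic scale via a finite-dimensional averaging operator, extract a pointwise-at-a-single-scale bound on $E_p(u_k,\cdot)$ from the supsum Lemma~\ref{lem:supsum-estimate}, and then close with a diagonal subsequence.

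For each $m\in\Nbb$ I choose a maximal $2^{-m}R$-separated net $\{z_j^m\}_{j=1}^{N_m}\subset Z$. Its Voronoi cells $V_j^m$ (with ties broken by the indexing) are Borel and satisfy $B(z_j^m, 2^{-m-1}R) \subset V_j^m \subset B(z_j^m, 2^{-m}R)$. Boundedness of $Z$ together with doubling of $\nu$ gives $N_m<\infty$, so the averaging operator $P_m u = \sum_{j=1}^{N_m} u_{V_j^m}\,\chi_{V_j^m}$ has $N_m$-dimensional range. Using the sandwich above, $V_j^m\subset B(y, 2^{1-m}R)$ for every $y\in V_j^m$, and two applications of doubling (to compare $\nu(V_j^m)$ with $\nu(B(y, 2^{1-m}R))$), a straightforward computation yields the approximation estimate
\[
  \|u-P_m u\|_{L^p(Z)}^p \;\le\; C\, E_p(u, 2^{1-m}R)^p
\]
with $C$ depending only on $p$ and the doubling constant of $\nu$.

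Next I transfer averaged Besov control to pointwise-at-one-scale control. The trivial inclusion $B(y,r)\subset B(y,t)$ combined with doubling gives $E_p(u,r)^p \le c_\dbl^2\, E_p(u,t)^p$ whenever $r\le t \le 2r$. Hence
\[
  E_p(u_k, 2^{-m-1}R)^q \;\lesssim\; \int_{2^{-m-1}R}^{2^{-m}R} E_p(u_k, t)^q\, \frac{dt}{t} \;\eqcolon\; a_{m,k},
  \qquad \sum_{m=0}^\infty a_{m,k} \le \|u_k\|_{\dot{B}^0_{p,q}(Z)}^q \le K,
\]
uniformly in $k$ by hypothesis. I now apply Lemma~\ref{lem:supsum-estimate} iteratively with $\eps_n\downarrow 0$: at the $n$-th step I apply it to the sequence $\{a_{m,k}\}_{m\in\Nbb,\,k\in I_{n-1}}$ (which still has the same uniform bound on $\sum_m a_{m,k}$), thereby obtaining a scale index $m_n$ and an infinite subset $I_n\subset I_{n-1}$ such that $E_p(u_k, 2^{-m_n-1}R)\le \eta_n$ for every $k\in I_n$, with $\eta_n\to 0$. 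Combined with the previous display, $\|u_k - P_{m_n+2}u_k\|_{L^p(Z)} \lesssim \eta_n$ for every $k\in I_n$.

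Since $\mathrm{range}(P_{m_n+2})$ is finite-dimensional and $\{u_k\}$ is $L^p$-bounded, $\{P_{m_n+2}u_k\}_{k\in I_n}$ is relatively compact in $L^p(Z)$. A Cantor diagonal extraction then produces a single subsequence $\{u_{k_j}\}$ on which $P_{m_n+2}u_{k_j}$ converges in $L^p(Z)$ for every $n\in\Nbb$. The triangle inequality
\[
  \|u_{k_i}-u_{k_j}\|_{L^p(Z)} \;\le\; 2C\eta_n + \|P_{m_n+2}(u_{k_i}-u_{k_j})\|_{L^p(Z)}
\]
combined with $\eta_n\to 0$ shows that $\{u_{k_j}\}$ is Cauchy in the complete space $L^p(Z)$, hence converges to some $u\in L^p(Z)$. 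The main technical point is the quantitative passage from the averaged Besov bound $\int E_p(u_k,t)^q\,dt/t\lesssim 1$ to a pointwise-at-one-dyadic-scale bound $E_p(u_k,t_n)\to 0$ along an infinite index subset; Lemma~\ref{lem:supsum-estimate} delivers precisely this, once the crude doubling monotonicity of $t\mapsto E_p(u,t)$ is observed.
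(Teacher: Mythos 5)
Your proof is correct, but the mechanism you use to achieve compactness is genuinely different from the paper's. The paper works with the ball-averaging operator $A_\delta f(y)=\fint_{B(y,\delta)}f\,d\nu$: it first extracts a weakly convergent subsequence using reflexivity of $L^p$ (this is where $p>1$ enters), then observes that weak convergence gives pointwise convergence of $A_\delta u_k(y)$, combines this with the uniform bound $\|A_\delta u_k\|_{L^\infty}\lesssim \delta^{-Q/p}\|u_k\|_{L^p}$ coming from the lower mass bound~\eqref{eq:nu-reg-lower-dimension}, and concludes $A_\delta u_k\to A_\delta u$ in $L^p$ by dominated convergence. You replace $A_\delta$ by a conditional-expectation-type projection $P_m$ onto piecewise-constant functions over a Voronoi partition generated by a maximal $2^{-m}R$-separated net; since $P_m$ has finite-dimensional range and $\|P_m f\|_{L^p}\le\|f\|_{L^p}$ by Jensen, the family $\{P_m u_k\}_k$ is automatically relatively compact in $L^p(Z)$ by Bolzano--Weierstrass, with no appeal to weak compactness or reflexivity. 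The remaining ingredients --- the approximation estimate $\|u-P_m u\|_{L^p}\lesssim E_p(u,c\,2^{-m}R)$, the crude monotonicity $E_p(u,r)\lesssim E_p(u,t)$ for $r\le t\le 2r$ via doubling, the iterative use of Lemma~\ref{lem:supsum-estimate} to find good scales along nested infinite index sets, and the closing Cauchy/diagonal argument --- parallel the paper's. A consequence of your route, which you do not claim but which is worth noting, is that it goes through verbatim for $p=1$ as well, because nothing in it relies on reflexivity; the hypothesis $p>1$ in the theorem statement is only needed by the paper's own proof. One small technical point: the precise enlargement factor in $\|u-P_m u\|_{L^p}\lesssim E_p(u,2^{1-m}R)$ should allow a slightly larger constant (the Voronoi cell $V_j^m$ lies in the closed ball $\overline{B(z_j^m,2^{-m}R)}$, so one should compare with $E_p(u,c\,2^{-m}R)$ for $c$ slightly above $2$, or use closed balls), but the monotonicity step absorbs any fixed dilation and the argument is unaffected.
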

\begin{proof}
Let $\{u_k\}_{k=1}^\infty$ be bounded in $B^0_{p,q}(Z)$. Then, it is bounded in $L^p(Z)$, which is reflexive as $p>1$ and hence there is a subsequence (still denoted by $\{u_k\}_{k=1}^\infty$) that is weakly convergent in $L^p(Z)$. 
We will find a subsequence that is Cauchy (and hence convergent) with respect to the $L^p$ norm. 

Given $\delta>0$, let $A_\delta$ be the averaging operator defined for $f\in L^1_\loc(Z)$ by
\[
  A_\delta f(y) = \fint_{B(y,\delta)} f(z)\,d\nu(z), \quad y\in Z.
\]
For a fixed $\delta>0$, the weak convergence of $\{u_k\}_k$ results in the pointwise convergence $A_\delta u_k(y) \to A_\delta u(y)$ for every $y \in Z$ as $k\to \infty$. The lower mass bound~\eqref{eq:nu-reg-lower-dimension}, which is available as $Z$ is bounded and $\nu$ is doubling, implies that $\| A_\delta u_k\|_{L^\infty(Z)} \lesssim \delta^{-Q/p} \|u_k\|_{L^p(Z)} \le C$. The Lebesgue dominated convergence then yields that $A_\delta u_k \to A_\delta u$ in the norm of $L^p(Z)$.

By the triangle and the H\"older inequalities,
\begin{align*}
   \|A_\delta f - f\|_{L^p(Z)}^p = \int_Z |A_\delta f(y) - f(y)|^p\,d\nu(y)
   \le \int_Z  \biggl(\fint_{B(y,\delta)} |f(z) - f(y)| \,d\nu(z)\biggr)^p d\nu(y)\qquad& \\
    \le \int_Z \fint_{B(y,\delta)} |f(z) - f(y)|^p \,d\nu(z)\,d\nu(y) \le 
    E_p(f,\delta)^p &.
\end{align*}
Observe that
\[
  \sum_{j=1}^\infty E_p(f, 2^{-j})^q \approx \sum_{j=1}^\infty E_p(f, 2^{-j})^q \int_{2^{-j}}^{2^{1-j}} \frac{dt}{t} \lesssim \int_0^1 E_p(f, t)^q \frac{dt}{t} \lesssim \|f\|_{B^0_{p,q}(Z)}^q\,.
\]

Letting $a_{j,k} \coloneq E_p(u_k, 2^{-j})^q$, we see that $\sup_k \sum_j a_{j,k} \lesssim \sup_k \|u_k\|_{B^0_{p,q}(Z)}^q \le C$, which allows us to use Lemma~\ref{lem:supsum-estimate}. Thus, for every $n \in \Nbb$, we can find $j_n\ge 1$ and a countably infinite set $I_n \subset \Nbb$ so that $E_p(u_k, 2^{-j_n})<2^{-n}$ for all $k\in I_n$. Moreover, it is possible to ensure that the sequence $\{j_n\}_{n=1}^\infty$ is strictly increasing. Using a diagonal argument, we will now construct the desired subsequence $\{u_{k_m}\}_{m=1}^\infty$. Let $k_1 \in I_1$ be arbitrary. For $m>1$, we pick $k_m \in \bigcap_{n=1}^m I_n$ such that $k_m \ge k_{m-1}$. By this choice, we have
\[
  E_p(u_{k_m}, 2^{-j_n}) < 2^{-n} \qquad\text{whenever $m\ge n \ge 1$.}
\]

Let $\eps > 0$. Then, there is $n_\eps\in \Nbb$ such that $2^{-n_\eps} < \eps$. Let $\delta = 2^{-j_{n_\eps}}$. Since $\{A_\delta u_{k_m}\}_{m=1}^\infty$ is convergent in $L^p(Z)$, there is $m_0\in \Nbb$ such that $\| A_\delta u_{k_m} - A_\delta u_{k_{m'}}\|_{L^p(Z)} \le \eps$ whenever $m,m' \ge m_0$. Thus, for every $m, m' \ge \max\{n_\eps, m_0\}$, we have
\begin{align*}
  \|u_{k_m} - u_{k_{m'}}\|_{L^p(Z)} & \le \|u_{k_m} - A_\delta u_{k_{m}}\|_{L^p(Z)} + \|A_\delta u_{k_{m}} - A_\delta u_{k_{m'}}\|_{L^p(Z)} + \|A_\delta u_{k_{m'}} - u_{k_{m'}}\|_{L^p(Z)} \\
  & \le E_p(u_{k_m}, \delta) + \eps + E_p(u_{k_{m'}}, \delta) < 3\eps.
\end{align*}
Since $\eps>0$ is arbitrary, we have shown that $\{u_{k_m}\}_{m=1}^\infty$ is a Cauchy sequence in $L^p(Z)$ and hence $u_{k_m} \to u$ in $L^p(Z)$ as $m\to\infty$.
\end{proof}
\begin{cor}
\label{cor:Besov-cpt-embedding}
Assume that $Z$ is bounded. Let $p\ge 1$ and $\alpha\in(0,1]$. Let $\{u_k\}_{k=1}^\infty$ be a sequence of functions with uniformly bounded $B^\alpha_{p,\infty}(Z)$ norm. Then, there is a subsequence $\{u_{k_j}\}_{j=1}^\infty$ and a function $u \in L^s(Z)$ such that $u_{k_j} \to u$ in $L^s(Z)$ for every $s \in [1, p^*)$, where $p^* = pQ/(Q-\alpha p)$  if $\alpha p<Q$ and $p^* = \infty$ otherwise. Moreover, $s=\infty$ is also possible if $\alpha p > Q$.
\end{cor}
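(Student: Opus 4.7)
The plan is to reduce the problem to the zero-smoothness compactness statement (Theorem~\ref{thm:Besov0-cpt-embedding}), extract a subsequence converging in a single Lebesgue space, and then upgrade to every $s\in[1,p^*)$ simultaneously by H\"older interpolation against uniform Sobolev-type Besov bounds from Corollary~\ref{cor:Besov-standardembeddings}. Concretely, if $p>1$, Lemma~\ref{lem:decreasingsmoothness} shows that a uniform bound on $\|u_k\|_{B^\alpha_{p,\infty}(Z)}$ produces a uniform bound on $\|u_k\|_{B^0_{p,q}(Z)}$ for every finite $q>0$, so Theorem~\ref{thm:Besov0-cpt-embedding} yields a subsequence (still denoted $\{u_k\}$) converging in $L^p(Z)$ to some $u$. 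If $p=1$, I would first apply Theorem~\ref{thm:Besov-embedding-openended}\,\ref{it:BesEmbedIt2} with $\lambda$ small enough that $p_\lambda>1$ to embed $\{u_k\}$ into $B^{\lambda\alpha}_{p',\infty}(Z)$ with $p'>1$, and then proceed exactly as above to obtain a subsequence convergent in $L^{p'}(Z)$ and hence, because $Z$ is bounded, in $L^1(Z)$. Passing to a further subsequence, I may additionally assume $u_k\to u$ pointwise a.e.

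Next, to reach every $s\in[1,p^*)$, I would fix $s_0\in(s,p^*)$ and choose $\beta\in(0,\alpha)$ with $pQ/(Q-\beta p)\ge s_0$. Lemma~\ref{lem:decreasingsmoothness} gives $\|u_k\|_{\dot{B}^\beta_{p,p}(Z)}\lesssim\|u_k\|_{\dot{B}^\alpha_{p,\infty}(Z)}$, and Corollary~\ref{cor:Besov-standardembeddings}\,\ref{it:BesovEmbIt1} (applied with $G=Z$, admissible because the bounded doubling space $Z$ satisfies the lower mass bound~\eqref{eq:nu-reg-lower-dimension}) produces a uniform bound on $\|u_k\|_{L^{s_0}(Z)}$; Fatou's lemma then puts $u\in L^{s_0}(Z)$ as well. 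The classical H\"older interpolation
\[
  \|u_k-u\|_{L^s(Z)}\le \|u_k-u\|_{L^1(Z)}^{\theta}\,\|u_k-u\|_{L^{s_0}(Z)}^{1-\theta},\qquad \tfrac{1}{s}=\theta+\tfrac{1-\theta}{s_0},
\]
together with the $L^1$ convergence from the first step, forces $u_k\to u$ in $L^s(Z)$; crucially the \emph{same} subsequence works for every $s\in[1,p^*)$, because only the tail bound depends on $s$.

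For the borderline exponents I would handle the remaining cases in parallel. When $\alpha p=Q$ (so $p^*=\infty$), Corollary~\ref{cor:Besov-standardembeddings}\,\ref{it:BesovEmbIt2} combined with Lemma~\ref{lem:decreasingsmoothness} supplies an $\exp L$-bound, which on the bounded measure space $Z$ yields uniform $L^{s_0}$ bounds for every finite $s_0$, and the same H\"older interpolation gives $L^s$ convergence for all $s\in[1,\infty)$. When $\alpha p>Q$, Corollary~\ref{cor:Besov-standardembeddings}\,\ref{it:BesovEmbIt3} (after a small reduction of smoothness to some $\beta\in(Q/p,\alpha)$ via Lemma~\ref{lem:decreasingsmoothness}) delivers a uniform H\"older bound on $\{u_k\}$ with positive exponent; since a bounded doubling metric space is totally bounded, Arzel\`a--Ascoli extracts a uniformly convergent subsequence, which covers the additional case $s=\infty$. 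The only mildly delicate point I anticipate is the exponent bookkeeping in the $p=1$ step, to make sure the intermediate embedding lands at an exponent $p'>1$ where Theorem~\ref{thm:Besov0-cpt-embedding} applies; everything else reduces to the single H\"older interpolation once an $L^1$-convergent subsequence is in hand.
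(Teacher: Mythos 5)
Your proposal is correct and follows essentially the paper's own route, whose proof is just a citation chain through Lemma~\ref{lem:decreasingsmoothness}, Theorems~\ref{thm:Besov-embedding-openended} and~\ref{thm:Besov0-cpt-embedding} (with Remark~\ref{rem:Besov-embedding-openended}) and, for $s=\infty$ when $\alpha p>Q$, Corollary~\ref{cor:Besov-standardembeddings} plus compactness of H\"older-continuous functions; your H\"older interpolation against uniform $L^{s_0}$ bounds merely makes explicit how a single subsequence serves every $s<p^*$. Two cosmetic points: in the critical case $\alpha p=Q$ a uniform $B^\alpha_{p,\infty}$ bound does not feed Corollary~\ref{cor:Besov-standardembeddings}\,\ref{it:BesovEmbIt2} directly (that item needs $q\le p$), but the reduction to $\beta<\alpha$ that you already use elsewhere yields uniform $L^{s_0}$ bounds for every finite $s_0$, which is all your interpolation requires; and for $p=1$ with $\alpha p\ge Q$ the embedding step of your first paragraph needs the extension of Theorem~\ref{thm:Besov-embedding-openended} recorded in Remark~\ref{rem:Besov-embedding-openended}, which the paper itself invokes.
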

\begin{proof}
The claim follows from Lemma~\ref{lem:decreasingsmoothness} and Theorems~\ref{thm:Besov-embedding-openended} and~\ref{thm:Besov0-cpt-embedding} (see also Remark~\ref{rem:Besov-embedding-openended}). In view of Corollary~\ref{cor:Besov-standardembeddings}, the assertion in case $s=\infty$ for $\alpha p > Q$ follows from the compact embedding of H\"older continuous functions $\Ccal^{0,\kappa}(Z) \Subset \Ccal(Z) \subset L^\infty(Z)$.
\end{proof}
\begin{cor}
\label{cor:BesovBesov-cpt-embedding}
Under the assumptions of Corollary~\ref{cor:Besov-cpt-embedding}, $u_{k_j} \to u$ in $B^{\lambda \alpha}_{\tilde{p},q}$ for every $\lambda \in [0,1)$, $q \in (0,\infty)$, and $\tilde{p} \in [1, p_\lambda)$, where $p_\lambda^{-1} = (1-\lambda){p^*}^{-1} + \lambda p^{-1}$, where $p^* = pQ/(Q-\alpha p)$  if $\alpha p<Q$ and $p^* = \infty$ otherwise.
\end{cor}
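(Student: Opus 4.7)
The plan is to combine the $L^s$-convergence from Corollary~\ref{cor:Besov-cpt-embedding} with the uniform Besov bound via the interpolation inequality of Corollary~\ref{cor:BesovInterpolate}. Applying Corollary~\ref{cor:Besov-cpt-embedding} I extract a subsequence $\{u_{k_j}\}$ converging to some $u$ in $L^s(Z)$ for every $s \in [1, p^*)$. Passing to an a.e.\@ convergent sub-subsequence and invoking Fatou's lemma on the inner double integral defining \eqref{eq:Besov} shows that $\|u\|_{B^\alpha_{p,\infty}(Z)} \le \liminf_j \|u_{k_j}\|_{B^\alpha_{p,\infty}(Z)} < \infty$. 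Setting $v_j = u_{k_j} - u$, the differences are uniformly bounded in $B^\alpha_{p,\infty}(Z)$, hence by Lemma~\ref{lem:decreasingsmoothness} in every $B^{\tilde\alpha}_{p, q_1}(Z)$ with $\tilde\alpha \in (0,\alpha)$ and $q_1 \in (0,\infty]$; moreover $v_j \to 0$ in $L^s(Z)$ for all $s \in [1, p^*)$. It therefore suffices to show $\|v_j\|_{B^{\lambda\alpha}_{\tilde p, q}(Z)} \to 0$.

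For the main case $\lambda \in (0,1)$ and a target triple $(\lambda\alpha, \tilde p, q)$, I would pick an auxiliary $\tilde\alpha \in (\lambda\alpha, \alpha)$ and set $\tau = \lambda\alpha/\tilde\alpha \in (\lambda, 1)$, and then apply Corollary~\ref{cor:BesovInterpolate} with $(\alpha_0, p_0, q_0) = (0, s, \infty)$, so that $B^0_{s,\infty}(Z) = L^s(Z)$, together with $(\alpha_1, p_1, q_1) = (\tilde\alpha, p, \tau q)$ and convex weight $\tau$. This yields
\[
  \|v_j\|_{B^{\lambda\alpha}_{\tilde p, q}(Z)} \le \|v_j\|_{L^s(Z)}^{1-\tau} \cdot \|v_j\|_{B^{\tilde\alpha}_{p, \tau q}(Z)}^{\tau},
\]
provided the matching condition $\tilde p^{-1} = (1-\tau)/s + \tau/p$ admits a solution $s \in [1, p^*)$; the second factor is uniformly bounded while the first tends to zero.

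The delicate step is parameter matching across the full range $\tilde p \in [1, p_\lambda)$. Solving for $s$ gives $s^{-1} = (\tilde p^{-1} - \tau/p)/(1-\tau)$, and $s \ge 1$ forces $\tilde p \ge (1 - \tau/p')^{-1}$, which strictly exceeds $1$ as soon as $\lambda > 0$. For $\tilde p$ below this threshold, I would exploit boundedness of $Z$: applying Hölder's inequality to the inner integral in \eqref{eq:Eput-def} gives $\|v\|_{B^\beta_{\tilde p, q}(Z)} \le C \|v\|_{B^\beta_{\tilde p', q}(Z)}$ whenever $1 \le \tilde p \le \tilde p'$, with $C$ depending on $\nu(Z)$. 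A direct computation shows $(1 - \lambda/p')^{-1} < p_\lambda$, so one can always select an intermediate $\tilde p' \in (\tilde p, p_\lambda)$ for which the interpolation above succeeds, and then conclude for $\tilde p$ by this monotonicity.

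The case $\lambda = 0$ must be treated separately since the interpolation degenerates there. The target is then $B^0_{\tilde p, q}(Z)$ with $\tilde p < p^*$, and I would repeat the computation in the proof of Lemma~\ref{lem:zerosmoothness} directly for $v_j$: for a chosen $r \in (\tilde p, p^*)$ and $s_0 \in [1, \tilde p)$, that argument gives
\[
  \|v_j\|_{\dot B^0_{\tilde p, q}(Z)} \le C \|v_j\|_{L^r(Z)}^{r(\tilde p - s_0)/\tilde p (r - s_0)} \cdot \|v_j\|_{\dot B^\alpha_{s_0, \infty}(Z)}^{s_0(r - \tilde p)/\tilde p (r - s_0)},
\]
whose first factor vanishes and whose second stays bounded; together with $\|v_j\|_{L^{\tilde p}(Z)} \to 0$ this delivers convergence in $B^0_{\tilde p, q}(Z)$, with the edge case $\tilde p = 1$ again handled by the $L^{\tilde p'}$–$L^1$ monotonicity. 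The main obstacle throughout is the careful bookkeeping of parameters needed to reach every $(\lambda, \tilde p, q)$ claimed in the corollary.
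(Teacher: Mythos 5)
Your proposal is correct and follows essentially the same strategy as the paper: combine the $L^s$-convergence of Corollary~\ref{cor:Besov-cpt-embedding} with the uniform $B^\alpha_{p,\infty}$ bound via the interpolation inequality of Corollary~\ref{cor:BesovInterpolate}. There are two minor differences worth noting. First, you handle the step $\|u\|_{B^\alpha_{p,\infty}(Z)}<\infty$ explicitly by a Fatou argument on an a.e.\@ convergent sub-subsequence; the paper's proof implicitly assumes this when it writes $\|u_k - u\|_{B^\alpha_{p,\infty}}^{\tilde\lambda}\lesssim \sup_j\|u_j\|_{B^\alpha_{p,\infty}}^{\tilde\lambda}$, so your version is slightly more careful. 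Second, your parameter plumbing is arranged differently: you interpolate directly at the target smoothness $\lambda\alpha$, which degenerates when $\lambda=0$ and also forces a separate Hölder-monotonicity step when the auxiliary exponent $s$ would drop below $1$. The paper avoids both issues at once by first reducing to $\tilde p > p$ (so that the $s$ solving the convex-combination equation is automatically $> p \ge 1$) and by interpolating at an \emph{intermediate} smoothness $\tilde\lambda\alpha > \lambda\alpha$, then lowering to $\lambda\alpha$ via Lemma~\ref{lem:decreasingsmoothness}; this works uniformly for all $\lambda\in[0,1)$ without a separate $\lambda=0$ branch invoking Lemma~\ref{lem:zerosmoothness}. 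Both arrangements are valid; the paper's is a bit leaner.
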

\begin{proof}
Let $\lambda\in[0,1)$ and $\tilde{p}\in[1, p_\lambda)$ be given. Without loss of generality, we may assume that $\tilde{p} > p$ since the H\"older inequality will then guarantee norm convergence for smaller values of $\tilde{p}$. Then, we can find $\tilde\lambda \in (\lambda, 1)$ and $s \in (p, p^*)$ such that
\[
  \frac{1-\tilde{\lambda}}{p^*} + \frac{\tilde\lambda}{p} \eqcolon \frac{1}{p_{\tilde\lambda}} < \frac{1}{\tilde{p}} = \frac{1-\tilde{\lambda}}{s} + \frac{\tilde\lambda}{p} \,.
\]
Let $\{u_{k}\}_{k=1}^\infty$ be the $L^s(Z)$-norm convergent subsequence selected in Corollary~\ref{cor:Besov-cpt-embedding} and $u \in L^s(Z)$ be the limit function. Then,
\begin{align*}
  \|u_k - u\|_{B^{\lambda \alpha}_{\tilde{p},q}(Z)} \lesssim \|u_k - u\|_{B^{\tilde{\lambda} \alpha}_{\tilde{p},\infty}(Z)} & \lesssim \|u_k - u\|_{L^s(Z)}^{1-\tilde{\lambda}} \|u_k - u\|_{B^\alpha_{p,\infty}(Z)}^{\tilde{\lambda}} \\
  & \lesssim \|u_k - u\|_{L^s(Z)}^{1-\tilde{\lambda}} \sup_{j\ge 1}\|u_j\|_{B^\alpha_{p,\infty}(Z)}^{\tilde{\lambda}} \to 0\quad \text{as $k\to\infty$}
\end{align*}
by Lemma~\ref{lem:decreasingsmoothness} (see also Remark~\ref{rem:decreasingsmoothness}) and Corollary~\ref{cor:BesovInterpolate}.
\end{proof}
\section{Traces of functions with a Poincar\'e inequality: general domains}
\label{sec:tracesGen}
In this section, we will consider functions that satisfy a Poincar\'e inequality in a domain $\Om \subset X$, cf.\@ Paragraph~\ref{subsec:PI}, and we will show that such functions have a measurable trace on $\dOm$ in the sense of~\eqref{eq:defoftrace} provided that $\Hcal\lfloor_{\dOm}$ has an upper codimension-$\theta$ bound \eqref{eq:H-upper-massbound} and a $p$-PI gradient of the Sobolev-type function exhibits sufficiently high summability.

Even though the theorems and proofs in this section are stated for boundary traces of functions on a bounded domain $\Om$, the arguments will also work whenever $\dOm$ is replaced by an arbitrary bounded set $F \subset \overline{\Om}$ such that $\Hcal \lfloor_F$ has an upper codimension-$\theta$ bound. In such a case $\overline{\Om}$ may very well be unbounded (unlike $F$).

We will apply boundedness of a certain fractional maximal operator. For $\alpha \ge \theta$, we define the centered operator
\begin{equation}
\label{eq:FracMax}
  M_{\alpha, p} f(z) = \sup_{0<r<2 \diam \dOm} \biggl( r^\alpha \fint_{B(z,r) \cap \Om} |f|^p\,d\mu \biggr)^{1/p}, \quad z\in \dOm,
\end{equation}
which maps $L^p_\loc(\Om)$ into the space of lower semicontinuous functions on $\dOm$. Let us now establish its boundedness.
\begin{lem}
\label{lem:fracMax-bdd}
Let $p\in [1, \infty)$. Suppose that $\alpha \in [\theta, s)$. Then, $M_{\alpha, p}: L^p(\Om) \to \text{weak-}L^{\frac{p(s-\theta)}{s-\alpha}}(\dOm)$ is bounded. Moreover, $M_{\alpha, p}: \text{weak-}L^{ps/\alpha}(\Om) \to L^\infty(\dOm)$ is bounded.

If however $\alpha \in [s, \infty)$, then $M_{\alpha, p}: L^p(\Om) \to L^\infty(\dOm)$ is bounded.
\end{lem}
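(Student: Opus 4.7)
The plan is to treat the three assertions separately, relying on the lower mass bound $\mu(B(z,r)\cap\Om)\gtrsim r^s$ from \eqref{eq:ball-mu-vs-rads} and on the facts that $\Hcal\lfloor_\dOm$ is doubling and that $\Hcal(B(z,r)\cap\dOm)\lesssim \mu(B(z,r)\cap\Om)/r^\theta$.

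For the weak-type estimate in the case $\alpha\in[\theta,s)$, let $q=p(s-\theta)/(s-\alpha)$ and fix $\lambda>0$ and $f\in L^p(\Om)$. For each $z$ in the level set $E_\lambda=\{M_{\alpha,p}f>\lambda\}\cap\dOm$ there is a radius $r_z\in(0,2\diam\dOm)$ with
\[
\lambda^p\,\mu(B(z,r_z)\cap\Om) < r_z^\alpha\int_{B(z,r_z)\cap\Om}|f|^p\,d\mu.
\]
Combining this with the lower mass bound $\mu(B(z,r_z)\cap\Om)\ge c_s r_z^s$ yields the uniform upper bound $r_z\le C(\|f\|_{L^p(\Om)}/\lambda)^{p/(s-\alpha)}$. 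Because of this uniform bound, the simple 5-covering lemma produces a disjoint subfamily $\{B(z_k,r_{z_k})\}_k$ whose fivefold dilations cover $E_\lambda$. Using the doubling property of $\Hcal\lfloor_\dOm$, the upper codimension-$\theta$ bound, and the defining inequality for $r_{z_k}$, I estimate
\[
\Hcal(E_\lambda)\lesssim\sum_k\frac{\mu(B(z_k,r_{z_k})\cap\Om)}{r_{z_k}^\theta}\le\lambda^{-p}\sum_k r_{z_k}^{\alpha-\theta}\int_{B(z_k,r_{z_k})\cap\Om}|f|^p\,d\mu \lesssim\lambda^{-p}\,r_{\max}^{\alpha-\theta}\,\|f\|_{L^p(\Om)}^p,
\]
where the disjointness of the balls was used in the last step. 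Substituting the upper bound on $r_{z_k}$, the exponents collapse exactly to $-q$ on $\lambda$ and $q$ on $\|f\|_{L^p(\Om)}$, giving the desired weak-type estimate. The main subtlety is making sure the argument remains correct in the degenerate case $\alpha=\theta$, where $r_{\max}^{\alpha-\theta}=1$ and $q=p$; this is where the proof is tightest, and handling the supremum defining $M_{\alpha,p}$ requires that the uniform bound on $r_z$ be obtained before invoking the covering lemma.

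For the second claim, fix $\alpha\in[\theta,s)$ and $f\in\wk L^{ps/\alpha}(\Om)$. Since $|f|^p\in\wk L^{s/\alpha}(\Om)$ and $s/\alpha>1$, the standard layer-cake estimate for weak Lebesgue spaces over a set of finite measure gives
\[
\int_{B(z,r)\cap\Om}|f|^p\,d\mu\lesssim\mu(B(z,r)\cap\Om)^{1-\alpha/s}\,\|f\|_{\wk L^{ps/\alpha}(\Om)}^p,
\]
which, after dividing by $\mu(B(z,r)\cap\Om)$, multiplying by $r^\alpha$, and invoking the lower mass bound $\mu(B(z,r)\cap\Om)\ge c_s r^s$, produces a bound on $M_{\alpha,p}f(z)^p$ independent of $z$ and $r$.

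For the case $\alpha\in[s,\infty)$, the estimate is immediate: the lower mass bound yields
\[
r^\alpha\fint_{B(z,r)\cap\Om}|f|^p\,d\mu\le c_s^{-1}r^{\alpha-s}\|f\|_{L^p(\Om)}^p\le c_s^{-1}(2\diam\dOm)^{\alpha-s}\|f\|_{L^p(\Om)}^p
\]
for all admissible radii, proving the boundedness into $L^\infty(\dOm)$. Overall, the only genuinely delicate step is the first one: combining the Vitali covering argument with the upper codimension-$\theta$ bound and carefully tracking the exponents of $\lambda$ and $\|f\|_{L^p(\Om)}$.
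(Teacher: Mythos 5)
Your proof is correct. The $\alpha\ge s$ case and the $\wk L^{ps/\alpha}\to L^\infty$ bound match the paper's argument (the ``layer-cake'' estimate for weak spaces and the ``generalized H\"older inequality'' are the same device). For the main weak-type estimate, the structure is the same — $5R$-covering, doubling of $\Hcal$, upper codimension-$\theta$ bound to get $\Hcal(E_\lambda)\lesssim\sum_k\mu(B_k\cap\Om)/r_k^\theta$, then disjointness — but the final exponent bookkeeping differs. The paper uses the lower mass bound to write $\mu(B_k\cap\Om)/r_k^\alpha\ge C\bigl(\mu(B_k\cap\Om)/r_k^\theta\bigr)^{\beta}$ with $\beta=(s-\alpha)/(s-\theta)\in(0,1]$, so that each term in the sum is dominated by $\bigl(\lambda^{-p}\int_{B_k}|f|^p\bigr)^{1/\beta}$, and then invokes the superadditivity $\sum_k a_k^{1/\beta}\le\bigl(\sum_k a_k\bigr)^{1/\beta}$. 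You instead write $\mu(B_k\cap\Om)/r_k^\theta<\lambda^{-p}r_k^{\alpha-\theta}\int_{B_k}|f|^p$, pull $r_{\max}^{\alpha-\theta}$ out of the sum, sum the integrals by disjointness, and only then substitute the a priori bound $r_{\max}\lesssim(\|f\|_{L^p(\Om)}/\lambda)^{p/(s-\alpha)}$ derived from the lower mass bound; the exponents collapse to $q=p(s-\theta)/(s-\alpha)$ exactly as required. Both routes localize the use of the lower mass bound in one spot, and both are valid; yours sidesteps the superadditivity step at the cost of tracking $r_{\max}$ explicitly. One small remark: the explicit a priori bound on $r_z$ is not actually needed to invoke the $5R$-covering lemma — the radii are already bounded by $2\diam\dOm$ by the very definition of $M_{\alpha,p}$ — it only enters in the final substitution step.
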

\begin{proof}
Let $f \in L^p(\Om)$ and suppose that $\alpha \in [\theta, s)$.

 Having fixed $\lambda>0$, define $E_\lambda = \{z \in \dOm: M_{\alpha, p} f(z) > \lambda\}$. For each $z \in E_\lambda$, there is a ball $B_z = B(z, r_z)$ such that $r_z^\alpha \fint_{B_z \cap \Om} |f|^p\,d\mu > \lambda^p$. Then, $E_\lambda \subset \bigcup_{z\in E_\lambda} B_z \cap \dOm$. Since $\mu\lfloor_\Om$ is doubling and the radii $r_z$ are bounded by $2\diam \dOm$, we can apply the simple Vitali 5-covering lemma to find pairwise disjoint  balls $B_k \coloneq B_{z_k}$, $k=1,2,\ldots$, for some choice of $\{z_k\} \subset E_\lambda$ such that $E_\lambda \subset \bigcup_{k} 5B_k \cap \dOm$. Then,
\[
  \Hcal(E_\lambda) \le \Hcal\biggl(\bigcup_k 5B_k \cap \dOm \biggr) \lesssim \sum_k \Hcal(5B_k \cap \dOm) \lesssim \sum_k \frac{\mu(5B_k \cap \Om)}{(5r_k)^\theta} \approx \sum_k \frac{\mu(B_k \cap \Om)}{r_k^\theta}\,.
\]
By the choice of balls $B_z$, we have
\[
  \frac{\int_{B_k} |f|^p\,d\mu}{\lambda^p} > \frac{\mu(B_k \cap \Om)}{r_k^\alpha} \ge C \biggl(\frac{\mu(B_k \cap \Om)}{r_k^\theta}\biggr)^\beta,
\]
for some $\beta \in [0, 1]$ that we will now determine.

Recall that the doubling condition of $\mu\lfloor_\Om$ gives a lower mass bound, i.e., $\mu(B(z,r) \cap \Om) \ge C r^s$ whenever $r < 2\diam \dOm$.
We strive for $\mu(B(z,r) \cap \Om)^{1-\beta} / r^{\alpha - \theta \beta} \ge C$ for every $r \in (0, 2\diam\dOm)$. This estimate will be satisfied whenever $s(1-\beta)-(\alpha - \theta\beta) \le 0$. Thus, $\beta \ge (s-\alpha)/(s-\theta)$.

As $\alpha \in [\theta, s)$, we have $\beta\coloneq (s-\alpha)/(s-\theta) \in (0, 1]$. That gives us that
\[
  \Hcal(E_\lambda) \lesssim \sum_k \biggl(\frac{\int_{B_k} |f|^p\,d\mu}{\lambda^p} \biggr)^{(s-\theta)/(s-\alpha)} \lesssim \biggl(\frac{\sum_k \int_{B_k} |f|^p\,d\mu}{\lambda^p} \biggr)^{(s-\theta)/(s-\alpha)} \le \biggl(\frac{\int_{\Om} |f|^p\,d\mu}{\lambda^p} \biggr)^{(s-\theta)/(s-\alpha)}.
\]
Therefore,
\[
  \|M_{\alpha, p}f\|_{\text{weak-}L^{p(s-\theta)/(s-\alpha)}(\dOm)} = \sup_{\lambda>0} \lambda \Hcal(E_\lambda)^{(s-\alpha)/p(s-\theta)} \le \|f\|_{L^p(\Om)}\,.
\]

The (generalized) H\"older inequality gives us that
\begin{align*}
  \biggl( r^\alpha \fint_{B(z,r)} |f|^p\,d\mu\biggr)^{1/p} & \le  \biggl(r^{\alpha}\,\frac{\|\, |f|^p \chi_{B(z,r)}\|_{\text{weak-}L^{s/\alpha}(\Om)}}{\mu(B(z,r)\cap\Om)^{\alpha/s}}\biggr)^{1/p} \\
  & = \biggl(\frac{r}{\mu(B(z,r)\cap\Om)^{1/s}}\biggr)^{\alpha/p} \|f \chi_{B(z,r)}\|_{\text{weak-}L^{ps/\alpha}(\Om)}\,.
\end{align*}
The quantity $r/\mu(B(z,r)\cap \Om)^{1/s}$ is bounded due to \eqref{eq:ball-mu-vs-rads}, and hence $M_{\alpha, p}f(z) \lesssim \|f\|_{\text{weak-}L^{ps/\alpha}(\Om)}$ for every $z\in\dOm$.

Assume now that $\alpha \in [s, \infty)$. Recall that $0<r<2\diam \dOm$. By \eqref{eq:ball-mu-vs-rads}, we see that 
\[
  \frac{r^\alpha}{\mu(B(z,r) \cap \Om)} = \frac{r^s }{ \mu(B(z,r) \cap \Om)} \cdot r^{\alpha-s}\lesssim \frac{(\diam \dOm)^s }{ \mu(B(z,2\diam \dOm) \cap \Om)} \cdot r^{\alpha-s} \le C.
\]
This yields that $M_{\alpha, p} f \le C \|f\|_{L^p(\Om)}$ everywhere on $\dOm$, and hence $M_{\alpha, p} f \in L^\infty(\dOm)$.
\end{proof}
\begin{rem}
\label{rem:fracMaxbdd}
If $\alpha \in [\theta, s)$, we immediately obtain that $M_{\alpha, p} f \in L^\infty(\dOm)$ whenever $f \in L^q(\Om)$ with $q\ge ps/\alpha$. On the other hand, if $p < q <ps / \alpha$, then the Marcinkiewicz interpolation theorem yields that $M_{\alpha, p}: L^q(\Om) \to L^{p(s-\theta)/(\frac {ps}{q}- \alpha)}(\dOm)$ is bounded.
Furthermore,
it follows from \cite[Theorems~IV.4.11 and~IV.6.14]{BenSha} that $\|M_{\theta, p} f\|_{L^p(\dOm)} \lesssim \|f \log(e+|f|)\|_{L^p(\Om)}$.
\end{rem}
Now, we are ready to prove the first trace theorem of this section, which shows existence and boundedness of a trace operator for functions with a $p$-PI gradient in $L^p(\Om)$ with $p>\theta$.
\begin{thm}
\label{thm:TraceOpB}
Assume that $\Om \subset X$ is a bounded domain such that $\mu\lfloor_\Om$ is doubling and $\Hcal\lfloor_{\dOm}$ has an upper codimension-$\theta$ bound. Let $u \in P^{1,p}_p(\Om)$ with some $p \in (\theta, \infty)$. Then, there exists a trace $Tu \in L^p(\dOm)$ that satisfies \eqref{eq:defoftrace}, i.e.,
\[
  \lim_{R \to 0} \fint_{B(z, R)\cap \Om} |u(x) - Tu(z)|\,d\mu(x) = 0\quad\text{for $\Hcal$-a.e.\@ } z\in\dOm.
\]
Moreover, $T: P^{1,p}_p(\Om) \to B^{1-\theta/p}_{p,\infty}(\dOm)$ is a bounded linear operator.
\end{thm}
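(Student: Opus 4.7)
The plan is to define $Tu(z)$ pointwise $\Hcal$-a.e.\@ as the limit of the integral means $u_{B(z,r)\cap\Om}$ along a dyadic scale $r_k = 2^{-k} R_0$ with $R_0 = 2\diam \Om$, and then bound $\|Tu\|_{L^p(\dOm)}$ and the Besov semi-norm through integrated telescoping arguments. Writing $B_k(z) = B(z, r_k)$ and letting $g \in L^p(\Om)$ denote a $p$-PI gradient of $u$, combining \eqref{eq:def-P1pq} (extended to balls centred on $\dOm$ by the Remark preceding Definition~\ref{df:PI}) with doubling of $\mu\lfloor_\Om$ yields
\[
|u_{B_{k+1}(z) \cap \Om} - u_{B_k(z) \cap \Om}| \lesssim r_k \biggl(\fint_{\lambda B_k(z) \cap \Om} g^p\,d\mu\biggr)^{1/p} \lesssim r_k^{1-\theta/p} M_{\theta, p} g(z).
\]
Since $M_{\theta, p} g \in \wk L^p(\dOm)$ by Lemma~\ref{lem:fracMax-bdd}, the right-hand side is finite $\Hcal$-a.e., so the telescoping series converges and we set $Tu(z) \coloneq \lim_{k \to \infty} u_{B_k(z) \cap \Om}$. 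Linearity of $T$ is immediate. Property~\eqref{eq:defoftrace} follows from a parallel telescoping started at scale $r$ together with the one-step Poincar\'e bound for $\fint_{B(z,r)\cap\Om} |u - u_{B(z,r)\cap\Om}|\,d\mu$, letting $r \to 0^+$.

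The technical obstacle is that $M_{\theta,p}$ is only bounded from $L^p(\Om)$ into $\wk L^p(\dOm)$, so the pointwise estimate above is too weak to give $Tu \in L^p(\dOm)$ or the Besov bound directly. We bypass this by integrating the telescoping sum through a weighted H\"older inequality. Fix $\eps \in (0, 1-\theta/p)$. For $t \in (0, R_0]$, writing $r_k = 2^{-k}t$ and $B_k(y) = B(y, r_k)$,
\[
|Tu(y) - u_{B(y,t) \cap \Om}|^p \lesssim \biggl(\sum_k r_k \biggl(\fint_{\lambda B_k(y) \cap \Om} g^p\,d\mu\biggr)^{1/p}\biggr)^p \le C_\eps\,t^p \sum_k 2^{-kp(1-\eps)} \fint_{\lambda B_k(y) \cap \Om} g^p\,d\mu.
\]
Integrating in $y \in \dOm$ and applying Fubini, each inner double integral is (up to a doubling constant) comparable to
\[
\int_\Om g(x)^p \frac{\Hcal(B(x, \lambda r_k) \cap \dOm)}{\mu(B(x, \lambda r_k) \cap \Om)}\,d\mu(x) \lesssim r_k^{-\theta} \|g\|_{L^p(\Om)}^p,
\]
where the last inequality uses the upper codimension-$\theta$ bound on $\dOm$ transferred to $x \in \Om$ by comparing with a ball centred at a nearby boundary point. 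The geometric series $\sum_k 2^{-k(p(1-\eps)-\theta)}$ converges by choice of $\eps$, giving $\int_{\dOm} |Tu(y) - u_{B(y,t)\cap\Om}|^p \,d\Hcal(y) \lesssim t^{p-\theta} \|g\|_{L^p(\Om)}^p$. Specialising at $t = R_0$, where $u_{B(y, R_0) \cap \Om} = u_\Om$ is constant in $y$, and using the trivial bound on $u_\Om$, we obtain $\|Tu\|_{L^p(\dOm)} \lesssim \|u\|_{L^p(\Om)} + \|g\|_{L^p(\Om)}$.

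For the Besov semi-norm, given $y, z \in \dOm$ with $\dd(y,z) < t$, we split
\[
|Tu(y) - Tu(z)| \le |Tu(y) - u_{B(y,t)\cap\Om}| + |u_{B(y,t)\cap\Om} - u_{B(z,t)\cap\Om}| + |u_{B(z,t)\cap\Om} - Tu(z)|.
\]
The middle term is dispatched by a single Poincar\'e estimate on the common enclosing ball $B(y, 3t)$ (using doubling to equate $\mu(B(\cdot, t) \cap \Om)$ with $\mu(B(y, 3t) \cap \Om)$), which bounds it by $t\bigl(\fint_{\lambda B(y, 3t) \cap \Om} g^p\,d\mu\bigr)^{1/p}$; the outer terms have just been controlled. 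Integrating the $p$-th power of each contribution first over $z \in B(y, t) \cap \dOm$ (Fubini with doubling of $\Hcal\lfloor_\dOm$ for the term depending on $z$) and then over $y \in \dOm$, we obtain $E_p(Tu, t)^p \lesssim t^{p-\theta} \|g\|_{L^p(\Om)}^p$ uniformly in $t$. This yields $\|Tu\|_{\dot{B}^{1-\theta/p}_{p,\infty}(\dOm)} \lesssim \|g\|_{L^p(\Om)}$ which, together with the $L^p$ bound, establishes boundedness $T : P^{1,p}_p(\Om) \to B^{1-\theta/p}_{p,\infty}(\dOm)$.
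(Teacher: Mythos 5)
Your proof is correct and follows essentially the same strategy as the paper: define the trace via dyadic telescoping of integral means $u_{B(z,2^{-k}R_0)\cap\Om}$, use a one-step Poincar\'e bound at each scale, convert the boundary integral to a weighted interior integral by Fubini plus the upper codimension bound \eqref{eq:H-upper-massbound}, and sum a geometric series to produce the $t^{p-\theta}$ factor. The genuine point of variation is the location of the telescoping. You carry the full telescoping sum pointwise under the $L^p(\dOm)$-integral, which forces the $\eps$-weighted H\"older device $\bigl(\sum_k 2^{-k}a_k\bigr)^p \le C_\eps \sum_k 2^{-k(1-\eps)p}a_k^p$ to decouple the scales, with $\eps$ recovered only after the geometric summation using $p(1-\eps)>\theta$. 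The paper instead first estimates the $L^p(\dOm)$-norm of a single dyadic increment $\|T_{2^{-k}R}u-T_{2^{1-k}R}u\|_{L^p(\dOm)} \lesssim (2^{-k}R)^{1-\theta/p}\|g\|_{L^p(\Om_{2^{1-k}\lambda R})}$ and then telescopes by the triangle inequality in $L^p$, where the geometric sum converges directly because $1-\theta/p>0$. This route avoids the H\"older trick and also produces the localized norm $\|g\|_{L^p(\Om_{\lambda R})}$ on the boundary shell at no extra cost, which the paper reuses in Theorem~\ref{thm:TraceOp-theta=p} and Proposition~\ref{pro:TraceOp-theta>p-l}. Two small items you skipped over that the paper handles explicitly: the continuity of $T_ru$ on $\dOm$ (which makes the trace manifestly Borel) and the passage from dyadic radii to arbitrary $r\to 0^+$ in \eqref{eq:defoftrace}; both are routine given doubling of $\mu\lfloor_\Om$ and $M_{\theta,p}g<\infty$ $\Hcal$-a.e., so the argument is sound.
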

In fact, the trace exhibits better integrability than merely $L^p(\dOm)$ due to the embeddings of Besov spaces and boundedness of the fractional maximal operator, see Corollary~\ref{cor:Tr-openendedtargets} below.

As noted in the introductory paragraph of this section, the proof below can be easily modified to obtain that $T: P^{1,p}_p(\Om) \to B^{1-\theta/p}_{p,\infty}(F)$ is bounded whenever $F \subset \overline\Om$ is bounded (while $\Om \subset X$ may be unbounded) and $\Hcal\lfloor_F$ has an upper codimension-$\theta$ bound.
\begin{proof}
Let $u\in P^{1,p}_p(\Om)$ be given. For $z \in \dOm$ and $r>0$, we define $T_r u(z) = \fint_{B(z,r) \cap \Om} u\,d\mu$. Let us now show that $T_r u \in \Ccal(\dOm)$ for every $r>0$. For the sake of brevity, let $B_z$ denote the set $B(z, r)\cap \Om$ whenever $z \in \dOm$ for some fixed $r>0$. As balls have finite measure, we have $\mu(B_w \symdiff B_z) \to 0$ and, in particular, $\mu(B_w) \to \mu(B_z)$ as $w \to z$, where $w,z \in \dOm$. Since we are interested in the behavior of $T_r u(w)$ as $w\to z$, we may also assume that $B_w \subset 2B_z$. Then, we can estimate
\begin{align*}
  |T_r u(z) - T_r u(w)| & \le  \biggl| \frac{1}{\mu(B_z)}\int_{B_z \setminus B_w} u\,d\mu - \frac{1}{\mu(B_w)} \int_{B_w \setminus B_z} \,u\,d\mu  + \biggl(\frac{1}{\mu(B_z)}-\frac{1}{\mu(B_w)}\biggr) \int_{B_w \cap B_z} u\,d\mu\biggr|\\
  & \le \biggl(\frac{1}{\mu(B_z)}+\frac{1}{\mu(B_w)}\biggr) \int_{B_z \symdiff B_w} |u|\,d\mu + \biggl|\frac{1}{\mu(B_z)}-\frac{1}{\mu(B_w)}\biggr| \int_{2B_z} |u|\,d\mu,
\end{align*}
which approaches $0$ as $w\to z$ since $u \in L^1(\Om)$.

As the next step, we will show that $\{T_{r_k}u\}_{k=1}^\infty$ is a Cauchy sequence in $L^p(\dOm)$ whenever $r_k \to 0$. For $z\in\dOm$ and $\rho > 0$, let $B_{z,\rho} = B(z,\rho)\cap \Om$ and $\Om_\rho = \{x \in \Om: \dist(x,\dOm)<\rho\}$. Suppose that $g\in L^p(\Om)$ is a $p$-PI gradient of $u$ with a dilation factor $\lambda\ge 1$, see \eqref{eq:def-P1pq}. Let $0<\frac{R}{2} \le r < R < 2 \diam \Om$. Then,
\begin{align*}
  \|T_R u &- T_r u\|_{L^p(\dOm)}^p  = \int_{\dOm} |u_{B_{z,R}}-u_{B_{z,r}}|^p \,d\Hcal(z) \lesssim \int_{\dOm} \biggl(\fint_{B_{z,R}}|u(x)-u_{B_{z,R}}|\,d\mu(x)\biggr)^p \,d\Hcal(z) \\
  & \lesssim \int_{\dOm} R^p \fint_{B_{z, \lambda R}} g(x)^p\,d\mu(x)\,d\Hcal(z) 
   \approx \int_{\Om_{\lambda R}} g(x)^p \int_{B(x,\lambda R)\cap\dOm} \frac{R^p}{\mu(B_{z,\lambda R})}\,d\Hcal(z)\,d\mu(x)
  \\
  & \lesssim \int_{\Om_{\lambda R}} g(x)^p \int_{B(x,\lambda R)\cap\dOm} \frac{R^{p-\theta} \,d\Hcal(z)}{\Hcal(B(z,\lambda R)\cap\dOm)}\,d\mu(x) \lesssim R^{p-\theta}\,\int_{\Om_{\lambda R}} g(x)^p\,d\mu(x),
\end{align*}
where in the last inequality we used that $\Hcal$ is  doubling while $B(x,\lambda R)\cap \dOm \subset B(z, 2\lambda R) \cap \dOm$ whenever $z \in B(x,\lambda R)$. 

Let now $0 < r < R \le 2\diam\Om$. Then, there is $N \in \Nbb$ such that $2^{-N}R \le r < 2^{1-N} R$. Thus,
\begin{align}
  \notag \| T_R u - T_r u\|_{L^p(\dOm)} & \le \|T_r u - T_{2^{-N} R}u\|_{L^p(\dOm)}+ \sum_{k=1}^N \|T_{2^{1-k} R} - T_{2^{-k} R}\|_{L^p(\dOm)} \\
  \label{eq:TRTr-est}
  & \lesssim \sum_{k=1}^N (2^{1-k}R)^{1-\theta/p} \|g\|_{L^p(\Om_{2^{1-k}\lambda R})} \lesssim R^{1-\theta/p} \|g\|_{L^p(\Om_{\lambda R})}.
\end{align}
We have hereby shown that $\{T_{r_k} u\}_{k=1}^\infty$ is a Cauchy sequence in $L^p(\dOm)$ whenever $r_k \to 0$. Thus, we can define the trace of $u$ as the $L^p$-limit, i.e.,  $Tu \coloneq \lim_{r\to0} T_r u \in L^p(\dOm)$. Moreover, it follows from \eqref{eq:TRTr-est} that $T_{2^{-k}} u(z) \to Tu(z)$ as $k\to \infty$ for $\Hcal$-a.e.\@ $z\in\dOm$. Consequently, $T_R u(z) \to Tu(z)$ as $R\to 0$ for $\Hcal$-a.e.\@ $z\in\dOm$ since $\mu\lfloor_\Om$ is doubling.

If $R = 2 \diam \Om$, then $T_R u \equiv u_\Om$. Hence, \[
  \|Tu - u_\Om\|_{L^p(\dOm)} = \lim_{r\to 0} \|T_r u - T_R u\|_{L^p(\dOm)} \lesssim (\diam \Om)^{1-\theta/p} \|g\|_{L^p(\Om)}.
\]
Let $E=\{z \in \dOm: M_{\theta,p} g(z)<\infty\text{ and }T_r u(z) \to Tu(z)\text{ as }r\to0\}$. It follows from Lemma~\ref{lem:fracMax-bdd} that $M_{\theta,p}: L^p(\Om) \to \wk L^p(\dOm)$ is bounded, whence $\Hcal(E) = 0$. For every $z \in \dOm \setminus E$ and $r>0$, we obtain then that
\begin{align}
  \notag
  \fint_{B_{z,r}} |u(x) & - Tu(z)|\,d\mu(x) \le \fint_{B_{z,r}} |u(x) - T_r u(z)|\,d\mu(x) + |T_ru(z) - Tu(z)| \\
  \label{eq:traceconvergence}
  & \le r \biggl(\fint_{B_{z,\lambda r}} g^p\,d\mu\biggr)^{1/p} + |T_ru(z) - Tu(z)| \le r^{1-\theta/p} M_{\theta,p} g(z) + |T_ru(z) - Tu(z)|,
\end{align}
which approaches $0$ as $r \to 0$.

Finally, in order to show that $Tu \in B^{1-\theta/p}_{p,\infty}(\dOm)$, we will next find an estimate for $E_p(Tu, R)$, which was defined in~\eqref{eq:Eput-def}, for $R>0$. If $z,w \in \dOm$ with $\dd(z,w) \le R$, then 
\begin{equation}
  \label{eq:TuTuTriangle}
  |Tu(z) - Tu(w)| \le |Tu(z) - T_Ru(z)| + |T_Ru(z) - T_{2R}u(w)| + |T_{2R}u(w) - Tu(w)|.
\end{equation}
Note also that the doubling condition for $\mu\lfloor_\Om$ leads to the estimate
\begin{align}
  \notag
  |T_Ru(z) - T_{2R}u(w)| & \le \fint_{B_{z,R}} |u(x) - T_{2R} u(w)|\,d\mu(x) \lesssim \fint_{B_{w,2R}} |u(x) - T_{2R} u(w)|\,d\mu(x) \\
  &  = \fint_{B_{w,2R}} |u -u_{B_{w,2R}}|\,d\mu \le 2R \biggl(\fint_{B_{w,2\lambda R}} g^p\,d\mu\biggr)^{1/p}\,.
\end{align}
Since $\Hcal$ is doubling and $\dd(z,w)\le R$, we have $\Hcal(B(w,R)\cap\dOm) \approx \Hcal(B(z,R)\cap\dOm)$ with constants independent of $z$, $w$, and $R$. Thus,
\begin{align}
\notag
  \int_{\dOm} \fint_{B(w,R)} |Tu(z) &- T_Ru(z)|^p\,d\Hcal(z) \,d\Hcal(w) \\
\notag 
 & = \int_{\dOm} |Tu(z) - T_Ru(z)|^p \int_{B(z,R)} \frac{d\Hcal(w)}{\Hcal(B(w,R) \cap \dOm)}\,d\Hcal(z) \\
 & \approx \int_{\dOm} |Tu(z) - T_Ru(z)|^p\,d\Hcal(z). \label{eq:TuTRu-est}
\end{align}
Combining \eqref{eq:TuTuTriangle}--\eqref{eq:TuTRu-est} with \eqref{eq:TRTr-est}, we obtain that
\begin{align}
\notag
E_p(Tu,R)^p & = \int_{\dOm} \fint_{B(w,R)} |Tu(z) - Tu(w)|^p\,d\Hcal(z)\,d\Hcal(w) \\
\notag
& \lesssim \int_{\dOm} \biggl(|Tu(z) - T_{2R} u(z)|^p + |Tu(z) - T_{R} u(z)|^p + (2R)^p \fint_{B_{z,2\lambda R}} g^p(x)\,d\mu(x) \biggr)\,d\Hcal(z) \\
\label{eq:Ep-TuR}
& \lesssim (2R)^{p-\theta} \|g\|_{L^p(\Om_{2\lambda R})}^p\,.
\end{align}
Hence, $\|Tu\|_{\dot B^{1-\theta/p}_{p,\infty}(\dOm)} = \sup_{R>0} E_p(Tu, R)/R^{1-\theta/p} \lesssim \|g\|_{L^p(\Om)}$, which concludes the proof.
\end{proof}
Comparing the situation with well-known trace theorems for domains in the Euclidean spaces, one can expect that the trace class of $P^{1,p}_*(\Om)$ should be the Besov space $B^{1-\theta/p}_{p,p}(\dOm)$ provided that the boundary is sufficiently regular. In Section~\ref{sec:tracesJohn} below, we will obtain the expected boundedness of the trace operator for $P^{1,p}_p(\Om)$ and $P^{1,p}_q(\Om)$ if $\Om$ is a uniform or a John domain, respectively.
\begin{pro}
\label{pro:TraceOp1}
Let $u \in P^{1,p}_p(\Om)$ for some $p \in (\theta, \infty)$. Then, there exist a set $E \subset \dOm$ with $\Hcal(E)=0$ and a function $h\in \wk L^p(\dOm)$ with  $\|h\|_{\wk L^p(\dOm)}\lesssim \|g\|_{L^p(\Om)}$, where $g\in L^p(\Om)$ is a $p$-PI gradient of $u$, such that
\[
  |Tu(z) - Tu(w)| \lesssim \dd(z,w)^{1-\theta/p} \bigl(h(z) + h(w)\bigr)\quad\text{for every }z,w \in \dOm \setminus E.
\]
\end{pro}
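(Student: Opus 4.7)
My plan is to obtain the claimed pointwise estimate by taking $h$ to be (a constant multiple of) the fractional maximal function $M_{\theta,p}g$ introduced in~\eqref{eq:FracMax}. By Lemma~\ref{lem:fracMax-bdd} applied with $\alpha=\theta<p\le s$ (the case $p\ge s$ is even simpler since then $M_{\theta,p}g\in L^\infty(\dOm)$), this operator maps $L^p(\Om)$ into $\wk L^p(\dOm)$ with the correct quantitative bound. The exceptional set $E$ will be the union of $\{z\in\dOm: M_{\theta,p}g(z)=\infty\}$ (which is $\Hcal$-null by the weak-$L^p$ estimate) and the null set of $z\in\dOm$ where $T_ru(z)\not\to Tu(z)$ as $r\to 0^+$, which was identified in the proof of Theorem~\ref{thm:TraceOpB}.

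The key step is a dyadic telescoping argument: for $z\in\dOm\setminus E$ and $R\in (0,2\diam\Om)$, I telescope $Tu(z)-T_Ru(z)=\lim_{N\to\infty}\sum_{k=0}^{N}(T_{2^{-k-1}R}u(z)-T_{2^{-k}R}u(z))$, and estimate each increment by doubling and the $p$-PI inequality,
\[
  |u_{B_{z,2^{-k-1}R}}-u_{B_{z,2^{-k}R}}|\lesssim \fint_{B_{z,2^{-k}R}}|u-u_{B_{z,2^{-k}R}}|\,d\mu\lesssim 2^{-k}R\,\biggl(\fint_{B_{z,\lambda 2^{-k}R}}g^p\,d\mu\biggr)^{\!1/p}.
\]
Bounding the right-hand side by $(2^{-k}R)^{1-\theta/p}(\lambda^{-\theta/p}M_{\theta,p}g(z))$ and summing the resulting geometric series (which converges precisely because $p>\theta$) yields
\[
  |Tu(z)-T_Ru(z)|\lesssim R^{1-\theta/p}M_{\theta,p}g(z).
\]

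To finish, given $z,w\in\dOm\setminus E$ set $R=\dd(z,w)$ and apply the triangle inequality
\[
  |Tu(z)-Tu(w)|\le |Tu(z)-T_Ru(z)|+|T_Ru(z)-T_{2R}u(w)|+|T_{2R}u(w)-Tu(w)|.
\]
The first and third terms are controlled by the previous step, contributing $R^{1-\theta/p}(M_{\theta,p}g(z)+M_{\theta,p}g(w))$. For the middle term, since $B_{z,R}\subset B_{w,2R}\subset B_{z,3R}$ the doubling of $\mu\lfloor_\Om$ gives $\mu(B_{z,R})\approx\mu(B_{w,2R})$, and hence
\[
  |u_{B_{z,R}}-u_{B_{w,2R}}|\lesssim \fint_{B_{w,2R}}|u-u_{B_{w,2R}}|\,d\mu\lesssim R\biggl(\fint_{B_{w,2\lambda R}}g^p\,d\mu\biggr)^{\!1/p}\lesssim R^{1-\theta/p}M_{\theta,p}g(w).
\]
Combining the three estimates gives the desired Hajłasz-type inequality with $h=CM_{\theta,p}g$, and Lemma~\ref{lem:fracMax-bdd} provides $\|h\|_{\wk L^p(\dOm)}\lesssim\|g\|_{L^p(\Om)}$.

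The only real subtlety I foresee is to make sure that all balls appearing in the telescope remain within the scale range $r<2\diam\Om$ at which $M_{\theta,p}g$ is defined, so that Lemma~\ref{lem:fracMax-bdd} is directly applicable; this is automatic since $R\le 2\diam\Om$ and we only shrink $R$. The hypothesis $p>\theta$ enters precisely in the convergence of the geometric series $\sum 2^{-k(1-\theta/p)}$, which is exactly what one expects for a trace with fractional smoothness $1-\theta/p$.
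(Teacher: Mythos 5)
Your argument is correct and mirrors the paper's own proof: you set $h = C\,M_{\theta,p}g$, telescope dyadically to obtain $|Tu(z)-T_Ru(z)|\lesssim R^{1-\theta/p}M_{\theta,p}g(z)$, and close with the three-term triangle inequality at scale $R=\dd(z,w)$ combined with Lemma~\ref{lem:fracMax-bdd}. (One minor slip in your parenthetical: the $L^\infty$ conclusion in Lemma~\ref{lem:fracMax-bdd} kicks in when $\alpha\ge s$, not when $p\ge s$; since here $\alpha=\theta<s$ by the standing assumption, only the weak-$L^p$ case is ever used, so your main argument is unaffected.)
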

\begin{proof}
Let $0 < R \le 2 \diam(\Om)$ and $z\in \dOm$. Using the notation established in the proof of Theorem~\ref{thm:TraceOpB}, we have
\[
  |T_{R}u(z) - T_{R/2}u(z)| \lesssim R \biggl(\fint_{B_{\lambda R}} g^p\biggr)^{1/p} \le R^{1-\theta/p} M_{\theta,p}g(z)\,,
\]
where $M_{\theta,p}$ is the fractional maximal operator defined in \eqref{eq:FracMax}. Thus,
\[
 |T_R u(z) - Tu(z)| \le \sum_{k=1}^{\infty} |T_{2^{-k} R} u(z) - T_{2^{1-k} R}u(z)| \le \sum_{k=1}^{\infty} (2^{1-k}R)^{1-\theta/p} M_{\theta,p}g(z) \approx R^{1-\theta/p} M_{\theta,p}g(z). 
\]
Let now $z,w \in \dOm$ and $R = \dd(z,w)$. Then,
\begin{align}
  \notag
  |Tu(z) - Tu(w)| & \le |Tu(z) - T_Ru(z)| + |T_R u(z) - T_{2R} u(w)| + |T_{2R} u (w) - Tu(w)| \\
  \label{eq:TrHajlaszTypeIneq}
  & \le c R^{1-\theta/p} (M_{\theta,p} g(z) + M_{\theta,p} g(w))\,.
\end{align}
If we set $h = c M_{\theta,p} g$, then $\|h\|_{\wk L^p(\dOm)} \lesssim \|g\|_{L^p(\Om)}$ by Lemma~\ref{lem:fracMax-bdd} and the proof is complete.
\end{proof}
If the couple $(u,g)$ satisfies a Poincar\'e inequality in the form
\[
  \biggl(\fint_{B\cap \Om} |u-u_B|^{p^\maltese}\,d\mu\biggr)^{1/p^\maltese} \le C_\PI \rad(B) \biggl(\fint_{\lambda B\cap\Om} g^p\,d\mu\biggr)^{1/p}
\]
for some $p^\maltese>1$, then the trace satisfies not only \eqref{eq:defoftrace}, but also a stronger relation
\begin{equation}
  \label{eq:betterTconv}
  \lim_{R \to 0} \fint_{B(z,R)\cap \Om} |u(x) - Tu(z)|^{p^\maltese}\,d\mu(x) = 0 \quad\text{for $\Hcal$-a.e.\@ $z\in\dOm$.}
\end{equation}
Indeed, similarly as in \eqref{eq:traceconvergence}, one obtains for $\Hcal$-a.e.\@ $z\in\dOm$ that
\begin{align*}
  \biggl(\fint_{B_{z,r}} |u(x) - Tu(z)|^{p^\maltese}\,d\mu\biggr)^{1/p^\maltese} & \le \biggl(\fint_{B_{z,r}} |u (x) - T_ru(z)|^{p^\maltese}\,d\mu(x)\biggr)^{1/{p^\maltese}} + |T_r u(z) - Tu(z)| \\ 
  & \lesssim r^{1-\theta/p} M_{\theta,p}g(z)+ |T_r u(z) - Tu(z)| \to 0 \qquad \text{as }r\to 0.
\end{align*}
In view of the Sobolev-type embeddings \cite[Theorem~5.1]{HajKos}, \eqref{eq:betterTconv} holds for every $p^\maltese < (s-p)/sp$ if $p<s$ and for every $p^\maltese<\infty$ if $p\ge s$. If $u \in P^{1,p}_q$ for some $q<p<s$, then $p^\maltese = (s-p)/sp$ is also possible by Corollary~\ref{cor:Hajlasz-infPI} and \cite[Theorem~6]{Haj96}.
\begin{cor}
\label{cor:Tr-openendedtargets}
Suppose that $u \in P^{1,p}_p(\Om)$ for some $p>\theta$. Then:
\begin{enumerate}
  \item \label{it:tr1} $Tu \in B^{\alpha}_{m,q}(\dOm)$ for every $0 \le \alpha < 1-\frac{\theta}{p}$, $1 \le m < p_\alpha$, and $0<q\le \infty$, where $p_\alpha = \frac{p(s-\theta)}{s-p(1-\alpha)}$ if $p(1-\alpha)<s$ and $p_\alpha = \infty$ otherwise.
  \item \label{it:tr2} $Tu \in L^m(\dOm)$ for every $m \in [1, p^*)$, where $p^* = \frac{p(s-\theta)}{s-p}$ if $p<s$, while $p^*=\infty$ otherwise.
  \item \label{it:tr3} If $p>s$, then $Tu \in \Ccal^{0, \kappa}(\dOm)$, where $\kappa = 1-\frac{s}{p}$\,.
\end{enumerate}
\end{cor}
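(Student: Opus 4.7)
The plan is to apply the Besov embedding machinery developed in Section~\ref{sec:besov-emb} to the membership $Tu \in B^{1-\theta/p}_{p,\infty}(\dOm)$ furnished by Theorem~\ref{thm:TraceOpB}. The key geometric input that lets me invoke those results on $(\dOm, \Hcal)$ is that the doubling measure $\Hcal\lfloor_{\dOm}$, on the bounded set $\dOm$, satisfies the lower mass bound $\Hcal(B(z, r) \cap \dOm) \gtrsim r^{s-\theta}$; this supplies $Q = s - \theta$ as the exponent required by \eqref{eq:nu-reg-lower-dimension} in Corollary~\ref{cor:Besov-standardembeddings} and Theorem~\ref{thm:Besov-embedding-openended} applied with $Z = \dOm$.

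Part~\ref{it:tr2} follows by a case split on $p$ versus $s$. When $p < s$, Corollary~\ref{cor:Besov-standardembeddings}\,\ref{it:BesovEmbIt1} applied to the smoothness $1-\theta/p$ and $Q = s-\theta$ gives exactly the Sobolev conjugate $p(s-\theta)/(s-p) = p^*$ of the statement, so $Tu \in L^{p^*}(\dOm)$ and thus $Tu \in L^m(\dOm)$ for every $m \in [1, p^*)$ by H\"older on the finite-measure space. When $p = s$, Corollary~\ref{cor:Besov-standardembeddings}\,\ref{it:BesovEmbIt2} places $Tu$ in $\exp L(\dOm)$, which sits inside every $L^m(\dOm)$ with $m < \infty = p^*$. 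When $p > s$, Corollary~\ref{cor:Besov-standardembeddings}\,\ref{it:BesovEmbIt3} simultaneously yields $Tu \in L^\infty(\dOm)$ (covering part~\ref{it:tr2}) and the H\"older estimate of part~\ref{it:tr3} with exponent $\kappa = (1-\theta/p) - (s-\theta)/p = 1 - s/p$.

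For part~\ref{it:tr1} with $\alpha \in (0, 1-\theta/p)$, I will apply Theorem~\ref{thm:Besov-embedding-openended}\,\ref{it:BesEmbedIt2} (together with Remark~\ref{rem:Besov-embedding-openended} when $p \ge s$) starting from smoothness $\alpha_0 = 1-\theta/p$ with interpolation parameter $\lambda = \alpha/(1-\theta/p) \in (0,1)$. A direct simplification of $p_\lambda^{-1} = p^{-1} - (1-\lambda)\alpha_0/(s-\theta)$ produces $(s - p(1-\alpha))/(p(s-\theta))$, so that the $p_\lambda$ handed back by the theorem coincides with the $p_\alpha$ of the statement. The theorem then yields $Tu \in B^{\alpha}_{m, q}(\dOm)$ for every $m \in [p, p_\alpha)$ and every $q \in (0, \infty]$, and the H\"older inequality on the finite-measure space $(\dOm, \Hcal)$ extends both $\|Tu\|_{L^m(\dOm)}$ and the difference quantity $E_m(Tu, t)$ down to the range $m \in [1, p)$.

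The endpoint $\alpha = 0$ in part~\ref{it:tr1} splits further: for $q = \infty$ one has $B^0_{m,\infty}(\dOm) = L^m(\dOm)$ by the definition right after \eqref{eq:Besov}, which is already covered by part~\ref{it:tr2}; for finite $q$ I invoke Lemma~\ref{lem:zerosmoothness} with integrability exponent $s_0 = p$, smoothness $\alpha_0 = 1-\theta/p$, and $r$ chosen in $(m, p^*)$ (using part~\ref{it:tr2} to supply $Tu \in L^r(\dOm)$), which gives $Tu \in \dot B^0_{m, q}(\dOm)$ for $m \in [p, p^*)$, with the H\"older argument again reaching $m \in [1, p)$. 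No substantive obstacle arises here: the whole argument is a bookkeeping exercise built on the Besov embeddings already established, the only mild delicacy being the consistent interpretation of $p^*$ and $p_\alpha$ as either finite conjugate exponents or as $\infty$ across the case split on $p$ versus $s$.
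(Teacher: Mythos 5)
Your strategy—post-compose $Tu\in B^{1-\theta/p}_{p,\infty}(\dOm)$ from Theorem~\ref{thm:TraceOpB} with the Besov embeddings of Section~\ref{sec:besov-emb} on $Z=\dOm$—hinges entirely on the claim that $\Hcal\lfloor_{\dOm}$ satisfies the lower mass bound $\Hcal(B(z,r)\cap\dOm)\gtrsim r^{s-\theta}$, i.e.\ that $Q=s-\theta$ is a valid exponent in \eqref{eq:nu-reg-lower-dimension}. But the corollary stands under the hypotheses of Section~\ref{sec:tracesGen}, which assume only the \emph{upper} codimension-$\theta$ bound \eqref{eq:H-upper-massbound}; the lower bound \eqref{eq:H-lower-massbound} is not in force. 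The paper flags exactly this point right after the definitions: under \eqref{eq:H-upper-massbound} alone, $\Hcal\lfloor_{\dOm}$ has a lower mass bound with some exponent $\alpha\ge s-\theta$, which is \emph{weaker} than $r^{s-\theta}$, and no bound with exponent $s-\theta$ need hold. If the genuine lower dimension $Q$ of $\Hcal\lfloor_{\dOm}$ strictly exceeds $s-\theta$, then $p^*=pQ/(Q-p(1-\theta/p))<p(s-\theta)/(s-p)$, so Corollary~\ref{cor:Besov-standardembeddings} and Theorem~\ref{thm:Besov-embedding-openended} give only a strictly smaller range of $m$ than the statement claims; the gap can be arbitrarily large (e.g.\ weight $\Hcal$ by $|z|^\gamma$ near the cusp in Example~\ref{exa:T-not-surjective} and let $\gamma\to\infty$). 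For part~\ref{it:tr3} the failure is starker: if $Q>p-\theta$ you would be in the subcritical case $\alpha p<Q$ of Corollary~\ref{cor:Besov-standardembeddings} and lose H\"older continuity altogether, yet the H\"older estimate is a purely metric statement about $Tu$ and cannot actually depend on the measure $\Hcal$.

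The paper's proof avoids this entirely by working with the fractional maximal operator $M_{\alpha,p}$ of \eqref{eq:FracMax}: it derives a pointwise fractional-Haj\l asz inequality $|Tu(z)-Tu(w)|\lesssim\dd(z,w)^{\tilde\alpha}(M_{p-\tilde\alpha p,p}g(z)+M_{p-\tilde\alpha p,p}g(w))$ as in Proposition~\ref{pro:TraceOp1}, and then the integrability exponent $p(s-\theta)/(s-p(1-\tilde\alpha))$ is supplied by Lemma~\ref{lem:fracMax-bdd}, whose proof uses only the lower mass bound of $\mu\lfloor_\Om$ (exponent $s$) and the upper codimension bound of $\Hcal$, never a lower bound on $\Hcal$; Lemmata~\ref{lem:HajlaszIsBesov} and~\ref{lem:decreasingsmoothness} then yield the Besov memberships, and the $p>s$ case of Lemma~\ref{lem:fracMax-bdd} gives the H\"older estimate directly. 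In short: your argument is correct and essentially the paper's proof of the \emph{next} corollary, Corollary~\ref{cor:Tr-generic-codimReg}, which assumes Ahlfors codimension-$\theta$ regularity and hence does give $Q=s-\theta$; it does not prove Corollary~\ref{cor:Tr-openendedtargets} under its stated, weaker hypotheses.
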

\begin{proof}
\ref{it:tr1} Given $\alpha \in [0, 1-\frac{\theta}{p})$ and $m \in [1, p_\alpha)$, we can find $\tilde{\alpha} \in (\alpha, 1-\frac{\theta}{p})$ such that $m < p_{\tilde{\alpha}}$. A~simple modification of the proof of Proposition~\ref{pro:TraceOp1} yields an analog of \eqref{eq:TrHajlaszTypeIneq}, viz.\@ $|Tu(z) - Tu(w)| \lesssim \dd(z,w)^{\tilde{\alpha}} (M_{p-\tilde\alpha p,p}g(z) + M_{p-\tilde\alpha p,p}g(w))$ while $|Tu(z) - u_\Om| \lesssim (\diam \Om)^{\tilde{\alpha}} M_{p-\tilde\alpha p,p}g(z)$. By Lemma~\ref{lem:fracMax-bdd}, $M_{p-\tilde\alpha p,p}g \in \wk L^{p_{\tilde{\alpha}}}(\dOm) \subset L^m(\dOm)$. Then, Lemmata~\ref{lem:HajlaszIsBesov} and~\ref{lem:decreasingsmoothness} give the desired result.

\ref{it:tr2} The assertion follows immediately from \ref{it:tr1} by choosing $\alpha = 0$.

\ref{it:tr3} We use another analog of~\eqref{eq:TrHajlaszTypeIneq}, viz.\@ $|Tu(z) - Tu(w)| \le C \dd(z,w)^{1-s/p} (M_{s,p}g(z) + M_{s,p}g(w))$. Since $M_{s,p}: L^p(\Om) \to L^\infty(\dOm)$ is bounded by Lemma~\ref{lem:fracMax-bdd}, we obtain that $Tu$ has a $(1-\frac{s}{p})$-H\"older continuous representative.
\end{proof}
\begin{cor}
\label{cor:Tr-generic-codimReg}
Assume that $\Hcal$ is Ahlfors codimension-$\theta$ regular. Suppose that $u \in P^{1,p}_p(\Om)$ for some $p>\theta$. Then:
\begin{enumerate}
  \item $Tu \in B^{\alpha}_{p_\alpha,\infty}(\dOm)$ for every $1-\frac{s}{p} < \alpha \le 1-\frac{\theta}{p}$, where $p_\alpha = \frac{p(s-\theta)}{s-p(1-\alpha)}$.
  \item $Tu \in \wk L^{p^*}(\dOm) \subset \bigcap_{m<p^*} L^m(\dOm)$, where $p^* = \frac{p(s-\theta)}{s-p}$ provided that $p<s$.
\end{enumerate}
\end{cor}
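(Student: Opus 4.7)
The proof combines Theorem~\ref{thm:TraceOpB} with the Besov embedding machinery of Section~\ref{sec:besov-emb}. Starting from $Tu\in B^{1-\theta/p}_{p,\infty}(\dOm)$, we exploit the fact that Ahlfors codim-$\theta$ regularity, together with the lower mass bound $\mu(B(z,r)\cap\Om)\gtrsim r^s$ from \eqref{eq:ball-mu-vs-rads}, yields $\Hcal(B(z,r)\cap\dOm)\approx\mu(B(z,r)\cap\Om)/r^\theta\gtrsim r^{s-\theta}$. Thus $\Hcal\lfloor_{\dOm}$ satisfies the lower mass bound \eqref{eq:nu-reg-lower-dimension} with exponent $Q=s-\theta$, and $\dOm$ is treated as a metric measure space of lower dimension $Q$.

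For part~(i), the crucial observation is that $B^{1-\theta/p}_{p,\infty}(\dOm)$ and $B^{\alpha}_{p_\alpha,\infty}(\dOm)$ lie on the same differential-dimension line: a direct computation gives $(1-\theta/p)-Q/p=\alpha-Q/p_\alpha=1-s/p$. When $p<s$ (so that $\alpha_0 p_0<Q$ with $\alpha_0=1-\theta/p$ and $p_0=p$), Theorem~\ref{thm:Besov-embedding-closedend} applied on $\dOm$ with $q=\infty$ and $\lambda=\alpha/\alpha_0\in(0,1)$ delivers $Tu\in B^{\lambda\alpha_0}_{p_\lambda,\infty}(\dOm)$; algebraic manipulation of $1/p_\lambda=(1-\lambda)/p^*+\lambda/p$ with $p^*=pQ/(Q-\alpha_0 p)$ confirms $p_\lambda=p_\alpha$, while the boundary case $\lambda=1$ (i.e.\@ $\alpha=1-\theta/p$) is given directly by Theorem~\ref{thm:TraceOpB}. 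When $p\ge s$, Corollary~\ref{cor:Tr-openendedtargets}\,\ref{it:tr3} provides Hölder continuity, and the pointwise bound $|Tu(y)-Tu(z)|\lesssim\dd(y,z)^{1-s/p}$ combined with $E_p(Tu,t)^p\lesssim t^{p-\theta}$ (from the proof of Theorem~\ref{thm:TraceOpB}) yields
\[
E_{p_\alpha}(Tu,t)^{p_\alpha}\lesssim t^{(1-s/p)(p_\alpha-p)}E_p(Tu,t)^{p}\lesssim t^{(1-s/p)(p_\alpha-p)+p-\theta}=t^{\alpha p_\alpha},
\]
where the last equality follows from the easily verified identity $p_\alpha(1-s/p-\alpha)=-(s-\theta)$, which is immediate from the definition of $p_\alpha$. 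Combined with $Tu\in L^{p_\alpha}(\dOm)$ (automatic when $p\ge s$ from boundedness of $Tu$, and contained in part~(ii) when $p<s$), this gives $Tu\in B^{\alpha}_{p_\alpha,\infty}(\dOm)$.

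Part~(ii) follows from the Sobolev-type weak-$L^{p^*}$ embedding of Besov spaces recorded in Remark~\ref{rem:sharp-besov-emb}, namely $B^{1-\theta/p}_{p,\infty}(\dOm)\hookrightarrow L^{p^*,\infty}(\dOm)=\wk L^{p^*}(\dOm)$. The subsequent inclusion $\wk L^{p^*}(\dOm)\subset\bigcap_{m<p^*}L^m(\dOm)$ is the standard Cavalieri estimate $\int|Tu|^m\,d\Hcal\lesssim\|Tu\|_{\wk L^{p^*}(\dOm)}^m\,\Hcal(\dOm)^{1-m/p^*}$ combined with $\Hcal(\dOm)<\infty$.

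The main delicate point is that Theorem~\ref{thm:Besov-embedding-closedend} and Remark~\ref{rem:sharp-besov-emb} are nominally stated under the reverse doubling condition \eqref{eq:nu-upper-dimension}, which may fail on $\dOm$ (e.g.\@ when $\dOm$ is disconnected). As indicated at the close of Remark~\ref{rem:sharp-besov-emb}, the reverse doubling hypothesis can be dropped by substituting the Hajłasz-type Sobolev embedding \cite[Theorem~8.7(1)]{Haj}, which requires only the lower mass bound available here, for the Ahlfors-regularity-based step in the underlying proofs of the embeddings.
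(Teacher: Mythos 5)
Your proposal reproduces the paper's strategy (cite Theorem~\ref{thm:TraceOpB}, Theorem~\ref{thm:Besov-embedding-closedend}, and Remark~\ref{rem:sharp-besov-emb}) but adds useful detail, including the differential-dimension algebra identifying $p_\lambda=p_\alpha$ and a separate Hölder-continuity argument for $p>s$. The latter is a genuine gain: Theorem~\ref{thm:Besov-embedding-closedend} requires $\alpha_0 p_0<Q$, i.e.\ $(1-\theta/p)p<s-\theta$, which is precisely $p<s$, so the paper's bare citation cannot by itself cover the range $p\geq s$ allowed in part~(i). Your computation $E_{p_\alpha}(Tu,t)^{p_\alpha}\lesssim t^{(1-s/p)(p_\alpha-p)}E_p(Tu,t)^p\lesssim t^{\alpha p_\alpha}$ using Corollary~\ref{cor:Tr-openendedtargets}\,\ref{it:tr3} together with \eqref{eq:Ep-TuR} is correct and fills this in cleanly for $p>s$ (though the critical case $p=s$, where $\alpha_0 p=Q$ and Corollary~\ref{cor:Tr-openendedtargets}\,\ref{it:tr3} is unavailable, is still not covered by either your argument or the paper's citation).

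One claim in your last paragraph is not supported by what the paper actually provides. You assert that the reverse doubling hypothesis \eqref{eq:nu-upper-dimension} in Theorem~\ref{thm:Besov-embedding-closedend} and Remark~\ref{rem:sharp-besov-emb} ``can be dropped'' by substituting \cite[Theorem~8.7(1)]{Haj}. The circumvention stated at the end of Remark~\ref{rem:sharp-besov-emb} addresses only the replacement of strict Ahlfors $Q$-regularity (used in \cite{HeiIhnTuo}) by the weaker lower mass bound; reverse doubling remains a standing hypothesis there, and Theorem~\ref{thm:Besov-embedding-closedend} in turn rests on the RD-space machinery of \cite{Han,MulYan}, whose frame constructions rely on the reverse doubling condition in an essential way. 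Ahlfors codimension-$\theta$ regularity of $\Hcal\lfloor_{\dOm}$ gives reverse doubling for $\Hcal$ only if $\mu\lfloor_\Om$ satisfies reverse doubling with exponent $\sigma>\theta$, which is not guaranteed by the hypotheses of the corollary. You are right to flag the issue — the paper's one-line proof glosses over the same point — but the fix you propose is not justified; either the reverse doubling must be assumed, or a genuinely different proof of the closed-endpoint embedding is needed.

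Part~(ii) as you present it is fine: Remark~\ref{rem:sharp-besov-emb} with $q=\infty$ gives the weak-$L^{p^*}$ bound, and the Cavalieri/Hölder argument for $\wk L^{p^*}(\dOm)\subset\bigcap_{m<p^*}L^m(\dOm)$ uses $\Hcal(\dOm)<\infty$, which holds since $\Om$ is bounded — modulo the same reverse-doubling caveat inherited from Remark~\ref{rem:sharp-besov-emb}.
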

\begin{proof}
Follows from Theorem~\ref{thm:TraceOpB}, Remark~\ref{rem:sharp-besov-emb}, and Theorem~\ref{thm:Besov-embedding-closedend}.
\end{proof}
\begin{pro}
\label{pro:Trace-compact-operator}
The trace $T: M^{1,p}(\Om) \to L^m(\dOm)$ is a compact operator for every $m < p^*$, where $p^* = \frac{p(s-\theta)}{s-p}$ if $p<s$ while $p^* = \infty$ otherwise. Moreover, if $\Om$ admits a $p$-Poincar\'e inequality, then $T: N^{1,p}(\Om) \to L^m(\dOm)$ is compact for every $m<p^*$.
\end{pro}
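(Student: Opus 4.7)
The plan is to factor the trace operator through its Besov target and then apply the compact embedding of Besov functions into Lebesgue spaces. The three ingredients are the continuous inclusions $M^{1,p}(\Om)\subset P^{1,p}_p(\Om)$ (respectively $N^{1,p}(\Om)\subset P^{1,p}_p(\Om)$ in the presence of a $p$-Poincar\'e inequality), the continuous trace bound from Theorem~\ref{thm:TraceOpB}, and the Rellich--Kondrachev-type statement of Corollary~\ref{cor:Besov-cpt-embedding}.

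Given a bounded sequence $\{u_k\}_{k=1}^\infty\subset M^{1,p}(\Om)$, I first invoke Corollary~\ref{cor:Hajlasz-infPI} together with Corollary~\ref{cor:P1pq-equality}, using that $\mu\lfloor_\Om$ is doubling, to deduce that $\{u_k\}$ is uniformly bounded in $P^{1,p}_p(\Om)$. Theorem~\ref{thm:TraceOpB} then supplies the uniform bound
\[
  \|Tu_k\|_{B^{1-\theta/p}_{p,\infty}(\dOm)}\le C
\]
independent of $k$. Since $\Om$ is bounded, so is $\dOm$, and the upper codimension-$\theta$ bound combined with~\eqref{eq:ball-mu-vs-rads} yield that $\Hcal\lfloor_{\dOm}$ is doubling with a lower mass bound $\Hcal(B(z,r)\cap\dOm)\gtrsim r^{s-\theta}$. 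Applying Corollary~\ref{cor:Besov-cpt-embedding} with Besov smoothness $\alpha=1-\theta/p$ and lower-dimension exponent $Q=s-\theta$, a brief computation shows that the resulting Sobolev conjugate $pQ/(Q-\alpha p)$ equals precisely $p(s-\theta)/(s-p)$ when $p<s$ and $\infty$ when $p\ge s$, matching the $p^*$ of the statement. The corollary then extracts a subsequence $\{Tu_{k_j}\}$ convergent in $L^m(\dOm)$ for any $m<p^*$, establishing compactness of $T\colon M^{1,p}(\Om)\to L^m(\dOm)$.

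If $\Om$ additionally admits a $p$-Poincar\'e inequality, then Definition~\ref{df:PI} yields the bounded inclusion $N^{1,p}(\Om)\subset P^{1,p}_p(\Om)$ directly, and the same chain of implications produces compactness of $T\colon N^{1,p}(\Om)\to L^m(\dOm)$. The main subtlety I foresee is matching the exponents correctly: one must identify the lower-dimension exponent on $\dOm$ as $Q=s-\theta$ so that Corollary~\ref{cor:Besov-cpt-embedding} returns precisely the $p^*$ appearing in the proposition, rather than a weaker one. Beyond this bookkeeping, the argument is a direct concatenation of previously established results and no new analytic difficulty arises.
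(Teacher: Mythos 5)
Your overall strategy---factor the trace through a Besov space on $\dOm$ and invoke the compact embedding---is the same as the paper's, but the specific Besov target you choose makes the exponent bookkeeping go wrong, and the step where you claim $Q = s - \theta$ is a genuine gap.

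You assert that the upper codimension-$\theta$ bound \eqref{eq:H-upper-massbound}, combined with \eqref{eq:ball-mu-vs-rads}, forces $\Hcal(B(z,r)\cap\dOm)\gtrsim r^{s-\theta}$. This does not follow. The upper codimension bound gives only an \emph{upper} bound on $\Hcal$ in terms of $\mu$, and \eqref{eq:ball-mu-vs-rads} is a \emph{lower} bound on $\mu$; these two estimates point in opposite directions and cannot be combined into a lower mass bound for $\Hcal$. The estimate $\Hcal(B(z,r)\cap\dOm)\gtrsim r^{s-\vartheta}$ comes from the \emph{lower} codimension bound \eqref{eq:H-lower-massbound}, which Section~\ref{sec:tracesGen} does not assume. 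In general, $\Hcal$ can be doubling with a lower-mass exponent $Q$ strictly larger than $s-\theta$ (take $\Om$ the unit disk in $\Rbb^2$ with Lebesgue measure, $\theta=1$, and $d\Hcal=\dd(z,z_0)^2\,d\mathcal{H}^1\lfloor_{\dOm}$ for a fixed boundary point $z_0$; then $Q=3>1=s-\theta$ while \eqref{eq:H-upper-massbound} still holds with $\theta = 1$). When you then feed $\alpha = 1-\theta/p$ into Corollary~\ref{cor:Besov-cpt-embedding}, the resulting conjugate exponent is $pQ/(Q-p+\theta)$, which is strictly decreasing in $Q$; if $Q > s-\theta$ this is strictly smaller than $p(s-\theta)/(s-p)$, so you would only obtain compactness up to some exponent below the $p^*$ of the statement.

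The paper avoids this by not using the critical Besov bound $Tu\in B^{1-\theta/p}_{p,\infty}(\dOm)$ from Theorem~\ref{thm:TraceOpB} at all. Instead it uses Corollary~\ref{cor:Tr-openendedtargets}\,\ref{it:tr1}, which places $Tu$ in $B^{\alpha}_{m,q}(\dOm)$ for every $\alpha<1-\theta/p$ and every $m<p_\alpha$, where $p_\alpha\to p^*=p(s-\theta)/(s-p)$ as $\alpha\to 0$. That corollary is proven via boundedness of the fractional maximal operator $M_{\alpha,p}$ (Lemma~\ref{lem:fracMax-bdd}), whose mapping properties are governed directly by $s$ and $\theta$ of $\mu\lfloor_\Om$ and never reference the lower-mass exponent of $\Hcal$. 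Given $m<p^*$ one picks $\alpha>0$ tiny and $m'\in(m,p_\alpha)$, obtains a uniform bound in $B^{\alpha}_{m',\infty}(\dOm)$, and then Corollary~\ref{cor:Besov-cpt-embedding} only needs to deliver an $L^m$-convergent subsequence with $m<m'$, which holds for any admissible $Q$ since the Besov conjugate always exceeds $m'$. If you wish to keep your factoring through Theorem~\ref{thm:TraceOpB} and Corollary~\ref{cor:Besov-cpt-embedding}, you would need to additionally assume the lower codimension bound \eqref{eq:H-lower-massbound}, i.e., Ahlfors codimension-$\theta$ regularity, which is more restrictive than the proposition's hypotheses.
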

\begin{proof}
Follows from Corollaries~\ref{cor:Hajlasz-infPI}, \ref{cor:Tr-openendedtargets}\,\ref{it:tr1}, and~\ref{cor:Besov-cpt-embedding}; see also commentary after Definition~\ref{df:PI}. 
\end{proof}
The proofs of statements about the trace operator above can be easily adapted for functions in the Besov class $B^\alpha_{p,q}(\Om)$ provided that $\alpha p>\theta$, $q\in (0,\infty]$. If $u \in B^\alpha_{p,q}(\Om)$ for some $q\le p$, then Lemma~\ref{lem:BesovHaj} provides us with a function $g\in L^p(\Om)$ so that $(u,g)$ satisfies the fractional Haj\l asz gradient inequality \eqref{eq:BesovHaj}. By \cite[Theorem~9.3]{Haj}, we have $u \in P^{\alpha}_{p,q}(\Om)$ for some $q<p$, i.e.,
\[
  \fint_{B} |u - u_B|\,d\mu \le \rad(B)^\alpha \biggl(\fint_{\lambda B} g^q \,d\mu\biggr)^{1/q},
\]
where $g\in L^p(\Om)$. One obtains that $Tu \in B^{\alpha-\theta/p}_{p,\infty}(\dOm)$ by following the steps of the proof of Theorem~\ref{thm:TraceOpB}. Similarly as in~\eqref{eq:TrHajlaszTypeIneq}, we have $|Tu(z) - Tu(w)| \lesssim \dd(z,w)^{\alpha - \theta/p} (M_{\theta,p}g(z) + M_{\theta,p}g(w))$. Open-ended boundedness and compactness of the trace operator can then be obtained similarly as in Corollary~\ref{cor:Tr-openendedtargets} and Proposition~\ref{pro:Trace-compact-operator}.

In case $u\in B^\alpha_{p,q}(\Om)$ with $q \in (p, \infty]$, one can first use Lemma~\ref{lem:decreasingsmoothness} to see that $u\in B^{\alpha-\eps}_{p,p}(\Om)$ for every $\eps>0$ and then proceed as above to show that $Tu \in B^{\alpha-\eps-\theta/p}_{p,\infty}(\dOm)$ and thus obtain the corresponding open-ended results.

Next, we will show a positive result on existence of a trace when a function has a $\theta$-PI gradient in a weighted $L^\theta(\Om)$. In Proposition~\ref{pro:T-intoLp-sharpness} below, it is shown that the hypothesis here is essentially sharp for $\theta = p > 1$. On the other hand, if $\theta = p = 1$ and $\dOm$ is Ahlfors codimension-1 regular, then $T: P^{1,1}_1(\Om) \to L^1(\dOm)$ without any extra assumptions on weighted $L^1$-integrability of the 1-PI gradient by \cite{LahSha}.
\begin{thm}
\label{thm:TraceOp-theta=p}
Assume that $\theta = p$. Let $w: (0, \infty) \to [1, \infty)$ be a decreasing function such that $\int_0^1 \frac{dt}{t w(t)} < \infty$. Suppose that $u \in P^{1,p}_p(\Om)$ and $\tilde{g}(x) \coloneq g(x) w(\dist(x,\dOm)) \in L^p(\Om)$, where $g\in L^p(\Om)$ is a $p$-PI gradient of $u$. Then, $Tu$ satisfying \eqref{eq:defoftrace} exists and $\|Tu - u_\Om\|_{L^p(\dOm)} \lesssim \|\tilde{g}\|_{L^p(\Om)}$.
\end{thm}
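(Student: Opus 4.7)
The plan is to mimic the proof of Theorem~\ref{thm:TraceOpB}, but exploit the weight $w$ to recover the summability that is lost at the endpoint $\theta=p$. Fix a $p$-PI gradient $g$ of $u$ and dilation factor $\lambda\ge1$, and keep the notation $T_r u(z)=\fint_{B(z,r)\cap\Om}u\,d\mu$ and $\Om_\rho=\{x\in\Om:\dist(x,\dOm)<\rho\}$ from that proof. Repeating the Fubini/doubling computation leading to the estimate displayed just before \eqref{eq:TRTr-est}, we obtain
\[
  \|T_R u-T_{R/2}u\|_{L^p(\dOm)}^p\ \lesssim\ R^{p-\theta}\int_{\Om_{\lambda R}}g^p\,d\mu\ =\ \int_{\Om_{\lambda R}}g^p\,d\mu
\]
for $0<R\le 2\diam\Om$. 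Now use the weight: on $\Om_{\lambda R}$ we have $\dist(x,\dOm)<\lambda R$, and since $w$ is decreasing, $w(\dist(x,\dOm))\ge w(\lambda R)$. Therefore $g(x)\le\tilde g(x)/w(\lambda R)$ on $\Om_{\lambda R}$, and
\[
  \|T_R u-T_{R/2}u\|_{L^p(\dOm)}\ \lesssim\ \frac{\|\tilde g\|_{L^p(\Om)}}{w(\lambda R)}.
\]

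The key step is then to telescope along a dyadic scale and exploit the Dini-type integrability hypothesis. For any $R_0\le 2\diam\Om$ and $r=2^{-N}R_0$,
\[
  \|T_{R_0}u-T_ru\|_{L^p(\dOm)}\ \le\ \sum_{k=1}^{N}\|T_{2^{1-k}R_0}u-T_{2^{-k}R_0}u\|_{L^p(\dOm)}\ \lesssim\ \|\tilde g\|_{L^p(\Om)}\sum_{k=1}^{N}\frac{1}{w(\lambda 2^{1-k}R_0)}.
\]
Because $w$ is decreasing, $\sum_{k\ge1}w(\lambda 2^{1-k}R_0)^{-1}$ is comparable to $\int_0^{c}\frac{dt}{t\,w(t)}$, which is finite by hypothesis; in particular $w(t)\to\infty$ as $t\to0^+$. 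Taking $R_0=2\diam\Om$ (so $T_{R_0}u\equiv u_\Om$) and sending $N\to\infty$ gives both the Cauchy property of $\{T_{2^{-k}R_0}u\}_{k}$ in $L^p(\dOm)$ and the norm bound $\|Tu-u_\Om\|_{L^p(\dOm)}\lesssim\|\tilde g\|_{L^p(\Om)}$, where $Tu$ is defined as the $L^p$-limit; the doubling of $\mu\lfloor_\Om$ extends this to $T_ru(z)\to Tu(z)$ for $\Hcal$-a.e.\ $z\in\dOm$ along arbitrary $r\to 0^+$.

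It remains to verify \eqref{eq:defoftrace}. As in \eqref{eq:traceconvergence}, for every $z\in\dOm$ and $0<r<2\diam\Om$,
\[
  \fint_{B(z,r)\cap\Om}|u(x)-Tu(z)|\,d\mu(x)\ \le\ r\biggl(\fint_{B(z,\lambda r)\cap\Om}g^p\,d\mu\biggr)^{1/p}+|T_ru(z)-Tu(z)|.
\]
Applying the weight bound $g\le\tilde g/w(\lambda r)$ on $B(z,\lambda r)\cap\Om$ (valid since $z\in\dOm$ forces $\dist(\cdot,\dOm)<\lambda r$ there) and recognising the fractional maximal expression with $\alpha=\theta=p$,
\[
  r\biggl(\fint_{B(z,\lambda r)\cap\Om}g^p\,d\mu\biggr)^{1/p}\ \le\ \frac{1}{w(\lambda r)}\Bigl(r^{\theta}\fint_{B(z,\lambda r)\cap\Om}\tilde g^{p}\,d\mu\Bigr)^{1/p}\ \lesssim\ \frac{M_{\theta,p}\tilde g(z)}{w(\lambda r)}.
\]
By Lemma~\ref{lem:fracMax-bdd}, $M_{\theta,p}\tilde g<\infty$ $\Hcal$-a.e.\ on $\dOm$. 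Since $w(\lambda r)\to\infty$ as $r\to0^+$, both terms above vanish at $\Hcal$-a.e.\ $z$, which is \eqref{eq:defoftrace}.

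The only real obstacle is the endpoint scaling: when $\theta=p$ the gain $R^{1-\theta/p}$ that drove summability in Theorem~\ref{thm:TraceOpB} collapses to $1$. The weight $w$ is designed precisely to replace that missing power, and the hypothesis $\int_0^1\frac{dt}{tw(t)}<\infty$ is exactly the Dini-type condition needed to sum the resulting dyadic series; everything else is a bookkeeping adaptation of the earlier proof.
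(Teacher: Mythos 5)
Your proposal is correct and follows essentially the same line of argument as the paper's proof: start from the $L^p$ estimate $\|T_Ru-T_{R/2}u\|_{L^p(\dOm)}\lesssim\|g\|_{L^p(\Om_{\lambda R})}$ inherited from the proof of Theorem~\ref{thm:TraceOpB}, use the monotonicity of $w$ to replace $g$ by $\tilde g/w(\lambda R)$ on the shell $\Om_{\lambda R}$, telescope dyadically and invoke the Dini-type condition $\int_0^1\frac{dt}{tw(t)}<\infty$ to sum the series, and finally verify~\eqref{eq:defoftrace} via the fractional maximal operator bound $M_{\theta,p}\tilde g<\infty$ $\Hcal$-a.e.\ combined with $w(\lambda r)\to\infty$. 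The only cosmetic point is that ``$\sum_k w(\lambda 2^{1-k}R_0)^{-1}$ is comparable to $\int_0^c\frac{dt}{tw(t)}$'' should be read as a one-sided bound obtained by comparing each term with the integral over the adjacent (larger) dyadic interval and absorbing the finite first term using $w\ge1$; with that reading the argument is complete and matches the paper's sketch.
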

In particular, the proposition can be applied for the weight function
\[
  w(t) = \begin{cases}
    \bigl(\log \frac et\bigr)^{1+\eps} & \text{for } 0<t<1,\\
    1 & \text{for }1\le t < \infty,
  \end{cases}
  \quad\text{for which}
  \quad
  \tilde{g}(x) \approx g(x) \biggl(\log \frac{2 \diam \Om}{\dist(x, \dOm)}\biggr)^{1+\eps}, \quad x\in\Om,
\]
where $\eps>0$ is arbitrary.
\begin{proof}[Sketch of proof]
We can follow the proof of Theorem~\ref{thm:TraceOpB} without any modification until \eqref{eq:TRTr-est}. In what follows, we will be using the notation of the aforementioned proof.

Let $0<\frac{R}{2} \le r < R < 2 \diam \Om$. Then,
\[
  \|T_R u - T_r u\|_{L^p(\dOm)} \lesssim \biggl(\int_{\Om_{\lambda R}} g(x)^p\,d\mu(x)\biggr)^{1/p}
  \le \frac{\|\tilde g\|_{L^p(\Om_{\lambda R})}}{w(\lambda R)}\,.
\]
Let now $0 < r < R < 2\diam\Om$. Then, there is $N \in \Nbb$ such that $2^{-N}R \le r < 2^{1-N} R$. Thus,
\begin{align*}
  \| T_R u - T_r u\|_{L^p(\dOm)} & \le \|T_r u - T_{2^{-N} R}u\|_{L^p(\dOm)}+ \sum_{k=1}^N \|T_{2^{1-k} R} - T_{2^{-k} R}\|_{L^p(\dOm)} \\
  & \lesssim \sum_{k=1}^N \frac{\|\tilde{g}\|_{L^p(\Om)}}{w(2^{1-k} \lambda R)} \approx  \sum_{k=1}^N \frac{\|\tilde{g}\|_{L^p(\Om)}}{w(2^{1-k} \lambda R)} \int_{2^{-k}\lambda R}^{2^{1-k}\lambda R} \frac{dt}{t} \le \|\tilde{g}\|_{L^p(\Om)} \int_0^{2\lambda R} \frac{dt}{t w(t)}.
\end{align*}
Since $W(\rho) \coloneq \int_0^\rho \frac{dt}{tw(t)} \to 0$ as $\rho \to 0$, we have hereby shown that $\{T_{r_k} u\}_k$ is a Cauchy sequence in $L^p(\dOm)$ whenever $r_k \to 0$. Thus, the limit function $Tu$ lies in $L^p(\dOm)$ and $T_R u(z) \to Tu(z)$ as $R\to 0$ for $\Hcal$-a.e.\@ $z\in\dOm$.

If $R = 2\diam \Om$, then $T_R u \equiv u_\Om$. Hence, $\|Tu - u_\Om\|_{L^p(\dOm)} 
\lesssim \|\tilde g\|_{L^p(\Om)} W(2 \lambda \diam \Om)$.

Let $E=\{z \in \dOm: M_{\theta,p}\tilde{g}(z)<\infty\text{ and }T_r u(z) \to Tu(z)\text{ as }r\to0\}$. Then, $\Hcal(E) = 0$ and for every $z \in \dOm \setminus E$, we obtain that
\[
  \fint_{B_{z,r}} |u(x)  - Tu(z)|\,d\mu(x) 
  \le r \biggl(\fint_{B_{z,r}} g^p\,d\mu\biggr)^{1/p} + |T_ru(z) - Tu(z)| \le \frac{M_{\theta,p} \tilde{g}(z)}{w(r)} + |T_ru(z) - Tu(z)|,
\]
which approaches $0$ as $r \to 0$.
\end{proof}
\begin{cor}
\label{cor:Tr-LpLogLp}
Let $p\ge 1$. Assume that both \eqref{eq:H-lower-massbound} and \eqref{eq:H-upper-massbound} are satisfied for some $0 < \vartheta \le \theta = p$. Suppose that $u \in P^{1,p}_{p}(\Om)$ has a $p$-PI gradient $g\in L^p(\Om)$ such that $\breve{g} \coloneq g \log (e+g)^{1+\eps} \in L^p(\Om)$ for some $\eps>0$. Then, $\|Tu-u_\Om\|_{L^p(\dOm)} \lesssim (1+ \|\breve g\|_{L^p(\Om)})$. 
\end{cor}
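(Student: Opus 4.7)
The plan is to deduce the corollary from Theorem~\ref{thm:TraceOp-theta=p} applied with a carefully chosen logarithmic weight, and then to absorb the resulting weighted $L^p$-norm of $g$ into $\|\breve g\|_{L^p(\Om)}$ by means of a pointwise Young-type inequality that is tailored to consume precisely the additional factor $(1+\eps)$ in the definition of $\breve g$.

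Concretely, I would fix $\delta\in(0,\eps)$ and set $w(t)=\log(e/t)^{1+\delta}$ for $0<t\le 1$ and $w(t)=1$ for $t>1$. This $w$ is decreasing with $w\ge 1$, and the substitution $s=\log(e/t)$ gives $\int_0^1 dt/(tw(t))=\int_1^\infty s^{-1-\delta}\,ds<\infty$, so Theorem~\ref{thm:TraceOp-theta=p} yields the existence of $Tu$ and the bound
\[
  \|Tu-u_\Om\|_{L^p(\dOm)} \lesssim \|\tilde g\|_{L^p(\Om)},\qquad \tilde g(x):= g(x)\,w(\dist(x,\dOm)).
\]
The remaining and only nontrivial task is the pointwise-to-integral estimate $\|\tilde g\|_{L^p(\Om)}^p\lesssim\|\breve g\|_{L^p(\Om)}^p+1$.

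For that, I would use the elementary fact that, for every $a,b\ge 0$ and every $N\ge 1$, a simple case analysis according to whether $\log(e+a)\ge b^{1/N}$ or $\log(e+a)<b^{1/N}$ gives the inequality
\[
  ab\le a\log(e+a)^N+C_N\bigl(1+\exp(C_N b^{1/N})\bigr).
\]
Applying this with $a=g(x)^p$, $b=w(\dist(x,\dOm))^p$, and $N=(1+\eps)p$, and using that $\log(e+g^p)\approx\log(e+g)$ up to a factor depending only on $p$, yields the pointwise bound
\[
  \tilde g(x)^p \lesssim \breve g(x)^p + \exp\!\bigl(C\log(e/\dist(x,\dOm))^\beta\bigr) + 1,\qquad \beta:=\frac{1+\delta}{1+\eps}<1,
\]
where contributions from $\{\dist(\cdot,\dOm)>1\}$ are absorbed in the additive constant.

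The final step is to show that the exponential-in-log remainder is $\mu$-integrable over $\Om$. Since $\beta<1$, the asymptotics $\exp(C\log(e/t)^\beta)=o(t^{-\alpha})$ as $t\to 0^+$ hold for every $\alpha>0$; choosing $\alpha\in(0,\vartheta)$, one obtains the pointwise majorization by $1+\dist(x,\dOm)^{-\alpha}$. A dyadic decomposition of $\Om_1$ along the shells $\Om_{2^{-k}}\setminus\Om_{2^{-k-1}}$ together with the shell estimate $\mu(\Om_R)\lesssim R^\vartheta$ furnished by Lemma~\ref{lem:shell-measure}\,\ref{it:shellmeas1} then gives
\[
  \int_\Om\dist(x,\dOm)^{-\alpha}\,d\mu(x)\lesssim \sum_k 2^{k(\alpha-\vartheta)}<\infty,
\]
and the assembled estimate $\|\tilde g\|_{L^p(\Om)}\lesssim 1+\|\breve g\|_{L^p(\Om)}$ follows. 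The only delicate point is the exponent bookkeeping in the Young step: the choice $\delta<\eps$ is forced by the requirement $\beta<1$, and that strict inequality is precisely what allows the lower codimension-$\vartheta$ bound to tame the exponential-in-log remainder. Without the $(1+\eps)$ improvement in the hypothesis on $\breve g$, one would land at $\beta=1$ and the distance-function tail would not integrate---this is the structural reason the hypothesis is stated with an extra power of the logarithm.
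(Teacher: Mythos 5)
Your proof is correct, and it is close in spirit to the paper's argument but packages the central absorption step differently. Both approaches start by invoking Theorem~\ref{thm:TraceOp-theta=p} with a logarithmic weight, and both hinge on the observation from Lemma~\ref{lem:shell-measure} that the lower codimension-$\vartheta$ bound makes any negative power $\dist(\cdot,\dOm)^{-\alpha}$ with $\alpha<\vartheta$ integrable over $\Om$ via a dyadic shell decomposition. The difference is the mechanism used to split $\tilde g$. The paper keeps the weight exponent equal to the full $1+\eps$ and partitions $\Om$ by a direct threshold $E=\{g\le(\Delta/\delta)^{\vartheta/2p}\}$, $F=\Om\setminus E$: on $E$ the factor $g^p$ is traded for a distance power, and on $F$ the boundary-distance logarithm is dominated by $\log g$. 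You instead shrink the weight exponent to $1+\delta<1+\eps$ and apply a Young-type inequality $ab\le a\log(e+a)^N+C_N(1+e^{C_N b^{1/N}})$ with $N=(1+\eps)p$, which converts the remainder into $\exp(C\log(e/\dist)^\beta)$ with $\beta=(1+\delta)/(1+\eps)<1$, then uses $\exp(C\log(1/t)^\beta)=o(t^{-\alpha})$ to land back on the same shell estimate. Your route is a bit more systematic (the Young inequality bundles the case analysis), at the cost of introducing the extra parameter $\delta$ and the constraint $\delta<\eps$; the paper's route avoids that slack because the threshold split works with $\beta=1$ power of the logarithm and even with $\alpha=\vartheta/2$ strictly below $\vartheta$. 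One small caveat about your closing remark: the essential reason the hypothesis carries the extra exponent $1+\eps$ with $\eps>0$ is that Theorem~\ref{thm:TraceOp-theta=p} requires $\int_0^1 dt/(tw(t))<\infty$, which already forces the weight to beat $\log(1/t)$ by a positive power. The constraint $\beta<1$ in your Young step is a feature of your particular bookkeeping (and is easily met), not the underlying obstruction; the paper's threshold version makes this clearer.
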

\begin{proof}
The claim follows from Theorem~\ref{thm:TraceOp-theta=p} with the weight $w(t) = \bigl(1+\log_+(\Delta/t)\bigr)^{1+\eps}$, where $\Delta = \diam\Om$. Let us therefore verify that $\tilde{g} \in L^p(\Om)$, where $\tilde{g}$ is as in the theorem above.

For the sake of brevity, let $\delta(x) = \dist(x,\dOm)$.  Define $\Om_k = \{x\in \Om:  2^{-k}\Delta < \delta(x) \le 2^{1-k}\Delta\}$, $k=1,2,\ldots$. Then, $\mu(\Om_k) \lesssim \Hcal(\dOm) (2^{-k}\Delta)^\vartheta$ by Lemma~\ref{lem:shell-measure}. Let $\alpha \in [0, \vartheta)$, $\beta > 0$.
\begin{align*}
  \int_\Om \biggl(\frac{\Delta}{\delta(x)}\biggr)^\alpha \log^\beta\biggl(\frac{\Delta}{\delta(x)}\biggr) \,d\mu & \le \sum_{k=1}^\infty \int_{\Om_k} \biggl(\frac{1}{2^{-k}}\biggr)^\alpha \log^\beta \biggl(\frac{1}{2^{-k}}\biggr)\,d\mu \\
  & \approx \sum_{k=1}^\infty 2^{k\alpha} k^{\beta} \mu(\Om_k) \lesssim \Hcal(\dOm) \sum_{k=1}^\infty k^{\beta} 2^{k(\alpha-\vartheta)}<\infty.
\end{align*}
We can split $\Om$ into two parts, namely $E=\bigl\{x\in \Om: g(x) \le \bigl(\Delta/\delta(x)\bigr)^{\vartheta/2p} \bigr\}$ and $F = \Om \setminus E$. Then,
\begin{align*}
   \int_\Om g(x)^p \log^{p(1+\eps)}\biggl(\frac{\Delta}{\delta(x)}\biggr)\,d\mu & \le \int_E \biggl(\frac{\Delta}{\delta(x)}\biggr)^{\vartheta/2} \log^{p(1+\eps)}\biggl(\frac{\Delta}{\delta(x)}\biggr) \, d\mu \\
  & \qquad + \int_F g(x)^p \biggl(\frac{2p }{\vartheta} \log\bigl(g(x)\bigr)\biggr)^{p(1+\eps)}\,d\mu \lesssim 1 + \|\breve{g}\|_{L^p(\Om)}^p.
\qedhere
\end{align*}
\end{proof}
\begin{pro}
\label{pro:TraceOp-theta>p-l}
Assume that $\theta > p$. Suppose that $\tilde{g}(x) \coloneq g(x) \log(\frac{2\diam(\Om)}{\dist(x,\dOm)})^{1/p} \in L^p(\Om)$, where $g\in L^p(\Om)$ is a $p$-PI gradient of  $u \in P^{1,p}_p(\Om)$. Then, $Tu \in B^{1-\theta/p}_{p,p}(\dOm)$.

In particular, in order to obtain $Tu \in B^{1-\theta/p}_{p,p}(\dOm)$, it suffices to assume that both \eqref{eq:H-lower-massbound} and \eqref{eq:H-upper-massbound} are satisfied for some $0 < \vartheta \le \theta<p$ and that $\breve g \coloneq g \log (e+g)^{1/p} \in L^p(\Om)$ .
\end{pro}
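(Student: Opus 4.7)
The plan is to combine the pointwise estimate on $E_p(Tu,\cdot)$ that was obtained already inside the proof of Theorem~\ref{thm:TraceOpB}, namely
\[
  E_p(Tu,R)^p \lesssim R^{p-\theta}\,\|g\|_{L^p(\Om_{2\lambda R})}^p,
\]
see~\eqref{eq:Ep-TuR}, with the Gogatishvili--Koskela--Shanmugalingam form of the Besov seminorm and to exchange the order of integration. The derivation of~\eqref{eq:Ep-TuR} uses only the doubling of $\mu\lfloor_\Om$, the codimension-$\theta$ upper bound, and the $p$-PI assumption, so it is available independently of the Theorem~\ref{thm:TraceOpB} Cauchy-sequence argument; we merely upgrade the $L^\infty_t$ bound to the $L^p_t$ bound by using the extra logarithmic integrability.

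Concretely, pick $R_0 = 2\lambda\diam\Om$ and substitute~\eqref{eq:Ep-TuR} into the seminorm:
\[
  \|Tu\|_{\dot{B}^{1-\theta/p}_{p,p}(\dOm)}^p
   = \int_0^{R_0} E_p(Tu,t)^p\,\frac{dt}{t^{p-\theta+1}}
   \lesssim \int_0^{R_0} \|g\|_{L^p(\Om_{2\lambda t})}^p\,\frac{dt}{t}.
\]
Since $x \in \Om_{2\lambda t}$ if and only if $2\lambda t > \dist(x,\dOm)$, and every $x\in\Om$ satisfies $\dist(x,\dOm)\le\diam\Om < R_0$, Fubini yields
\[
  \int_0^{R_0} \chi_{\Om_{2\lambda t}}(x)\,\frac{dt}{t}
   = \log\!\left(\frac{2\lambda R_0}{\dist(x,\dOm)}\right)
   \approx \log\!\left(\frac{2\diam\Om}{\dist(x,\dOm)}\right),
\]
whence $\|Tu\|_{\dot{B}^{1-\theta/p}_{p,p}(\dOm)}^p \lesssim \|\tilde g\|_{L^p(\Om)}^p$. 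The $L^p(\dOm)$-part of the Besov norm is already controlled by $\|g\|_{L^p(\Om)}+|u_\Om|$ via Theorem~\ref{thm:TraceOpB}, so $Tu \in B^{1-\theta/p}_{p,p}(\dOm)$ as claimed.

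For the ``In particular'' assertion, I would show that $\breve g \in L^p(\Om)$ together with the codimension bounds implies $\tilde g \in L^p(\Om)$, imitating the splitting used in the proof of Corollary~\ref{cor:Tr-LpLogLp}. Set $\delta(x)=\dist(x,\dOm)$, $\Delta=\diam\Om$, and split $\Om = E \cup F$, where $E = \{x\in\Om : g(x) \le (\Delta/\delta(x))^{\vartheta/(2p)}\}$ and $F = \Om\setminus E$. On $F$ the defining inequality inverts to $\log(2\Delta/\delta(x)) \lesssim \log(e+g(x))$, whence $\int_F g^p\log(2\Delta/\delta)\,d\mu \lesssim \|\breve g\|_{L^p(\Om)}^p$. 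On $E$ the integrand is dominated by $(\Delta/\delta)^{\vartheta/2}\log(2\Delta/\delta)$; using the dyadic shells $\Om_k = \{2^{-k}\Delta < \delta \le 2^{1-k}\Delta\}$ and Lemma~\ref{lem:shell-measure}\,\ref{it:shellmeas1}, which yields $\mu(\Om_k)\lesssim\Hcal(\dOm)(2^{-k}\Delta)^\vartheta$, the integral over $E$ reduces to a convergent geometric sum of the form $\sum_k k\cdot 2^{-k\vartheta/2}$.

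The main obstacle, such as it is, is merely the bookkeeping of constants in the Fubini exchange and choosing $R_0$ large enough so that the time-integral covers the full near-boundary strip; all analytic ingredients are already present in the proofs of Theorem~\ref{thm:TraceOpB} and Corollary~\ref{cor:Tr-LpLogLp}.
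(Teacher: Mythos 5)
Your proposal matches the paper's own argument essentially step by step: both start from the estimate \eqref{eq:Ep-TuR} obtained inside the proof of Theorem~\ref{thm:TraceOpB}, substitute it into the $B^{1-\theta/p}_{p,p}$ seminorm, use Fubini to turn the $\int dt/t$ over shells into the logarithmic weight $\log(2\diam\Om/\dist(x,\dOm))$, and then reduce the ``in particular'' clause to the splitting argument from Corollary~\ref{cor:Tr-LpLogLp}. This is the same proof as the paper's.
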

\begin{proof}[Sketch of proof]
Let \eqref{eq:Ep-TuR} from the proof of Theorem~\ref{thm:TraceOpB} be the starting point of the current proof. Let $K = 2 \diam\Om$. Then,
\begin{align*}
  \|Tu\|_{\dot B^{1-\theta/p}_{p,p}(\dOm)}^p & = \int_0^K \frac{E_p(Tu, R)^p}{R^{p-\theta}} \frac{dR}{R} \lesssim \int_0^K \|g\|_{L^p(\Om_{2\lambda R})}^p \frac{dR}{R} \\
  & = \int_\Om g(x)^p \int_{\dist\{x, \dOm\}/2\lambda}^K \frac{dR}{R}\,d\mu(x) = \int_\Om g(x)^p \log\biggl(\frac{2\lambda K}{\dist\{x, \dOm\}}\biggr)\,d\mu(x) \approx \|\tilde g\|_{L^p(\Om)}^p.
\end{align*}
If $\breve g \in L^p(\Om)$, then $\tilde g\in L^p(\Om)$, which can be proven similarly as in Corollary~\ref{cor:Tr-LpLogLp}.
\end{proof}
\begin{rem}
Instead of assuming that $\tilde{g}\in L^p(\Om)$ in Theorem~\ref{thm:TraceOp-theta=p} and~\ref{pro:TraceOp-theta>p-l}, it is possible to obtain the desired boundedness of the trace operator by logarithmically refining the codimension relation between $\Hcal\lfloor_{\dOm}$ and $\mu\lfloor_\Om$. Namely, if there is $\eta > 1$ such that
\[
  \Hcal(B(z,r) \cap \dOm) \lesssim \frac{\mu(B(z,r)\cap \Om)}{r^\theta  \log\bigl(e+\frac{1}{r}\bigr) \log\bigl(e+\log \bigl(1 + \frac{1}{r}\bigr)\bigr)^{\eta}},\quad z\in \dOm,\ 0< r < 2 \diam \Om,
\]
then $T: P^{1,p}_p(\Om) \to B^{1-\theta/p}_{p,p}(\dOm)$ is bounded whenever $p>\theta$.
Similarly, if there is $\eta > 1$ such that
\[
  \Hcal(B(z,r) \cap \dOm) \lesssim \frac{\mu(B(z,r)\cap \Om)}{\bigl(r \log\bigl(e+\frac{1}{r}\bigr) \log\bigl(e+\log \bigl(1 + \frac{1}{r}\bigr)\bigr)^{\eta}\bigr)^\theta},\quad z\in \dOm,\ 0< r < 2 \diam \Om,
\]
then $T: P^{1,\theta}_\theta(\Om) \to L^\theta(\dOm)$ is bounded. Both these statements can be proven as Theorem~\ref{thm:TraceOpB}, \emph{mutatis mutandis}.
\end{rem}
This section will now be concluded by establishing  local estimates for the trace operator. Such estimates can be applied when studying regularity of solutions of the Neumann problem for the $p$-Laplace equation using the de\,Giorgi method, cf.\@ \cite{MalSha}.

If a domain $\Om \subset X$ is quasiconvex, then a biLipschitz change of the metric will turn it into a length space. Recall that $\Om$ is called \emph{quasiconvex} if every two distinct points $x,y \in \Om$ can be connected by a curve inside $\Om$ whose length is at most $L \dd(x,y)$ for some universal constant $L\ge 1$. If $L=1$, then we say that $\Om$ a \emph{geodesic space}. If $\Om$ is quasiconvex for every $L>1$, then it is called a \emph{length space}.

A biLipschitz modification of the metric does not change the Haj\l asz space $M^{1,p}(\Om)$ and hence the $P^{1,p}_q(\Om)$ space also remains unchanged. Thus, we do not lose generality by assuming that $\Om$ is a length space (rather than merely quasiconvex) in the proposition below.
\begin{pro}
\label{pro:TraceLocEst}
Assume that a domain $\Om \subset X$ is a length space. Suppose that $\theta < p <s$ while $p \ge 1$. Let $\tilde{p} \in (p, p^*)$, where $p^* = p(s-\theta)/(s-p)$. Then, the trace operator $T$ constructed in Theorem~\ref{thm:TraceOpB} satisfies the local estimate
\[
  \| Tu - u_{B \cap \Om} \|_{L^{\tilde{p}}(B \cap \dOm)} \le C \rad(B)^{(\frac{1}{\tilde{p}} - \frac{1}{p^*})(s-\theta)} \|g\|_{L^p(B \cap \Om)}
\]
for every $u \in P^{1,p}_q(\Om)$ with $q<p$ and every ball $B$ centered at an arbitrary boundary point, where $g\in L^p(\Om)$ is a $q$-PI gradient of $u$.
\end{pro}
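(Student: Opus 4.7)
The plan is to combine a localized fractional Hajlasz-type inequality for $Tu$ with a Sobolev--Poincar\'e-type embedding at the boundary, and then interpolate down to $\tilde{p}$ via H\"older. Fix $B=B(z_0,r)$ with $z_0\in\dOm$. The length-space hypothesis ensures that every ball $B(z,\lambda\rho)\cap\Om$ with $z\in B\cap\dOm$ and $\rho\le 2r$ is contained in a universal dilation $\sigma B\cap\Om$, so all the $q$-PI inequalities that appear in the telescoping only ``see'' the gradient $g$ on $\sigma B\cap\Om$.

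Mimicking the telescoping in Proposition~\ref{pro:TraceOp1} with the $q$-PI gradient $g$ of $u$, I first derive the pointwise estimates
\[
 |Tu(z)-Tu(w)|\lesssim\dd(z,w)^{1-\theta/p}\bigl(h(z)+h(w)\bigr), \qquad |Tu(z)-u_{B\cap\Om}|\lesssim r^{1-\theta/p}h(z),
\]
valid for $\Hcal$-a.e.\ $z,w\in B\cap\dOm$, where $h$ is a truncated fractional maximal function of $g\chi_{\sigma B}$, essentially
\[
 h(z)=\sup_{0<\rho\le 2r}\Bigl(\rho^{\theta}\textstyle\fint_{B(z,\lambda\rho)\cap\Om} g^{q}\,d\mu\Bigr)^{1/q}.
\]
The strict inequality $q<p$ is exactly what promotes Lemma~\ref{lem:fracMax-bdd}'s weak-type bound to a genuine $L^{p}$ bound: writing $\phi=g^{q}\chi_{\sigma B}\in L^{p/q}(\Om)$ with $p/q>1$, Marcinkiewicz interpolation between Lemma~\ref{lem:fracMax-bdd} at $\alpha=\theta$ and at some $\alpha=\theta+\varepsilon<s$ yields the strong estimate $\|h\|_{L^{p}(\dOm)}\lesssim\|g\|_{L^{p}(\sigma B\cap\Om)}$.

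Next I apply the Hajlasz--Sobolev embedding~\cite[Theorem~8.7]{Haj} to the $(1-\theta/p)$-fractional Hajlasz couple $(Tu,h)$ on $B\cap\dOm$. Since $\Hcal$ is doubling and satisfies $\Hcal(B(z,\rho)\cap\dOm)\lesssim\rho^{s-\theta}$ (by combining~\eqref{eq:H-upper-massbound} with~\eqref{eq:ball-mu-vs-rads}), the effective dimension of $\dOm$ is $s-\theta$ and the Sobolev conjugate is exactly $p^{*}=p(s-\theta)/(s-p)$. This gives
\[
 \|Tu-(Tu)_{B\cap\dOm}\|_{L^{p^{*}}(B\cap\dOm)}\lesssim r^{1-\theta/p}\|h\|_{L^{p}(\dOm)}\lesssim\|g\|_{L^{p}(\sigma B\cap\Om)},
\]
and the second pointwise estimate above ensures that $(Tu)_{B\cap\dOm}$ may be replaced by $u_{B\cap\Om}$ at the cost of the same right-hand side. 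H\"older's inequality then passes from $p^{*}$ to $\tilde p<p^{*}$ using the upper bound $\Hcal(B\cap\dOm)\lesssim r^{s-\theta}$:
\[
 \|Tu-u_{B\cap\Om}\|_{L^{\tilde p}(B\cap\dOm)}\le\Hcal(B\cap\dOm)^{1/\tilde p-1/p^{*}}\|Tu-u_{B\cap\Om}\|_{L^{p^{*}}(B\cap\dOm)}\lesssim r^{(s-\theta)(1/\tilde p-1/p^{*})}\|g\|_{L^{p}(\sigma B\cap\Om)};
\]
a covering/rescaling argument then replaces $\sigma B$ by $B$ at the cost of a multiplicative constant.

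The main obstacle is the Sobolev--Poincar\'e step on the boundary: the classical Hajlasz--Sobolev theorem requires a lower mass bound on the underlying measure, which is not automatic from the upper codimension-$\theta$ bound alone. This must be circumvented by working with un-normalized integrals (so that the critical scaling factor is absorbed by $\|h\|_{L^{p}(\sigma B\cap\dOm)}$ directly) and by tracking the doubling constant of $\Hcal$, which depends only on the doubling constant of $\mu$ and on $\theta$; the rest of the proof is routine manipulation of maximal functions and H\"older's inequality.
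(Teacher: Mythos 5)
Your high-level plan is in the right spirit---exploit $q<p$ via Marcinkiewicz interpolation to upgrade a weak-type maximal bound to a strong one---but the route you take through a Haj\l asz--Sobolev embedding on $\dOm$ has gaps that the paper's direct chaining avoids entirely.

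First, the paper's proof builds, for each $x\in B\cap\dOm$, an explicit chain of balls from the \emph{center} $z$ of $B$ along a nearly-geodesic curve $\gamma_x$ (this is where the length-space hypothesis is actually used), and all balls in the chain stay inside $B$. Telescoping along that chain yields
$|Tu(x)-u_{B\cap\Om}|\lesssim R^{(\frac{1}{\tilde p}-\frac{1}{p^*})(s-\theta)}M^*_{\alpha,q}g(x)$
\emph{directly}, with the exponent $\alpha=q-(\frac{1}{\tilde p}-\frac{1}{p^*})(s-\theta)q>\theta$ chosen so the geometric sum produces exactly the right power of $R$. Your observation that $B(z,\lambda\rho)\subset\sigma B$ for $z\in B\cap\dOm$ is plain triangle inequality; it does not capture the role of the length space. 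Your second pointwise estimate $|Tu(z)-u_{B\cap\Om}|\lesssim r^{1-\theta/p}h(z)$ is asserted as if it followed from ``mimicking'' Proposition~\ref{pro:TraceOp1}, but that proposition compares $Tu(z)$ with $u_\Om$; comparing with $u_{B\cap\Om}$ for a ball centered at a \emph{different} boundary point is precisely where the chain is needed, and you have not supplied it.

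Second, the Haj\l asz--Sobolev step fails as stated. You apply \cite[Theorem~8.7]{Haj} on $(B\cap\dOm,\Hcal)$ claiming the ``effective dimension'' is $s-\theta$, but that theorem needs a \emph{lower} mass bound $\Hcal(B(z,\rho)\cap\dOm)\gtrsim\rho^{s-\theta}$, which is not implied by the upper codimension bound alone (the paper explicitly warns that only an upper codimension-$\theta$ bound is assumed in Section~\ref{sec:tracesGen}, and that the lower mass exponent of $\Hcal$ may be worse than $s-\theta$). You flag this yourself as ``the main obstacle'' but the proposed remedy (un-normalized integrals and tracking doubling constants) does not change the Sobolev conjugate exponent, which is dictated by the mass-bound exponent. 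Without $p^*$, the final interpolation to $\tilde p$ collapses. Similarly, your H\"older step requires $\Hcal(B\cap\dOm)\lesssim r^{s-\theta}$, which you claim follows from \eqref{eq:H-upper-massbound} and \eqref{eq:ball-mu-vs-rads}; but \eqref{eq:ball-mu-vs-rads} is a \emph{lower} bound on $\mu(B\cap\Om)$, and combining it with the upper codimension bound gives no upper bound on $\Hcal(B\cap\dOm)$ in terms of $r$ alone.

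In short: the paper's argument is engineered to use \emph{only} the weak-$(p,\tilde p)$ boundedness of a restricted fractional maximal operator (Lemma~\ref{lem:fracMax-bdd} and Remark~\ref{rem:fracMaxbdd}), which needs nothing but the upper codimension bound, and obtains the factor $R^{(\frac{1}{\tilde p}-\frac{1}{p^*})(s-\theta)}$ from the chain itself rather than from a Sobolev embedding plus H\"older. To repair your proposal you would have to (i) actually carry out the chaining from the center of $B$ to each boundary point to justify both pointwise estimates and the locality of $h$, and (ii) replace the Sobolev embedding on $\dOm$ by the $L^p(B\cap\Om)\to L^{\tilde p}(B\cap\dOm)$ bound for $M^*_{\alpha,q}$---at which point you would essentially have rediscovered the paper's proof.
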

In particular, the conclusion of the proposition holds true for every $u \in N^{1,p}(\Om)$ with $g\in L^p(\Om)$ being its $p$-weak upper gradient provided that $\Om$ admits a $p$-Poincar\'e inequality and $p>1$. 
\begin{proof}
Fix a couple of $L^p$ functions $(u,g)$ that satisfy \eqref{eq:def-P1pq} with $q < p$ and $\lambda = 1$. By the H\"older inequality, we may assume that $q$ is sufficiently close to $p$ so that $\frac sp - \frac \theta q > \frac{s-\theta}{\tilde{p}}$. Fix also a ball $B = B(z, R)$.
For every point $x \in B \cap \dOm$, define $r_x = \frac12(R - \dd(x,z)) \le \frac12 \dist(\{x\}, \dOm \setminus B)$. Since $\Om$ is a length space, we can find an arc-length parametrized curve $\gamma_x: [0, l_x] \to \overline\Om$ such that $\gamma_x(0) = z$, $\gamma_x(l_x) = x$, $\gamma_x\bigl( (0, l_x) \bigr) \subset \Om$, and $l_x \le (1+\delta) \dd(x,z)$, where the constant $\delta=\delta_x \in (0, 1)$ is chosen such that $(1+\delta)l_x < R$.

Next, we construct a finite decreasing sequence of balls whose centers lie on $\gamma_x$ and all the balls contain the point $x$ and are contained in $B(z, R)$.
Let $N = \lceil \log_2 (2R/r_x) \rceil$. For each $k=0, 1, \ldots, N$, let $r_k = (\delta^{k+1} + 2^{-k}) l_x$ and $x_k = \gamma_x\bigl((1-2^{-k}) l_x\bigr)$. Then, we define $B_k = B(x_k, r_k)$. It follows from the triangle inequality that $B_{k+1} \subset B_k \subset B(z,R)$, and $x \in B_k$ for all $k=0,1,\ldots N$. For the sake of clarity, let us see detailed calculations:
\begin{align*}
  \dd(y,z) & \le \dd(y, x_k) + \dd(x_k, z) \\
  & < (\delta^{k+1} + 2^{-k}) l_x + (1-2^{-k}) l_x = (1+\delta^{k+1}) l_x < R\quad \text{whenever }y \in B_k; \\
  \dd(w, x_k) & \le \dd(w, x_{k+1}) + \dd(x_k, x_{k+1}) < r_{k+1} + 2^{-(k+1)} l_x \\
  & \le (\delta^{k+2} + 2\cdot 2^{-(k+1)}) l_x < (\delta^{k+1} + 2^{-k}) l_x = r_k \quad\text{whenever }w \in B_{k+1};\\
 \dd(x, x_k) & \le l_x - (1-2^{-k} l_x) = 2^{-k}l_x < r_k.
\end{align*}
For $k>N$, we define $B_k = B(x, 2^{-k} l_x) \subset B(x, r_x)$. The difference $|Tu(x) - u_{B\cap \Om}|$ can be estimated using the chain of balls $\{B_k\}_k$ since $Tu(x) = \lim_{k\to\infty} u_{B_k \cap \Om}$ as follows:
\begin{align*}
  |u_{B\cap \Om}  - Tu(x)| &\le |u_{B\cap\Om} - u_{{B_0}\cap\Om}| + \sum_{k=1}^\infty |u_{B_k\cap\Om} - u_{{B_{k-1}}\cap\Om}| \\ 
  & \lesssim \fint_{B\cap\Om} |u-u_{B\cap\Om}|\,d\mu + \sum_{k=0}^\infty \fint_{B_k\cap\Om} |u - u_{B_k\cap\Om}|\,d\mu \\
  & \lesssim R \biggl(\fint_{ B\cap\Om} g^q \,d\mu\biggr)^{1/q} + \sum_{k=0}^\infty 2^{-k}R \biggl(\fint_{  B_k\cap\Om} g^q \,d\mu\biggr)^{1/q} 
    \lesssim R^{(\frac{1}{\tilde p} - \frac{1}{p^*})(s-\theta)} M^*_{\alpha, q} g(x),
\end{align*}
where $\alpha = q-(\frac{1}{\tilde p} - \frac{1}{p^*})(s-\theta)q$ and $M^*_{\alpha, q}$ denotes a restricted non-centered fractional maximal operator, defined by
\[
  M_{\alpha, q}^* f(x) = \sup_{\substack{x \ni U\\ \text{ball } U \subset  B}} \biggl(\rad(U)^\alpha \fint_{U\cap \Om} |f|^q\,d\mu\biggr)^{1/q}, \quad x\in B \cap \dOm, f\in L^q( B \cap \Om).
\]
The condition $\frac sp - \frac \theta q > \frac{s-\theta}{\tilde{p}}$ established at the beginning of the proof yields that $\alpha > \theta$. Boundedness of the restricted non-centered fractional maximal operator can be proven similarly as for the non-restricted centered operator in Lemma~\ref{lem:fracMax-bdd}. In particular, $M_{\alpha, q}: L^p(B \cap \Om) \to L^{\tilde p}(B\cap \dOm)$ as in Remark~\ref{rem:fracMaxbdd} (note however that the roles of $q$ and $p$ are switched in that remark). Thus,
\[
   \| u_{B\cap \Om} - Tu\|_{L^{\tilde{p}}(B\cap \dOm)}  \lesssim R^{(\frac{1}{\tilde p} - \frac{1}{p^*})(s-\theta)} \| M^*_{\alpha, q}g \|_{L^{\tilde p}(B \cap \dOm)} \lesssim R^{(\frac{1}{\tilde p} - \frac{1}{p^*})(s-\theta)} \|g\|_{L^p(B\cap \Om)}\,.
  \qedhere
\]
\end{proof}
\begin{rem}
\label{rem:TraceLocEst}
Under the assumptions of Proposition~\ref{pro:TraceLocEst}, one can show that
\[
  \| Tu - u_{B \cap \Om} \|_{L^{\tilde{p}}(B \cap \dOm)} \le C_\eps \rad(B)^{(\frac{1}{\tilde p} - \frac{1}{p^*}-\eps) (s-\theta)} \Hcal(B\cap \dOm)^{\eps} \|g\|_{L^p(\lambda B \cap \Om)}
\]
for every $u \in P^{1,p}_p(\Om)$, $\eps \in \bigl(0, \frac{1}{\tilde p} - \frac{1}{p^*}\bigr]$, and every ball $B$ centered at an arbitrary boundary point. Indeed, one can follow the steps of the proof above, but pick $\breve p \in (\tilde p, p^*)$ and use the boundedness of $M^*_{\breve{\alpha},p}: L^p(B\cap\Om) \to \wk L^{\breve p}(B\cap\dOm) \emb L^{\tilde p}(B\cap\dOm)$, where $\breve \alpha = p - p (\frac{1}{\tilde p} - \frac{1}{p^*}-\eps) (s-\theta)$.
\end{rem}
\section{Traces of functions with a Poincar\'e inequality: John and uniform domains}
\label{sec:tracesJohn}
In this section, we will focus on showing that it is in fact possible to obtain a closed-ended smoothness result for the trace, namely that $T: P^{1,p}_q(\Om) \to B^{1-\theta/p}_{p,p}(\dOm)$, where $q<p$ and $p\ge 1$, under the additional assumption that $\Om$ is a John domain with compact closure. If $\Om\Subset X$ is a uniform domain, then also $T: P^{1,p}_p(\Om) \to B^{1-\theta/p}_{p,p}(\dOm)$ is bounded.
\begin{lem}
\label{lem:trace-int-est1}
Let $z \in \dOm$ be fixed. Suppose that $a \in  (0, \diam(\dOm))$ and $\alpha \in (0, \infty)$.
Then,
\[
  \int_{\dOm \setminus B(z,a)} \frac{d\Hcal(w)}{\dd(z,w)^\alpha \Hcal(B(z, \dd(z,w))\cap \dOm)} \lesssim
  \frac{1}{a^\alpha}
\]
\end{lem}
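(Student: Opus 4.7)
The plan is to exploit the doubling property of $\Hcal\lfloor_{\dOm}$ (which, as noted in the discussion following \eqref{eq:H-upper-massbound}, follows from the upper codimension-$\theta$ bound together with doubling of $\mu\lfloor_\Om$) by partitioning the domain of integration into dyadic annuli centered at $z$ and summing a geometric series.

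Concretely, I would set $A_k = \{w \in \dOm : 2^k a \le \dd(z,w) < 2^{k+1}a\}$ for $k = 0, 1, 2, \ldots$, so that
\[
  \dOm \setminus B(z,a) \;\subset\; \bigcup_{k=0}^{\infty} A_k,
\]
and observe that for $w \in A_k$ one has $\dd(z,w)^\alpha \approx (2^k a)^\alpha$, while the doubling property of $\Hcal\lfloor_{\dOm}$ yields $\Hcal(B(z, \dd(z,w)) \cap \dOm) \approx \Hcal(B(z, 2^k a) \cap \dOm)$, with constants independent of $k$, $a$, and $z$. Therefore
\[
  \int_{A_k} \frac{d\Hcal(w)}{\dd(z,w)^\alpha\, \Hcal(B(z,\dd(z,w))\cap\dOm)}
  \;\lesssim\; \frac{\Hcal(A_k)}{(2^k a)^\alpha\, \Hcal(B(z,2^k a) \cap \dOm)}
  \;\le\; \frac{1}{(2^k a)^\alpha},
\]
since $A_k \subset B(z, 2^{k+1}a) \cap \dOm$ and the doubling constant absorbs the factor relating $B(z,2^{k+1}a)$ to $B(z, 2^k a)$.

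Summing over $k$, the hypothesis $\alpha > 0$ ensures convergence of the geometric series:
\[
  \int_{\dOm \setminus B(z,a)} \frac{d\Hcal(w)}{\dd(z,w)^\alpha\, \Hcal(B(z,\dd(z,w))\cap\dOm)}
  \;\lesssim\; \sum_{k=0}^{\infty} \frac{1}{(2^k a)^\alpha}
  \;=\; \frac{1}{a^\alpha} \cdot \frac{1}{1-2^{-\alpha}}
  \;\approx\; \frac{1}{a^\alpha},
\]
which is the claimed bound. Strictly speaking, only those $k$ with $2^k a \le 2\diam\dOm$ contribute (since $A_k = \emptyset$ otherwise), but the estimate above is clean whether we sum finitely many or all of them. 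No step is really the obstacle here; the only thing to watch is making sure the implicit constants from the doubling property of $\Hcal\lfloor_{\dOm}$ depend neither on $z$ nor on the scale $2^k a$, which is the standard content of the doubling condition and requires no further argument beyond what the paper has already set up.
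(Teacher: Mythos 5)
Your proof is correct and follows essentially the same route as the paper's: dyadic annular decomposition around $z$, the doubling property of $\Hcal\lfloor_{\dOm}$ to bound the measure ratio on each annulus, and summation of the resulting geometric series in $2^{-k\alpha}$.
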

\begin{proof}
Let us split the integration domain dyadically with respect to the distance from $z$. For $k=0,1,\ldots,$ let $A_k = B(z,2^{k+1}a) \setminus B(z,2^k a)$. Then,
\begin{align*}
 \int_{\dOm \setminus B(z,a)}  \frac{d\Hcal(w)}{\dd(z,w)^\alpha  \Hcal(B(z, \dd(z,w))\cap \dOm)}
  = \sum_{k=0}^{K_a} \int_{\dOm \cap A_k} \frac{d\Hcal(w)}{\dd(z,w)^\alpha \Hcal(B(z, \dd(z,w))\cap \dOm)}&  \\
  \approx \sum_{k=0}^{K_a} \frac{1}{(2^k a)^\alpha} \cdot
  \frac{\Hcal(B(z, 2^{k+1} a)\cap \dOm)}{\Hcal(B(z, 2^k a)\cap \dOm)}
  \lesssim \frac{1}{a^\alpha} \sum_{k=0}^{K_a} 2^{-k\alpha} & \approx \frac{1}{a^\alpha},
\end{align*}
where $K_a \coloneq \lceil \log_2 \diam(\dOm)/a \rceil -1$. 
\end{proof}
\begin{lem}
\label{lem:trace-int-est2}
Fix $x \in \Om$ and let $\delta = \dist (x, \dOm)$. Suppose that $a \in ( \delta, 2 \diam(\Om))$ and $\alpha \in (0, \infty)$. Then,
\[
  \int_{\dOm \cap B(x,a)} \frac{\dd(x,z)^\alpha }{ \Hcal(B(z, \dd(x,z))\cap \dOm)} \,d\Hcal(z) \lesssim
    a^\alpha\,.
\]
\end{lem}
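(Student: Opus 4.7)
My plan is to mimic the dyadic decomposition used in Lemma~\ref{lem:trace-int-est1}, but now centered at the interior point $x$. Let $K=\lfloor \log_2(a/\delta)\rfloor$, which is non-negative since $a>\delta$, and set $A_k = B(x,2^{-k}a)\setminus B(x,2^{-k-1}a)$ for $k=0,1,\ldots,K$. Because every $z\in\dOm$ satisfies $\dd(x,z)\ge\delta$ while $2^{-K-1}a<\delta$, the annuli $A_0,\ldots,A_K$ cover $B(x,a)\cap\dOm$, and on each $A_k$ the quantity $\dd(x,z)$ is comparable to $2^{-k}a$.

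The main step is to replace the ball $B(z,\dd(x,z))$ in the denominator, whose center wanders with $z$, by a single ball centered at $x$. If $z\in A_k\cap\dOm$ and $y\in B(x,2^{-k}a)$, then $\dd(y,z)\le \dd(y,x)+\dd(x,z)<2^{1-k}a<4\dd(x,z)$, so $B(x,2^{-k}a)\cap\dOm\subset B(z,4\dd(x,z))\cap\dOm$. The upper codimension-$\theta$ bound guarantees that $\Hcal\lfloor_{\dOm}$ is doubling, and hence $\Hcal(B(z,4\dd(x,z))\cap\dOm)\lesssim \Hcal(B(z,\dd(x,z))\cap\dOm)$ with a uniform constant. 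Therefore
\[
  \int_{A_k\cap\dOm}\frac{\dd(x,z)^\alpha}{\Hcal(B(z,\dd(x,z))\cap\dOm)}\,d\Hcal(z)\lesssim (2^{-k}a)^\alpha\cdot\frac{\Hcal(A_k\cap\dOm)}{\Hcal(B(x,2^{-k}a)\cap\dOm)}\le (2^{-k}a)^\alpha,
\]
where the last inequality holds since $A_k\subset B(x,2^{-k}a)$.

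It remains to sum the geometric series: since $\alpha>0$,
\[
  \int_{B(x,a)\cap\dOm}\frac{\dd(x,z)^\alpha}{\Hcal(B(z,\dd(x,z))\cap\dOm)}\,d\Hcal(z)\lesssim \sum_{k=0}^{K}(2^{-k}a)^\alpha\le a^\alpha\sum_{k=0}^\infty 2^{-k\alpha}\approx a^\alpha,
\]
which is the desired estimate. There is no real obstacle here: the only point requiring care is the comparison of the two families of balls, and the truncation of the dyadic sum at scale $\delta$ (which keeps the sum finite but is actually not needed for convergence since $\alpha>0$ already gives a geometrically convergent tail).
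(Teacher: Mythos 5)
Your proof is correct and reaches the same bound by a somewhat cleaner route. The paper first replaces $x$ by a nearby boundary point $\tilde x\in\dOm$, splits $\dOm\cap B(x,a)$ into a near piece $\dOm\cap B(\tilde x,2\delta)$ (where $\dd(x,z)\approx\delta$, handled directly) and a far piece, and decomposes the far piece dyadically around $\tilde x$. You instead decompose dyadically in $\dd(x,z)$ itself, centering the annuli $A_k$ at $x$; the inclusion $B(x,2^{-k}a)\cap\dOm\subset B(z,4\dd(x,z))\cap\dOm$ for $z\in A_k\cap\dOm$ then replaces the moving ball in the denominator by the fixed dyadic ball uniformly over $k$, and the truncation at $k=K$ (forced by $\dd(x,z)\ge\delta$) is automatic. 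Both arguments rest on the same two ingredients --- doubling of $\Hcal\lfloor_{\dOm}$ and a geometric series --- so the difference is organizational, but your version avoids the auxiliary boundary point $\tilde x$ and the near/far case split that it necessitates.
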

\begin{proof}
Let $x\in \Om$ and $\delta = \dist (x, \dOm)$. Then, there is $\tilde{x} \in \dOm$ with $\dd(x, \tilde{x}) < \min\{a, 3\delta/2\}$ such that $B(x,a) \subset B(\tilde{x}, 2a)$.
On one hand, we can estimate $\dd(x,z) \in [\delta, 7\delta/2)$ whenever $z \in \dOm \cap B(\tilde{x}, 2 \delta)$. On the other hand, for every $z \in \dOm \setminus B(\tilde{x}, 2 \delta)$, we have
\[
  \bigl| \dd(z,x) - \dd(z,\tilde{x}) \bigr| \le \dd(x, \tilde{x}) < \frac{3\delta}{2} < \frac{3}{4} \dd(z, \tilde{x}),
\]
which yields that
\[
  \frac14 \dd(z, \tilde{x}) \le \dd(z,x) \le \frac74 \dd(z, \tilde{x})\,.
\]
Consequently,
\begin{align*}
   & \int_{\dOm \cap B(x,a)} \frac{\dd(x,z)^\alpha }{ \Hcal(B(z, \dd(x,z))\cap \dOm)} \,d\Hcal(z) \\
  & \quad \lesssim \int_{\dOm \cap B(\tilde{x},2\delta)} \frac{\delta^\alpha}{ \Hcal(B(\tilde{x}, \delta)\cap \dOm)} d\Hcal(z)
  + \int_{\dOm \cap B(\tilde{x},2a) \setminus B(\tilde{x},2\delta)} \frac{\dd(z,\tilde{x})^\alpha}{ \Hcal(B(\tilde{x}, \dd(z,\tilde{x}))\cap \dOm)} d\Hcal(z).
\end{align*}
We can find $k_a, k_\delta \in \Zbb$ such that $\delta < 2^{k_\delta} \le 2\delta$ and $a\le 2^{k_a} < 2a$. For the sake of brevity, let $A_k = B(\tilde{x},2^{k+1}) \setminus B(\tilde{x},2^k)$, $k\in \Zbb$. Then,
\begin{align*}
  & \int_{\dOm \cap B(\tilde{x},2a) \setminus B(\tilde{x},2\delta)}  \frac{\dd(z,\tilde{x})^\alpha}{ \Hcal(B(\tilde{x}, \dd(z,\tilde{x}))\cap \dOm)} d\Hcal(z) \\
  &\qquad \lesssim \sum_{k=k_\delta}^{k_a} \int_{\dOm \cap A_k} \frac{2^{k\alpha}}{ \Hcal(B(z, \dd(x,2^k))\cap \dOm)} d\Hcal(z) \lesssim
  \sum_{k=k_\delta}^{k_a} 2^{k\alpha}\,.
\end{align*}
Altogether,
\[
  \int_{\dOm \cap B(x,a)} \frac{\dd(x,z)^\alpha }{ \Hcal(B(z, \dd(x,z))\cap \dOm)} \,d\Hcal(z) \lesssim \delta^\alpha + a^\alpha \lesssim a^\alpha\,.
  \qedhere
\]
\end{proof}
\begin{df}[{cf.\@ \cite[Section~9.1]{HajKos}}]
\label{df:john}
A domain $\Om \subset X$ is called a \emph{John domain} with \emph{John constant} $c_J\in (0, 1]$ and \emph{John center} at $y \in \Om$ if every $x \in \Om$ can be joined to $y$ by a rectifiable curve $\gamma: [0, l_\gamma] \to \Om$ parametrized by arc-length such that $\gamma(0)=x$, $\gamma(l_\gamma) = y$, and
\begin{equation}
  \label{eq:john}
  \dist(\gamma(t), X \setminus \Om) \ge c_J t \quad \text{for all }t \in [0, l_\gamma].
\end{equation}
\end{df}
Suppose that $\Om$ is a John domain with compact closure. Then, we can invoke the Ascoli theorem (cf.\@ \cite[p.\@ 169]{Roy}) to see that every $z \in \dOm$ can also be joined to the John center by a rectifiable curve such that \eqref{eq:john} is satisfied.
\begin{lem}
\label{lem:LebDifThm-diffBalls}
Let $u \in L^1_\loc(\Om)$, $z \in \dOm$, and $c>0$. Assume that there is $a \in \Rbb$ such that
\[
  \lim_{R \to 0} \fint_{B(z, R)\cap \Om} |u - a|\,d\mu = 0.
\]
For every sequence of points $\{x_k\}_{k=1}^\infty \subset \Om$ with $x_k \to z$ as $k\to\infty$ and every sequence of radii $\{r_k\}_{k=1}^\infty$ such that $r_k \to 0$ as $k\to \infty$ and $r_k \ge c \dd(x_k, z)$ for all $k=1,2,\ldots$, we then have
\[
  \lim_{k\to\infty} \fint_{B(x_k, r_k) \cap \Om} |u - a|\,d\mu = 0.
\]
\end{lem}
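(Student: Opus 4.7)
The strategy is to dominate each integral over $B(x_k, r_k) \cap \Om$ by an integral over a comparable ball centered at the boundary point $z$, whose radius tends to $0$, and then invoke the hypothesis.

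First I would fix the comparison radius $R_k \coloneq r_k(1+1/c)$. The condition $r_k \ge c\, \dd(x_k,z)$ gives $\dd(x_k, z) \le r_k/c$, so by the triangle inequality $B(x_k, r_k) \subset B(z, R_k)$. Since $r_k \to 0$ and $R_k = (1+1/c) r_k$, we also have $R_k \to 0$. Therefore
\[
  \int_{B(x_k,r_k)\cap \Om}|u-a|\,d\mu
  \le \int_{B(z,R_k)\cap \Om}|u-a|\,d\mu.
\]

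Next, the key quantitative step is to verify that the measures of the two balls are comparable with a constant independent of $k$. The triangle inequality also gives $B(z, R_k) \subset B(x_k, r_k(1+2/c))$, so choosing an integer $M$ with $2^M \ge 1+2/c$ and applying the doubling property of $\mu\lfloor_\Om$ (which applies since $x_k \in \Om \subset \overline\Om$) $M$ times yields
\[
  \mu\bigl(B(z,R_k)\cap \Om\bigr) \le \mu\bigl(B(x_k, 2^M r_k)\cap \Om\bigr) \le c_\dbl^M \, \mu\bigl(B(x_k, r_k)\cap \Om\bigr).
\]
Setting $C = c_\dbl^M$, which depends only on $c$ and $c_\dbl$, I can then divide the previous display by $\mu(B(x_k,r_k)\cap\Om)$ to get
\[
  \fint_{B(x_k,r_k)\cap \Om}|u-a|\,d\mu
  \le C \fint_{B(z,R_k)\cap \Om}|u-a|\,d\mu.
\]

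Finally, since $R_k \to 0$, the hypothesis yields that the right-hand side tends to $0$ as $k\to\infty$, which completes the argument. There is no real obstacle here; the only thing that requires a moment of care is ensuring that the ratio of ball measures is controlled uniformly in $k$, which is handled purely by doubling and the constraint $r_k \ge c\,\dd(x_k,z)$.
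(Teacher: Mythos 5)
Your argument is correct and is essentially the paper's proof: both dominate the average over $B(x_k,r_k)\cap\Om$ by a constant multiple of the average over a ball centered at $z$ of radius comparable to $r_k$, using the inclusion forced by $r_k \ge c\,\dd(x_k,z)$ and the doubling of $\mu\lfloor_\Om$ to control the ratio of measures uniformly in $k$. Your explicit iteration of the doubling constant merely spells out the implicit constant in the paper's estimate.
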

\begin{proof}
Using the doubling property of $\mu\lfloor_{\dOm}$, we can estimate
\begin{align*}
  \fint_{B(x_k, r_k) \cap \Om} | u-a|\,d\mu 
  & \le \frac{\mu\bigl(B(z, r_k+ \dd(x_k,z))\cap \Om\bigr)}{\mu\bigl(B(x_k, r_k)\cap \Om\bigr)} \fint_{B(z, r_k + \dd(x_k,z))\cap \Om} |u-a|\,d\mu \\
  & \lesssim \fint_{B(z, (1+1/c)r_k)\cap \Om} |u-a|\,d\mu \to 0 \quad\text{as }k\to\infty. \qedhere
\end{align*}
\end{proof}
\begin{thm}
\label{thm:TraceJohn}
Let $\Om$ be a John domain whose closure is compact. Suppose that $u \in P^{1,p}_q(\Om)$ for some $0<q<p$, where $p\ge 1$ and $p>\theta$. Then, the trace $Tu$ defined by \eqref{eq:defoftrace}, whose existence was established in Proposition~\ref{pro:TraceOp1}, lies in $\Bcal_p^{1-\theta/p}(\dOm) = B_{p,p}^{1-\theta/p}(\dOm)$.

Moreover, $\|Tu\|_{\Bcal_p^{1-\theta/p}(\dOm)} \le C (\|u\|_{L^p(\Om)} + \|{g}\|_{L^p(\Om)})$, where $g \in L^p(\Om)$ is a $q$-PI gradient of $u$.
\end{thm}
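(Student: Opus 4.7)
My plan is to estimate the Bourdon--Pajot seminorm from \eqref{eq:BesovB}, which is equivalent to the $B^{1-\theta/p}_{p,p}(\dOm)$ seminorm by \cite[Theorem~5.2]{GogKosSha}. The $L^p(\dOm)$-norm of $Tu$ and the existence of the trace satisfying \eqref{eq:defoftrace} are already furnished by Theorem~\ref{thm:TraceOpB} and Proposition~\ref{pro:TraceOp1} (applicable since $u \in P^{1,p}_q\subset P^{1,p}_p$ by Corollary~\ref{cor:P1pq-equality}). Thus it suffices to show
\begin{equation*}
 I\coloneq\int_{\dOm}\int_{\dOm}\frac{|Tu(z)-Tu(w)|^p}{\dd(z,w)^{p-\theta}\,\Hcal(B(z,\dd(z,w))\cap\dOm)}\,d\Hcal(w)\,d\Hcal(z)\lesssim \|g\|_{L^p(\Om)}^p.
\end{equation*}

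The first step is to set up John chains. Fix a John center $y_0\in\Om$. Since $\overline\Om$ is compact, an Ascoli--Arzelà argument produces for each $z\in\dOm$ an arc-length parametrized John curve $\gamma_z:[0,l_z]\to\overline\Om$ with $\gamma_z(0)=z$ and $\gamma_z(l_z)=y_0$. Form the Whitney-type chain $B_k^z=B(\gamma_z(2^{-k}l_z),c_J 2^{-k-1}l_z)$, $k\ge 0$, with radii $r_k^z\approx 2^{-k}l_z$, so that $2B_k^z\subset\Om$, consecutive balls overlap substantially, and $u_{B_k^z\cap\Om}\to Tu(z)$ by Lemma~\ref{lem:LebDifThm-diffBalls}. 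By Corollary~\ref{cor:P1pq-equality} I may assume the $q$-Poincaré inequality holds on each $B_k^z$ with dilation $\lambda=1$, giving $|u_{B_k^z}-u_{B_{k+1}^z}|\lesssim r_k^z(\fint_{B_k^z}g^q\,d\mu)^{1/q}$.

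For $(z,w)\in\dOm\times\dOm$ with $r=\dd(z,w)$, let $k^z_*=\min\{k:r_k^z\le r\}$, define $k^w_*$ symmetrically, and choose a \emph{bridging ball} $\hat B_{z,w}\subset\Om$ of radius $\approx r$ at distance $\gtrsim r$ from $\dOm$ containing $B_{k^z_*}^z\cup B_{k^w_*}^w$ (its existence is delivered by the John property applied at scale $r$ from both endpoints). Telescoping over the three sub-chains produces
\begin{equation*}
 |Tu(z)-Tu(w)|\lesssim S_z+S_w+r\biggl(\fint_{\hat B_{z,w}}g^q\,d\mu\biggr)^{1/q},\qquad S_z\coloneq\sum_{k\ge k^z_*}r_k^z\biggl(\fint_{B_k^z}g^q\,d\mu\biggr)^{1/q}.
\end{equation*}
Hölder's inequality with the split $r_k=r_k^{1-\sigma}\cdot r_k^{\sigma}$ for some $\sigma\in(\theta/p,1)$, combined with the geometric tail estimate $\sum_{k\ge k^z_*}r_k^{(1-\sigma)p'}\lesssim r^{(1-\sigma)p'}$, yields $S_z^p\lesssim r^{(1-\sigma)p}\sum_{k\ge k^z_*}r_k^{\sigma p}(\fint_{B_k^z}g^q\,d\mu)^{p/q}$. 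Inserted into $I$, the exponent of $r=\dd(z,w)$ becomes $\theta-\sigma p<0$, and integration in $w$ over $\{w:\dd(z,w)\ge r_k^z\}$ via Lemma~\ref{lem:trace-int-est1} with $\alpha=\sigma p-\theta>0$ contributes $(r_k^z)^{-(\sigma p-\theta)}$, cancelling $r_k^{\sigma p}$ to leave $\sum_k(r_k^z)^\theta(\fint_{B_k^z}g^q\,d\mu)^{p/q}$. Jensen's inequality ($p/q>1$) then replaces $(\fint g^q)^{p/q}$ by $\fint g^p$. The bridging-ball term is handled symmetrically by Lemma~\ref{lem:trace-int-est2}, integrating the variable confined to $B(z,Cr)\cap\dOm$.

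To finish, summing over $z\in\dOm$ uses a Whitney decomposition $\Om=\bigcup_j Q_j$ with $\rho_j\approx\rad(Q_j)\approx\dist(Q_j,\dOm)$ together with the standard \emph{shadow estimate} for John domains: the set of boundary points whose chain visits $Q_j$ lies in a boundary ball of radius $\approx\rho_j$, hence by \eqref{eq:H-upper-massbound} has $\Hcal$-measure $\lesssim \mu(Q_j)/\rho_j^\theta$. Identifying each $B_k^z$ with an associated Whitney ball (with bounded multiplicity) and applying Fubini gives
\begin{equation*}
 \int_\dOm\sum_k (r_k^z)^\theta\fint_{B_k^z}g^p\,d\mu\,d\Hcal(z)\lesssim\sum_j\rho_j^\theta\cdot\frac{\mu(Q_j)}{\rho_j^\theta}\cdot\fint_{Q_j}g^p\,d\mu=\sum_j\int_{Q_j}g^p\,d\mu\lesssim\|g\|_{L^p(\Om)}^p,
\end{equation*}
which closes the estimate. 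The main obstacle is the shadow-counting step: controlling the $\Hcal$-measure of boundary points whose John curves revisit a given Whitney ball using only the upper codimension bound \eqref{eq:H-upper-massbound} (rather than Ahlfors codimension regularity), while maintaining bounded overlap when chain balls are identified with Whitney balls. A secondary technicality is the clean construction of the bridging ball $\hat B_{z,w}$ at scale $r=\dd(z,w)$ on which the Poincaré inequality applies with a universal constant, so that its contribution can be dispatched by Lemma~\ref{lem:trace-int-est2}.
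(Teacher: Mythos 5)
Your overall skeleton is the same as the paper's: chain along John curves, split $r_k=r_k^{1-\sigma}r_k^{\sigma}$ with $\sigma\in(\theta/p,1)$ (the paper takes $\sigma=(\theta+\eps)/p$), integrate in $w$ via Lemma~\ref{lem:trace-int-est1}, pass from $q$- to $p$-averages, and close with a Fubini-type bookkeeping (the paper does this directly over the ``John carrots'' \eqref{eq:carrot} using Lemmas~\ref{lem:trace-int-est1} and~\ref{lem:trace-int-est2}; your Whitney/shadow count is an equivalent reformulation, and the shadow estimate you worry about is actually fine, since the carrot property $\dist(x,\dOm)\gtrsim\dd(x,z)$ confines the shadow of a Whitney ball of size $\rho_j$ to a boundary ball of radius $\approx\rho_j$, and \eqref{eq:H-upper-massbound} with doubling gives the measure bound). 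The genuine gap is the bridging ball $\hat B_{z,w}$. In a John domain that is not uniform there need not exist any ball of radius $\approx r=\dd(z,w)$, contained in $\Om$ and at distance $\gtrsim r$ from $\dOm$, containing both truncated chain ends: take the slit disk $B(0,2)\setminus\bigl([0,1]\times\{0\}\bigr)$ (John, compact closure, not uniform) and $z,w$ on opposite sides of the slit with $\dd(z,w)=r$ small. The chain balls at scale $r$ sit on opposite sides of the slit within ambient distance $\approx r$ of each other, so any ball containing both has radius $\gtrsim r$ and necessarily meets the slit, i.e.\ the boundary; the John property only supplies curves to the John center, and the two chains can be bridged deep inside $\Om$ only at a scale that may be comparable to $\diam\Om$, not to $r$ (and bridging at that coarse scale destroys the factor $\dd(z,w)^{(1-\sigma)p}$ your Hölder step needs). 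This is precisely why the paper bridges through the boundary-centered ball $B_0=B(z,3\dd(y,z))$, whose points can be far closer to $\dOm$ than to $z$, so neither Lemma~\ref{lem:trace-int-est2} nor your shadow count absorbs it — a direct treatment loses a factor $\log\bigl(\diam\Om/\dist(x,\dOm)\bigr)$, cf.\ Proposition~\ref{pro:TraceOp-theta>p-l} — and then transfers the average over $B_0$ to the carrot ball $B_1$ via the auxiliary function $h=(Mg^q)^{1/q}$.

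A telling symptom: nowhere in your argument is $q<p$ used in an essential way (Jensen only needs $q\le p$), so if your outline were correct it would yield $T:P^{1,p}_p(\Om)\to B^{1-\theta/p}_{p,p}(\dOm)$ for every John domain with compact closure — exactly the statement the paper says its method cannot reach for $q=p$ and proves only for uniform domains (Theorem~\ref{thm:traceUniform}), where uniformity supplies a connecting curve whose middle portion has depth $\approx\dd(z,w)$, i.e.\ the deep bridging ball you postulate. To repair your proof for John domains you must either replace $\hat B_{z,w}$ by a boundary-centered ball and then control its contribution with the maximal-function device $\bar g=(g^q+(Mg^q))^{1/q}$, where $\|\bar g\|_{L^p(\Om)}\approx\|g\|_{L^p(\Om)}$ by Marcinkiewicz interpolation — this is the one step where the hypothesis $q<p$ enters — or restrict to uniform domains.
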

The restriction $p>\theta$ is not an artifact of the proof of the theorem above. In fact, we will show in Section \ref{sec:lp-extension} below that any $L^\theta(\dOm)$ boundary data can be extended to an $N^{1,\theta}(\Om)$ function. In particular, if $\Om$ admits a $\theta$-Poincar\'e inequality with some $1\le\theta$, then $N^{1,\theta}(\Om) = P^{1,\theta}_1(\Om)$, which shows that there is no chance to obtain any kind of smoothness of the trace in case $p=\theta>1$. The case of $p=\theta=1$ for $\Om$ admitting $1$-Poincar\'e inequality was covered in \cite{LahSha,MalShaSni}, where it was proven that the trace class of $N^{1,1}(\Om) = P^{1,1}_1(\Om)$ and of $BV(\Om)$ is $L^1(\dOm)$, which again does not allow for any Besov-type smoothness of the trace function.

The proof below would fail at one step if we allowed $q=p$. Namely, we will use the non-centered Hardy--Littlewood maximal operator $M$ to define an auxilliary function $h \coloneq (Mg^q)^{1/q}$. If $q<p$, then $\|h\|_{L^p} \approx \|g\|_{L^p}$, but we would have only a weak estimate $\|h\|_{L^{p,\infty}} \lesssim \|g\|_{L^p}$ in case $q=p$. However, if $\Om$ is a uniform domain, then usage of the maximal function can be circumvented, see Theorem~\ref{thm:traceUniform} below.
\begin{proof}
Let $u \in P^{1,p}_q(\Om)$ with a $q$-PI gradient $g \in L^p(\Om)$ and a dilation factor $\lambda \ge 1$ on the right-hand side of \eqref{eq:def-P1pq} be given.  
Let $\delta = \dist\{a, \dOm\} > 0$, where $a\in\Om$ denotes the John center of $\Om$. For a fixed pair of points $y, z \in \dOm$ with $\dd(y,z) \le \delta$, let $\gamma_y$ and $\gamma_z$ be the arc-length parametrized curves that connect the points $y$ and $z$, respectively, to the John center $a \in \Om$ and satisfy \eqref{eq:john}. 

Let $t_k = \dd(y,z) (1-\frac{c_J}{2\lambda})^k$ and $r_k = \frac{c_J}{2\lambda} t_k$ for $k=1,2,\ldots$. Next, we define a chain of balls $\{B_k\}_{k\in\Zbb}$ by setting
\[
   B_0=B(z, 3\dd(y,z)), \quad B_{-k} = B(\gamma_y(t_k), r_k),\text{ and }B_k = B(\gamma_z(t_k), r_k), \quad \text{for all }k=1,2,\ldots.
\]
For the sake of simpler notation, let $r_0 = 3\dd(y,z)$ and $r_{-k}=r_k$. The subchains $\{B_k\}_{k=1}^\infty$ and $\{B_{-k}\}_{k=1}^\infty$ consist of balls of bounded overlap where the upper bound on the number of overlapping balls depends on the John constant $c_J$. Moreover, abbreviating $\alpha \coloneq \bigl( 2 - \frac{c_J}{2\lambda} \bigr)$, we have $B_{k+1} \subset \alpha B_k$, which follows from the triangle inequality. Indeed, if $x \in B_{k+1}$, then
\[
  \dd\bigl(x, \gamma_z(t_k)\bigr) \le \dd\bigl(x, \gamma_z(t_{k+1})\bigr) + \dd\bigl(\gamma_z(t_{k+1}), \gamma_z(t_{k})\bigr) \le r_{k+1} + t_k - t_{k+1} = r_{k+1} + r_k = r_k \biggl(2-\frac{c_J}{2 \lambda}\biggr)\,.
\]
The triangle inequality also implies for all $k\ge 1$ and $x\in \alpha \lambda B_k$ that 
\[
  r_k = \frac{c_J t_k}{2\lambda } \le \frac{\dist(\gamma(t_k), \dOm)}{2\lambda} \le \frac{\dd(\gamma(t_k), z)}{2\lambda}  \le \frac{\dd(\gamma(t_k), x) + \dd(x, z)}{2\lambda} \le \frac{\alpha r_k}{2} + \frac{\dd(x,z)}{2\lambda}\,.
\]
Conversely,
\[
  \dd(x,z) \le \dd(x, \gamma(t_k)) + \dd(\gamma(t_k), z) \le \alpha \lambda r_k + t_k = \biggl(\alpha \lambda + \frac{2\lambda}{c_J}\biggr) r_k\,.
\]
Hence,
\begin{equation}
\label{eq:rk-vs-dxz}
\frac{\dd(x,z)}{\lambda(\alpha+2/c_J)}  \le r_k \le \frac{2\dd(x,z)}{c_J} \quad\text{if }x \in \alpha\lambda B_k.
\end{equation}
We can also use the triangle inequality to show that if $x\in \alpha \lambda B_k$, $k\ge 1$, then
\begin{align*}
  \dist(x, \dOm) & \ge \dist(\gamma(t_k), \dOm) - \alpha \lambda r_k \\ & \ge c_J t_k - \alpha \lambda r_k  = (2\lambda-\alpha\lambda) r_k 
   = \frac{c_J}{2} r_k \ge \frac{c_J\,\dd(x,z) }{2 \lambda(\alpha+2/c_J)} \eqcolon \beta \,\dd(x,z)
\end{align*}
and conversely,
\[
  \dist(x, \dOm) \le \dd(x,z) \le \lambda \biggl(\alpha+ \frac{2}{c_J}\biggr) r_k = \biggl(\frac{c_J \alpha}{2} + 1 \biggr) t_k \le 2 \dd(y,z)\,.
\]
An analogous argument can be carried out for the balls $B_k$ with $k\le -1$. Consequently, the chains of inflated balls $\{\alpha \lambda B_k\}_{k\ge1}$ and $\{\alpha \lambda B_{-k}\}_{k\ge1}$ are contained in ``John carrots''
\begin{align}
\label{eq:carrot}
  \bigcup_{k=1}^\infty \alpha \lambda B_k & \subset \biggl\{x\in\Om: \beta \dd(x,z) \le \dist(x,\dOm) \le 2\dd(z,y)\biggr\} \eqcolon C_{z,y} \quad \text{and} \\
  \notag
  \bigcup_{k=1}^\infty \alpha \lambda B_{-k} & \subset \biggl\{x\in\Om: \beta \dd(x,y) \le \dist(x,\dOm) \le 2 \dd(y,z)\biggr\} \eqcolon C_{y,z}
\end{align}
Lemma~\ref{lem:LebDifThm-diffBalls} yields that
\[
  Tu(y) = \lim_{k\to \infty} \fint_{B_{-k} \cap \Om} u\,d\mu \quad\text{and}\quad
  Tu(z) = \lim_{k\to \infty} \fint_{B_{k} \cap \Om} u\,d\mu.
\]
Since $p>\theta$, we can find $\eps > 0$ such that $p-\theta-\eps>0$. Using a chaining argument and the doubling property of $\mu$ similarly as before, we obtain that
\begin{equation}
  \label{eq:chaining0}
  |Tu(y) - Tu(z)| \lesssim \sum_{k\in\Zbb} \fint_{\Om \cap \alpha B_k} |u-u_{\alpha B_k}|\,d\mu \lesssim \sum_{k\in\Zbb} \alpha r_k \biggl(\fint_{\Om \cap \alpha \lambda B_k} g^q\,d\mu\biggr)^{1/q}\,.
\end{equation}
Note that $\alpha r_0 = C(c_J) \cdot r_1$. Let
\[
  h(x) = \sup_{B \ni x} \biggl(\fint_{\Om \cap  B} g^q\,d\mu\biggr)^{1/q} \eqcolon M_q g(x), \quad x\in \Om,
\]
be a non-centered maximal function of the Hardy--Littlewood type in $\Om$ for $g$. Since $\mu\lfloor_\Om$ is doubling, the maximal operator $M_q$ maps $L^q(\Om) \to \wk L^q(\Om)$, cf.\@ \cite[Theorem~III.2.1]{CoiWei}, as well as $L^\infty(\Om) \to L^\infty(\Om)$. Hence, $\|h\|_{L^p(\Om)} \approx \|g\|_{L^p(\Om)}$ by the Marcinkiewicz interpolation theorem (with a slightly more involved, yet fairly straightforward, reasoning in case $q<1$). Moreover,
\[
  \biggl(\fint_{\Om \cap \alpha \lambda B_0} g^q \,d\mu \biggr)^{1/q} \le \inf_{x\in \Om \cap \alpha \lambda B_0} h(x) \le \biggl(\fint_{\alpha \lambda B_1} h^q\,d\mu\biggr)^{1/q}
\]
since $\alpha \lambda B_1 \subset \Om \cap B_0 \subset \Om \cap \alpha \lambda B_0$. For the sake of brevity, let $\bar g = (g^q + h^q)^{1/q}$, in which case $\|\bar g\|_{L^p(\Om)} \approx \| g\|_{L^p(\Om)}$.
Then, \eqref{eq:chaining0} and the H\"older inequality yield that
\begin{align}
  \label{eq:chaining}
  |Tu(y) - Tu(z)|  \lesssim \sum_{k\in\Zbb\setminus\{0\}} r_k 
  \biggl(\fint_{\alpha \lambda B_k} \bar{g}^q\,d\mu\biggr)^{1/q}   
  \le \sum_{k\in\Zbb\setminus\{0\}} r_k^{1-(\theta+\eps)/p} r_k^{(\theta+\eps)/p} \biggl(\fint_{\alpha \lambda B_k} \bar{g}^p\,d\mu\biggr)^{1/p}&  \\
  \notag
   \lesssim \biggl(\sum_{k\in\Zbb\setminus\{0\}} r_k^{(p-\theta-\eps)/(p-1)}\biggr)^{1/p'} \biggl(\sum_{k\in\Zbb\setminus\{0\}} \frac{r_k^{\theta+\eps}}{\mu(B_k)} \int_{\alpha \lambda B_k} \bar{g}^p\,d\mu \biggr)^{1/p}&\,.
\end{align}
As the radii $r_k$ form a geometric sequence, we can compute the sum 
\[
  \biggl(\sum_{k\in\Zbb\setminus\{0\}} r_k^{(p-\theta-\eps)/(p-1)}\biggr)^{1/p'} \approx \bigl( \dd(y,z)^{(p-\theta-\eps)/(p-1)} \bigr)^{1/p'} = \dd(y,z)^{1-(\theta+\eps)/p}\,,
\]
where the constants depend on $p$, $\theta$, $\eps$, and $c_J$. It follows from \eqref{eq:rk-vs-dxz} that $\dd(\gamma_z(t_k), z) \approx r_k$ for $k\ge 1$ and hence $\mu(B_k) = \mu(B(\gamma_z(t_k), r_k)) \approx \mu(B(z, r_k) \cap \Om)$ by the doubling condition of $\mu\lfloor_{\dOm}$. Similarly, $\mu(B_{-k}) = \mu(B(\gamma_y(t_k), r_k)) \approx \mu(B(y, r_k)\cap\Om)$. Consequently, \eqref{eq:H-upper-massbound} yields that
\[
  \frac{r_k^\theta}{\mu(B_k)} 
  \lesssim \frac{1}{\Hcal(B(z, r_k)\cap \dOm)} \quad\text{and}\quad \frac{r_{-k}^\theta}{\mu(B_{-k})}\lesssim \frac{1}{\Hcal(B(y, r_{-k})\cap \dOm)}, \quad k\ge 1\,.
\]
As each of the subchains of balls $\{\alpha \lambda B_k\}_{k=1}^\infty$ and $\{\alpha \lambda B_{-k}\}^\infty_{k=1}$ has bounded overlap, we have
\begin{align*}
  \sum_{k=1}^\infty \frac{r_k^{\theta+\eps}}{\mu(B_k)} \int_{\alpha \lambda B_k} \bar{g}(x)^p\,d\mu(x)
  & \lesssim \sum_{k=1}^\infty  \int_{\alpha \lambda B_k} \frac{r_k^\eps \bar{g}(x)^p}{\Hcal(B(z,r_k)\cap \dOm)}\,d\mu(x) \\
  & \lesssim \int_{C_{z,y}} \frac{\dd(x,z)^\eps \bar{g}(x)^p}{\Hcal(B(z, \dd(x,z))\cap \dOm)}\,d\mu(x),
\end{align*}
where $C_{z,y}$ is the ``carrot'' \eqref{eq:carrot} whose tip is at $z \in \dOm$.

The trace $Tu$ lies  in $L^p(\dOm)$ by Theorem~\ref{thm:TraceOpB}. It suffices to estimate the Besov seminorm considering only pairs of points $(y,z)\in\dOm^2_\delta \coloneq \{(y,z) \in\dOm\times\dOm: \dd(y,z)<\delta\}$, where $\delta>0$ is the distance of the John center of $\Om$ from $\dOm$. For each such couple of points, we will apply \eqref{eq:chaining} and the subsequent estimates. Hence
\begin{align*}
   \iint_{\dOm^2_\delta} & \frac{|Tu(y) - Tu(z)|^p}{\dd(y,z)^{p-\theta} \Hcal(B(z, \dd(y,z))\cap \dOm)} \,d\Hcal^2(y,z)
 \\ 
 \tag{I} \lesssim& \iint_{\dOm^2_\delta} \frac{\dd(y,z)^{p-\theta-\eps}}{\dd(y,z)^{p-\theta} \Hcal(B(z, \dd(y,z))\cap \dOm)} \int_{C_{z,y}} \frac{\dd(x,z)^\eps \bar{g}(x)^p}{\Hcal(B(z, \dd(x,z))\cap \dOm)}\,d\mu(x)\,d\Hcal^2(y,z) \\
 \tag{II}  &\ + \iint_{\dOm^2_\delta} \frac{\dd(y,z)^{p-\theta-\eps}}{\dd(y,z)^{p-\theta} \Hcal(B(y, \dd(y,z))\cap \dOm)}  \int_{C_{y,z}} \frac{\dd(x,y)^\eps \bar{g}(x)^p}{\Hcal(B(y, \dd(x,y))\cap \dOm)}\,d\mu(x)\,d\Hcal^2(y,z) \\
\end{align*}
Let us now examine the first of the two integrals. The integration domain is contained in the set
\[
  \bigl\{(x,y,z) \in \Om\times \dOm \times \dOm: \beta \dd(x,z) \le  \dist(x,\dOm) \le 2\dd(y,z) < \delta\bigr\}.
\]
By the Fubini theorem, we change the order of integration and then extend the integration domain so that
\[
  I \le \int_{\{x\in\Om: \dist(x, \dOm) < \delta\}} \int_{\{z\in \dOm \cap B(x, \dist(x, \dOm)/\beta\}} \int_{\{y \in \dOm \setminus B(z, \dist(x, \dOm)/2)\}}\bigl(\cdots\bigr) d\Hcal(y)\,d\Hcal(z)\,d\mu(x)\,.
\]
We apply Lemma~\ref{lem:trace-int-est1} for the innermost integral and then we proceed by Lemma~\ref{lem:trace-int-est2}, which yields
\begin{align*}
  I & \le \int_{\Om_\delta} \bar{g}(x)^p \biggl(\int_{\dOm \cap B(x, \dist(x,\dOm)/c)} \frac{\dd(x,z)^\eps }{\Hcal(B(z, \dd(x,z))\cap \dOm)}\cdot \frac{1}{\dist(x,\dOm)^\eps} \,d\Hcal(z)\biggr)\,d\mu(x) \\
  & \le \int_{\Om_\delta} \bar{g}(x)^p \frac{\dist(x,\dOm)^\eps}{\dist(x,\dOm)^\eps}\,d\mu(x) \le \|\bar{g}\|_{L^p(\Om)}^p,
\end{align*}
where $\Om_\delta = \{ x\in\Om: \dist(x, \dOm) < \delta\}$ is the $\delta$-strip in $\Om$ near the boundary. Integral (II) can be processed analogously as (I) by mere flipping order of integration with respect to $y$ and $z$.
\end{proof}
As noted after the theorem's statement above, the Hardy--Littlewood maximal function we used in the proof fails to be in $L^p(\Om)$ if we allow $q=p$. However, we will obtain the desired critical Besov smoothness of the trace under the additional assumption that $\Om$ is a uniform domain. 
\begin{df}
\label{df:uniform}
A domain $\Om \subset X$ is called \emph{uniform} if there is a constant $c_U \in (0, 1]$ such that every pair of distinct points $x,y\in\Om$ can be connected by a rectifiable curve $\gamma: [0, l_\gamma] \to \Om$ parametrized by arc-length such that $\gamma(0) = x$, $\gamma(l_\gamma) = y$, $l_\gamma \le c_U^{-1} \dd(x,y)$, and
\begin{equation}
  \label{eq:uniform}
  \dist(\gamma(t), X \setminus \Om) \ge c_U \min\{ t, l_\gamma-t\} \quad \text{for all }t\in[0, l_\gamma].
\end{equation}
\end{df}
Similarly as for John domains, one can use the Ascoli theorem (cf.\@ \cite[p.\@ 169]{Roy}) to see that if $\Om$ is uniform with compact closure, then every pair of distinct boundary points $x,y \in \dOm$ can be connected by a curve $\gamma$ of length $l_\gamma \le c_U^{-1} \dd(x,y)$ such that \eqref{eq:uniform} is satisfied.
\begin{thm}
\label{thm:traceUniform}
Let $\Om$ be a uniform domain whose closure is compact. Suppose that $u \in P^{1,p}_p(\Om)$ for some $p \ge 1$ while $\theta < p$. Then, the trace $Tu \in L^p(\dOm)$ lies in $\Bcal_p^{1-\theta/p}(\dOm) = B_{p,p}^{1-\theta/p}(\dOm)$. Moreover, $\|Tu\|_{\dot\Bcal_p^{1-\theta/p}(\dOm)} \le C \|g\|_{L^p(\Om)}$, where $g\in L^p(\Om)$ is a $p$-PI gradient of $u$.
\end{thm}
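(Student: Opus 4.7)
The strategy is to mirror the proof of Theorem~\ref{thm:TraceJohn}, but to replace the two ``carrot'' chains meeting at a John center with a single chain of balls running along a uniform curve that joins $y$ and $z$ directly. This eliminates the bridging ball at the boundary that forced the John proof to appeal to the Hardy--Littlewood maximal function; since that is precisely where the estimate breaks at the endpoint $q=p$, the uniform geometry should yield the critical Besov smoothness without any loss.

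Given $y,z \in \dOm$ with $\dd(y,z)$ smaller than some fixed threshold $\delta>0$, let $\gamma\colon[0,l]\to\overline{\Om}$ be an arc-length parametrized uniform curve from $y$ to $z$ with $l\le c_U^{-1}\dd(y,z)$ and $\dist(\gamma(t),\dOm)\ge c_U\min\{t,l-t\}$. I would then enumerate a two-sided chain $\{B_k\}_{k\in\Zbb}$ of balls centered on $\gamma$, with radii $r_k = \kappa \min\{\tau_k, l-\tau_k\}$ where $\kappa \in (0,1)$ is chosen small enough (depending on $c_U$ and $\lambda$) that $\lambda B_k \subset \Om$ and consecutive balls overlap, and where $\tau_k$ decreases geometrically toward $0$ as $k\to-\infty$ and toward $l$ as $k\to+\infty$. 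The key structural point is that by \eqref{eq:uniform} one has $\lambda B_k \subset \Om$ for every $k \in \Zbb$, so the $p$-Poincar\'e inequality applies to each $\lambda B_k$ at full strength without any need for Lemma~\ref{lem:inf-PI}. With bounded overlap of $\{B_k\}_k$ and Lemma~\ref{lem:LebDifThm-diffBalls}, the traces $Tu(y)$ and $Tu(z)$ equal the limits of $u_{B_k \cap \Om}$ as $k \to \mp\infty$.

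A standard telescoping followed by H\"older in the counting measure with exponents $p'$ and $p$ (fix an auxiliary $\eps\in(0,p-\theta)$) would yield
\[
|Tu(y)-Tu(z)|^p \lesssim \dd(y,z)^{p-\theta-\eps}\sum_{k\in\Zbb}\frac{r_k^{\theta+\eps}}{\mu(B_k)} \int_{\lambda B_k} g^p\,d\mu,
\]
using $\sum_k r_k^{(p-\theta-\eps)/(p-1)} \approx \dd(y,z)^{(p-\theta-\eps)/(p-1)}$. By doubling of $\mu\lfloor_\Om$ one has $\mu(B_k)\approx\mu(B(y,r_k)\cap\Om)$ for $k<0$ and $\mu(B_k)\approx\mu(B(z,r_k)\cap\Om)$ for $k>0$, so \eqref{eq:H-upper-massbound} converts $r_k^\theta/\mu(B_k)$ into $1/\Hcal(B(y,r_k)\cap\dOm)$ or $1/\Hcal(B(z,r_k)\cap\dOm)$ respectively. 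The chain lives in a tube $T_{y,z} \subset \Om$ of radius $\approx \dd(y,z)$ along $\gamma$; on $T_{y,z}$ the distance from the nearer endpoint is comparable to the corresponding $r_k$, and this reproduces the same integrand that appears in the proof of Theorem~\ref{thm:TraceJohn}.

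To finish, I would integrate the resulting estimate against $\dd(y,z)^{\theta-p}/\Hcal(B(z,\dd(y,z))\cap\dOm)\,d\Hcal(y)\,d\Hcal(z)$ over $\{(y,z)\in\dOm^2:\dd(y,z)<\delta\}$ and apply Fubini to integrate first in $y$ and $z$ for each fixed $x \in \Om$; Lemmata~\ref{lem:trace-int-est1} and~\ref{lem:trace-int-est2} then collapse the double integral down to $\|g\|_{L^p(\Om)}^p$ exactly as in the John case. The tail $\{\dd(y,z)\ge\delta\}$ contributes at most $C\|Tu\|_{L^p(\dOm)}^p$, which is controlled by Theorem~\ref{thm:TraceOpB}. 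The main obstacle is precisely the one flagged in the commentary between Theorems~\ref{thm:TraceJohn} and~\ref{thm:traceUniform}: in the John setting the bridging ball forces one to introduce $h=(Mg^q)^{1/q}$, which is only of weak type $L^p$ when $q=p$; the uniform geometry abolishes that bridge, so every Poincar\'e application remains strong type $p$ and the critical endpoint $q=p$ becomes admissible.
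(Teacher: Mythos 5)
Your proposal is correct and matches the paper's proof in all essentials: the paper likewise replaces the two John carrots meeting at a fixed center by a single two-sided chain of balls $B_k=B(\gamma(t_k),r_k)$, $B_{-k}=B(\gamma(l_\gamma-t_k),r_k)$ running along the uniform curve from $z$ to $y$, observes that the inflated balls (including the midpoint ball $\alpha\lambda B_0$) all lie inside the carrots $C_{z,y}$ and $C_{y,z}$ by \eqref{eq:uniform}, so the bridging step via $h=(M_q g)^{1/q}$ disappears, and then the chaining, H\"older with auxiliary $\eps\in(0,p-\theta)$, and Fubini via Lemmata~\ref{lem:trace-int-est1}--\ref{lem:trace-int-est2} go through unchanged at $q=p$. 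One small attribution slip: what breaks at $q=p$ in the John proof is the $L^p$-boundedness of the Hardy--Littlewood maximal operator $M_q$ used to absorb the boundary-centered ball $B_0$, not Lemma~\ref{lem:inf-PI} per se (which is never invoked in that proof), but your diagnosis of the obstruction and why uniformity removes it is exactly the paper's reasoning.
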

\begin{proof}[Sketch of proof]
The proof can be led along the same lines as for Theorem~\ref{thm:TraceJohn} with a slightly modified construction of chains of balls connecting pairs of boundary points, using the fact that $\Om$ is uniform, which will yield that a Hardy--Littlewood-type maximal function is no longer needed. 

For a fixed couple of points $y,z \in \dOm$, let $\gamma: [0, l_\gamma] \to \overline{\Om}$ be a curve that connects the two points, i.e., $\gamma(0)=z$ and $\gamma(l_\gamma) = y$, and satisfies the conditions from the definition of a uniform domain.
Let
\[
  t_k = \frac{l_\gamma}{2} \Bigl(1-\frac{c_U}{2\lambda}\Bigr)^k\quad \text{ and }\quad r_k = \frac{c_U t_k}{2\lambda}, \quad k=0,1,2,\ldots.
\]
Next, we define a chain of balls $\{B_k\}_{k\in\Zbb}$ by setting
\[
   B_{k} = B(\gamma(t_k), r_k)\quad\text{ and }\quad B_{-k} = B(\gamma(l_\gamma - t_k), r_k), \quad \text{for all }k=0,1,2,\ldots.
\]
With this choice of balls, we obtain that the chains of inflated balls (including $\alpha \lambda B_0$) are contained in the carrots
\begin{align*}
  \bigcup_{k=0}^\infty \alpha \lambda B_k & \subset \biggl\{x\in\Om: \beta \dd(x,z) \le \dist(x,\dOm) \le 2\dd(z,y)\biggr\} \eqcolon C_{z,y} \quad \text{and} \\
  \bigcup_{k=0}^\infty \alpha \lambda B_{-k} & \subset \biggl\{x\in\Om: \beta \dd(x,y) \le \dist(x,\dOm) \le 2 \dd(y,z)\biggr\} \eqcolon C_{y,z}\,.
\end{align*}
Hence $\alpha \lambda B_0$ does not need any special treatment via the maximal function and one can proceed as in the proof of Theorem~\ref{thm:TraceJohn}.
\end{proof}
\begin{cor}
Under the hypothesis of Theorems~\ref{thm:TraceJohn} or~\ref{thm:traceUniform} and assuming that $\Hcal$ is Ahlfors codimension-$\theta$ regular, we have that
\begin{align*}
  \|Tu\|_{\dot{B}^\alpha_{p_\alpha, q}(\dOm)} & \le C \|g\|_{L^p(\Om)}\quad \text{for every $0 < \alpha \le 1-\theta/p$, $q\in[p,\infty]$, and} \\
  \inf_{a\in\Rbb} \|Tu - a\|_{L^{p^*,p}(\dOm)} & \le \inf_{a\in\Rbb} \|Tu - a\|_{L^{p^*}(\dOm)} \le C \|g\|_{L^p(\Om)},
\end{align*}
where $p_\alpha = \frac{p(s-\theta)}{s-p(1-\alpha)}$ and 
$p^* = \frac{p(s-\theta)}{s-p}$.
\end{cor}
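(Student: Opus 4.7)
The plan is to extract the corollary by feeding the critical Besov estimate for $Tu$ provided by Theorem~\ref{thm:TraceJohn} or Theorem~\ref{thm:traceUniform} into the Besov-embedding machinery of Section~\ref{sec:besov-emb}, applied to $(\dOm, \dd, \Hcal)$. Under the Ahlfors codimension-$\theta$ regularity of $\Hcal$, combined with the lower mass bound~\eqref{eq:ball-mu-vs-rads} for $\mu\lfloor_\Om$, one has $\Hcal(B(z, r) \cap \dOm) \gtrsim r^{s-\theta}$, so that $(\dOm, \dd, \Hcal)$ satisfies~\eqref{eq:nu-reg-lower-dimension} with dimension parameter $Q = s-\theta$, while~\eqref{eq:H-upper-massbound} makes $\Hcal\lfloor_{\dOm}$ doubling.

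For the Besov estimate, Theorems~\ref{thm:TraceJohn}/\ref{thm:traceUniform} together with the $L^p(\dOm)$-bound from Theorem~\ref{thm:TraceOpB} furnish $\|Tu - u_\Om\|_{B^{1-\theta/p}_{p,p}(\dOm)} \lesssim \|g\|_{L^p(\Om)}$. Since homogeneous Besov seminorms are insensitive to additive constants, this immediately gives the endpoint $\alpha = 1-\theta/p$ (where $p_\alpha = p$). For $\alpha \in (0, 1-\theta/p)$, I would apply Theorem~\ref{thm:Besov-embedding-closedend} with source smoothness $1-\theta/p$, parameter $\lambda = \alpha/(1-\theta/p) \in (0, 1)$, and $Q = s-\theta$; a short rearrangement of $p_\lambda^{-1} = (1-\lambda)(p^*)^{-1} + \lambda p^{-1}$ with $p^* = p(s-\theta)/(s-p)$ produces $p_\lambda = p(s-\theta)/(s-p(1-\alpha)) = p_\alpha$, whence $\|Tu\|_{\dot{B}^\alpha_{p_\alpha, p}(\dOm)} \lesssim \|g\|_{L^p(\Om)}$. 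Bumping the summability exponent from $p$ to any $q \in [p, \infty]$ is then handled by the homogeneous variant of Lemma~\ref{lem:increasing-q} (the observation immediately following its proof), which applies once $R > \diam \dOm$ is taken in the definition of the seminorm.

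For the Lorentz bound, I would invoke Remark~\ref{rem:sharp-besov-emb} at $\alpha = 1-\theta/p$, $q = p$, $Q = s-\theta$ (so the Sobolev conjugate is $p^* = p(s-\theta)/(s-p)$), giving $\|Tu - u_\Om\|_{L^{p^*, p}(\dOm)} \lesssim \|Tu - u_\Om\|_{B^{1-\theta/p}_{p,p}(\dOm)} \lesssim \|g\|_{L^p(\Om)}$, and then take the infimum over constants $a \in \Rbb$. Control of $\inf_a \|Tu - a\|_{L^{p^*}(\dOm)}$ by $\|g\|_{L^p(\Om)}$ is then obtained by chaining with the standard continuous Lorentz embedding between $L^{p^*, p}$ and $L^{p^*}$, valid because $p \le p^*$ whenever $\theta > 0$. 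The delicate step of the plan is the invocation of Theorem~\ref{thm:Besov-embedding-closedend}, which implicitly needs the reverse-doubling condition~\eqref{eq:nu-upper-dimension} for $\Hcal\lfloor_{\dOm}$; under the present hypotheses this is available by transferring the analogous property of $\mu\lfloor_\Om$ (granted by \cite[Corollary~3.8]{BjoBjo} using connectedness of $\Om$) through~\eqref{eq:H-lower-massbound}--\eqref{eq:H-upper-massbound}.
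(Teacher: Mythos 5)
Your approach coincides with the paper's own very terse proof, which reads in its entirety: ``Both estimates follow from the embeddings in Theorem~\ref{thm:Besov-embedding-closedend} and Corollary~\ref{cor:Besov-standardembeddings}.'' You correctly identify $Q=s-\theta$ as the relevant lower-mass-bound exponent for $\Hcal\lfloor_{\dOm}$, correctly compute that $p_\lambda = p_\alpha$, correctly bump the summability index via the homogeneous variant of Lemma~\ref{lem:increasing-q}, and correctly route the Lorentz bound through Remark~\ref{rem:sharp-besov-emb} (which the paper apparently subsumes under its citation of Corollary~\ref{cor:Besov-standardembeddings}).

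There is one genuine gap, and it is in the step you yourself flag as delicate. You claim reverse doubling \eqref{eq:nu-upper-dimension} for $\Hcal\lfloor_{\dOm}$ transfers from $\mu\lfloor_\Om$ through the codimension relations. Tracing the algebra: if $\mu\lfloor_\Om$ is reverse doubling with exponent $\sigma_\mu$ and $\Hcal$ is Ahlfors codimension-$\theta$ regular, then
\[
  \frac{\Hcal(B(y,r)\cap\dOm)}{\Hcal(B(z,R)\cap\dOm)} \approx \frac{\mu(B(y,r)\cap\Om)}{\mu(B(z,R)\cap\Om)}\cdot\Bigl(\frac{R}{r}\Bigr)^{\theta} \lesssim \Bigl(\frac{r}{R}\Bigr)^{\sigma_\mu - \theta},
\]
so the transferred exponent is $\sigma_\mu - \theta$, which is \emph{not} guaranteed to be positive. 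The hypotheses only control the doubling/lower-mass exponent $s$ of $\mu\lfloor_\Om$ from below, not its reverse-doubling exponent $\sigma_\mu$, and nothing forces $\sigma_\mu>\theta$. What Theorem~\ref{thm:Besov-embedding-closedend} actually needs is reverse doubling for $\Hcal\lfloor_{\dOm}$ itself; via \cite[Corollary~3.8]{BjoBjo} this would follow from connectedness of $\dOm$, but the boundary of a John or uniform domain need not be connected. The paper's proof invokes the same theorem without verifying this condition either, so you have inherited, rather than created, an unstated assumption; still, your explicit transfer argument does not close it. A smaller point: as written, the chain $\|Tu-a\|_{L^{p^*,p}}\le\|Tu-a\|_{L^{p^*}}$ in the statement is in the wrong direction for $p\le p^*$ (one has $L^{p^*,p}\hookrightarrow L^{p^*,p^*}=L^{p^*}$); your own reasoning correctly derives the $L^{p^*}$-bound from the $L^{p^*,p}$-bound, the opposite order, but you do not flag the discrepancy with the stated corollary.
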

\begin{proof}
Both estimates follow from the embeddings in Theorem~\ref{thm:Besov-embedding-closedend} and Corollary~\ref{cor:Besov-standardembeddings}. 
\end{proof}
\section{\texorpdfstring{Bounded linear extension from the Besov class $B^{1-\vartheta/p}_{p,p}(\dOm)$\\to the Newtonian class $N^{1,p}(\Om)$}{Bounded linear extension from the Besov class B\^{}\{1-\unichar{"03D1}/p\}\_{}\{p,p\}(\unichar{"2202}\unichar{"03A9}) to the Newtonian class N\^{}\{1,p\}(\unichar{"03A9})}}
\label{sec:Besov-extension}
In this section, we will find a linear extension operator that maps the Besov class $B^{1-\vartheta/p}(\dOm)$ into the class of locally Lipschitz Newtonian functions $N^{1,p}(\Om)$. The extension will be defined using a Whitney-type cover of $\Om$ and the corresponding partition of unity.

For the extension theorem, we will assume that the relation between $\mu\lfloor_\Om$ and $\Hcal\lfloor_{\dOm}$ satisfies both \eqref{eq:H-lower-massbound} and \eqref{eq:H-upper-massbound} with $0 < \vartheta \le \theta \le p$. In fact, it suffices to assume that \eqref{eq:H-upper-massbound} is localized, i.e., 
\[
  \Hcal(B(z,r) \cap \dOm) \le C(z) \frac{\mu(B(z,r)\cap \Om)}{r^\theta}
\]
holds for $\Hcal$-a.e.\@ $z\in \dOm$ and for all $r<R(z)$, where $C(z)>0$ and $R(z)>0$. Such a localization would affect the proofs of Lemmata~\ref{lem:layer-est-Fn-boundaryball}, \ref{lem:trace-Lebesgue}, and~\ref{lem:trace-Besov}, making the argument somewhat obscured by technical details, whence their formulation and proof will use the non-localized form \eqref{eq:H-upper-massbound}.

Let us first establish the \emph{Whitney covering} of an open set $\Om$ as in \cite[Section 4.1]{HKST}, see also \cite{BjoBjoSha07}.
\begin{pro}[{\cite[Proposition~4.1.15]{HKST}}]
Let $\Omega\subsetneq X$ be open. Then there exists a countable collection $\mathcal{W}_\Omega=\{B(p_{j, i},r_{j,i})=B_{j,i}\}$ of balls in $\Omega$ so that
\begin{itemize}
  \setlength{\parskip}{0pt}
  \setlength{\itemsep}{2pt plus 1pt minus 2pt}
 \item $\bigcup_{j,i} B_{j,i} = \Omega$,
 \item $\sum_{j,i} \chi_{B(p_{j,i}, 2r_{j,i})}\leq C$, 
 \item $2^{j-1}<r_{j,i}\leq 2^{j}$ for all $i$, and
 \item $r_{j,i}=\frac{1}{8}\dist(p_{j,i}, X\setminus \Omega)$,
\end{itemize}
where $C>0$ depends only on the doubling constant of the measure $\mu$.
\end{pro}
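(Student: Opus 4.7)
The plan is to decompose $\Om$ into dyadic annular layers according to the distance to $X\setminus\Om$, choose a maximally separated net of centers in each layer, and verify the bounded-overlap condition by a packing--pigeonhole argument driven solely by the doubling hypothesis. Concretely, set
\[
  L_j = \bigl\{ x\in\Om : 2^{j+2} < \dist(x, X\setminus\Om) \le 2^{j+3}\bigr\}, \quad j\in\Zbb,
\]
so that $\Om = \bigsqcup_j L_j$. Within each $L_j$ I would pick (by Zorn's lemma) a maximal subset $\{p_{j,i}\}_i$ with pairwise distances at least $2^{j-1}$, and then declare $r_{j,i}=\tfrac18\dist(p_{j,i}, X\setminus\Om)$ and $B_{j,i}=B(p_{j,i}, r_{j,i})$. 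The last two bullets of the proposition are then built into the construction.

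To check that $\bigcup_{j,i} B_{j,i}=\Om$, fix $x\in L_j$; by maximality of the net there is some $p_{j,i}$ with $\dd(x,p_{j,i})<2^{j-1}<r_{j,i}$, whence $x\in B_{j,i}$. For the bounded-overlap bullet, fix $x\in\Om$ and put $d=\dist(x,X\setminus\Om)$. If $x\in B(p_{j,i}, 2r_{j,i})$, then the $1$-Lipschitz property of $\dist(\cdot, X\setminus\Om)$ combined with $r_{j,i}=\tfrac18\dist(p_{j,i},X\setminus\Om)$ forces $r_{j,i}\approx d$ with universal constants, so only $O(1)$ values of $j$ can contribute. Within a fixed such layer, the admissible centers lie in some ball $B(x, Cd)$ and are $2^{j-1}$-separated; since $2^{j-1}\approx d$, the balls $\{B(p_{j,i}, 2^{j-2})\}_i$ are pairwise disjoint and contained in $B(x, C'd)$. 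Doubling of $\mu\lfloor_\Om$ (iterated a universal number of times to compare $\mu(B(x,C'd))$ with each $\mu(B(p_{j,i}, 2^{j-2}))$) then caps their cardinality by a constant depending only on $c_\dbl$, which gives the required uniform overlap bound.

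This is the classical Whitney decomposition transplanted to the metric setting, and the only real obstacle is the bookkeeping of numerical constants so that the three quantitative relations $2^{j-1}<r_{j,i}\le 2^j$, $r_{j,i}=\tfrac18\dist(p_{j,i},X\setminus\Om)$, and the dilation factor $2$ in the overlap statement stay simultaneously consistent with the chosen layer thickness $(2^{j+2},2^{j+3}]$ and net spacing $2^{j-1}$. No Poincar\'e inequality, codimension bound, or completeness of $X$ is used at any step; doubling of $\mu$ enters only at the final packing estimate.
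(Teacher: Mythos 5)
Your construction is correct, and it is essentially the standard argument: the paper itself does not prove this proposition but quotes it from \cite[Proposition~4.1.15]{HKST}, where the decomposition is obtained in the same way (maximal separated nets at dyadic distance scales, with the overlap bound coming from a packing argument using only doubling). Your numerics are consistent: a center $p_{j,i}$ in the layer $2^{j+2}<\dist(p_{j,i},X\setminus\Om)\le 2^{j+3}$ gives exactly $2^{j-1}<r_{j,i}\le 2^j$, maximality of the $2^{j-1}$-net yields coverage, and if $x\in B(p_{j,i},2r_{j,i})$ then the $1$-Lipschitz property of the distance function gives $6r_{j,i}<\dist(x,X\setminus\Om)\le 10\,r_{j,i}$, so only boundedly many $j$ contribute, while within one layer the disjoint balls $B(p_{j,i},2^{j-2})\subset\Om$ sit inside $B(x,C'\dist(x,X\setminus\Om))$ and finitely many doublings (centered at the $p_{j,i}$, so that only $\mu\lfloor_\Om$-doubling is needed) cap their number by a constant depending only on $c_\dbl$. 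Two small points you leave implicit: countability of each net is not automatic from Zorn's lemma alone, but follows since the balls $B(p_{j,i},2^{j-2})$ are pairwise disjoint with positive measure and $\mu$ is $\sigma$-finite; and one should note $2B_{j,i}\subset\Om$ (radius $2r_{j,i}\le 2^{j+1}<\dist(p_{j,i},X\setminus\Om)$), which is what makes the dilated balls usable later in the paper. With these remarks your argument is complete.
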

Since the radii of the balls are sufficiently small, we have that $2B_{j,i}\subset\Omega$ for every $B_{j,i}\in\Wcal_\Om$. By the boundedness of $\Omega$ there is a largest exponent $j$ that occurs in the cover; we denote this exponent by $j_0$. Note that $2^{j_0}$ is comparable to $\diam(\Om)$. 

Note also that no ball in level $j$ intersects a ball in level $j+2$.  This follows by the reverse triangle inequality
$\dd(p_{j,i},p_{j+2,k})\geq 2^{j+4}-2^{j+3}=2^{j+3}$
and the bounds on the radii: $2^{j-1}<r_{j,i}\leq 2^j$ and $2^{j+1}<r_{j+2,k}\leq 2^{j+2}$.

As in~\cite[Theorem~4.1.21]{HKST}, there is a Lipschitz partition of unity $\{\vphi_{j,i}\}$ subordinate to the Whitney decomposition $\Wcal_\Om$, that is, $\sum_{j,i} \vphi_{j,i} \equiv \chi_\Om$ and $0 \le \vphi_{j,i} \le \chi_{2B_{j,i}}$ for every ball $B_{j,i}\in\Wcal_\Om$ while $\vphi_{j,i}$ is $C/r_{j,i}$-Lipschitz continuous.
\subsection{Extending boundary data via Whitney-type partition of unity}
Given $f \in L^1(\dOm)$, we will construct a function $F:\Omega\to\Rbb$ whose trace will (under certain hypothesis, cf.\@ Lemmata~\ref{lem:trace-Lebesgue} and~\ref{lem:trace-Besov}) be the original function $f$ on $\partial\Omega$. Moreover, we will show that $F \in N^{1,p}(\Om)$ provided that $f\in B_{p,p}^{1-\vartheta/p}(\partial\Omega)$.

Consider the center of the Whitney ball $p_{j,i}\in\Omega$ and choose a closest point $q_{j,i}\in\partial\Omega$.
Define $U_{j,i}:=B(q_{j,i}, r_{j,i})\cap \partial\Omega$.
We set $a_{j,i}:=\fint_{U_{j,i}}f(y)\,d\Hcal(y)$.
Then, set
\[
  F(x) = Ef(x) \coloneq \sum_{j,i} a_{j,i} \vphi_{j,i}(x), \quad x\in \Om.
\]
We will show that $E: L^p(\dOm) \to L^p(\Om)$ is bounded for every $p\ge 1$ in Lemma~\ref{lem:Lp-est_Whitney}. In Proposition~\ref{pro:extnBounds}, we will see that $E: B^{1-\vartheta/p}_{p,p}(\dOm) \to N^{1,p}(\Om) \cap \Lip_\loc(\Om)$ provided that $p\ge \max\{1,\vartheta\}$. Finally, Lemmata~\ref{lem:trace-Lebesgue} and~\ref{lem:trace-Besov} show that $Ef$ has a trace on $\dOm$ in the sense of \eqref{eq:defoftrace} and the trace coincides with the given function $f$ if either $\vartheta = \theta$ and $f \in L^p(\dOm)$ with any $p\ge 1$, or $f\in B^{1-\vartheta/p}_{p,p}(\dOm)$ and $p>\theta$.

Let us first establish the $L^p$-estimates for $F=Ef$.
\begin{lem}
\label{lem:Lp-est_Whitney}
Let $p\ge 1$. Then, 
\[
\|F\|_{L^p(\Om)} \lesssim \diam(\Om)^{\vartheta/p} \|f\|_{L^p(\dOm)}.\]
\end{lem}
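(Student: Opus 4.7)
My approach is a pointwise Jensen-type estimate followed by a reduction to the bounded overlap structure of the Whitney cover. Since $\{\vphi_{j,i}\}$ is a partition of unity on $\Om$ with $0 \le \vphi_{j,i} \le \chi_{2B_{j,i}}$ and $p\ge 1$, the convexity of $t \mapsto |t|^p$ applied first to the weighted average $F(x) = \sum_{j,i}\vphi_{j,i}(x) a_{j,i}$ and then to the averages defining the coefficients $a_{j,i}$ yields the pointwise bound $|F(x)|^p \le \sum_{j,i} \vphi_{j,i}(x) \fint_{U_{j,i}} |f|^p\,d\Hcal$. Integrating over $\Om$ and using $\int_\Om \vphi_{j,i}\,d\mu \le \mu(2B_{j,i})$ gives
\[
  \int_\Om |F|^p\,d\mu \le \sum_{j,i} \frac{\mu(2B_{j,i})}{\Hcal(U_{j,i})} \int_{U_{j,i}} |f|^p\,d\Hcal.
\]

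The key geometric step is to compare $\mu(2B_{j,i})$ with $r_{j,i}^\vartheta \Hcal(U_{j,i})$. Because $q_{j,i}\in\dOm$ is a closest boundary point to $p_{j,i}$, we have $\dd(p_{j,i},q_{j,i}) = \dist(p_{j,i},X\setminus\Om) = 8r_{j,i}$, so $B(p_{j,i},2r_{j,i}) \subset B(q_{j,i}, 10 r_{j,i})$. The doubling of $\mu\lfloor_\Om$ then gives $\mu(2B_{j,i}) \lesssim \mu(B(q_{j,i},r_{j,i})\cap\Om)$, and the lower codimension-$\vartheta$ bound \eqref{eq:H-lower-massbound} gives $\mu(B(q_{j,i},r_{j,i})\cap\Om) \lesssim r_{j,i}^\vartheta \Hcal(U_{j,i})$. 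Combining these inequalities and applying Tonelli,
\[
  \int_\Om |F|^p \, d\mu \lesssim \sum_{j,i} r_{j,i}^\vartheta \int_{U_{j,i}} |f|^p\,d\Hcal = \int_\dOm |f|^p \sum_{j,i} r_{j,i}^\vartheta \chi_{U_{j,i}} \, d\Hcal.
\]

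It then remains to show that the weighted overlap function $W(z) := \sum_{j,i} r_{j,i}^\vartheta \chi_{U_{j,i}}(z)$ is bounded by $C (\diam \Om)^\vartheta$ on $\dOm$. Fix $z \in \dOm$ and suppose $z \in U_{j,i}$. Then $\dd(z,q_{j,i}) < r_{j,i}$, while $\dd(z,p_{j,i}) \ge \dist(p_{j,i},\dOm) = 8r_{j,i}$; the triangle inequality therefore forces $8 r_{j,i} \le \dd(z,p_{j,i}) \le 9 r_{j,i}$, so at level $j$ (with $r_{j,i} \in (2^{j-1},2^j]$) the admissible centers $p_{j,i}$ lie in the annulus $B(z,9\cdot 2^j)\setminus B(z, 4\cdot 2^j)$. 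The global bounded overlap of $\{2B_{j,i}\}$ together with the doubling of $\mu\lfloor_\Om$ gives a uniform cardinality bound $N$ on the number of level-$j$ indices $i$ fitting this annulus, so
\[
  W(z) \le N \sum_{j\le j_0} 2^{j\vartheta} \lesssim 2^{j_0 \vartheta} \approx (\diam \Om)^\vartheta,
\]
from which the claimed inequality follows. The main potential obstacle is precisely the per-level cardinality bound: turning the stated global bounded overlap of $\{2B_{j,i}\}$ into a local cardinality estimate at a single level requires invoking the lower mass bound for $\mu\lfloor_\Om$ (or equivalently the doubling) to ensure that too many comparably-sized Whitney balls cannot accumulate inside $B(z,9\cdot 2^j)$.
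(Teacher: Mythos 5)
Your proof is correct, and it takes a route that is organized differently from the paper's. The paper bounds $\int_{B_{\ell,m}}|F|^p\,d\mu$ one Whitney ball at a time: it observes that the only partition-of-unity terms alive on $B_{\ell,m}$ come from levels $|j-\ell|\le1$, so every $U_{j,i}$ involved is comparable to a single enlarged boundary ball $U^*_{\ell,m}$; it then applies Jensen once (to $\fint_{U^*_{\ell,m}}|f|\,d\Hcal$), uses $\mu(B_{\ell,m})\lesssim r_{\ell,m}^\vartheta\Hcal(U_{\ell,m})$, and sums over $(\ell,m)$, with the per-level bounded overlap of $\{U^*_{\ell,m}\}_m$ absorbed into the final sum. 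You instead apply Jensen pointwise to the convex combination $F(x)=\sum_{j,i}\vphi_{j,i}(x)a_{j,i}$ and to the averages $a_{j,i}$, integrate, use Tonelli, and bound the resulting weight $W(z)=\sum_{j,i}r_{j,i}^\vartheta\chi_{U_{j,i}}(z)$ directly by $(\diam\Om)^\vartheta$ via a per-level cardinality count in the annulus $B(z,9\cdot2^j)\setminus B(z,4\cdot2^j)$. Both arguments rest on the same two ingredients — the comparison $\mu(2B_{j,i})\lesssim r_{j,i}^\vartheta\Hcal(U_{j,i})$ coming from \eqref{eq:H-lower-massbound} and doubling, and the per-level bounded overlap of the boundary balls $U_{j,i}$ — so they are close in substance. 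Your cardinality argument correctly combines the comparable measures of the level-$j$ Whitney balls (by doubling of $\mu\lfloor_\Om$) with their stated bounded overlap, and it makes explicit the step the paper leaves implicit when writing $\sum_m\int_{U^*_{\ell,m}}|f|^p\,d\Hcal\lesssim\int_\dOm|f|^p\,d\Hcal$. The one thing your Tonelli rearrangement gives up is the localized per-Whitney-ball estimate $\int_{B_{\ell,m}}|F|^p\,d\mu\lesssim r_{\ell,m}^\vartheta\int_{U^*_{\ell,m}}|f|^p\,d\Hcal$, which the paper reuses for the shell estimate in Lemma~\ref{lem:layer-est-Fn-boundaryball}; since that localization is not needed for the present lemma, the trade-off is harmless here.
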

\begin{proof}
We first consider a fixed ball $B_{\ell, m}$ from the Whitney cover. Recall that if $2B_{j,i}\cap B_{\ell,m}\neq \emptyset$, then
$|j-\ell|\le 1$. Thus, $U_{j,i}\subset U_{\ell,m}^*\coloneq B(q_{\ell,m}, 2^{6}r_{\ell,m})\cap\partial\Omega$, whence $\Hcal(U_{j,i})\approx \Hcal(U_{\ell,m}^*)$. Then,
\begin{align*}
\int_{B_{\ell, m}}|F(x)|^p\,d\mu(x) &=\int_{B_{\ell, m}}\biggl|\sum_{\substack{j,i \, \textrm{ s.t.}\\
    2B_{j,i}\cap B_{\ell,m}\neq \emptyset}} \fint_{U_{j,i}}f(y)\,d\Hcal(y)\vphi_{j,i}(x)\biggr|^p\,d\mu(x)\\
&\leq\int_{B_{\ell, m}}\biggl(\sum_{\substack{j,i \, \textrm{ s.t.}\\
    2B_{j,i}\cap B_{\ell,m}\neq \emptyset}}\fint_{U_{j,i}}|f(y)|\,d\Hcal(y)\,\vphi_{j,i}(x)\biggr)^p\,d\mu(x)\\
&\lesssim \int_{B_{\ell, m}}\biggl(\fint_{U^*_{\ell,m}}|f(y)|\,d\Hcal(y) \sum_{j,i}\vphi_{j,i}(x)\biggr)^p\,d\mu(x) \\
& = \int_{B_{\ell, m}}\biggl(\fint_{U^*_{\ell,m}}|f(y)|\,d\Hcal(y)\biggr)^p\,d\mu(x).  
\end{align*}
From the construction of the Whitney balls, the doubling condition of $\mu\lfloor_\Om$, and \eqref{eq:H-lower-massbound}, we deduce that
\begin{equation}
  \label{eq:muBlm-HUlm}
  \mu(B_{\ell,m}) \approx \mu(B(q_{\ell,m},r_{\ell,m}) \cap \Om) \lesssim r_{\ell,m}^\vartheta \Hcal(U_{\ell,m}).
\end{equation}
Hence,
\begin{equation}\label{eq:Ball-F-est}
\int_{B_{\ell, m}}|F(x)|^p\,d\mu(x) \lesssim \mu(B_{\ell,m}) \fint_{U^*_{\ell,m}} |f(y)|^p\,d\Hcal(y) \lesssim \,r^\vartheta_{\ell,m} \int_{U_{\ell,m}^*}|f(y)|^p\,d\Hcal(y).
\end{equation}
Recall that $r_{\ell, m}\approx 2^{\ell}$. As $\Omega=\bigcup_{\ell,m}B_{\ell,m}$ and the balls have uniformly bounded overlap, we have
\begin{align*}
  \int_\Omega |F|^p \,d\mu 
      & \lesssim \sum_{\ell,m} r_{\ell,m}^\vartheta \int_{U_{\ell,m}^*}\left|f\right|^p\,d\Hcal
      \lesssim \sum_{\ell=-\infty}^{j_0}2^{\ell\,\vartheta} \sum_{m} \int_{U_{\ell,m}^*}\left|f\right|^p\,d\Hcal \\
      & \lesssim \sum_{\ell=-\infty}^{j_0}2^{\ell\,\vartheta}  \int_{\partial\Omega}\left|f\right|^p\,d\Hcal
      \approx \diam(\Om)^\vartheta \|f\|_{L^p(\partial\Omega)}^p.
  \qedhere  
\end{align*}
\end{proof}
We will use the extension constructed in this section to find a nonlinear bounded extension from $L^p(\dOm)$
to $N^{1,p}(\Om)$ in the subsequent section in case $p=\vartheta=\theta \ge 1$. There, we will need the following estimates for the integral of the function $F$ and its gradient (Lemma~\ref{lem:Lip-layer-est-grad} below) on boundary shells of $\Omega$.
\begin{lem}
\label{lem:layer-est-Fn-boundaryball}
Let $z \in \dOm$, $r > 0$, and $0< \rho <\diam(\Om)/2$. Set $\Om(\rho) = \{x\in\Om : \dist(x,X\setminus\Om)<\rho\}$. Then,
\[
\int_{B(z,r)\cap \Om(\rho)}|F|^p\, d\mu\lesssim \min\{r, \rho\}^\vartheta \int_{B(z, 2^8r) \cap \dOm}|f|^p\, d\Hcal\,.
\]
\end{lem}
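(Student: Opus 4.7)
The plan is to reduce to the per-ball estimate
\[
  \int_{B_{\ell,m}}|F|^p\,d\mu\lesssim r_{\ell,m}^\vartheta\int_{U^*_{\ell,m}}|f|^p\,d\Hcal
\]
already contained in \eqref{eq:Ball-F-est} inside the proof of Lemma~\ref{lem:Lp-est_Whitney}, and then to restrict the sum over $(\ell,m)$ to those Whitney balls that actually meet the thin shell $B(z,r)\cap\Om(\rho)$. The geometric content is to show that for each such relevant ball one has both $r_{\ell,m}\lesssim\min\{r,\rho\}$ and $U^*_{\ell,m}\subset B(z,2^8 r)\cap\dOm$; with these two facts in hand, the summation is routine.

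For the geometric constraints, I would fix $(\ell,m)$ together with a point $x\in B_{\ell,m}\cap B(z,r)\cap\Om(\rho)$ and derive two upper bounds on $r_{\ell,m}$. From $\dist(x,X\setminus\Om)<\rho$ and $\dist(p_{\ell,m},X\setminus\Om)=8r_{\ell,m}$, the triangle inequality yields $8r_{\ell,m}\le r_{\ell,m}+\rho$, hence $r_{\ell,m}<\rho/7$. Since $2B_{\ell,m}\subset\Om$ while $z\in\dOm$, we further have $z\notin 2B_{\ell,m}$, i.e., $\dd(p_{\ell,m},z)\ge 2r_{\ell,m}$; combined with $\dd(p_{\ell,m},z)\le r_{\ell,m}+r$ this gives $r_{\ell,m}<r$. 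Together, $r_{\ell,m}\le c\min\{r,\rho\}$. The same two inequalities also give $\dd(q_{\ell,m},z)\le\dd(q_{\ell,m},p_{\ell,m})+\dd(p_{\ell,m},z)\le 9r_{\ell,m}+r\lesssim r$, so $U^*_{\ell,m}=B(q_{\ell,m},2^6 r_{\ell,m})\cap\dOm$ lies in $B(z,2^8 r)\cap\dOm$ with room to spare.

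Because $r_{\ell,m}\approx 2^\ell$, the previous step restricts the relevant indices to $\ell\le\ell^*$ with $2^{\ell^*}\approx\min\{r,\rho\}$. Using that $\{B_{\ell,m}\}$ covers $\Om$ and applying \eqref{eq:Ball-F-est}, I would then write
\[
  \int_{B(z,r)\cap\Om(\rho)}|F|^p\,d\mu\lesssim\sum_{\ell\le\ell^*}2^{\ell\vartheta}\sum_{m\text{ relevant}}\int_{U^*_{\ell,m}}|f|^p\,d\Hcal.
\]
For each fixed level $\ell$, the patches $\{U^*_{\ell,m}\}_m$ inherit uniformly bounded overlap from the Whitney balls (the inflation factor $2^6$ is absorbed since $\dd(p_{\ell,m},q_{\ell,m})\le 8r_{\ell,m}$ is of the same scale as $r_{\ell,m}$ and $\mu\lfloor_\Om$ is doubling), so the localization step yields $\sum_m\int_{U^*_{\ell,m}}|f|^p\,d\Hcal\lesssim\int_{B(z,2^8 r)\cap\dOm}|f|^p\,d\Hcal$. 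Summing the geometric series $\sum_{\ell\le\ell^*}2^{\ell\vartheta}\approx\min\{r,\rho\}^\vartheta$ then concludes the proof. The main obstacle I anticipate is the bound $r_{\ell,m}<r$: the $\rho$-bound is immediate from the $\Om(\rho)$ condition, but without the interplay of $z\in\dOm$ with $2B_{\ell,m}\subset\Om$ one would be stuck with $r_{\ell,m}\lesssim\rho$ only, losing the crucial $\min\{r,\rho\}^\vartheta$ factor in the regime $r<\rho$.
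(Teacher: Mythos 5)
Your argument is correct and is essentially the paper's own proof: the paper first reduces to $\rho\le r$ via $B(z,r)\cap\Om(\rho)=B(z,r)\cap\Om(\min\{r,\rho\})$ and then, exactly as you do, combines the per-ball estimate \eqref{eq:Ball-F-est} with the facts that the relevant Whitney radii satisfy $r_{\ell,m}\lesssim\min\{r,\rho\}$ and that $U^*_{\ell,m}\subset B(z,2^8r)$, finishing with the bounded overlap of the $U^*_{\ell,m}$ at each level and a geometric series. The only caveat is your inequality $\dd(q_{\ell,m},p_{\ell,m})\le 8r_{\ell,m}$, which is not automatic in a general metric space since a priori $\dd(q_{\ell,m},p_{\ell,m})=\dist(p_{\ell,m},\dOm)\ge\dist(p_{\ell,m},X\setminus\Om)=8r_{\ell,m}$; replace it by $\dd(q_{\ell,m},p_{\ell,m})\le\dd(p_{\ell,m},z)\le r_{\ell,m}+r$ (valid because $z\in\dOm$ and $q_{\ell,m}$ is a nearest boundary point to $p_{\ell,m}$), as the paper does, and all your containments go through with the same constant $2^8$.
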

\begin{proof}
Since $B(z,r)\cap \Om(\rho) = B(z,r)\cap \Om(\min\{r,\rho\})$, we do not lose any generality by 
assuming that $\rho \le r$. Let  
$\ell_\rho$ be the greatest value of $\ell \in \Zbb$ for which there exists a ball $B_{\ell,m}$ that intersects $\Omega(\rho)\cap B(z,r)$. For each $\ell \le \ell_\rho$, we define $\mathcal{I}(\ell)$ to be the collection of all $m\in\Nbb$ for which $B_{\ell,m}\cap\Omega(\rho) \cap B(z,r)$ is non-empty. Then by~\eqref{eq:Ball-F-est},
\[
\int_{B(z,r) \cap \Om(\rho)} |F|^p\, d\mu
   \le \sum_{\ell=-\infty}^{\ell_\rho} \sum_{m\in\mathcal{I}(\ell)}\int_{B_{\ell,m}}|F|^p\, d\mu
   \lesssim \sum_{\ell=-\infty}^{\ell_\rho} \sum_{m\in\mathcal{I}(\ell)} r_{\ell,m}^\vartheta\int_{U^*_{\ell,m}}|f|^p\, d\Hcal.
\]
The triangle inequality yields that
\[
  \dd(z, q_{\ell, m}) \le \dd(z, p_{\ell,m}) + \dd(p_{\ell,m}, q_{\ell, m}) \le 2 \dd(z, p_{\ell,m}) \le 2(r + r_{\ell,m}),
\]
where $B_{\ell, m} = B(p_{\ell,m}, r_{\ell,m})$ and $U_{\ell, m} = B(q_{\ell,m}, r_{\ell,m}) \cap \dOm$ with 
$q_{\ell,m} \in \dOm$ being a boundary point lying closest to $p_{\ell,m}$. Moreover, 
$8 r_{\ell,m} = \dist(p_{\ell,m}, X\setminus \Om) \le \dd(p_{\ell,m}, z) \le r+ r_{\ell,m}$. Hence, 
$r_{\ell,m} \le \tfrac17 r$. Consequently, $\dd(z, q_{\ell,m}) \le \tfrac{16}{7} r$ and 
$U_{\ell, m} \subset B(z, (\tfrac{16}{7} + \tfrac17)r)$. Thus, $U^*_{\ell, m} \subset B(z, 2^8 r)$ and
\[
  \int_{B(z,r) \cap\Om(\rho) } |F|^p\, d\mu 
  \lesssim \sum_{\ell=-\infty}^{\ell_\rho} 2^{\ell\vartheta} \int_{B(z, 2^8 r) \cap \dOm}|f|^p\, d\Hcal 
  \approx \rho^\vartheta \int_{B(z, 2^8 r) \cap \dOm}|f|^p\, d\Hcal,
\]
where we used that $r_{\ell,m}\approx 2^\ell$ and $2^{\ell_\rho} \approx \rho$, which follow from the construction of $\Wcal_\Om$.
\end{proof}
By choosing any $r>\diam \Om$ in the preceding lemma, we obtain the following corollary.
\begin{cor}
\label{cor:layer-est-Fn}
With the notation of Lemma~\ref{lem:layer-est-Fn-boundaryball}, we have
\[
\int_{\Om(\rho)}|F|^p\, d\mu\lesssim \rho^\vartheta\, \int_{\dOm}|f|^p\, d\Hcal.
\]
\end{cor}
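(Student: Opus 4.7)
The statement is an immediate consequence of Lemma~\ref{lem:layer-est-Fn-boundaryball}, as the author's own one-sentence announcement suggests: the plan is simply to choose a ball that swallows the whole domain.

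Concretely, I would first dispose of the trivial case $\dOm=\emptyset$, in which $F\equiv0$ and both sides vanish. Otherwise, fix an arbitrary base point $z_0\in\dOm$ and set $r=2\diam\Om$ in Lemma~\ref{lem:layer-est-Fn-boundaryball}. Because $z_0\in\overline\Om$, the triangle inequality gives $\dd(x,z_0)\le\diam(\overline\Om)=\diam\Om<r$ for every $x\in\overline\Om$, so $B(z_0,r)\supset\overline\Om\supset\Om(\rho)$ and also $B(z_0,2^8 r)\cap\dOm=\dOm$. Since by hypothesis $\rho<\diam\Om/2<r$, we have $\min\{r,\rho\}=\rho$.

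Plugging these simplifications into the conclusion of Lemma~\ref{lem:layer-est-Fn-boundaryball} yields
\[
  \int_{\Om(\rho)}|F|^p\,d\mu=\int_{B(z_0,r)\cap\Om(\rho)}|F|^p\,d\mu\lesssim \rho^\vartheta\int_{B(z_0,2^8 r)\cap\dOm}|f|^p\,d\Hcal=\rho^\vartheta\int_{\dOm}|f|^p\,d\Hcal,
\]
which is precisely the claimed estimate. There is essentially no obstacle: all the technical work (the Whitney chaining, the use of \eqref{eq:H-lower-massbound} in the form \eqref{eq:muBlm-HUlm}, and the dyadic summation over Whitney levels) was already carried out inside Lemma~\ref{lem:layer-est-Fn-boundaryball}, and the corollary is just the global-in-$z$ specialization obtained by letting $r$ exceed the diameter of $\Om$.
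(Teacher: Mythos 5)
Your proposal is correct and is essentially identical to the paper's own argument, which obtains the corollary by applying Lemma~\ref{lem:layer-est-Fn-boundaryball} with any $r>\diam\Om$ so that the ball absorbs all of $\Om(\rho)$ and $\dOm$ while $\min\{r,\rho\}=\rho$. The added remarks (the empty-boundary case and the explicit choice $r=2\diam\Om$) are harmless and change nothing.
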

Since $\Lip F$ is an upper gradient of locally Lipschitz functions, cf.~\cite[Proposition~1.14]{BjoBjo}, the following proposition provides us with the desired $L^p$-norm bound for an upper gradient of $F = Ef$.
\begin{pro}
\label{pro:extnBounds}
Given $\Omega\subset X$ and $p\ge \max\{1, \vartheta\}$, then
\[
  \| \Lip F \|_{L^p(\Om)} \lesssim \|f\|_{B_{p,p}^{1-\vartheta/p}(\partial\Omega)} \quad \text{whenever }f\in B_{p,p}^{1-\vartheta/p}(\partial\Omega).
\]
\end{pro}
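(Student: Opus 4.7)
The plan is to turn pointwise bounds on the locally-Lipschitz function $F$ into quantities involving pairwise averages of $f$ on boundary pieces, which then assemble precisely into the Besov seminorm. Since the Whitney partition $\{\vphi_{j,i}\}$ is finitely overlapping and each $\vphi_{j,i}$ is $C/r_{j,i}$-Lipschitz, $F$ is locally Lipschitz on $\Om$, so $\Lip F$ is a legitimate upper gradient by \cite[Proposition~1.14]{BjoBjo}. Using $\sum_{j,i}\vphi_{j,i}\equiv 1$, for $x\in B_{\ell,m}$ we rewrite $F(x)=a_{\ell,m}+\sum_{(j,i)}(a_{j,i}-a_{\ell,m})\vphi_{j,i}(x)$, where only neighbors with $2B_{j,i}\cap B_{\ell,m}\ne\emptyset$ contribute (forcing $|j-\ell|\le 1$, $r_{j,i}\approx r_{\ell,m}$, with boundedly many terms). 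Differentiating gives
\[
  \Lip F(x) \lesssim \frac{1}{r_{\ell,m}}\max_{\text{nbrs}}|a_{j,i}-a_{\ell,m}|, \qquad x\in B_{\ell,m}.
\]

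For each neighbor pair, $U_{j,i}$ and $U_{\ell,m}$ both lie in $U^*_{\ell,m}=B(q_{\ell,m},2^{6}r_{\ell,m})\cap\dOm$ with measures comparable to $\Hcal(U^*_{\ell,m})$. Writing $a_{j,i}-a_{\ell,m}$ as a double average of $f(y)-f(z)$ and applying Jensen (this is where $p\ge 1$ enters), then integrating $(\Lip F)^p$ over $B_{\ell,m}$ and invoking $\mu(B_{\ell,m})\lesssim r_{\ell,m}^{\vartheta}\Hcal(U^*_{\ell,m})$ (which is derived from \eqref{eq:H-lower-massbound} and doubling exactly as in \eqref{eq:muBlm-HUlm}), one obtains
\[
  \int_{B_{\ell,m}}(\Lip F)^p\,d\mu \lesssim \frac{1}{r_{\ell,m}^{p-\vartheta}\,\Hcal(U^*_{\ell,m})}\int_{U^*_{\ell,m}}\int_{U^*_{\ell,m}}|f(y)-f(z)|^p\,d\Hcal(y)\,d\Hcal(z).
\]

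Next I would sum level by level. For each fixed $\ell$, the enlarged boundary balls $\{U^*_{\ell,m}\}_m$ are finitely overlapping on $\dOm$ (inherited from the bounded overlap of $\{2B_{\ell,m}\}_m$, since $q_{\ell,m}$ lies at distance $\approx 2^\ell$ from $p_{\ell,m}$, together with the doubling of $\Hcal\lfloor_\dOm$ provided by \eqref{eq:H-upper-massbound}). Since $U^*_{\ell,m}\subset B(y,c_0 2^\ell)$ whenever $y\in U^*_{\ell,m}$, with comparable $\Hcal$-measures, the inner $d\Hcal(z)$ average may be enlarged to an average over $B(y,c_0 2^\ell)\cap\dOm$, and summing in $m$ produces
\[
  \sum_{m}\frac{1}{\Hcal(U^*_{\ell,m})}\int_{U^*_{\ell,m}}\int_{U^*_{\ell,m}}|f(y)-f(z)|^p\,d\Hcal(z)\,d\Hcal(y) \lesssim E_p(f,c_0 2^\ell)^p.
\]
Summing over $\ell\le j_0$ yields $\int_\Om(\Lip F)^p\,d\mu \lesssim \sum_{\ell\le j_0} E_p(f,c_0 2^\ell)^p (2^\ell)^{\vartheta-p}$. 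Finally, because $E_p(f,s)\lesssim E_p(f,t)$ when $s\le t\le 2s$ (a doubling consequence), each dyadic summand is controlled by $\int_{c_0 2^\ell}^{c_0 2^{\ell+1}} E_p(f,t)^p\, t^{\vartheta-p}\,dt/t$, so the full sum is bounded by $\|f\|_{\dot B^{1-\vartheta/p}_{p,p}(\dOm)}^p$ with $R\approx\diam\Om$ in \eqref{eq:Besov}, and hence by $\|f\|_{B^{1-\vartheta/p}_{p,p}(\dOm)}^p$.

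The principal technical point is the overlap bookkeeping in the third paragraph: one needs the inflation by $2^6$ not to destroy the finite overlap of $\{U^*_{\ell,m}\}_m$ at each scale $\ell$, and the measures $\Hcal(U^*_{\ell,m})$ and $\Hcal(B(y,c_0 2^\ell)\cap\dOm)$ to remain comparable for $y\in U^*_{\ell,m}$. Both follow straightforwardly from doubling of $\Hcal\lfloor_\dOm$; everything else is a routine Whitney chaining argument combined with the standard dyadic-to-continuous passage for the Besov seminorm.
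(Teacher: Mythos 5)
Your proposal is correct and follows essentially the same route as the paper's proof: the partition-of-unity chaining giving $\Lip F \lesssim r_{\ell,m}^{-1}$ times the averaged oscillation of $f$ over the expanded boundary sets $U^*_{\ell,m}$, the comparison $\mu(B_{\ell,m})\lesssim r_{\ell,m}^{\vartheta}\Hcal(U_{\ell,m})$ from \eqref{eq:H-lower-massbound}, Jensen to move the $p$th power inside, bounded overlap of $\{U^*_{\ell,m}\}_m$ at each dyadic level, and the dyadic-to-continuous passage to the $\dot{B}^{1-\vartheta/p}_{p,p}$ seminorm. The only cosmetic differences (taking a maximum instead of a sum over neighboring chips, and phrasing the level-$\ell$ summation via $E_p(f,c_02^\ell)$) do not change the argument.
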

\begin{proof}
  Fix a ball $B_{\ell,m}\in \Wcal_\Om$, and fix a point $x \in B_{\ell,m}$.  Then, for all $y\in B_{\ell,m}$,
  \begin{align*}
    |F(y)-F(x)| &= \biggl|\sum_{j,i}a_{j,i}(\vphi_{j,i}(y)-\vphi_{j,i}(x))\biggr| \\
     & = \biggl|\sum_{j,i}(a_{j,i}-a_{\ell,m})(\vphi_{j,i}(y)-\vphi_{j,i}(x))\biggr| 
    \lesssim \sum_{\substack{j,i\, \textrm{ s.t.}\\ 2B_{j,i}\cap B_{\ell,m}\ne\emptyset}}|a_{j,i}-a_{\ell,m}|\frac{\dd(y,x)}{r_{j,i}},
  \end{align*}
  where the last inequality follows from the Lipschitz constant of $\vphi_{j,i}$.  Hence, 
  \[
  \frac{|F(y)-F(x)|}{\dd(y,x)}\leq \frac{C}{r_{\ell,m}} \sum_{\substack{j,i \,\textrm{ s.t.} \\
    2B_{j,i}\cap B_{\ell,m}\neq \emptyset}}|a_{j,i}-a_{\ell,m}|.
  \]
  Thus, we want to bound terms of the form $|a_{j,i}-a_{\ell,m}|$. Note also that if $2B_{j,i} \cap B_{\ell,m} \neq \emptyset$, then $|j-\ell|\le 1$. Therefore,
\begin{align}
  |a_{j,i} -a_{\ell,m}| = \left|\fint_{U_{j,i}}f(z)\,d\Hcal(z)-\fint_{U_{\ell,m}}f(w)\,d\Hcal(w)\right|&\notag \\
  \label{eq:expandballs1}
  \le \fint_{U_{j,i}}\fint_{U_{\ell,m}}  \left|f(z)-f(w)\right|\,d\Hcal(w)\,d\Hcal(z)\lesssim & \fint_{U^*_{\ell,m}}\fint_{U^*_{\ell,m}}  \left|f(z)-f(w)\right|\,d\Hcal(w)\,d\Hcal(z),
\end{align}
where $U^*_{\ell,m} \coloneq B(q_{\ell,m}, 2^{6}r_{\ell,m}) \cap \dOm$ denotes the expanded subset of the boundary. Recall that $\Hcal$ is doubling by \eqref{eq:H-upper-massbound} and by the assumption that $\mu\lfloor_\Om$ is doubling, and that is why $\Hcal(U^*_{\ell,m})\lesssim \Hcal(U_{\ell,m})$ and $\Hcal(U^*_{\ell,m})\lesssim \Hcal(U_{j,i})$,
which was used in~\eqref{eq:expandballs1}. The above estimates together with the bounded overlap of the
Whitney balls yield the following inequality:
\begin{equation}
\label{eq:pointwise-Lip-est}
\Lip F(x) =\limsup_{y\to x}\frac{|F(y)-F(x)|}{\dd(y,x)}   \lesssim \frac{1}{r_{\ell,m}}  \fint_{U^*_{\ell,m}}\fint_{U^*_{\ell,m}}  \left|f(z)-f(w)\right|\,d\Hcal(w)\,d\Hcal(z)
\end{equation}
whenever $x\in B_{\ell,m}$.
Therefore, $\|\Lip F\|_{L^p(\Om)}^p \le \sum_{\ell,m} \|\Lip F\|_{L^p(B_{\ell,m})}^p$ and hence \eqref{eq:pointwise-Lip-est} and \eqref{eq:muBlm-HUlm} yield that
\begin{align*}
 \int_\Omega (\Lip F)^p\,d\mu  
      &\lesssim \sum_{\ell, m}\, \frac{\mu(B_{\ell,m})}{r^p_{\ell,m}}\,
          \biggl(\fint_{U^*_{\ell,m}}\fint_{U^*_{\ell,m}}  \left|f(z)-f(w)\right|\,d\Hcal(w)\,d\Hcal(z)\biggr)^p\notag\\
  &\lesssim \sum_{\ell,m} \frac{\Hcal(U_{\ell,m})}{r^{p-\vartheta}_{\ell,m}}
 \fint_{U^*_{\ell,m}}\fint_{U^*_{\ell,m}}  \left|f(z)-f(w)\right|^p\,d\Hcal(w)\,d\Hcal(z)\\
 &\lesssim\sum_{\ell=-\infty}^{j_0}\sum_{m} \frac{1}{r^{p-\vartheta}_{\ell,m}}
 \int_{U^*_{\ell,m}}\fint_{U^*_{\ell,m}}  \left|f(z)-f(w)\right|^p\,d\Hcal(w)\,d\Hcal(z)\\
 &\lesssim \sum_{\ell=-\infty}^{j_0} \frac{1}{(2^{7+\ell})^{p-\vartheta}}
 \int_{\partial\Omega}\fint_{B(z,2^{7+\ell})}  \left|f(z)-f(w)\right|^p\,d\Hcal(w)\,d\Hcal(z)\,,
\end{align*}
where the last inequality follows from the uniformly bounded overlap of the balls $U_{\ell,m}^*$ for each $\ell$.
Without loss of generality, we may choose $R = 2^{j_0+7}$ in the definition of the Besov norm \eqref{eq:Besov}, in which case $R\approx \diam(\Omega)$. Finally,
\begin{align*} 
\sum_{\ell=-\infty}^{j_0}& \frac{1}{(2^{7+\ell})^{p-\vartheta}}
 \int_{\partial\Omega}\fint_{B(z,2^{7+\ell})}  \left|f(z)-f(w)\right|^p\,d\Hcal(w)\,d\Hcal(z)\\
 & \approx\int_{t=0}^{R} \frac{1}{t^{p-\vartheta}}
   \int_{\partial\Omega}\fint_{B(z,t)}  \left|f(z)-f(w)\right|^p\,d\Hcal(w)\,d\Hcal(z)\frac{dt}{t} = \|f\|_{\dot{B}_{p,p}^{1-\vartheta/p}(\partial\Omega)}\,.  \\[-6.3ex] & \phantom{\frac{1}{t}}\qedhere
\end{align*}
\end{proof}
Next, we obtain a localized estimate for the $L^p$-norm of the $p$-weak upper gradient $\Lip F$ on the layer $\Om(\rho)$ if the boundary function $f$ Lipschitz is continuous. The Lipschitz constant of $f$ on $\dOm$ will be denoted by $\LIP(f, \dOm)$, i.e., we define
\[
  \LIP (f, \dOm) = \sup_{x,y\in \dOm : x\neq y} \frac{|f(x)-f(y)|}{\dd(x,y)}.
\]
\begin{lem}
\label{lem:Lip-layer-est-grad}
For $0< \rho <\diam(\Om)/2$, set $\Om(\rho) = \{x\in\Om : \dist(x,X\setminus\Om)<\rho\}$. If $f$ is Lipschitz continuous on $\dOm$, then
\[
  \int_{\Om(\rho)}(\Lip F(x))^p\, d\mu(x) \lesssim \mu(\Om(\rho)) \LIP(f, \dOm)^p\,.
\]
\end{lem}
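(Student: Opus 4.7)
The plan is to reuse the pointwise estimate \eqref{eq:pointwise-Lip-est} established during the proof of Proposition~\ref{pro:extnBounds}, which says that for every Whitney ball $B_{\ell,m}$ and every $x\in B_{\ell,m}$,
\[
  \Lip F(x) \lesssim \frac{1}{r_{\ell,m}} \fint_{U^*_{\ell,m}}\fint_{U^*_{\ell,m}} |f(z)-f(w)|\,d\Hcal(w)\,d\Hcal(z),
\]
where $U^*_{\ell,m} = B(q_{\ell,m}, 2^6 r_{\ell,m})\cap \dOm$. This bound is the only place where the structure of $F$ enters; once it is available, everything else is just plugging in the Lipschitz hypothesis.

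First I would observe that, since $U^*_{\ell,m}$ has diameter at most $2^7 r_{\ell,m}$, the Lipschitz continuity of $f$ on $\dOm$ yields
\[
  |f(z)-f(w)| \le \LIP(f,\dOm) \dd(z,w) \le 2^7 r_{\ell,m} \LIP(f,\dOm)
\]
for every $z,w \in U^*_{\ell,m}$. Inserting this into the pointwise bound above and cancelling the factor $r_{\ell,m}$ gives
\[
  \Lip F(x) \lesssim \LIP(f,\dOm) \qquad\text{for every }x\in B_{\ell,m}.
\]
As the Whitney balls cover $\Om$ and the constant does not depend on $(\ell,m)$, this is a uniform pointwise bound $\Lip F \lesssim \LIP(f,\dOm)$ holding $\mu$-a.e.\ on $\Om$.

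Finally I would integrate this uniform bound over the shell $\Om(\rho)$:
\[
  \int_{\Om(\rho)} (\Lip F(x))^p\,d\mu(x) \lesssim \LIP(f,\dOm)^p \int_{\Om(\rho)} d\mu = \mu(\Om(\rho)) \LIP(f,\dOm)^p,
\]
which is exactly the claimed inequality. There is no genuine obstacle here; the whole content of the lemma is that the Lipschitz hypothesis on $f$ upgrades the double integral appearing in \eqref{eq:pointwise-Lip-est} to a quantity comparable with $r_{\ell,m}\LIP(f,\dOm)$, eliminating the $1/r_{\ell,m}$ blow-up near the boundary that is otherwise the crux of the $L^p$-estimate for $\Lip F$.
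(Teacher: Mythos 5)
Your proposal is correct and is exactly the paper's argument: the paper's one-line proof likewise deduces the uniform pointwise bound $\Lip F \lesssim \LIP(f,\dOm)$ from \eqref{eq:pointwise-Lip-est}, using that $\diam U^*_{\ell,m} \lesssim r_{\ell,m}$, and then integrates over $\Om(\rho)$. Your write-up merely makes the cancellation of the $1/r_{\ell,m}$ factor explicit.
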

\begin{proof}
It follows from \eqref{eq:pointwise-Lip-est} that $\Lip F(x) \lesssim \LIP(f, \dOm)$ for all $x\in \Om$.
\end{proof}
So far, we have seen that if $f\in B^{1-\vartheta/p}_{p,p}(\dOm)$, the corresponding extension $F$ is in the Newtonian space $N^{1,p}(\Om)$. This extension is linear by construction.
We will now prove that the trace of $F$ returns the original function $f$, i.e., $T\circ E$ is the identity function on $B^{1-\vartheta/p}_{p,p}(\dOm)$ whenever $\vartheta = \theta$ or $p> \theta$. 
For the sake of clarity, let us explicitly point out that the following lemma shows that the $N^{1,p}$ extension has a well-defined trace even though no Poincar\'e inequality for $\Om$ is assumed.
\begin{lem}\label{lem:trace-Lebesgue}
  Assume that $\vartheta = \theta$. Let $f \in L^p_\loc(\dOm)$, $p\ge 1$. Then,
\[
 \lim_{r\to 0^+} \fint_{B(z,r)\cap\Omega}|Ef(x)-f(z)|^p\,d\mu(x)=0\quad \text{for $\Hcal$-a.e.\@ $z\in \partial\Omega$.}
\]
  In particular, the trace $T(Ef)(z)$ exists and equals $f(z)$ for $\Hcal$-a.e.~$z\in\dOm$.
\end{lem}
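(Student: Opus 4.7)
\medskip

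\noindent\textbf{Proof plan.} The strategy is to reduce the claim to the Lebesgue differentiation theorem on $(\dOm,\Hcal)$ (applicable since $\Hcal\lfloor_\dOm$ is doubling by~\eqref{eq:H-upper-massbound}) by combining two observations: the operator $E$ is affinely equivariant (it preserves additive constants), and Lemma~\ref{lem:layer-est-Fn-boundaryball} already quantifies how the $L^p$-mass of $Ef$ on a boundary shell is controlled by $\Hcal$-integrals of $f$ over a comparable boundary ball.

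The first step is to observe that since $\sum_{j,i}\vphi_{j,i} \equiv 1$ on $\Om$, we have $E(c)\equiv c$ for any constant $c$, so by linearity
\[
  Ef(x) - f(z) = E(f - f(z))(x), \qquad x \in \Om, z \in \dOm.
\]
Next, fix $z\in\dOm$ and $r>0$; since $z\in X\setminus\Om$, any $x\in B(z,r)\cap\Om$ satisfies $\dist(x,X\setminus\Om)\le\dd(x,z)<r$, whence $B(z,r)\cap\Om\subset B(z,r)\cap\Om(r)$ in the notation of Lemma~\ref{lem:layer-est-Fn-boundaryball}. Applying that lemma to $E(f-f(z))$, in place of $Ef$ (taking $\rho=r$), yields
\[
  \int_{B(z,r)\cap\Om}|Ef(x)-f(z)|^p\,d\mu(x) \lesssim r^{\vartheta}\int_{B(z,2^8 r)\cap\dOm}|f(y)-f(z)|^p\,d\Hcal(y).
\]

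Now I invoke the hypothesis $\vartheta=\theta$ together with the upper codimension bound~\eqref{eq:H-upper-massbound}, which gives $r^\theta/\mu(B(z,r)\cap\Om)\lesssim 1/\Hcal(B(z,r)\cap\dOm)$; combined with doubling of $\Hcal$ to pass from $B(z,r)\cap\dOm$ to $B(z,2^8 r)\cap\dOm$, this produces the pointwise estimate
\[
  \fint_{B(z,r)\cap\Om}|Ef(x)-f(z)|^p\,d\mu(x) \lesssim \fint_{B(z,2^8 r)\cap\dOm}|f(y)-f(z)|^p\,d\Hcal(y).
\]
To conclude, I apply the Lebesgue differentiation theorem on $(\dOm,\Hcal)$, available since $\Hcal$ is doubling and $|f-f(z)|^p\in L^1_\loc(\dOm,\Hcal)$ (as $f\in L^p_\loc(\dOm)$): for $\Hcal$-a.e.\ $z\in\dOm$ the right-hand side tends to $0$ as $r\to 0^+$. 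The identification $T(Ef)(z)=f(z)$ at such $z$ is then immediate from Jensen's inequality, which reduces \eqref{eq:defoftrace} to the $p$-mean statement just proved. The only subtle point is the correct bookkeeping of the codimension exponents when applying Lemma~\ref{lem:layer-est-Fn-boundaryball}: the factor $r^\vartheta$ there was produced using the lower bound~\eqref{eq:H-lower-massbound} with exponent $\vartheta$, and it cancels the upper bound exponent $\theta$ only because we have assumed $\vartheta=\theta$. This is why the hypothesis $\vartheta=\theta$ is essential and cannot be dispensed with at this stage.
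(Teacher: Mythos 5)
Your proof is correct and follows essentially the same route as the paper's: reduce to $Ef_z$ with $f_z = f - f(z)$ by linearity of $E$, apply Lemma~\ref{lem:layer-est-Fn-boundaryball} with $\rho = r$, convert $r^\vartheta/\mu(B(z,r)\cap\Om)$ into $1/\Hcal(B(z,r)\cap\dOm)$ via \eqref{eq:H-upper-massbound} with $\vartheta = \theta$, and finish with Lebesgue differentiation on $(\dOm, \Hcal)$. Your remark tracing the exponent $\vartheta$ back to \eqref{eq:H-lower-massbound} in the proof of Lemma~\ref{lem:layer-est-Fn-boundaryball} (via \eqref{eq:muBlm-HUlm}) and explaining why only $\vartheta = \theta$ makes the cancellation work is an accurate and welcome clarification that the paper leaves implicit.
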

\begin{proof}
Since $\Hcal\lfloor_{\dOm}$ is doubling, we know that
$\Hcal$-a.e.\@ $z\in\dOm$ is a Lebesgue point of $f$.
Let $z\in\dOm$ be such a point. Let $f_z(w) = f(w) - f(z)$, $w\in \dOm$. Note that $E f_z(x) = Ef(x) - f(z)$ for every $x\in\dOm$ by linearity of the extension operator $E$.
From Lemma~\ref{lem:layer-est-Fn-boundaryball} applied to $E f_z$ with $r=\rho$, we obtain that
\[
  \int_{B(z,r)\cap\Om} |Ef(x) - f(z)|^p \,d\mu(x) = \int_{B(z,2^8 r) \cap \Om} |Ef_z(x)|^p \,d\mu(x) \lesssim r^{\vartheta} \int_{B(z,2^8 r) \cap \dOm} |f_z(w)|^p \,d\Hcal(w).
\]
In particular,
\begin{align*}
  \fint_{B(z,r)\cap\Om} |Ef(x) - f(z)|^p \,d\mu(x) & \lesssim \frac{r^{\vartheta}}{\mu(B(z,r)\cap\Om)} \int_{B(z,r) \cap \dOm} |f_z(w)|^p \,d\Hcal(w)\\
  &\approx \fint_{B(z,r) \cap \dOm} |f(w) - f(z)|^p \,d\Hcal(w)
\end{align*}
by \eqref{eq:H-upper-massbound}. As $z\in\dOm$ is a Lebesgue point of $f$, letting $r\to 0$ concludes the proof.
\end{proof}
\begin{lem}\label{lem:trace-Besov}
 Let $f \in B^{\alpha}_{p,p}(\dOm)$ with $\alpha \in (0, 1)$ and $p\ge 1$ such that $\alpha p > \theta - \vartheta$. Then,
\[
 \lim_{r\to 0^+} \fint_{B(z,r)\cap\Omega}|Ef(x)-f(z)|^p\,d\mu(x)=0\quad \text{for $\Hcal$-a.e.\@ $z\in \partial\Omega$.}
\]
In particular, the trace $T(Ef)(z)$ exists and equals $f(z)$ for $\Hcal$-a.e.~$z\in\dOm$ if $f \in B^{1-\vartheta/p}_{p,p}(\dOm)$ and $0 < \vartheta \le \theta < p$.
\end{lem}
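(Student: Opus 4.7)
The plan is to mirror the scheme of Lemma~\ref{lem:trace-Lebesgue}---combining Lemma~\ref{lem:layer-est-Fn-boundaryball} applied to the shifted boundary datum with the upper codimension bound---but to compensate for the unfavorable factor $r^{\vartheta-\theta}$ (which now arises because we allow $\vartheta<\theta$) using the Besov smoothness of $f$ through Lemma~\ref{lem:BesovHaj}.

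Fix $z \in \dOm$. By linearity of $E$ and the partition-of-unity identity $\sum_{j,i}\vphi_{j,i} \equiv \chi_\Om$, the function $E(f - f(z))$ equals $Ef - f(z)$ on $\Om$. Since $z \in \dOm \subset X\setminus\Om$, we have $B(z,r) \cap \Om \subset \Om(r)$ for every $r>0$. Applying Lemma~\ref{lem:layer-est-Fn-boundaryball} with $\rho=r$ to the boundary datum $f - f(z)$, then dividing by $\mu(B(z,r)\cap\Om)$, using \eqref{eq:H-upper-massbound} in the form $\mu(B(z,r)\cap\Om) \gtrsim r^\theta \Hcal(B(z,r)\cap\dOm)$, and invoking the doubling property of $\Hcal\lfloor_{\dOm}$ to absorb the $2^8$-dilation, I obtain
\[
  \fint_{B(z,r)\cap\Om} |Ef(x)-f(z)|^p\,d\mu(x) \lesssim r^{\vartheta-\theta} \fint_{B(z,2^8 r)\cap\dOm} |f(w)-f(z)|^p\,d\Hcal(w).
\]

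Next, Lemma~\ref{lem:BesovHaj} (applied with $q=p$) supplies $g \in L^p(\dOm)$ and an $\Hcal$-null set $E_0 \subset \dOm$ such that $|f(w)-f(z)| \le \dd(w,z)^\alpha(g(w)+g(z))$ for all $w,z \in \dOm \setminus E_0$. Since $g^p \in L^1(\dOm)$ and $\Hcal\lfloor_{\dOm}$ is doubling, the Lebesgue differentiation theorem applies; thus for $\Hcal$-a.e.\ $z \in \dOm\setminus E_0$ both $g(z) < \infty$ and $\fint_{B(z,2^8 r)\cap\dOm} g^p\,d\Hcal \to g(z)^p$ as $r\to 0$. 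For such $z$, bounding $\dd(w,z)\le 2^8 r$ yields
\[
  \fint_{B(z,2^8 r)\cap\dOm} |f(w)-f(z)|^p\,d\Hcal(w) \lesssim r^{\alpha p}\bigl(g(z)^p + o(1)\bigr),
\]
and combining this with the previous display produces a bound of the form $C(z)\,r^{\alpha p - (\theta-\vartheta)}$, which tends to $0$ because $\alpha p > \theta - \vartheta$. The \emph{in particular} clause is immediate: when $\alpha = 1-\vartheta/p$, one has $\alpha p - (\theta-\vartheta) = p - \theta > 0$. Finally, Jensen's inequality upgrades the $L^p$-mean convergence to the $L^1$-mean convergence demanded by \eqref{eq:defoftrace}, so the trace $T(Ef)$ exists and equals $f$ $\Hcal$-a.e.\ on $\dOm$.

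I do not foresee a real obstacle: the two essential inputs are already in hand and the remainder is a dimensional bookkeeping. The only mild subtlety is ensuring that the exceptional sets from Lemma~\ref{lem:BesovHaj}, from finiteness of $g$, and from the Lebesgue point property of $g^p$ combine into a single $\Hcal$-null subset of $\dOm$ outside which the pointwise conclusion holds; this is a standard countable union.
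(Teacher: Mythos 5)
Your proposal is correct and follows essentially the same route as the paper's own proof: estimate via Lemma~\ref{lem:layer-est-Fn-boundaryball} applied to $f - f(z)$, divide by $\mu(B(z,r)\cap\Om)$, use the upper codimension bound and doubling of $\Hcal$, then feed the Haj\l asz-type inequality from Lemma~\ref{lem:BesovHaj}. The only cosmetic difference is that the paper controls $\fint_{B(z,r)\cap\dOm} g^p\,d\Hcal$ by the Hardy--Littlewood maximal function $M g^p(z)$ (finite $\Hcal$-a.e.\ by the weak $(1,1)$ bound), whereas you invoke the Lebesgue differentiation theorem; both rest on the same weak-type estimate and give the needed a.e.\ finiteness.
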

\begin{proof}
By Lemma~\ref{lem:BesovHaj}, there is $g \in L^p(\dOm)$ and $E \subset \dOm$ with $\Hcal(E) = 0$ such that $|f(z) - f(w)| \le \dd(z,w)^{\alpha} (g(z) + g(w))$ for all $z,w \in \dOm \setminus E$. As $\Hcal\lfloor_{\dOm}$ is doubling, the Hardy--Littlewood maximal operator $M: L^1(\dOm) \to \wk L^1(\dOm)$ is bounded by~\cite[Theorem~III.2.1]{CoiWei}.

Pick $z \in \dOm \setminus E$ such that $g(z)<\infty$ and $M g^p (z) < \infty$, which is satisfied for $\Hcal$-a.e.\@ $z\in\dOm$ as $Mg^p \in \wk L^1(\dOm)$. Then, following the steps of proof of Lemma~\ref{lem:trace-Lebesgue} to its ultimate displayed formula, we arrive at
\begin{align*}
  \fint_{B(z,r)\cap\Om} |Ef(x) - f(z)|^p \,d\mu(x) & \lesssim r^{\vartheta - \theta} \fint_{B(z,r) \cap \dOm} |f(w) - f(z)|^p \,d\Hcal(w) \\
  & \lesssim r^{\vartheta - \theta} \fint_{B(z,r) \cap \dOm} \dd(z,w)^{\alpha p} (g(z)^p + g(w)^p)\,d\Hcal(w) \\
  & \le r^{\vartheta - \theta + \alpha p} (g(z)^p + Mg^p(z)),
\end{align*}
which approaches $0$ as $r\to 0$.
\end{proof}
\section{\texorpdfstring{Extension theorem for $L^p$ data on a boundary of codimension $p$}{Extension theorem for L\^{}p data on a boundary of codimension p}}
\label{sec:lp-extension}
In this section, we assume that $\dOm$ is Ahlfors codimension-$p$ regular, i.e., both \eqref{eq:H-lower-massbound} and \eqref{eq:H-upper-massbound} are satisfied and $\vartheta = \theta = p \ge 1$.
Given an $L^p$-function on $\dOm$, we will construct its $N^{1,p}$ extension in $\Om$ using the linear extension operator for Besov boundary data. However, the mapping 
$f\in L^p(\dOm) \mapsto \Ext f\in N^{1,p}(\Om)$ will be nonlinear, which is to be expected in view of \cite{Pee,PelWoj}.

Instead of constructing the extension using a Whitney decomposition of $\Omega$, we will set up a 
sequence of layers inside $\Om$ whose widths depend not only on their distance from $X\setminus \Om$,
but also on the function itself. Using a partition of unity subordinate to these layers, we will glue together $N^{1,p}$ extensions of Lipschitz approximations of the boundary data. 
The core idea of such a construction can be traced back to Gagliardo~\cite{Gag} who discussed
extending $L^1(\Rbb^{n-1})$ functions to $W^{1,1}(\Rbb^n_+)$, and the construction was successfully generalized later in \cite{MalShaSni,Vit}. 

We start by approximating $f$ in $L^p(\dOm)$ by a sequence of Lipschitz functions $\{f_k\}_{k=1}^\infty$
such that $\|f_{k+1} - f_k\|_{L^p(\dOm)} \le 2^{2-k} \|f\|_{L^p(\dOm)}$.
Note that this rate of convergence of $f_k$ to $f$ also ensures that $f_k\to f$ pointwise
$\Hcal$-a.e.\@ in $\dOm$. For technical reasons, we choose
$f_1 \equiv 0$.

Next, we choose a decreasing sequence of real numbers $\{\rho_k\}_{k=1}^\infty$ such that:
\begin{itemize}
  \setlength{\parskip}{0pt}
  \setlength{\itemsep}{2pt plus 1pt minus 2pt}
	\item $\rho_1 \le \diam(\Omega)/2$;
  \item $0<\rho_{k+1} \le \rho_k /2$;
  \item $\sum_k \rho_k \LIP(f_{k+1}, \dOm) \le C \|f\|_{L^p(\dOm)}$.
\end{itemize}
These will now be used to define layers in $\Omega$. Let
\[
  \psi_k (x) = \max\biggl\{ 0, \min\biggl\{1, \frac{\rho_k - \dist(x, X\setminus \Om)}{\rho_k - \rho_{k+1}} \biggr\} \biggr\},
  \quad x \in \Om.
\]
Recall that $\Om(\rho)$ denotes the shell $\{x\in \Om: \dist(x, X\setminus \Om) < \rho\}$. The sequence of functions $\{\psi_{k-1} - \psi_k: k=2,3,\ldots\}$ serves as a partition of unity in 
$\Om(\rho_2)$ subordinate to the system of layers given by $\{ \Om(\rho_{k-1}) \setminus \Om(\rho_{k+1}): k=2,3,\ldots\}$.

Since Lipschitz continuous functions lie in the Besov class $B_{p,p}^0$, we can apply the linear extension 
operator $E: B_{p,p}^0(\dOm) \to N^{1,p}(\Om)$, whose properties were established in
Section~\ref{sec:Besov-extension}, to define the extension of $f \in L^p(\dOm)$ by extending its Lipschitz 
approximations in layers, i.e.,
\begin{equation}
  F(x) = \Ext f(x) := \sum_{k=2}^\infty \bigl(\psi_{k-1}(x) - \psi_{k}(x)\bigr) Ef_k(x) = \sum_{k=1}^\infty \psi_k(x) \bigl(Ef_{k+1}(x) - Ef_{k}(x)\bigr), \quad x\in\Om.
  \label{eq:Lp-ext}
\end{equation}

The following result shows that the above extension is in the class $N^{1,p}(\Om)$ with appropriate
norm bounds since $\Lip F$ is an upper gradient of $F$, which is locally Lipschitz by construction.
\begin{pro}\label{pro:trace-Lp}
Given $f \in L^p(\dOm)$, the extension defined by \eqref{eq:Lp-ext} satisfies
\begin{align*}
  \| F\|_{L^p(\Omega)} & \lesssim \diam(\Omega)^p \|f\|_{L^p(\dOm)} \quad\text{and}\\
  \|\Lip F\|_{L^p(\Omega)} & \lesssim (1+\Hcal(\dOm)^{1/p}) \|f\|_{L^p(\dOm)}\,.
\end{align*}
\end{pro}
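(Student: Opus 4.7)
\medskip

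The plan is to exploit linearity of the Besov-data extension $E$ to rewrite
\[
  F = \sum_{k=1}^{\infty} \psi_k\,\bigl(Ef_{k+1} - Ef_{k}\bigr) = \sum_{k=1}^\infty \psi_k \,E(f_{k+1}-f_k),
\]
noting that for any compact subset of $\Om$ only finitely many summands are nonzero (since $\psi_k$ is supported in $\Om(\rho_k)$ and $\rho_k \to 0$), so $F$ is locally Lipschitz and $\Lip F$ is a genuine upper gradient. The telescoping identity $f_k = \sum_{j=1}^{k-1}(f_{j+1}-f_j)$ (using $f_1\equiv 0$) combined with the geometric decay $\|f_{j+1}-f_j\|_{L^p(\dOm)}\le 2^{2-j}\|f\|_{L^p(\dOm)}$ yields a uniform bound $\|f_k\|_{L^p(\dOm)}\le C\|f\|_{L^p(\dOm)}$.

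For the $L^p$-bound on $F$, I would use that $(\psi_{k-1}-\psi_k)Ef_k$ is supported in $\Om(\rho_{k-1})$, so by Corollary~\ref{cor:layer-est-Fn} with $\vartheta=p$,
\[
  \|(\psi_{k-1}-\psi_k)Ef_k\|_{L^p(\Om)}\lesssim \rho_{k-1}\|f_k\|_{L^p(\dOm)}\lesssim \rho_{k-1}\|f\|_{L^p(\dOm)}.
\]
Summing using $\rho_{k+1}\le\rho_k/2$ gives $\sum_k\rho_{k-1}\lesssim \rho_1\lesssim\diam(\Om)$, which produces the required $\|F\|_{L^p(\Om)}\lesssim \diam(\Om)\,\|f\|_{L^p(\dOm)}$ bound (and, a fortiori, $\diam(\Om)^p$ if $\diam(\Om)\ge 1$; otherwise one rescales or estimates directly).

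For $\Lip F$, I would apply the Leibniz-type inequality $\Lip(\psi_k E(f_{k+1}-f_k)) \le (\Lip\psi_k)\,|E(f_{k+1}-f_k)| + \psi_k\,\Lip E(f_{k+1}-f_k)$, decomposing $\Lip F\le I_1+I_2$. The transition regions $\{\rho_{k+1}<\dist(\cdot,X\setminus\Om)<\rho_k\}$ where $\Lip \psi_k\lesssim 1/\rho_k$ is nonzero are pairwise disjoint, so
\[
  \|I_1\|_{L^p(\Om)}^p = \sum_k \int_{\Om(\rho_k)\setminus\Om(\rho_{k+1})} \tfrac{|E(f_{k+1}-f_k)|^p}{\rho_k^p}\,d\mu
  \lesssim \sum_k \|f_{k+1}-f_k\|_{L^p(\dOm)}^p \lesssim \|f\|_{L^p(\dOm)}^p
\]
by Corollary~\ref{cor:layer-est-Fn} and the geometric decay. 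For $I_2$, Lemma~\ref{lem:Lip-layer-est-grad} applied to the Lipschitz function $f_{k+1}-f_k$, combined with Lemma~\ref{lem:shell-measure}\ref{it:shellmeas1} (which gives $\mu(\Om(\rho_k))\lesssim \Hcal(\dOm)\,\rho_k^p$), yields
\[
  \|\psi_k \Lip E(f_{k+1}-f_k)\|_{L^p(\Om)} \lesssim \Hcal(\dOm)^{1/p}\,\rho_k\,\LIP(f_{k+1}-f_k,\dOm).
\]
Summing and using $\LIP(f_{k+1}-f_k)\le\LIP(f_{k+1})+\LIP(f_k)$ together with the reindexing permitted by $\rho_k\le 2\rho_{k-1}$, the hypothesis $\sum_k \rho_k\LIP(f_{k+1},\dOm)\lesssim\|f\|_{L^p(\dOm)}$ gives $\|I_2\|_{L^p(\Om)}\lesssim \Hcal(\dOm)^{1/p}\|f\|_{L^p(\dOm)}$.

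The main obstacle is the bookkeeping: keeping the supports of $\psi_k$ and of $\Lip\psi_k$ straight so that $I_1$ can be treated via the (almost-)disjointness of the transition shells rather than by a crude triangle-inequality blowup, and making sure that the cross term arising from $\LIP(f_k)$ can be reabsorbed into the summable quantity $\sum_k\rho_k\LIP(f_{k+1},\dOm)$ via the geometric comparison of consecutive $\rho_k$'s. Once these two are handled, the rest is a direct invocation of the layer estimates from Section~\ref{sec:Besov-extension}.
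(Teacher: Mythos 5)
Your proof is correct and follows essentially the same strategy as the paper's: decompose $F$ via the partition of unity, control $\|F\|_{L^p}$ by Corollary~\ref{cor:layer-est-Fn}, and bound $\Lip F$ by the product rule, handling the $|E(f_{k+1}-f_k)|\,\Lip\psi_k$ terms with the layer estimate and the $\psi_k\,\Lip E(f_{k+1}-f_k)$ terms with Lemma~\ref{lem:Lip-layer-est-grad} combined with Lemma~\ref{lem:shell-measure}\,\ref{it:shellmeas1}. Two minor, equally valid variations from the paper's write-up: for the $L^p$ bound you keep the non-telescoped representation $F=\sum_{k\ge2}(\psi_{k-1}-\psi_k)Ef_k$ together with the uniform bound $\|f_k\|_{L^p(\dOm)}\lesssim\|f\|_{L^p(\dOm)}$, whereas the paper telescopes first and uses the geometric decay of $\|f_{k+1}-f_k\|_{L^p(\dOm)}$ directly; and for the $\Lip\psi_k$ contribution you sum $p$-th powers over the pairwise disjoint transition shells $\{\rho_{k+1}<\dist(\cdot,X\setminus\Om)<\rho_k\}$, while the paper simply applies the triangle inequality in $L^p$ together with $\rho_k/(\rho_k-\rho_{k+1})\le 2$ and the decay of $\|f_{k+1}-f_k\|_{L^p(\dOm)}$. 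You are also right to flag the exponent: applying Corollary~\ref{cor:layer-est-Fn} with $\vartheta=p$ and taking $p$-th roots gives $\|E(f_{k+1}-f_k)\|_{L^p(\Om(\rho_k))}\lesssim\rho_k\,\|f_{k+1}-f_k\|_{L^p(\dOm)}$, hence $\|F\|_{L^p(\Om)}\lesssim\diam(\Om)\,\|f\|_{L^p(\dOm)}$, in line with Lemma~\ref{lem:Lp-est_Whitney}'s $\diam(\Om)^{\vartheta/p}$; the $\rho_k^p$ in the paper's chain and the $\diam(\Om)^p$ in the statement appear to be an exponent slip, and your sharper bound is what the argument actually produces.
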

\begin{proof}
Corollary~\ref{cor:layer-est-Fn} allows us to obtain the desired $L^p$ estimate for $F$. Since the
extension operator for Besov boundary data $E$ is linear, we have that $Ef_{k+1}-Ef_{k}=E(f_{k+1}-f_{k})$. Therefore,
\begin{align*}
  \| F\|_{L^p(\Omega)} &\le \sum_{k=1}^\infty \| \psi_k E(f_{k+1}-f_{k})\|_{L^p(\Omega)}
   \le \sum_{k=1}^\infty \| E(f_{k+1}-f_{k})\|_{L^p(\Omega(\rho_{k}))} \\
   & \lesssim \sum_{k=1}^\infty \rho_{k}^p \|f_{k+1}-f_{k}\|_{L^p(\dOm)}
  \lesssim \rho_1^p \|f\|_{L^p(\dOm)}
    \lesssim \diam(\Om)^p  \|f\|_{L^p(\dOm)}\,.
\end{align*}
In order to obtain the $L^p$ estimate  for $\Lip F$, we first apply the product rule for locally Lipschitz functions, which yields that
\begin{align*}
  \Lip F & = \sum_{k=1}^\infty\bigl( |E(f_{k+1}-f_{k})|\Lip\psi_k + \psi_k \Lip(E(f_{k+1}-f_{k})) \bigr)\\
  & \le \sum_{k=1}^\infty \biggl(\frac{|E(f_{k+1}-f_{k})| \chi_{\Omega(\rho_k)}}{\rho_k-\rho_{k+1}}
  + \chi_{\Omega(\rho_k)} \Lip(E(f_{k+1}-f_{k}))\biggr)
\end{align*}
Thus,
\[
  \|\Lip F\|_{L^p(\Omega(\rho_1))} \le \sum_{k=1}^{\infty} \biggl(\biggl\| \frac{E(f_{k+1}-f_{k})}{\rho_k-\rho_{k+1}} \biggr\|_{L^p(\Omega(\rho_k))} + \bigl\| \Lip E(f_{k+1}-f_{k}) \bigr\|_{L^p(\Omega(\rho_k))}\biggr)\,.
\]
It follows from Corollary~\ref{cor:layer-est-Fn} that
\begin{align*}
 \sum_{k=1}^{\infty} \biggl\| \frac{E(f_{k+1}-f_{k})}{\rho_k-\rho_{k+1}} \biggr\|_{L^p(\Omega(\rho_k))} & \lesssim \sum_{k=1}^{\infty}  \frac{\rho_k}{\rho_k-\rho_{k+1}} \|f_{k+1}-f_{k}\|_{L^p(\dOm)} \\
  &  \lesssim  \sum_{k=1}^{\infty}  \|f_{k+1}-f_{k}\|_{L^p(\dOm)} \lesssim \|f\|_{L^p(\dOm)} \,.
\end{align*}
Next, we apply Lemma~\ref{lem:Lip-layer-est-grad} to see that
\begin{align*}
  \sum_{k=1}^\infty \bigl\| \Lip E(f_{k+1}-f_{k}) \bigr\|_{L^p(\Omega(\rho_k))} & \lesssim  \sum_{k=1}^\infty \rho_k \Hcal(\dOm)^{1/p} \LIP (f_{k+1}-f_{k},\dOm) \\
  & \lesssim \Hcal(\dOm)^{1/p}  \sum_{k=1}^\infty \rho_k \bigl(\LIP (f_{k+1},\dOm)+\LIP(f_{k},\dOm)\bigr) \\
  &\lesssim \Hcal(\dOm)^{1/p}  \|f\|_{L^p(\dOm)},
\end{align*}
where we used the defining properties of $\{\rho_k\}_{k=1}^\infty$ to obtain the ultimate inequality. 
Altogether, we have shown that $\|\Lip F\|_{L^p(\Omega(\rho_1))} \lesssim (1+ \Hcal(\dOm)^{1/p}) \|f\|_{L^p(\dOm)}$. 
\end{proof}
Finally, we will show that the trace of the
extended function yields the original function back.
\begin{pro}
Let $F \in N^{1,p}(\Om)$ be the extension of $f\in L^p(\dOm)$ as constructed in \eqref{eq:Lp-ext}. Then,
\[
  \lim_{r\to 0} \fint_{B(z,r)\cap \Om} |F - f(z)| \,d\mu = 0
\]
for $\Hcal$-a.e.\@ $z\in \dOm$.
\end{pro}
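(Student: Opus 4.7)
My plan is to exploit the telescoping structure of the extension~\eqref{eq:Lp-ext} together with the layer geometry near $\dOm$. The key observation is that for any $x \in \Om$ with $\dist(x, X\setminus\Om) < \rho_{K+1}$ we have $\psi_k(x) = 1$ for all $k \le K$, so (using $f_1 \equiv 0$ and the linearity of $E$) the defining series collapses to
\[
  F(x) = Ef_{K+1}(x) + \sum_{k=K+1}^\infty \psi_k(x)\,E(f_{k+1}-f_k)(x).
\]
This representation will hold uniformly on $B(z,r)\cap\Om$ as soon as $r \le \rho_{K+1}$, and it is the decomposition I would use to split the average $\fint_{B(z,r)\cap\Om}|F-f(z)|\,d\mu$ into a ``Lipschitz main part'' (involving only $Ef_{K+1}$) plus a tail.

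First, I would single out a full-$\Hcal$-measure set of ``good'' points $z \in \dOm$ at which: (i) $f_k(z)\to f(z)$; and (ii) $z$ is a Lebesgue point of $h_K \coloneq \sum_{k>K}|f_{k+1}-f_k|$ for every $K \in \Nbb$. The prescribed rate $\|f_{k+1}-f_k\|_{L^p(\dOm)} \le 2^{2-k}\|f\|_{L^p(\dOm)}$ ensures $h_K \in L^p(\dOm)$ and $\|h_K\|_{L^p} \to 0$, so also $h_K(z) \to 0$ at $\Hcal$-a.e.~$z$. Fixing such $z$ and an arbitrary $K \in \Nbb$, for $r < \rho_{K+1}$ the triangle inequality would yield
\[
  \fint_{B(z,r)\cap\Om}|F-f(z)|\,d\mu \le \fint_{B(z,r)\cap\Om}|Ef_{K+1}-f_{K+1}(z)|\,d\mu + |f_{K+1}(z)-f(z)| + \sum_{k=K+1}^\infty \fint_{B(z,r)\cap\Om}|E(f_{k+1}-f_k)|\,d\mu.
\]

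Next, I would control each of the three terms. The first tends to $0$ as $r\to 0^+$ by Lemma~\ref{lem:trace-Lebesgue} applied to the Lipschitz (hence $L^p_\loc$) function $f_{K+1}$. The second is independent of $r$ and vanishes as $K\to\infty$ by (i). For the tail, I would invoke Lemma~\ref{lem:layer-est-Fn-boundaryball} with exponent $p=1$ and $\rho = r$ (noting $B(z,r)\cap\Om \subset \Om(r)$), then use the codimension-$p$ upper bound~\eqref{eq:H-upper-massbound} and doubling of $\Hcal$ to convert the right-hand side into a boundary average, obtaining for each individual $k$
\[
  \fint_{B(z,r)\cap\Om}|E(f_{k+1}-f_k)|\,d\mu \lesssim \fint_{B(z,2^8 r)\cap\dOm}|f_{k+1}-f_k|\,d\Hcal.
\]
Summation over $k \ge K+1$ (exchanged with $\fint$ by Tonelli's theorem, since all integrands are non-negative) would then dominate the tail by $C\fint_{B(z,2^8 r)\cap\dOm}h_K\,d\Hcal$, which converges to $Ch_K(z)$ as $r\to 0$ by Lebesgue differentiation at a good point.

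Finally, taking $\limsup_{r\to 0^+}$ for fixed $K$ leaves $|f_{K+1}(z)-f(z)| + Ch_K(z)$ on the right, and sending $K\to\infty$ kills both summands at $\Hcal$-a.e.\ $z$. The subtle point of the argument is the coupling between the cutoff index $K$ and the radius $r$: the telescoping collapse requires $r \le \rho_{K+1}$, which would superficially force $K \to \infty$ as $r\to 0^+$. I would resolve this purely through the order of the limits, letting $r\to 0^+$ first with $K$ fixed (arbitrarily large) and only afterwards letting $K\to\infty$; no tight quantitative coupling between $K$ and $r$ is needed.
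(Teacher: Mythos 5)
Your proof is correct and takes a cleaner route than the paper's. Both arguments implicitly use that $F$ collapses to $Ef_{K+1}$ plus a tail on $\Om(\rho_{K+1})$; the paper encodes this through an auxiliary sequence $F_n$ with $F_j = Ef_j$ on $\Om(\rho_j)$, whereas you telescope the defining series directly. The real difference lies in how the tail is controlled. The paper bounds $\|F-F_j\|_{L^p(B(z,r)\cap\Om)}$ via the Lipschitz constants of $f_k-f_j$ together with the engineered properties of $\{\rho_k\}$, in particular $\sum_k \rho_k\LIP(f_{k+1},\dOm) \lesssim \|f\|_{L^p(\dOm)}$, and then converts back to a boundary average through the codimension relation. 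You instead exploit the prescribed $L^p$-rate $\|f_{k+1}-f_k\|_{L^p} \le 2^{2-k}\|f\|_{L^p}$ to place $h_K = \sum_{k>K}|f_{k+1}-f_k|$ in $L^p(\dOm)$, apply Lemma~\ref{lem:layer-est-Fn-boundaryball} with exponent $1$ (which its proof accommodates, since the exponent $\vartheta$ on $\min\{r,\rho\}$ comes from the codimension, not from the integrability power) to each increment, sum via Tonelli, and invoke Lebesgue differentiation for $h_K$. This bypasses the Lipschitz bookkeeping attached to $\{\rho_k\}$ entirely—only $\rho_k \searrow 0$ enters—and makes the order of limits ($r\to 0^+$ at fixed $K$, then $K\to\infty$) transparent. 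The one condition you leave implicit in the ``good'' set is that Lemma~\ref{lem:trace-Lebesgue} should apply to each $f_{K+1}$, but since these are Lipschitz and hence continuous, that convergence holds at every $z\in\dOm$, so nothing further is required.
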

\begin{proof}
Let $E_0$ be the collection of all $z\in\dOm$ for which $\lim_kf_k(z)=f(z)$, and for $k\in\Nbb$ let $E_k$ be the collection of all $z\in\dOm$ for which $TEf_k(z)=f_k(z)$ exists.
Lemma~\ref{lem:trace-Lebesgue} yields that $\Hcal(\dOm \setminus \bigcap_{k=0}^\infty E_k) = 0$.
We define also an auxiliary sequence $\{F_n\}_{n=1}^\infty$ of functions approximating $F$ by
\[
  F_n = \sum_{k=2}^n (\psi_{k-1}-\psi_k) E{f_k} + \sum_{k=n+1}^\infty (\psi_{k-1}-\psi_k) E{f_n}, \quad n\in\Nbb.
\]
Since $F_n = Ef_n$ in $\Om(\rho_n)$, the trace of $F_n$ exists on $\dOm$ and coincides with $T Ef_n = f_n$.

Fix a point $z \in \bigcap_{k=0}^\infty E_k$ and let $\eps>0$.
Then, we can find $j \in \Nbb$ such that $|f_k(z) - f(z)| < \eps$ for every $k\ge j$.
Next, we choose $k_0>j$ such that $R := \rho_{k_0}$ satisfies:
\begin{itemize}
  \setlength{\parskip}{0pt}
  \setlength{\itemsep}{2pt plus 1pt minus 2pt}
	\item $R \LIP(f_j, \dOm) < \eps$;
  \item $\fint_{B(z,r)\cap\Om} |F_j - f_j(z)|\,d\mu < \eps$ for every $r < R$;
  \item $\sum_{k=k_0}^\infty \rho_k \LIP(f_{k+1}, \dOm) < \eps$.
\end{itemize}
For every $r \in (0, \rho_{k_0+1}) \subset (0, R/2)$, we can then estimate 
\begin{align}
  \notag
  \fint_{B(z,r)\cap\Om} |F-f(z)|\,d\mu \le \fint_{B(z,r)\cap\Om} |F-F_j|\,d\mu + \fint_{B(z,r)\cap\Om} |F_j - f_j(z)|\,d\mu + |f_j(z) - f(z)|& \\
  \label{eq:Lp-trace}
  \le \fint_{B(z,r)\cap\Om} |F-F_j|\,d\mu + 2\eps \le \frac{\|F-F_j\|_{L^p(B(z,r)\cap \Om)}}{\mu(B(z,r)\cap\Om)^{1/p}} &+ 2\eps.
\end{align}
For such $r$, choose
$k_r> k_0$ such that $\rho_{k_r+1} \le r < \rho_{k_r}$. Then,
\begin{align}
  \notag
  \|F&-F_j\|_{L^p(B(z,r)\cap \Om)} \le \sum_{k=k_r}^\infty \bigl\|(\psi_{k-1} - \psi_k) |E(f_k - f_j)|\bigr\|_{L^p(B(z,r)\cap \Om)} \\
  & \le \sum_{k=k_r}^\infty \bigl\||E(f_k - f_j)|\bigr\|_{L^p(B(z,r)\cap \Om(\rho_{k-1}))}
 \label{eq:Lp-trace-A}
 \lesssim \sum_{k=k_r}^\infty \min\{r,\rho_{k-1}\} \|f_k - f_j\|_{L^p(B(z,2^8 r) \cap \dOm)}
\end{align}
by Lemma~\ref{lem:layer-est-Fn-boundaryball}. 
For $k=k_r$ in the last inequality above, we used the fact that we have chosen $r<\rho_{k_r}$ and hence
$B(z,r)\cap\Om(\rho_{k_r-1})=B(z,r)\cap\Om(r)$.

Let us, for the sake of brevity, write $U_r = B(z, 2^8 r) \cap \dOm$. As $f_k - f_j$ is Lipschitz continuous and due to the choice of $j$, we have for $k\ge j$ that
\begin{align}
  \notag
  \|f_k - f_j\|_{L^p(U_r)}
  & \le \bigl\|f_k - f_j - (f_k(z) - f_j(z))\bigr\|_{L^p(U_r)} + \bigl|f_k(z) - f_j(z)\bigr| \Hcal(U_r)^{1/p} \\
 \label{eq:Lp-trace-B}
  & \lesssim r \Hcal(U_r)^{1/p} \LIP(f_k - f_j, U_r)  + 2 \eps \Hcal(U_r)^{1/p}.
\end{align}
Observe that $r^p \Hcal(U_r) \lesssim \mu(B(z, r) \cap \Om)$ by \eqref{eq:H-upper-massbound} and the doubling condition for $\mu\lfloor_\Om$. 
Note that $\sum_{k=k_r}^\infty\rho_{k-1}\approx \rho_{k_r-1} \le R$.
Combining this with~\eqref{eq:Lp-trace-A} and \eqref{eq:Lp-trace-B} gives us that
\begin{align*}
  \|F-F_j\|_{L^p(B(z,r)\cap \Om)} & \lesssim \sum_{k=k_r}^\infty \rho_{k-1} \mu(B(z,r)\cap \Om)^{1/p} \bigl(\LIP(f_k, \dOm) + \LIP(f_j, \dOm)\bigr) \\
 & \quad + 2 \eps \mu(B(z,r)\cap \Om)^{1/p} \sum_{k=k_r}^\infty \frac{\min\{r, \rho_{k-1}\}}{r} \\
 &\lesssim \mu(B(z,r)\cap \Om)^{1/p} \biggl(\sum_{k=k_0}^\infty \bigl(\rho_k \LIP(f_{k+1}, \dOm)\bigr) +  R \LIP(f_{j}, \dOm) + \eps\biggr)\\
 & \lesssim \mu(B(z,r)\cap \Om)^{1/p} \eps.
\end{align*}
Plugging this estimate into~\eqref{eq:Lp-trace} completes the proof.
\end{proof}
\section{Counterexamples for the trace theorems}
\label{sec:examples}
In Sections~\ref{sec:tracesGen}--\ref{sec:lp-extension}, we witnessed a certain mismatch of the target space of the trace operator and the domain of the extension operators for domains whose boundary has unequal lower and upper codimension bounds, i.e., $\vartheta<\theta$ in \eqref{eq:H-lower-massbound} and \eqref{eq:H-upper-massbound}. We have seen that the trace operator maps $P^{1,p}_p(\Om)$ into $B^{1-\theta/p}_{p, \infty}(\dOm)$ but we were able to extend functions of $B^{1-\vartheta/p}_{p, p}(\dOm) \subsetneq B^{1-\theta/p}_{p, p}(\dOm)$ to a Sobolev-type function in $\Om$ with $p$-integrable gradient. 

The following example shows that the trace operator, in general, does not map into a Besov space of smoothness any higher than $1-\theta/p$ with summability of the $p$th power. 
\begin{exa}
\label{exa:T-not-surjective}
Let $X=\Rbb^2$ be endowed with the Euclidean distance and $2$-dimensional Lebesgue measure. Let $\Om = \{(x_1, x_2) \in X : 0<x_2<x_1^2 < x_1\}$. Since the boundary consists of two line segments connected by a parabola, i.e., $\dOm = \{(x_1,x_2)\in [0,1]^2: x_2=x_1^2\ \text{or}\ x_1=1\ \text{or}\ x_2=0\}$, it is natural to choose $\Hcal$ to be the $1$-dimensional Hausdorff measure on $\dOm$.

Note that for every $x \in \Om$, there is $R(x)>0$ such that $\mu(B(x,r) \cap \Om) \approx r^2$ whenever $r \in (0, R(z))$. However, $\mu(B(0, r) \cap \Om) \approx r^3$ for every $r \in (0, 2\diam \Om)$. Therefore,
\[
  \frac{\mu(B(z,r) \cap \Om)}{r} \lesssim \Hcal(B(z,r) \cap \dOm) \lesssim \frac{\mu(B(z,r) \cap \Om)}{r^2}, \quad z\in \dOm, r \in (0, 2\diam \Om).
\]
Given $p\in (2, 3)$, let $\alpha = \frac{3}{p}-1 \in (0,\frac12)$ and define a function
\[
  u(x_1, x_2) = \frac{1}{x_1^\alpha \log (\frac{e}{x_1})}, \quad (x_1, x_2) \in \Om.
\]
Since $u$ is really a function of one variable only, it is easy to verify (e.g., by the mean value theorem) that $g(x_1, x_2) \coloneq C_\alpha \frac{u(x_1, x_2)}{x_1}$ is a Haj\l asz gradient of $u$ in $\Om$. Then,
\[
  \int_\Om g(x)^p\,dx \approx \int_0^1 \int_0^{x_1^2} \frac{1}{x_1^{(\alpha + 1)p} \log(\frac{e}{x_1})^p}\,dx_2\, dx_1 \approx \int_0^1 \frac{1}{x_1 \log(\frac{e}{x_1})^p}\,dx_1 < \infty.
\]
Hence, $u \in M^{1,p}(\Om) \subset P^{1,p}_p(\Om)$. Theorem~\ref{thm:TraceOpB} yields that $Tu \in B^{1-2/p}_{p,\infty}(\dOm)$. If we restrict our attention to the horizontal line segment $\dOm$, we see that
\[
  \int_{[0,1]\times\{0\}} (Tu)^q \,d\Hcal = \int_0^1 \frac{dt}{t^{\alpha q} \log(\frac{e}{t})^q},
\]
which is convergent if and only if $q \le 1/\alpha$. In particular, $Tu \notin L^q(\dOm)$ for any $q>1/\alpha$. From Remark~\ref{rem:sharp-besov-emb}, we can deduce that $B^{1-2/p + \eps}_{p, \infty}(\dOm) \subset \wk L^{1/(\alpha - \eps)}(\dOm)$ for any $\eps \ge 0$. Consequently, $Tu \notin B^{1-2/p + \eps}_{p, \infty}(\dOm)$ for any $\eps > 0$ since otherwise we would have $Tu \in L^q(\dOm)$ for some $q>1/\alpha$.
\end{exa}
Note that $Tu$ in the example above actually lies in $B^{1-\theta/p}_{p,p}(\dOm) \subsetneq B^{1-\theta/p}_{p,\infty}(\dOm)$. In Theorem~\ref{thm:TraceJohn}, the trace operator was shown to map $M^{1,p} (\Om)$ into $B^{1-\theta/p}_{p,p}(\dOm)$ rather than $B^{1-\theta/p}_{p,\infty}(\dOm)$ provided that $\Om$ is a John domain. However, it is currently unclear whether the target space is $B^{1-\theta/p}_{p,p}(\dOm)$ also for general non-John domains.

Considering the example above, let us now argue that the trace operator there is \emph{not surjective}, be it for the target space $B^{1-\theta/p}_{p,\infty}(\dOm)$, or for $B^{1-\theta/p}_{p,p}(\dOm)$. Recall that $\vartheta = 1 < \theta = 2$. The measure $\Hcal$ restricted to the vertical segment $\dOm_V \coloneq \{1\} \times [0,1] \subset \dOm$, is codimension $1$ regular with respect to $\mu\lfloor_\Om$. Hence, $Tu|_{\dOm_V} \in B^{1-1/p}_{p,\infty}(\dOm_V) \subsetneq B^{1-2/p}_{p,\infty}(\dOm_V)$ whenever $u \in P^{1,p}_p(\Om)$.

The following example shows that the issue with $T$ not being surjective can arise when $\vartheta < \theta$ even if $\Om$ is a uniform domain that admits a $1$-Poincar\'e inequality.
\begin{exa}
\label{exa:T-not-surjective-uniform}
Let $X = \Rbb^2$ be endowed with the Euclidean distance and measure $d\mu(x) = |x|\,dx$. Then, $X$ supports a $1$-Poincar\'e inequality by \cite[Example~1.6]{HeKiMa}. Let $\Om = [0,1]^2$. Since $\Om$ is a uniform domain, it also supports a $1$-Poincar\'e inequality by \cite[Theorem~4.4]{BjoSha}, whence $N^{1,p}(\Om) = M^{1,p}(\Om) = P^{1,p}_p(\Om)$ for every $p>1$. Suppose that $\dOm$ is endowed with $1$-dimensional Hausdorff measure. Then, $1=\vartheta < \theta = 2$. Given $p \in (2,3)$, let $\alpha = \frac 3p - 1$. Thus, $u(x) = |x|^{-\alpha} \log(e/|x|)^{-1} \in N^{1,p}(\Om)$. By Theorem~\ref{thm:TraceJohn}, $Tu \in B^{1-2/p}_{p,p}(\dOm)$, but it lacks any higher smoothness (for the exponent $p$) similarly as in Example~\ref{exa:T-not-surjective}.
On the other hand, $T: N^{1,p}(\Om) \to B^{1-2/p}_{p,p}(\dOm)$ is not surjective since the restriction of the trace to any compact set $K\subset \dOm \setminus \{0\}$ lies in $B^{1-1/p}_{p,p}(K)$.
\end{exa}

Let us now also look at the assumptions of Theorem~\ref{thm:TraceOp-theta=p}, where the trace of a $P^{1, \theta}_{\theta}(\Om)$ function was established to be $L^\theta(\dOm)$ provided that its $\theta$-PI gradient lies in a weighted $L^\theta(\Om)$ space. The following example shows that the weight in the integrability condition for the PI gradient cannot be removed even in uniform domains with smooth boundary. 
\begin{exa}
\label{exa:T-to-Lp-fail}
Let $\Om = B(0,1) \subset \Rbb^2$ be endowed with Euclidean distance. Let the measure in $\Om$ be given by $d\mu(x) = \dist(x, \dOm)\,dx$.
Let $\Hcal$ be the 1-dimensional Hausdorff measure on $\dOm$. Then, $\vartheta = \theta = 2$ in \eqref{eq:H-lower-massbound} and \eqref{eq:H-upper-massbound}.
Let $\eps \in (0, \frac12)$ and define
\[
  u(x) = \log \biggl(\frac{e}{\dist(x,\dOm)}\biggr)^{\eps}, \quad g(x) = \frac{C}{\dist(x,\dOm) \log (\frac{e}{\dist(x,\dOm)})^{1-\eps}}, \quad x\in \Om.
\]
Then, $g$ is a Haj\l asz gradient of $u$ provided that  the constant $C>0$ if sufficiently large. Moreover,
\[
  \int_\Om g(x)^2\,d\mu(x) = \int_\Om \frac{\dist(x, \dOm)}{ \dist(x,\dOm)^2 \log (\frac{e}{\dist(x,\dOm)})^{2-2\eps}}\,dx \approx \int_0^1 \frac{r (1-r)}{(1-r)^2 \log(\frac{e}{1-r})^{2-2\eps}} \,dr < \infty.
\]
Hence, $u \in M^{1,2}(\Om) \subset P^{1,2}_{2}(\Om)$. However, for every $z \in \dOm$,
\[
  T_R u(z) = \fint_{B(z, R) \cap \Om} \log \biggl(\frac{e}{\dist(x,\dOm)}\biggr)^{\eps} \,d\mu(x) \ge \log \biggl(\frac{e}{R}\biggr)^{\eps} \to \infty\quad \text{as }R \to 0.
\]
It follows from the definition of the trace~\eqref{eq:defoftrace} that necessarily $Tu(z) = \lim_{R\to 0} T_R u(z)$ for $\Hcal$-a.e.\@ $z\in\dOm$. In this case, we would obtain that $T u \equiv \infty$ on $\dOm$, which would violate \eqref{eq:defoftrace}.
\end{exa}
Moreover, the weight condition of Theorem~\ref{thm:TraceOp-theta=p} is essentially sharp whenever $\theta > 1$, which is a consequence of the following proposition. On the other hand, if $\vartheta = \theta = 1$ and $\Om$ admits a $1$-Poincar\'e inequality, then $T: BV(\Om) \to L^1(\dOm)$ by \cite{LahSha}, where $BV(\Om) \supset N^{1,1}(\Om) = P^{1,1}_1(\Om)$ denotes the space of functions of bounded variation (see~\cite{Mir}).
\begin{pro}
\label{pro:T-intoLp-sharpness}
Let $\eps \in (0, 1)$. Then, there are:
\begin{itemize}
	\item a decreasing function $w: (0, \infty) \to [1, \infty)$ such that $\int_0^1 \frac{dt}{t w(t)} < \infty$;
  \item a uniform domain $\Om \Subset X$ with a doubling measure $\mu\lfloor_\Om$ that admits $1$-Poincar\'e inequality;
  \item a measure $\Hcal$ on $\dOm$ that satisfies both \eqref{eq:H-lower-massbound} and \eqref{eq:H-upper-massbound} with $\vartheta = \theta>1$; and
  \item a function $u \in M^{1,\theta}(\Om)$ such that $\tilde{g}_\eps(x) \coloneq g(x) w(\dist(x, \dOm))^{1-\eps} \in L^\theta(\Om)$, where $g \in L^\theta(\Om)$ is a Haj\l asz gradient (and hence a $1$-PI gradient) of $u$
\end{itemize}
such that $u$ does not have a trace in the sense of \eqref{eq:defoftrace}. Moreover, $\fint_{B(z,R)\cap \Om} u(x) \, d\mu(x) \to \infty$ as $R\to 0$ for every $z\in\dOm$.
\end{pro}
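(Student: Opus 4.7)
The plan is to exhibit an explicit counterexample, generalizing Example~\ref{exa:T-to-Lp-fail}. Fix $\eps \in (0,1)$ and choose $\theta > 1/\eps$, which forces $\eps > 1/\theta$; this inequality is precisely the ``room'' the construction will exploit. I would take $X = \Rbb^{n+1}$ with $n\ge 1$, Euclidean metric, $\Om = B(0,1)$, $d(x)\coloneq 1-|x|=\dist(x,\dOm)$, the weighted measure $d\mu = d(x)^{\theta-1}\,dx$ on $\Om$, and $\Hcal$ the surface measure on $\dOm = S^n$. A computation in normal coordinates near $\dOm$ gives $\mu(B(z,r)\cap \Om)\approx r^{n+\theta}$ and $\Hcal(B(z,r)\cap\dOm)\approx r^n$, which yields doubling of $\mu\lfloor_\Om$ as well as \eqref{eq:H-lower-massbound}--\eqref{eq:H-upper-massbound} with $\vartheta=\theta>1$. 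That $(\Om,\mu)$ supports a $1$-Poincar\'e inequality I would take as known from the theory of weighted Sobolev--Poincar\'e inequalities on Lipschitz (in particular uniform) domains with power weights of $\dist(\cdot,\dOm)$.

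Next I would pick $\alpha \in \bigl(1,\, (1-1/\theta)/(1-\eps)\bigr)$ — a non-empty interval precisely because $\eps>1/\theta$ — and set $w(t) = (\log(e/t))^\alpha$ for $t\in(0,1]$, extended by $w\equiv 1$ for $t\ge 1$. The hypothesis $\int_0^1 dt/(tw(t))<\infty$ then reduces to $\alpha>1$. Then I would choose $\gamma \in \bigl(0,\, 1-1/\theta-\alpha(1-\eps)\bigr)$ (non-empty by the choice of $\alpha$) and take $u(x) = (\log(e/d(x)))^\gamma$. Since $u$ factors through $d$ and the derivative of $f(t)=(\log(e/t))^\gamma$ is decreasing in absolute value on $(0,1)$, the mean value theorem together with $|d(x)-d(y)|\le|x-y|$ gives that $g(x)\coloneq \gamma(\log(e/d(x)))^{\gamma-1}/d(x)$ is a pointwise Haj\l{}asz gradient of $u$.

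All remaining verifications reduce to a single one-variable integral. Normal coordinates near $\dOm$ give $d\mu \approx r^{\theta-1}\,dr\,d\sigma$ with $r=d(x)$, so
\[
\int_\Om g^\theta\,d\mu \approx \int_0^{1/2}(\log(1/r))^{\theta(\gamma-1)}\,\frac{dr}{r}, \qquad \int_\Om \tilde g_\eps^\theta\,d\mu \approx \int_0^{1/2}(\log(1/r))^{\theta(\gamma-1+\alpha(1-\eps))}\,\frac{dr}{r},
\]
and these are finite precisely when $\gamma<1-1/\theta$ (automatic from the second inequality) and $\gamma<1-1/\theta-\alpha(1-\eps)$ (enforced by construction). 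Since $u\in L^\theta(\Om)$ is immediate (bounded $\log$-power against an integrable $d^{\theta-1}$), this shows $u\in M^{1,\theta}(\Om)$ and $\tilde g_\eps \in L^\theta(\Om)$. For the failure of the trace, observe that for $z\in\dOm$ and $0<R<1$, every $x\in B(z,R)\cap\Om$ has $d(x)\le R$ and hence $u(x)\ge (\log(e/R))^\gamma$; therefore $\fint_{B(z,R)\cap\Om} u\,d\mu \ge (\log(e/R))^\gamma \to\infty$ as $R\to 0$, ruling out \eqref{eq:defoftrace}.

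The main technical obstacle is the $1$-Poincar\'e inequality for $(\Om, d^{\theta-1}\,dx)$; everything else is elementary log-exponent bookkeeping, arranged so that passing from $w$ to $w^{1-\eps}$ shifts the critical log-exponent in the one-variable integral across the convergence threshold $-1$ — exactly the margin permitted by $\eps>1/\theta$.
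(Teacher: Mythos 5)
Your proposal is correct and follows essentially the same route as the paper's: a Euclidean ball with the distance-to-boundary power weight $\dist(\cdot,\dOm)^{\theta-1}$, a logarithmic-power function $u$ and a logarithmic-power weight $w$, with $\theta$ chosen large compared to $1/\eps$ so that the resulting one-variable log-integral crosses the convergence threshold exactly when $w$ is replaced by $w^{1-\eps}$. The paper fixes $X=\Rbb^2$ and the concrete exponents $\alpha=1+\eps/4$, $\gamma=\eps/4$, $\theta=n\ge 2/\eps$, citing Hurri for the $1$-Poincar\'e inequality of the weighted ball, whereas you parametrize $\alpha$ and $\gamma$ over explicit intervals in $\Rbb^{n+1}$ — a cosmetic difference in bookkeeping, not a different argument.
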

Recall that if $w$, $\Om$, $\mu$, and $\Hcal$ are as in the proposition above and $\tilde{g}_0 \in L^\theta(\Om)$, then $Tu \in L^\theta(\dOm)$ exists by Theorem~\ref{thm:TraceOp-theta=p}.
\begin{proof}
Let $\eps>0$ be given. Let $X = \Rbb^2$ be endowed with the Euclidean distance and set $\Om = B(0,1)$. Pick $n \ge 2/\eps$. Define $d\mu(x) = (\dist(x, X \setminus \Om)^{n-1} + \chi_{X \setminus \Om}(x))\,dx$. By \cite[Theorem~3.4\,(i)]{Hur}, cf.\@ \cite[Lemmata~8.3 and 8.4]{HurT}, $\Om$ admits a $1$-Poincar\'e inequality.
Let $\Hcal$ be the $1$-dimensional Hausdorff measure on $\dOm$, in which case $\vartheta = \theta = n$. Let $w(t) = \max\{1, \log (e/t)^{1+\eps/4}\}$, $t\in (0, \infty)$. 

Then, the function $u(x) = \log(\frac{e}{\dist(x, \dOm)})^{\eps/4}$ lies in $N^{1,\theta}(\Om) = M^{1, \theta}(\Om)$, but lacks a trace since $\lim_{R\to 0} T_R u(z) = \infty$ for every $z \in \dOm$. Note that
\[
  g(x) = \frac{C}{\dist(x,\dOm) \log (\frac{e}{\dist(x,\dOm)})^{1-\eps/4}}
\]
is a Haj\l asz gradient of $u$ (provided that $C>0$ is sufficiently large) and hence
\[
  \tilde{g}_\eps = \frac{C \log (\frac{e}{\dist(x,\dOm)})^{(1+\eps/4)(1-\eps)}}{\dist(x,\dOm) \log (\frac{e}{\dist(x,\dOm)})^{1-\eps/4}} = \frac{C }{\dist(x,\dOm) \log (\frac{e}{\dist(x,\dOm)})^{\eps/2 + \eps^2/4}}
\]
Let us now verify that $\tilde{g}_\eps \in L^\theta(\Om)$. Recall that $\theta = n \ge 2/\eps$.
\begin{align*}
  \| \tilde{g}_\eps \|_{L^n(\Om)}^n & \approx \int_\Om \frac{\dist(x,\dOm)^{n-1}\,dx}{\dist(x,\dOm)^n \log (\frac{e}{\dist(x,\dOm)})^{n\eps/2 + n\eps^2/4}} \\
  & \le \int_\Om \frac{dx}{\dist(x,\dOm) \log (\frac{e}{\dist(x,\dOm)})^{1 + \eps/2}} \approx \int_0^1 \frac{r\,dr}{(1-r) \log(\frac{e}{1-r})^{1+\eps/2}} < \infty.
  \qedhere
\end{align*}
\end{proof}

\end{document}